\documentclass[11pt]{article}

\usepackage{amsthm,amsmath,amssymb,bbm}
\usepackage{natbib}
\usepackage{multirow}
\usepackage[pdftex]{graphicx}
\usepackage{subfigure}
\usepackage{makecell}
\usepackage{booktabs}
\usepackage{array}
\usepackage{tabularx}
\usepackage{tabulary}
\usepackage{caption}
\usepackage{booktabs}
\usepackage{url}
\usepackage{algorithm}
\usepackage{algorithmic}
\usepackage{bm}
\usepackage{wrapfig}
\usepackage{lipsum}
\usepackage{mathrsfs}
\usepackage{dsfont}
\usepackage{titling}

\makeatletter
\newcommand*{\rom}[1]{\expandafter\@slowromancap\romannumeral #1@}
\makeatother

\usepackage[usenames,dvipsnames,svgnames,table]{xcolor}
\usepackage[colorlinks,
linkcolor=red,
anchorcolor=blue,
citecolor=blue
]{hyperref}

%%%%%%%%%%%%%%%%%%%%%%%%%%%%%%%%%%%%%
\usepackage{xr}
\externaldocument{Robust_TuningFree_supp}
%%%%%%%%%%%%%%%%%%%%%%%%%%%%%%%%%%%%%

\newcommand{\sgn}{\mathop{\mathrm{sign}}}

%%%%My macros
\usepackage{smile}

\newcommand*{\var}{\textnormal{var}}

\newcommand{\e}{\mathbb{E}}
\newcommand{\nn}{\nonumber}

%%%%%%%%%%%

%%%%blam

%%%%Definition of Equation environment
\def\##1\#{\begin{align}#1\end{align}}
\def\$#1\${\begin{align*}#1\end{align*}}

%%%%Definition of Operators

\def\T{\intercal} %%%transpose operator
\def\sn{\sum_{i=1}^n}
\def\Sb{\mathbf{S}}

\newcommand{\cov}{\mathrm{cov}}
\newcommand{\FDP}{{\rm FDP}}

\newcommand{\pr}{\mathbb{P}}

%%%Definition of rms

\newcommand{\wt}{\widetilde}

\newcommand{\bfsym}[1]{\ensuremath{\boldsymbol{#1}}}
       \def \bbeta    {\bfsym{\beta}}
\def \bgamma   {\bfsym{\gamma}}       \def \bdelta   {\bfsym{\delta}}

\newcommand{\B}{\mathrm{\tiny{b}}}
\newcommand{\mS}{\mathbb{S}}
\def \BH {{\rm BH}}

%%%%Definition of singlespace

%%%%Definition of Roman Numbers
\newcommand{\Rom}[1]{\text{\uppercase\expandafter{\romannumeral #1\relax}}}

\usepackage{geometry}
 \geometry{
 a4paper,
 %total={170mm,257mm},
 left=28mm,
 top=30mm,
 }
\textwidth=6in

\usepackage{enumitem}

\begin{document}

\title{Robust Inference via Multiplier Bootstrap}
 
\author{Xi Chen\thanks{Stern School of Business, New York University, New York, NY 10012, USA. E-mail: \href{mailto:xchen3@stern.nyu.edu}{\textsf{xchen3@stern.nyu.edu}}.}~~~and~~Wen-Xin Zhou\thanks{Department of Mathematics, University of California, San Diego, La Jolla, CA 92093, USA. E-mail:  \href{mailto:wez243@ucsd.edu}{\textsf{wez243@ucsd.edu}}. Supported in part by NSF Grant DMS-1811376.} }

\date{}
\maketitle

\vspace{-0.5in}

\begin{abstract}
This paper investigates the theoretical underpinnings of two fundamental statistical inference problems, the construction of confidence sets and large-scale simultaneous hypothesis testing,  in the presence of heavy-tailed data.
With heavy-tailed observation noise, finite sample properties of the least squares-based methods, typified by the sample mean, are suboptimal both theoretically and empirically. In this paper, we demonstrate that the adaptive Huber regression, integrated with the multiplier bootstrap procedure, provides a useful robust alternative to the method of least squares. Our theoretical and empirical results reveal the effectiveness of the proposed method, and highlight the importance of having inference methods that are robust to heavy tailedness.

\medskip
\noindent
{\it Keywords}: Confidence set; heavy-tailed data; multiple testing; multiplier bootstrap; robust regression; Wilks' theorem.
\end{abstract}

\section{Introduction}
\label{sec1}

In classical statistical analysis, it is typically assumed that data are drawn from a Gaussian distribution. {Although the normality assumption has been widely adopted to facilitate methodological development and theoretical analysis,  Gaussian models could be an idealization of the complex random world.} The non-Gaussian, or even heavy-tailed, character of the distribution of empirical data has been repeatedly observed in various domains, ranging from genomics, medical imaging to economics and finance. New challenges are thus brought to classical statistical methods.
For linear models, regression estimators based on the least squares loss are suboptimal, both theoretically
and empirically, in the presence of heavy-tailed errors.
The necessity of robust alternatives to  least squares was first noticed by Peter Huber in his seminal work  ``Robust Estimation of a Location Parameter'' \citep{H1964}. Due to the growing complexity of modern data, the notion of robustness is becoming increasingly important in statistical analysis and finds its use in a wide range of applications. We refer to \cite{HR2009} for an overview of robust statistics.

Although the past a few decades have witnessed the active development of  rich statistical theory on robust estimation, {robust statistical inference for heavy-tailed data has always been a challenging problem on which the extant literature has been somewhat silent.
%and the extant literature has been somewhat silent on the theoretical development for robust inference.
\cite{FHY2007}, \cite{DHJ2011} and \cite{LS2014} investigated robust inference that is confined to the Student's $t$-statistic. However, as pointed out by \cite{DLLO2016} (see Section 8 therein), sharp confidence estimation for heavy-tailed data in the finite sample set-up remains an open problem  and a general methodology is still lacking. To that end, this paper makes a further step in studying confidence estimation from a robust perspective. In particular, under linear model with heavy-tailed errors, we address two fundamental problems: (1) confidence set construction for regression coefficients, and (2) large-scale multiple testing with the guarantee of false discovery rate control. The developed techniques provide mathematical underpinnings for a class of robust statistical inference problems.} Moreover, sharp exponential-type bounds for the coverage probability of the confidence set are derived under weak moment assumptions.

%investigated the robustness and accuracy of methods for high dimensional data analysis based on Student's $t$-statistic. The methodologies considered in these papers are confined to the $t$-statistic due to the remarkably general results on self-normalization that require only low-order moment assumptions. See, for example, the review article by \cite{SZ2017}.

\subsection{Confidence sets}

Consider the linear model $Y=   \bX^\T \btheta^* + \varepsilon$, where $Y \in \RR$ denotes the response variable, $\bX \in \RR^d$ is the (random) vector of covariates, $\btheta^* \in  \RR^d$ is the vector of regression coefficients and $\varepsilon$ represents the regression error satisfying $\EE(\varepsilon | \bX) = 0$ and $\sigma^2 = \EE(\varepsilon^2| \bX )<\infty$. Assume that we observe a random sample $(Y_1, \bX_1), \ldots, (Y_n, \bX_n)$ from $(Y, \bX)$:
\begin{align}\label{reg.model}
	Y_i =   \bX_i^\T \btheta^* + \varepsilon_i, \quad i=1,\ldots, n.
\end{align}
The intercept term is implicitly assumed in model \eqref{reg.model} by taking the first element of $\bX_i$ to be one so that the first element of $\btheta^*$ becomes the intercept.
{The least squares estimator and its variations have been widely adopted to estimate $\btheta^*$, which on many occasions achieve the minimax rate in terms of the mean squared error (MSE).}

{Although the MSE plays an important role in estimation, an estimator that is optimal in MSE might be suboptimal in terms of non-asymptotic deviation, which often relates to the construction of confidence intervals.} For example, in the mean estimation problem, although the sample mean has an optimal minimax mean squared error among all mean estimators, its deviation is worse for non-Gaussian samples than for Gaussian ones, and the worst-case deviation is suboptimal when the sampling distribution has heavy tails \citep{C2012}. More specifically, let $X_1,\ldots,X_n$ be independent random variables from $X$ with mean $\mu$ and variance $\sigma^2>0$. Consider the empirical mean $\hat{\mu}_n = (1/n)\sn X_i$, applying Chebyshev's inequality delivers a polynomial-type deviation bound
\begin{align*}
	  \PP\left(  | \hat{\mu}_n  - \mu | \geq \sigma\sqrt{\frac{1}{  \delta n }} \right) \leq \delta ~\mbox{ for any } \delta \in (0,1).
\end{align*}
In addition, if the distribution of $X$ is sub-Gaussian, i.e. $\EE \exp(\lambda X) \leq \exp(\sigma^2 \lambda^2/2)$ for all $\lambda$, then following the terminology in \cite{DLLO2016}, $\hat{\mu}_n$ becomes a sub-Gaussian estimator in the sense that
\begin{align*}
  \PP\left\{  | \hat{\mu}_n  - \mu | \geq \sigma\sqrt{ \frac{ 2\log(2/\delta) }{n} } \right\} \leq \delta .
\end{align*}
\cite{C2012} established a lower bound for the deviations of $\hat{\mu}_n$ when the sampling distribution is the least favorable in the class of all distributions with bounded variance: for any $\delta \in (0, e^{-1})$, there is some distribution with mean $\mu$ and variance $\sigma^2$ such that an independent sample of size $n$ drawn from it satisfies
\begin{align*}
 \PP\left\{	| \hat{\mu}_n - \mu | \geq \sigma \sqrt{\frac{1}{\delta n } } \bigg( 1 - \frac{e \delta  }{n} \bigg)^{(n-1)/2} \right\} \geq \delta .
\end{align*}
This shows that the deviation bound obtained from Chebyshev's inequality is essentially sharp under finite variance condition.
The limitation of least squares method arises also in the regression setting, which triggers an outpouring of interest in developing sub-Gaussian estimators, from univariate mean estimation to multivariate or even high dimensional problems,  for heavy-tailed data to achieve sharp deviation bounds from a non-asymptotic viewpoint. See, for example, \cite{BJL2015},  \cite{M2015, M2016},  \cite{HS2016}, \cite{DLLO2016}, \cite{CG2017}, \cite{G2017}, \cite{FLW2017}, \cite{SZF2016} and  \cite{LM2017}, among others. {In particular,  \cite{FLW2017}  and  \cite{SZF2016} proposed adaptive (regularized) Huber estimators with diverging robustification parameters (see Definition \ref{Huber.def} in Section \ref{sec2.1}), and derived exponential-type deviation bounds when the error distribution only has finite variance. Their key observation is that the robustification parameter should adapt to the sample size, dimensionality and noise level for optimal tradeoff between bias and robustness.}

All the aforementioned work studies robust estimation through concentration properties, that is, the robust estimator is tightly
concentrated around the true parameter  with high probability even when the sampling distribution has only a small number of finite moments.
In general, concentration inequalities loose constant factors and may result in confidence intervals too wide to be informative.
Therefore, an interesting and challenging open problem is how to construct tight confidence sets for $\btheta^*$ with finite samples of heavy-tailed data \citep{DLLO2016}.

%An interesting open problem is then to build sub-Gaussian confidence intervals or confidence sets for finite sample inference in the presence of heavy-tailed data. See Section~8 of \cite{DLLO2016}. Since a general theory for robust inference is still lacking, the first goal of our paper is to furnish the first few steps toward such a theory.
%Back to the linear model \eqref{reg.model}, \cite{SZF2016} and \cite{FLW2017} showed, respectively in low and high dimensional settings, that the Huber estimator with a properly chosen robustification parameter admits exponential-type deviation bounds even when the error distribution only has finite variance. The key observation is that the robustification parameter should adapt to the sample size, dimension and confidence level for optimal tradeoff between bias and robustness. We recall the definition of Huber loss:
%Throughout this paper, we focus on the classical setting where the parameter dimension $d$ is smaller than but is allowed to increase with the number $n$ of observations.

{This paper addresses this open problem by developing a robust inference framework with non-asymptotic guarantees.} To illustrate the key idea, we focus on the classical setting where the parameter dimension $d$ is smaller than but is allowed to increase with the number of observations $n$. {Our approach integrates concentration properties of the adaptive Huber estimator (see Theorems \ref{br.thm} and \ref{wilks.thm}) and  the multiplier bootstrap method.}
%To address the challenge of confidence estimation from a robust perspective, this paper proposes a novel approach that combines the finite-sample  concentration properties of adaptive Huber estimator
%Given the nice concentration properties of $\hat{\btheta}_\tau$ developed recently, we make a further step in studying the problem of confidence estimation from a robust perspective.
The multiplier bootstrap, also known as the weighted bootstrap, is one of the most widely used resampling methods for constructing a confidence interval/set or for measuring the significance of a test. Its theoretical validity is typically guaranteed by the multiplier central limit theorem \citep{VW1996}. We refer to \cite{CB2005}, \cite{ABR2010}, \cite{CCK2013, CCK2014}, \cite{SZ2015} and \cite{Z2016} for the most recent progress in the theory and applications of the multiplier bootstrap. In particular, \cite{SZ2015} considered a multiplier bootstrap procedure for constructing likelihood-based confidence sets under a possible model misspecification. For a linear model with sub-Gaussian errors, their results validate the bootstrap procedure when applied to the ordinary least squares (OLS).  With heavy-tailed errors in the regression model \eqref{reg.model}, we demonstrate how the adaptive Huber regression and the multiplier bootstrap can be integrated to construct robust and sharp confidence sets for the true parameter $\btheta^*$ with a given coverage probability. The validity of the bootstrap procedure in situations with a limited sample size, growing dimensionality and heavy-tailed errors is established. In all these theoretical results, we provide non-asymptotic bounds for the errors of bootstrap approximation. See Theorems~\ref{boot.concentration.thm} and \ref{boot.wilks.thm} for finite sample properties of the bootstrap adaptive Huber estimator, including the deviation inequality, Bahadur representation and Wilks' expansion.

An alternative robust inference method is based on the asymptotic theory developed in \cite{ZBFL2017}; see, for example, Theorems~2.2 and 2.3 therein. Since the asymptotic distribution of either the proposed robust estimator itself or the excess risk depends on $\sigma^2$, a direct approach is to replace $\sigma^2$ by some sub-Gaussian variance estimator using Catoni's method \citep{C2012} or the median-of-means technique \citep{M2015}, with the advantage of being computationally fast. The disadvantage, however, is two-fold: first, constructing sub-Gaussian variance estimator involves another tuning parameter (for the problem of simultaneously testing $m$ regression models as discussed in the next section, variance estimation brings $m$ additional tuning parameters); secondly, because
the squared heavy-tailed data is highly right-skewed, using the method in \cite{C2012} or \cite{FLW2017} tends to underestimate the variance, and the median-of-means
method is numerically unstable for small or moderate samples. Both two methods were examined numerically in \cite{ZBFL2017}, while the multiplier bootstrap procedure,
albeit being more computationally intensive, demonstrates the most desirable finite sample performance.

\subsection{Simultaneous inference}

In addition to building confidence sets for an individual parameter vector, multiple hypothesis testing is another important
statistical problem with applications to many scientific fields, where thousands
of tests are performed simultaneously \citep{DV2008, E2010}. Gene microarrays comprise a prototypical example; there, each subject is automatically measured on tens of thousands of features.
%In the NCI-60 cancer cell lines data example considered in \cite{SZF2016}, it is observed that, even after the $\log$-transformation, 36.5\% of the gene expression variables still exhibit empirical kurtosises larger than that of the $t_5$-distribution, indicating their heavy-tailed characteristics.
Together, the large number of tests together with heavy tailedness bring new challenges to conventional statistical methods, which, in this scenario, often suffer from low power to detect important features and poor reproducibility. Robust alternatives are thus needed for conducting large-scale multiple inference for heavy-tailed data.

In this section, we consider the multiple response regression model
\begin{align}
	y_{ik} = \mu_k +   \bx_i^\T  \bbeta_k +  \varepsilon_{ik} , \ \ i=1,\ldots, n, \, k=1,\ldots, m,  \label{panel.data}
\end{align}
where $\mu_k$ is the intercept, $\bx_i = (x_{i1}, \ldots, x_{is})^\T , \bbeta_k = (\beta_{k1}, \ldots, \beta_{ks})^\T \in \RR^s$ are $s$-dimensional vectors of random covariates and regression coefficients, respectively, and $\varepsilon_{ik}$ is the regression error. Since our main focus here is the inference for intercepts, we decompose the parameter vector $\btheta^*$ in \eqref{reg.model} into two parts: the intercept $\mu_k$ and the slope $\bbeta_k$. Moreover, we use $\bx_i$  in \eqref{panel.data} to distinguish from $\bX_i$ in \eqref{reg.model}.
Write $\by_i=(y_{i1}, \ldots, y_{im})^\T \in \RR^m$ and let $\bmu=(\mu_1,\ldots, \mu_m)^\T \in \RR^m$ be the vector of intercepts. Based on random samples $\{ (\by_i, \bx_i) \}_{i=1}^n$ from model \eqref{panel.data}, our goal is to simultaneously test the hypotheses
\begin{align}
H_{0k} : \mu_k = 0  ~\mbox{ versus }~ H_{1k}: \mu_k \neq 0 , ~\mbox{ for } k=1,\ldots, m. \label{hypotheses}
\end{align}

An iconic example of model \eqref{panel.data} is the linear pricing model, which subsumes the capital asset pricing model (CAPM) \citep{S1964, L1965} and the Fama-French three-factor model \citep{FF1993}. The key implication from the multi-factor pricing theory is that for any asset $k$, the intercept $\mu_k$ should be zero. It is then important to investigate if such a pricing theory, also known as the ``mean-variance efficiency" pricing, can be validated by empirical data \citep{FLY2015}. According to the Berk and Green equilibrium \citep{BG2004}, inefficient pricing by the market may occur to a small proportion of exceptional assets, namely a very small fraction of the $\mu_k$'s are nonzero. To identify positive $\mu_k$'s by testing a large number of hypotheses simultaneously, \cite{BSW2010} and \cite{LD2017} developed FDR controlling procedures for data coming from model \eqref{panel.data}, which can be applied to mutual fund selection in empirical finance. We refer to  \cite{FKC2009},  \cite{DS2012}, \cite{FHG2012} and \cite{WZHO2017} for more examples from gene expression studies, where the goal is to identify features showing a biological signal of interest.

{Despite the extensive research and wide application of this problem, existing least squares-based methods with normal calibration could fail when applied to heavy-tailed data with a small sample size. To address this challenge, we develop a robust bootstrap procedure for large-scale simultaneous inference, which achieves good numerical performance for a small or moderate sample size. Theoretically, we prove its validity on controlling the false discover proportion (FDP) (see Theorem \ref{FDP.thm}).}

Finally, we  briefly comment on the computation issue. Fast computation of Huber regression is critical to our procedure since the multiplier bootstrap requires solving Huber loss minimization for at least hundreds of times. Ideally, a second order approach (e.g. Newton's method) is preferred. However, the second order derivative of Huber loss does not exist everywhere. To address this issue, we adopt the damped semismooth Netwon method \citep{QS1999}, which is a synergic integration of first and second order methods.
The details are provided in Appendix~\ref{app:C} of the supplemental material.

\subsection{Organization of the paper}

The rest of the paper proceeds as follows.
Section~\ref{sec2.1} presents a series of finite sample results of the adaptive Huber regression. Sections~\ref{sec.boot} and \ref{sec2.3} contain, respectively, the description of the bootstrap procedure for building confidence sets and theoretical guarantees.
Two data-driven schemes are proposed in Section~\ref{sec:adap} for choosing the tuning parameter in the Huber loss.
In Section~\ref{sec3}, we propose a robust bootstrap calibration method for multiple testing and investigate its theoretical property on controlling the FDP. The conclusions that are drawn in Sections~\ref{sec2} and \ref{sec3} are illustrated numerically in Section~\ref{sec.numerical}.  We conclude with a discussion in Section~\ref{sec.discuss}.
%Finally, proofs of Theorems~\ref{br.thm} and \ref{wilks.thm} are contained in Section~\ref{sec.proof}.
The supplementary material contains all the  proofs and additional simulation studies.

\subsection{Notation}
Let us summarize our notation. For every integer $k\geq 1$, we use $\RR^k$ to denote the the $k$-dimensional Euclidean space. The inner product of any two vectors $\bu=(u_1, \ldots, u_k)^\T, \bv=(v_1, \ldots ,v_k)^\T \in \RR^k$ is defined by $\bu^\T \bv = \langle \bu, \bv \rangle= \sum_{i=1}^k u_i v_i$.
We use the notation $\| \cdot \|_p , 1\leq p \leq \infty$ for the $\ell_p$-norms of vectors in $\RR^k$: $\| \bu \|_p = ( \sum_{i=1}^k | u_i |^p )^{1/p}$ and $\| \bu \|_\infty = \max_{1\leq i\leq k} |u_i|$. For $k\geq 2$, $\mathbb{S}^{k-1} = \{ \bu \in \RR^k : \| \bu \|_2 = 1 \}$ denotes the unit sphere in $\RR^k$. Throughout this paper, we use bold capital letters to represent matrices. For $k\geq 2$, $\bI_k$ represents the identity/unit matrix of size $k$. For any $k\times k$ symmetric matrix $\bA \in \RR^{k\times k}$, $\| \bA \|_2$ is the operator norm of $\bA$. We use $ \overline \lambda_{\bA} $ and $\underline \lambda_{\bA}$ to denote the largest and smallest eigenvalues of $\bA$, respectively. For any two real numbers $u$ and $v$, we write $u\vee v = \max(u,v)$ and $u \wedge v = \min(u,v)$.
%For a random variable $X$, the sub-Gaussian norm of $X$, denoted by $\| X \|_{\psi_2}$, is defined as $\| X \|_{\psi_2} = \inf\{ t>0 :  \EE \exp(X^2 / t^2) \leq 2 \}$. For a random vector $\bX \in \RR^k$ with $k\geq 2$, its sub-Gaussian norm $\| \bX \|_{\psi_2}$ is defined as $\sup_{\bu \in \mathbb{S}^{k-1}} \| \bu^\T \bX \|_{\psi_2}$.
For two sequences of non-negative numbers $\{ a_n \}_{n\geq 1}$ and $\{ b_n \}_{n\geq 1}$, $a_n \lesssim b_n$ indicates that there exists a constant $C>0$ independent of $n$ such that $a_n \geq Cb_n$; $a_n \gtrsim b_n$ is equivalent to $b_n \lesssim a_n$; $a_n \asymp b_n$ is equivalent to $a_n \lesssim b_n$ and $b_n \lesssim a_n$. For two numbers $C_1$ and $C_2$, we write $C_2=C_2(C_1)$ if $C_2$ depends only on $C_1$. For any set $\mathcal{S}$, we use ${\rm card}(\mathcal{S})$ and $|\mathcal{S}|$ to denote its cardinality, i.e. the number of elements in $\mathcal{S}$.

\section{Robust bootstrap confidence sets}
\label{sec2}

\subsection{Preliminaries}
\label{sec2.1}

First, we present some finite sample properties of the adaptive Huber estimator, which are of independent interest and also sharpen the results in \cite{SZF2016}.
%for the super heavy-tailed case that does not require the sampled data to have finite variance.

Let us recall the definition of Huber loss,
\begin{definition} \label{Huber.def}
{\rm The Huber loss $\ell_\tau(\cdot)$ \citep{H1964} is defined as
\begin{align}   \label{huber.loss}
	\ell_\tau(u) =
	\left\{\begin{array}{ll}
	 u^2/2  ,    & \mbox{if } | u | \leq  \tau ,  \\
	\tau | u | -  \tau^2 /2 ,   &  \mbox{if }  | u | > \tau ,
	\end{array}  \right.
\end{align}
where $\tau >0$ is a tuning parameter and will be referred to as the {\it robustification parameter} that balances bias and robustness.}
\end{definition}
The Huber estimator is defined as
\begin{align}
	\hat{\btheta}_\tau \in \argmin_{\btheta \in \RR^d} 	\cL_\tau(\btheta)   ~\mbox{ with }~  \cL_\tau(\btheta)= \cL_{n,\tau}(\btheta) : =  \sn  \ell_\tau(Y_i - \bX_i^\T \btheta) . \label{huber.est}
\end{align}
The following theorem provides a sub-Gaussian-type deviation inequality and a non-asymptotic Bahadur representation for $\hat{\btheta}_\tau$. The proof is given in  the supplement. We first impose the moment conditions.

\begin{assumption}
\label{moment.cond}
{\rm
(i) There exists some constant $A_0>0$ such that for any $\bu \in \RR^d$ and $t\in \RR$, $\PP(|\langle \bu, \bZ \rangle |\geq A_0 \| \bu \|_2 \cdot  t) \leq 2\exp(-t^2)$, where $\bZ= \bSigma^{-1/2} \bX$ and $\bSigma = \EE(\bX \bX^\T)$ is positive definite. (ii) The regression error $\varepsilon$ satisfies $\EE(\varepsilon | \bX)=0$, $\EE(\varepsilon^2 | \bX)=\sigma^2$ and $ \EE(|\varepsilon|^{2+\delta} | \bX) \leq \upsilon_{2+\delta}$ almost surely for some $\delta \geq 0$.
}
\end{assumption}

Part~(i) of Condition~\ref{moment.cond} requires $\bX$ to be a sub-Gaussian vector. Via one-dimensional marginal, this generalizes the concept of sub-Gaussian random variables to higher dimensions. Typical examples include: (i) Gaussian and Bernoulli random vectors, (ii) spherical random vector\footnote{A random vector $\bX \in \RR^d$ is said to have a spherical distribution if it is uniformly distributed on the Euclidean sphere in $\RR^d$ with center at the origin and radius $\sqrt{d}$. }, (iii) random vector that is uniformly distributed on the Euclidean ball centered at the origin with radius $\sqrt{d}$, and (iv) random vector that is uniformly distributed on the unit cube $[-1,1]^d$. In all the above cases, the constant $A_0$ represents a dimension-free constant. We refer to Chapter~3.4 in \cite{V2018} for detailed discussions of sub-Gaussian distributions in higher dimensions.
Technically, this assumption is needed in order to derive an exponential-type concentration inequality for the quadratic form $\| \sn \ell'_\tau(\varepsilon_i) \bZ_i \|_2$, where
\begin{align}
	\bZ_i = \bSigma^{-1/2} \bX_i, \ \ i=1,\ldots, n .  \label{Zi.def}
\end{align}
%Here $\bZ = \bSigma^{-1/2} \bX$ is a isotropic random vector, namely $\EE(\bZ \bZ^\T)=\bI_d$.

\begin{theorem} \label{br.thm}
Assume Condition~\ref{moment.cond} holds. For any $t>0$ and $v \geq \upsilon_{2+\delta}^{1/(2+\delta)}$, the estimator $\hat{\btheta}_\tau$ given in \eqref{huber.est} with $\tau=v (\frac{n}{d+t})^{1/(2+\delta)}$ satisfies
\begin{align}
	\PP\Bigg\{ \| \bSigma^{1/2}(\hat{\btheta}_\tau - \btheta^*) \|_2 \geq  c_1  v \sqrt{\frac{d+t}{n}}  \Bigg\} \leq 2 e^{-t} \label{concentration.MLE} \\
	\mbox{ and }~  \PP\Bigg\{   \bigg\| \bSigma^{1/2}(\hat{\btheta}_\tau - \btheta^*)  - \frac{1}{n} \sn \ell'_\tau(\varepsilon_i) \bZ_i  \bigg\|_2 \geq  c_2 v \frac{d+t}{n} \Bigg\} \leq  3 e^{-t}  \label{Bahadur.representation}
\end{align}
as long as $n\geq c_3   (d+t) $, where $c_1$--$c_3$ are constants depending only on $A_0$.
\end{theorem}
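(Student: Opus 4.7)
The plan is to derive both statements from the first-order optimality $\nabla\cL_\tau(\hat\btheta_\tau) = \bzero$, combined with (a) an exponential concentration bound on the score $\nabla\cL_\tau(\btheta^*) = -\tfrac{1}{n}\sum_i \ell'_\tau(\varepsilon_i)\bX_i$ and (b) a restricted strong convexity estimate for $\cL_\tau$ in a neighborhood of $\btheta^*$. Observe that $\bSigma^{-1/2}\nabla\cL_\tau(\btheta^*) = -\tfrac{1}{n}\sum_i \ell'_\tau(\varepsilon_i)\bZ_i$ is exactly the linear term appearing in the Bahadur expansion \eqref{Bahadur.representation}, so both conclusions funnel through a tight control of the score.

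To bound the score, I would exploit that $|\ell'_\tau(u)| = \min(|u|,\tau)$ to obtain the bias estimate $|\EE \ell'_\tau(\varepsilon_i)| \leq 2\EE[|\varepsilon|\mathbf{1}\{|\varepsilon|>\tau\}] \leq 2\upsilon_{2+\delta}/\tau^{1+\delta}$; with the prescribed $\tau = v(n/(d+t))^{1/(2+\delta)}$ this is $O(v\sqrt{(d+t)/n})$. For the centered part, fix $\bu\in\mathbb{S}^{d-1}$ and apply a Bernstein-type inequality to $\tfrac{1}{n}\sum_i \{\ell'_\tau(\varepsilon_i)-\EE\ell'_\tau(\varepsilon_i)\}\langle\bu,\bZ_i\rangle$, whose variance proxy is $v^2$ and whose summands have sub-exponential scale $\tau$ via the sub-Gaussianity of $\bZ_i$. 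A $1/2$-net of $\mathbb{S}^{d-1}$ has cardinality $\exp(O(d))$, so a union bound yields $\|\bSigma^{-1/2}\nabla\cL_\tau(\btheta^*)\|_2 \lesssim v\sqrt{(d+t)/n}$ with probability at least $1-e^{-t}$.

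For local curvature, note that $\nabla^2\cL_\tau(\btheta) = \tfrac{1}{n}\sum_i\mathbf{1}\{|Y_i-\bX_i^T\btheta|\leq\tau\}\bX_i\bX_i^T$ almost everywhere, and the indicator equals one on the event $\{|\varepsilon_i|\leq\tau/2\}\cap\{|\bX_i^T\bD|\leq\tau/2\}$, where $\bD = \hat\btheta_\tau - \btheta^*$. Combining (i) sub-Gaussian concentration of $\tfrac{1}{n}\sum_i\bZ_i\bZ_i^T$ around $\bI_d$ at rate $\sqrt{(d+t)/n}$, (ii) the Markov bound $\PP(|\varepsilon_i|>\tau/2) \lesssim 1/\tau^{2+\delta}$, and (iii) a peeling/$\epsilon$-net argument over $\bD$ with $\|\bSigma^{1/2}\bD\|_2 \leq r\asymp v$ produces a uniform lower bound $\bD^T\nabla^2\cL_\tau(\btheta^*+\bD)\bD \geq c\,\bD^T\bSigma\bD$ on this ball. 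By convexity, a standard M-estimation shrinkage argument then forces $\hat\btheta_\tau$ to lie inside the ball and gives $\|\bSigma^{1/2}(\hat\btheta_\tau-\btheta^*)\|_2 \lesssim \|\bSigma^{-1/2}\nabla\cL_\tau(\btheta^*)\|_2$, which together with the score bound proves \eqref{concentration.MLE}.

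For the Bahadur representation, I would use an integral mean-value identity: since $\ell''_\tau(u)=\mathbf{1}\{|u|\leq\tau\}$ a.e., optimality rewrites as $\bzero = \nabla\cL_\tau(\btheta^*) + \bA_n\bD$ with
$$\bA_n = \frac{1}{n}\sum_{i=1}^n\Bigl(\int_0^1\mathbf{1}\{|\varepsilon_i-s\bX_i^T\bD|\leq\tau\}\,ds\Bigr)\bX_i\bX_i^T,$$
so that $\bSigma^{1/2}\bD + \bSigma^{-1/2}\nabla\cL_\tau(\btheta^*) = \bSigma^{1/2}(\bSigma^{-1}-\bA_n^{-1})\nabla\cL_\tau(\btheta^*)$. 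Its norm is at most $\|\bSigma^{-1/2}\bA_n\bSigma^{-1/2}-\bI_d\|_2\cdot\|\bSigma^{-1/2}\nabla\cL_\tau(\btheta^*)\|_2/\lambda_{\min}(\bSigma^{-1/2}\bA_n\bSigma^{-1/2})$; the second factor is the score bound already proved, while the first is $O(\sqrt{(d+t)/n})$ after decomposing $\bA_n-\bSigma$ into a sub-Gaussian sample-covariance error plus a truncation error governed by $\PP(|\varepsilon|>\tau/2)\lesssim 1/\tau^{2+\delta}$ combined with the smallness of $\bD$ established above. The product delivers the $O(v(d+t)/n)$ rate, yielding \eqref{Bahadur.representation}. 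The main obstacle is the uniform local-curvature bound: the truncation indicator couples the heavy-tailed $\varepsilon_i$ (only $2+\delta$ moments available) with the sub-Gaussian direction $\bX_i^T\bD$, so the argument must simultaneously leverage Markov control of the error, net/peeling control over the direction, and sub-Gaussian sample-covariance concentration, all while keeping the failure probability exponentially small in $d+t$.
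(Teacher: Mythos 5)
Your proposal is correct and follows essentially the same route as the paper's proof: a score bound via a net plus Bernstein with the truncation bias controlled by $\upsilon_{2+\delta}\tau^{-(1+\delta)}$, a restricted strong convexity estimate on a local $\bSigma$-ball built from the event $\{|\varepsilon_i|\le \tau/2\}\cap\{|\langle \bX_i,\btheta-\btheta^*\rangle|\le \tau/2\}$, a convexity-based localization of $\hat{\btheta}_\tau$, and a mean-value (integrated-Hessian) expansion whose remainder is the product of a covariance-type perturbation of order $\sqrt{(d+t)/n}$ with the score norm. The paper implements the two uniform-in-$\btheta$ bounds with Spokoiny-type exponential-moment chaining and, for the curvature step, smoothed indicators combined with Talagrand's inequality and Gaussian comparison rather than your net-and-peeling Bernstein argument, but the decomposition, the key quantities, and the resulting rates are identical.
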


The non-asymptotic results in Theorem~\ref{br.thm} reveal a new perspective for Huber's method: to construct sub-Gaussian estimators for linear regression with heavy-tailed errors, the tuning parameter in the Huber loss should adapt to the sample size, dimension and moments for optimal tradeoff between bias and robustness. The resulting estimator is therefore referred to as the {\it adaptive Huber estimator}. Specifically, Theorem~\ref{br.thm} provides the concentration property of the adaptive Huber estimator $\hat{\btheta}_\tau$ and the Fisher expansion for the difference $\hat{\btheta}_\tau - \btheta^*$.
It improves Theorem~2.1 in \cite{ZBFL2017} by sharpening the sample size scaling.
The classical asymptotic results can be easily derived from the obtained non-asymptotic expansions. In the following theorem, we further study the concentration property of the Wilks' expansion for the excess $ \cL_\tau(\btheta^* ) - \cL_\tau(\hat{\btheta}_\tau)$. This new result is directly related to the construction of confidence sets. See Theorem~\ref{boot.concentration.thm} below for its counterpart in the bootstrap world.

\begin{theorem} \label{wilks.thm}
Assume Condition~\ref{moment.cond} holds. Then for any $t>0$ and $v \geq \upsilon_{2+\delta}^{1/(2+\delta)}$, the estimator $\hat{\btheta}_\tau$ with $\tau=v (\frac{n}{d+t})^{1/(2+\delta)}$ satisfies that with probability at least $1- 3 e^{-t}$,
\begin{align}
\bigg|  \cL_\tau(\btheta^* ) - \cL_\tau(\hat{\btheta}_\tau)   - \frac{1}{2} \bigg\|  \frac{1}{\sqrt{n}}\sn \ell'_\tau(\varepsilon_i)  \bZ_i  \bigg\|_2^2  \bigg| \leq c_4 v^2 \frac{(d+t)^{3/2}}{\sqrt{n}} \label{Wilks.expansion} \\
\mbox{ and }~  \bigg| \sqrt{ 2\{  \cL_\tau(\btheta^* ) - \cL_\tau(\hat{\btheta}_\tau)\} } -   \bigg\|\frac{1}{\sqrt{n}} \sn \ell'_\tau(\varepsilon_i) \bZ_i   \bigg\|_2  \bigg| \leq c_5 v \frac{d+t}{\sqrt{n}} \label{sqrt.Wilks.expansion}
\end{align}
as long as $n \geq c_3  (d+t) $, where $c_4, c_5>0$ are constants depending on $A_0$.
\end{theorem}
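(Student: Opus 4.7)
The plan is to combine a second-order expansion of $\cL_\tau$ about $\btheta^*$ with the Bahadur representation from Theorem~\ref{br.thm}. Since $\ell_\tau$ is convex and $C^1$ with $\ell_\tau''(u) = \mathbf{1}(|u|\le\tau)$ almost everywhere, the integral form of Taylor's theorem yields, for $\bu = \hat{\btheta}_\tau - \btheta^*$,
\begin{align*}
\ell_\tau(\varepsilon_i - \bX_i^\T\bu) - \ell_\tau(\varepsilon_i) + \ell_\tau'(\varepsilon_i)\bX_i^\T\bu = (\bX_i^\T\bu)^2 \int_0^1 (1-s)\,\mathbf{1}(|\varepsilon_i - s\bX_i^\T\bu|\le\tau)\,ds .
\end{align*}
Summing over $i$ and writing $\bb = \bSigma^{1/2}\bu$, $\boldsymbol{\xi} = n^{-1/2}\sum_{i=1}^n \ell_\tau'(\varepsilon_i)\bZ_i$, and $\widehat{\bH} = \sum_{i=1}^n w_i\bZ_i\bZ_i^\T$ with weights $w_i = 2\int_0^1(1-s)\mathbf{1}(|\varepsilon_i - s\bX_i^\T\bu|\le\tau)\,ds \in [0,1]$ produces the exact identity
\begin{align*}
\cL_\tau(\btheta^*) - \cL_\tau(\hat{\btheta}_\tau) = \sqrt{n}\,\boldsymbol{\xi}^\T\bb - \tfrac{1}{2}\bb^\T\widehat{\bH}\bb .
\end{align*}
Throughout I would work on the intersection of three high-probability events: (i) the sub-Gaussian tail bound $\|\boldsymbol{\xi}\|_2 \lesssim v\sqrt{d+t}$, which already appears inside the proof of Theorem~\ref{br.thm}; (ii) the Bahadur bound $\|\bb - \boldsymbol{\xi}/\sqrt{n}\|_2 \le c_2 v(d+t)/n$ from that same theorem; and (iii) a matrix concentration estimate $\|\widehat{\bH}/n - \bI_d\|_2 \lesssim \sqrt{(d+t)/n}$.

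For \eqref{Wilks.expansion}, the algebraic identity $2\sqrt{n}\,\boldsymbol{\xi}^\T\bb - n\|\bb\|_2^2 = \|\boldsymbol{\xi}\|_2^2 - \|\boldsymbol{\xi} - \sqrt{n}\bb\|_2^2$ lets me rewrite
\begin{align*}
2\{\cL_\tau(\btheta^*) - \cL_\tau(\hat{\btheta}_\tau)\} - \|\boldsymbol{\xi}\|_2^2 = -\|\boldsymbol{\xi} - \sqrt{n}\bb\|_2^2 - \bb^\T(\widehat{\bH} - n\bI_d)\bb .
\end{align*}
The first right-hand term equals $n\|\bb - \boldsymbol{\xi}/\sqrt{n}\|_2^2 \lesssim v^2(d+t)^2/n$, which is dominated by $v^2(d+t)^{3/2}/\sqrt{n}$ because $n\ge c_3(d+t)$; the second is at most $\|\bb\|_2^2\cdot\|\widehat{\bH} - n\bI_d\|_2 \lesssim (v^2(d+t)/n)\cdot n\sqrt{(d+t)/n} = v^2(d+t)^{3/2}/\sqrt{n}$, where $\|\bb\|_2 \lesssim v\sqrt{(d+t)/n}$ follows from (i) and (ii). Dividing by $2$ gives \eqref{Wilks.expansion}.

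For \eqref{sqrt.Wilks.expansion}, the naive inequality $|\sqrt{x}-\sqrt{y}| \le \sqrt{|x-y|}$ would cost a factor $(n/(d+t))^{1/4}$, so I would route through the unconstrained maximum $g^\star = n\,\boldsymbol{\xi}^\T\widehat{\bH}^{-1}\boldsymbol{\xi}$ of the quadratic $g(\bb') = 2\sqrt{n}\,\boldsymbol{\xi}^\T\bb' - \bb'^\T\widehat{\bH}\bb'$, attained at $\bb^\star = \sqrt{n}\,\widehat{\bH}^{-1}\boldsymbol{\xi}$. With $A = \cL_\tau(\btheta^*) - \cL_\tau(\hat{\btheta}_\tau)$, the identities $g^\star - 2A = \|\widehat{\bH}^{1/2}(\bb - \bb^\star)\|_2^2$ and $g^\star \ge 2A \ge 0$ give $0 \le \sqrt{g^\star} - \sqrt{2A} \le \|\widehat{\bH}\|_2^{1/2}\,\|\bb - \bb^\star\|_2$. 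A Neumann decomposition yields $\bb - \bb^\star = (\bb - \boldsymbol{\xi}/\sqrt{n}) + n^{-1/2}(\bI_d - n\widehat{\bH}^{-1})\boldsymbol{\xi}$, whose norm is $\lesssim v(d+t)/n$ by (ii), (iii), and $\|\bI_d - n\widehat{\bH}^{-1}\|_2 \lesssim \|\widehat{\bH}/n - \bI_d\|_2$; combined with $\|\widehat{\bH}\|_2 \le 2n$ this gives $\sqrt{g^\star} - \sqrt{2A} \lesssim v(d+t)/\sqrt{n}$. A parallel calculation yields $\bigl|\sqrt{g^\star} - \|\boldsymbol{\xi}\|_2\bigr| = \bigl|\|(\widehat{\bH}/n)^{-1/2}\boldsymbol{\xi}\|_2 - \|\boldsymbol{\xi}\|_2\bigr| \le \|(\widehat{\bH}/n)^{-1/2} - \bI_d\|_2\,\|\boldsymbol{\xi}\|_2 \lesssim v(d+t)/\sqrt{n}$, and the triangle inequality finishes \eqref{sqrt.Wilks.expansion}.

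The main obstacle is establishing the matrix concentration (iii). Two features make it non-routine: the weights $w_i$ involve the random event $\{|\varepsilon_i - s\bX_i^\T\bu|\le\tau\}$, and $\bu$ itself is random. I would first reduce $\widehat{\bH}$ to its \emph{frozen} version $\widehat{\bH}_0 = \sum_i \mathbf{1}(|\varepsilon_i|\le\tau)\bZ_i\bZ_i^\T$ through a deterministic perturbation estimate that uses $\|\bu\|_2 \lesssim v\sqrt{(d+t)/n}$ and bounds the mass of observations with $|\varepsilon_i|$ near $\tau$ via Markov's inequality applied to $\upsilon_{2+\delta}$. Concentration of $\widehat{\bH}_0$ around $\EE\widehat{\bH}_0$ then follows from a truncation-plus-matrix-Bernstein argument of the same flavour as the gradient concentration that underpins Theorem~\ref{br.thm}, while $\|\EE\widehat{\bH}_0 - n\bI_d\|_2 \le n\sup_i\PP(|\varepsilon_i|>\tau) \lesssim n\upsilon_{2+\delta}/\tau^{2+\delta} = d+t$ by our choice of $\tau$, a bias that is absorbed into the stated tolerance.
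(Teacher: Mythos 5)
Your route is genuinely different from the paper's and, at the level of the algebra, it is sound. The paper never forms an empirical Hessian: it bounds the gradient-increment process $\bB(\btheta) = \bSigma^{-1/2}\{\nabla \overline\cL_\tau(\btheta)-\nabla\overline\cL_\tau(\btheta^*)\}-\bSigma^{1/2}(\btheta-\btheta^*)$ \emph{uniformly} over the ball $\Theta_0(r)$ via Spokoiny's exponential-moment chaining (the quantity $\Delta(r)$ in the proof of Theorem~\ref{br.thm}), and then writes the quadratic remainder $R(\btheta^*,\hat\btheta_\tau)$ in mean-value form, so that both \eqref{Wilks.expansion} and \eqref{sqrt.Wilks.expansion} fall out of $|R|\le 2n\|\bSigma^{1/2}(\btheta-\btheta')\|_2\,\Delta(r)$ together with the Bahadur bound. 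Your exact Taylor identity $\cL_\tau(\btheta^*)-\cL_\tau(\hat\btheta_\tau)=\sqrt{n}\,\bxi^\T\bSigma^{1/2}\bu-\tfrac12 (\bSigma^{1/2}\bu)^\T\wh{\bH}\,\bSigma^{1/2}\bu$, the completion-of-squares step for \eqref{Wilks.expansion}, and the detour through the unconstrained maximizer $\sqrt{n}\,\wh{\bH}^{-1}\bxi$ for \eqref{sqrt.Wilks.expansion} are all correct and cleanly avoid the lossy $|\sqrt{x}-\sqrt{y}|\le\sqrt{|x-y|}$ bound; the rate bookkeeping ($(d+t)^2/n\le (d+t)^{3/2}/\sqrt n$ under $n\gtrsim d+t$, etc.) checks out. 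What your approach buys is transparency: the two error sources (Bahadur remainder and Hessian fluctuation) are separated explicitly. What it costs is that the entire burden is shifted onto a single new estimate, your item (iii).

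That estimate is where the genuine gap sits. The matrix $\wh{\bH}=\sn w_i\bZ_i\bZ_i^\T$ has weights $w_i=2\int_0^1(1-s)I(|\varepsilon_i-s\bX_i^\T\bu|\le\tau)\,ds$ that depend on the \emph{data-dependent} vector $\bu=\hat\btheta_\tau-\btheta^*$, so $\|\wh{\bH}/n-\bI_d\|_2\lesssim\sqrt{(d+t)/n}$ is not a statement about a sum of independent matrices, and matrix Bernstein does not apply directly. Your proposed reduction to the frozen matrix $\wh{\bH}_0$ controls $\|\wh{\bH}-\wh{\bH}_0\|_2\le\sup_{\bv\in\mathbb{S}^{d-1}}\sum_{i\in\mathcal{B}}(\bv^\T\bZ_i)^2$ over the random index set $\mathcal{B}=\{i:\,\big||\varepsilon_i|-\tau\big|\le|\bX_i^\T\bu|\}$, and here two things must be handled: (a) $\mathcal{B}$ depends on $\bu$, so you either need a union over a net of admissible $\bu$ or a deterministic envelope $|\bX_i^\T\bu|\le\|\bZ_i\|_2\|\bSigma^{1/2}\bu\|_2$, and the crude envelope $\|\bZ_i\|_2\asymp\sqrt d$ inflates the window to $\sqrt{d}\cdot v\sqrt{(d+t)/n}$, which is \emph{not} guaranteed to be $o(\tau)$ under the stated scaling $n\ge c_3(d+t)$ when $\delta>0$; (b) even with $|\mathcal{B}|\lesssim d+t$, bounding $\sup_{\bv}\sum_{i\in\mathcal{B}}(\bv^\T\bZ_i)^2$ requires something like Cauchy--Schwarz against $M_{n,4}$, which you do not mention. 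None of this is fatal — it is essentially the content of the paper's Proposition~\ref{prop:RSC} and of the chaining bound for $\overline{\bB}(\bdelta)$, done uniformly in $\bdelta\in\mathbb{B}^d(r)$ precisely to sidestep the data-dependence of $\bu$ — but as written your step (iii) is an assertion, not a proof, and it is the only step that carries real difficulty. To close the gap, prove the Hessian (or gradient-increment) concentration uniformly over $\{\btheta:\|\bSigma^{1/2}(\btheta-\btheta^*)\|_2\le r\}$ rather than at the random point $\hat\btheta_\tau$, which is exactly the device the paper uses. A minor further point: your bias bound should read $\|\EE\wh{\bH}_0-n\bI_d\|_2\le n\sup_{\bv\in\mathbb{S}^{d-1}}\EE\{(\bv^\T\bZ)^2\PP(|\varepsilon|>\tau\mid\bX)\}\le n\upsilon_{2+\delta}\tau^{-(2+\delta)}$, not $n\,\PP(|\varepsilon|>\tau)$, since $\varepsilon$ and $\bX$ need not be independent; the conclusion is unchanged. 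Your final probability also comes out as $1-4e^{-t}$ rather than $1-3e^{-t}$ unless event (iii) is folded into the events already used for Theorem~\ref{br.thm}, which is cosmetic.
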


\begin{remark}[{\sf On the robustification parameter $\tau$}] \label{rmk.order}
{\rm
Going through the proofs of Theorems~\ref{br.thm} and \ref{wilks.thm}, we see that the robustification parameter $\tau$ can be chosen as
\begin{align}
	\tau = v \{ n/(d+t) \}^{\eta} ~~\mbox{ for any }   \eta \in [1/(2+\delta), 1/2] ~\mbox{ and }~ v\geq \upsilon_{2+\delta}^{1/(2+\delta)}, \label{new.tau}
\end{align}
such that the conclusions \eqref{concentration.MLE}--\eqref{sqrt.Wilks.expansion} hold as long as $n \gtrsim d+t $. This implies that the existence of higher moments of $\varepsilon$ increases the flexibility of choosing $\tau$, whose order ranges from $(\frac{n}{d+t})^{1/(2+\delta)}$ to $(\frac{n}{d+t})^{1/2}$. In practice,  $\upsilon_{2+\delta}$ is unknown and thus brings difficulty in calibrating $\tau$. Guided by the theoretical results, in Section \ref{sec:adap} we propose a data-dependent procedure to choose $\tau$.}
\end{remark}

\begin{remark}[{\sf Sample size scaling}] \label{rmk.scaling}
{\rm
The deviation inequalities in Theorems~\ref{br.thm} and \ref{wilks.thm} hold under the scaling condition $n\gtrsim  d+t $, indicating that as many as $ d+t$ samples are required to guarantee the finite sample properties of the estimator. Similar conditions are also imposed for Proposition~2.4 in \cite{C2012} and Theorem~3.1 in \cite{AC2011}. In particular if $\EE( \varepsilon^2 )<\infty$, taking $t=  \log n$ and $\tau \asymp(\frac{n}{d+t})^{1/2}$, the corresponding estimator $\hat{\btheta}_\tau$ satisfies
$$
	 \hat{\btheta}_\tau = \btheta^* + \frac{1}{n} \sn \ell'_\tau(\varepsilon_i) \bSigma^{-1} \bX_i  + O\{ n^{-1} (d+\log n) \}
$$
with probability at least $1- O(n^{-1})$ under the scaling $n\gtrsim d$. From an asymptotic viewpoint, this implies that if the dimension $d$, as a function of $n$, satisfies $d  = o(n)$ as $n\to \infty$,
then for any deterministic vector $\bu \in \RR^d$, the distribution of the linear contrast $\bu^\T(\hat{\btheta}_\tau - \btheta^*)$ coincides with that of $ (1/n)\sn \ell'_\tau(\varepsilon_i) \bu^\T \bSigma^{-1} \bX_i$ asymptotically.
%The scaling condition \eqref{scaling.cond} is similar to that in \cite{P1985}.
}
\end{remark}

\begin{remark}  \label{rmk.loss}
{\rm
To achieve sub-Gaussian behavior, the choice of loss function is not unique. An alternative loss function, which is obtained from the influence function proposed by \cite{CG2017}, is
\begin{align}   \label{catoni.loss}
	\rho_\tau(u) =
	\left\{\begin{array}{ll}
	  u^2/2 -  u^4/(24 \tau^2)  ,    & \mbox{if } | u | \leq  \sqrt{2} \, \tau ,  \\
	\frac{2\sqrt{2}}{3} \tau | u | -  \tau^2/2 ,   &  \mbox{if }  | u | > \sqrt{2} \, \tau .
	\end{array}  \right.
\end{align}
The function $\rho_\tau$ is convex, twice differentiable everywhere and  has bounded derivative that $|\rho_\tau'(u)|\leq (2\sqrt{2}/3 )\tau$ for all $u$.
By modifying the proofs of Theorems~\ref{br.thm} and \ref{wilks.thm}, it can be shown that the theoretical properties of the adaptive Huber estimator remain valid for the estimator that minimizes the empirical $\rho_\tau$-loss. Computationally, it can be solved via Newton's method.}
\end{remark}

\subsection{Multiplier bootstrap}
\label{sec.boot}

In this section, we go beyond estimation and focus on robust inference. According to \eqref{sqrt.Wilks.expansion}, the distribution of $2\{\cL_\tau(\btheta^*) - \cL_\tau(\hat{\btheta}_\tau)\}$ is close to that of $(1/n) \| \sn \xi_i  \bZ_i \|_2^2$ provided that $d^2/n$ is small, where $\xi_i = \ell_\tau'(\varepsilon_i)$. As we will see in the proof of Theorem~\ref{Boot.consistency} that, the truncated random variable $\xi_i$ has mean and variance {\it approximately} equal to $0$ and $\sigma^2$, respectively. Heuristically, the multivariate central limit theorem allows us to approximate the distribution of the normalized sum $n^{-1/2} \sn \xi_i  \bZ_i $ by $\mathcal{N}(\textbf{0}, \sigma^2 \bI_d)$. If this were true, then the distribution of $2\{ \cL_\tau (\btheta^*) - \cL_\tau(\hat{\btheta}_\tau) \}$ is close to the scaled chi-squared distribution $\sigma^2 \chi_d^2$ with $d$ degrees of freedom. This is in line with the asymptotic behavior of the likelihood ratio statistic that was studied in \cite{W1938}. With sample size sufficiently large, this result allows to construct confidence sets for $\btheta^*$ using quantiles of $\chi_d^2$: for any $\alpha \in (0,1)$,
\begin{align}
	\mathcal{C}^*_{\alpha}(\sigma) := \left\{ \btheta \in \RR^d: \cL_\tau(\btheta) - \cL_\tau(\hat{\btheta}_\tau) \leq  \sigma^2  \chi^2_{d, \alpha}/2 \right\} , \label{chi2.conf.set}
\end{align}
where $\chi^2_{d, \alpha}$ denotes the upper $\alpha$-quantile of $\chi_d^2$.
Estimating the residual variance $\sigma^2$ in the construction of $\mathcal{C}_\alpha^*(\sigma)$ is even more challenging when the errors are heavy-tailed. Moreover, as argued in \cite{SZ2015}, a possibly low speed of convergence of the likelihood ratio statistic makes the asymptotic Wilks' result hardly applicable to the case of small or moderate samples. Motivated by these two concerns, we have the following goal:
\begin{align}
&\mbox{propose a new method to construct confidence sets for $\btheta^*$ that is robust} \nn \\
 &\mbox{against heavy-tailed error distributions and performs well for a small or}  \nn \\
 &\mbox{moderate sample.} \nn
\end{align}

The results in Section~\ref{sec2.1} show that the adaptive Huber estimator provides a robust estimate of $\btheta^*$ in the sense that it admits sub-Gaussian-type deviations when the error distribution only has finite variance. To estimate the quantiles of the adaptive Huber estimator and to construct confidence set, we consider the use of multiplier bootstrap. Let $U_1, \ldots, U_n$ be independent and identically distributed (IID) random variables that are independent of the observed data $\mathcal{D}_n := \{ (Y_i, \bX_i) \}_{i=1}^n$ and satisfy
\begin{align}
	\EE(U_i) = 0, \quad \EE( U_i^2) =1 , \ \ i=1,\ldots , n. \label{weight.cond1}
\end{align}
With $W_i := 1 + U_i$ denoting the random weights, the bootstrap Huber loss and bootstrap Huber estimator are defined, respectively, as
\begin{align} \label{bootHuber.est}
	\mathcal{L}_\tau^\B(\btheta) = \sn W_i \, \ell_\tau(Y_i - \bX_i^\T \btheta) , \ \ \btheta \in \RR^d   \\
	\mbox{ and }~~	\hat{\btheta}^\B_\tau \in  \argmin_{ \btheta \in \RR^d: \| \btheta - \hat{\btheta}_\tau \|_2 \leq R } \mathcal{L}_\tau^\B(\btheta), \nn
\end{align}
where $R>0$ is a prespecified radius parameter. A simple observation is that $\EE^*\{\mathcal{L}_\tau^{\flat }(\btheta) \} = \cL_\tau(\btheta)$, where $\EE^*(\cdot) := \EE(\cdot\, |   \mathcal{D}_n )$ is the conditional expectation given the observed data $\mathcal{D}_n $. Therefore, $\hat{\btheta}_\tau \in \argmin_{\btheta \in \RR^d} \EE^*\{\mathcal{L}_\tau^\B(\btheta) \}$ and the difference $ \cL_\tau^\B(\hat{\btheta}_\tau) - \cL_\tau^{\flat}(\hat{\btheta}_\tau^{\flat })$ mimics $\cL_\tau(\btheta^*) - \cL_\tau(\hat{\btheta}_\tau)$.

\begin{algorithm}[!t]
    \caption{ {\small {\sf Huber Robust Confidence Set}}}
    \label{algo:huber_inference}
    {\textbf{Input:} Data $\{(Y_i, \bX_i)\}_{i=1}^n$, number of bootstrap samples $B$, Huber threshold $\tau$, radius parameter $R$, confidence level $1-\alpha$}
    \begin{algorithmic}[1]
      \STATE  Solve the Huber regression in \eqref{huber.est} and obtain $\hat{\btheta}_\tau$.
      \FOR{$b=1,2 \ldots, B$}
          \STATE Generate IID random weights $\{W_i\}_{i=1}^n$ satisfying $\EE(W_i)=1$ and $\var(W_i)=1$.
          \STATE Solve the weighted Huber regression in \eqref{bootHuber.est} and obtain the ``bootstrap'' Huber estimator. %$\hat{\btheta}^{(m)}_\tau$.
      \ENDFOR
      \STATE Define $\PP^*$ be the conditional probability over the random multipliers given the observed data $\mathcal{D}_n =\{ (Y_i, \bX_i) \}_{i=1}^n$, that is, $\PP^*(\cdot) = \PP(\cdot\,| \mathcal{D}_n)$.
      %Define $\mathbb{P}^{\dagger}$ to be the obtained \emph{empirical} bootstrap probability based on the $M$ bootstrap samples.
      \STATE {Compute the upper $\alpha$-quantile of
      $ \cL^\B_\tau( \hat{\btheta}_\tau )  - \cL^\B_\tau( \hat \btheta_\tau^\B )  $:
      \begin{equation}\label{eq:boot_th}
          z^\B_{ \alpha} =\inf \{z \geq 0: \mathbb{P}^{*}\{  \cL^\B_\tau( \hat{\btheta}_\tau )  - \cL^\B_\tau( \hat \btheta_\tau^\B )   > z\} \leq \alpha \}.\nn
      \end{equation}
      }
    \end{algorithmic}
   \hspace{-1cm} \textbf{Output:} A confidence set of  $\btheta^*$ given by
    $\mathcal{C}_\alpha := \{\btheta \in \RR^d :   \cL_\tau( \btheta)  - \cL_\tau( \hat \btheta_\tau )   \leq z^\B_{ \alpha}  \}$.
\end{algorithm}

Based on this idea, we propose a Huber regression based inference procedure in Algorithm \ref{algo:huber_inference}, where the bootstrap threshold $z^\B_{\alpha}= z^\B_\alpha(\mathcal{D}_n)$ approximates
\begin{align}
z_\alpha := \inf \{ z \geq 0 :   \mathbb{P} \{  \cL_\tau( \btheta^*)  - \cL_\tau( \hat \btheta_\tau )  > z  \} \leq \alpha  \}. \label{def:zalpha}
\end{align}
Here $\mathbb{P}$ is the probability measure with respect to the underlying data generating process.

%The  bootstrap threshold $z^\B_{\alpha}= z^\B_\alpha(\mathcal{D}_n)$ depends on the data, and hence is a random variable.

\subsection{Theoretical results}
\label{sec2.3}

In this section, we present detailed theoretical results for the bootstrap adaptive Huber estimator, including the deviation inequality, non-asymptotic Bahadur representation (Theorem~\ref{boot.concentration.thm}), and Wilks' expansions (Theorem~\ref{boot.wilks.thm}). Moreover, Theorems~\ref{Boot.consistency} and \ref{Boot.validity} establish the validity of the multiplier bootstrap for estimating quantiles of $\cL_\tau(\btheta^*)-\cL_\tau(\hat{\btheta}_\tau)$ when the variance $\sigma^2$ is unknown. Proofs of the finite sample properties of the bootstrap estimator require new techniques and are more involved than those of Theorems~\ref{br.thm} and \ref{wilks.thm}. We leave them to the supplemental material.

\begin{assumption} \label{weight.cond}
{\rm
$U_1,\ldots, U_n$ are IID from a random variable $U$ satisfying $\EE(U) = 0$, $\var(U) = 1$ and $\PP(|U|\geq t) \leq 2 \exp(-t^2/A_U^2)$ for all $t\geq 0$.}
\end{assumption}

\begin{theorem} \label{boot.concentration.thm}
Assume Condition~\ref{moment.cond} with $\delta=2$ and Condition~\ref{weight.cond} hold. For any $t>0$ and $v \geq \upsilon_4^{1/4}$, the estimator $\hat{\btheta}^\B_\tau$ with $\tau=v(\frac{n}{d+t})^{1/4}$ and $R\asymp v$ satisfies:
\begin{enumerate}
\item with probability (over $\mathcal{D}_n$) at least $1- 5 e^{-t}$,
\begin{align}
	\PP^*\{   \| \bSigma^{1/2}( \hat{\btheta}_\tau^\B - \btheta^*)  \|_2  \geq  c_1 v (d+t)^{1/2} n^{-1/2}   \} \leq  3 e^{-t} , \label{boot.est.deviation}
\end{align}

\item with probability (over $\mathcal{D}_n$) at least $1-  6 e^{-t}$,
\begin{align}
	\PP^*\Bigg\{   \bigg\| \bSigma^{1/2}( \hat{\btheta}_\tau^\B -  \hat \btheta_\tau )  - \frac{1}{n} \sn \ell_\tau'(\varepsilon_i)  U_i \bZ_i \bigg\|_2  \geq & \,c_2v \frac{d+t}{n}  \Bigg\}  \leq 4 e^{-t}   \label{boot.br}
\end{align}
\end{enumerate}
as long as $n \geq  \max\{ c_3\kappa_{\bSigma}(d+t) , c_4 (d  +t   )^2 \}$, where $c_1$--$c_3$ are positive constants depending on $(A_0, A_U)$, $c_4 = c_4(A_0)>0$  and $\kappa_{\bSigma} =  \overline \lambda_{\bSigma} / \underline \lambda_{\bSigma}$ is the condition number of $\bSigma$.
\end{theorem}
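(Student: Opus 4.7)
The plan follows the template used for Theorem~\ref{br.thm} --- establishing local restricted strong convexity (RSC) and combining it with a small ``anchor gradient'' bound --- but with an extra layer of randomness coming from the bootstrap multipliers. Because $W_i = 1 + U_i$ can be negative, $\cL_\tau^\B$ is not globally convex, which is why the minimization in \eqref{bootHuber.est} is restricted to $\{\btheta : \|\btheta - \hat{\btheta}_\tau\|_2 \leq R\}$. The key identity is that the original KKT condition $\sum_i \ell_\tau'(\hat{\varepsilon}_i)\bX_i = 0$, with $\hat{\varepsilon}_i := Y_i - \bX_i^\T \hat{\btheta}_\tau$, cancels the ``$1$''-part of $W_i$ and gives $\nabla \cL_\tau^\B(\hat{\btheta}_\tau) = -\sum_i U_i \ell_\tau'(\hat{\varepsilon}_i)\bX_i$. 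Choosing $R$ a suitable multiple of $v$, I would show a~posteriori that $\hat{\btheta}_\tau^\B$ lies strictly inside the ball, so that $\nabla \cL_\tau^\B(\hat{\btheta}_\tau^\B) = 0$.

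For \eqref{boot.est.deviation}, the first step is a local RSC: with $\PP$-probability at least $1 - O(e^{-t})$, and uniformly over $\btheta$ in the radius-$R$ ball, $\langle \nabla \cL_\tau^\B(\btheta) - \nabla \cL_\tau^\B(\hat{\btheta}_\tau), \btheta - \hat{\btheta}_\tau\rangle \gtrsim n \|\bSigma^{1/2}(\btheta - \hat{\btheta}_\tau)\|_2^2$ with high $\PP^*$-probability. Decompose the Bregman divergence into its conditional expectation over $\{U_i\}$ (the Bregman divergence of the un-weighted $\cL_\tau$, handled exactly as in Theorem~\ref{br.thm}) plus a multiplier perturbation of the form $\sum_i U_i\{\ell_\tau'(Y_i - \bX_i^\T\btheta) - \ell_\tau'(\hat{\varepsilon}_i)\}\bX_i^\T(\btheta - \hat{\btheta}_\tau)$; the latter is controlled uniformly using sub-Gaussianity of $U_i$, the $1$-Lipschitz property of $\ell_\tau'$, and a covering argument on the ball. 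Next, conditional on $\mathcal{D}_n$, $-\sum_i U_i \ell_\tau'(\hat{\varepsilon}_i)\bX_i/n$ is a sub-Gaussian vector; a sphere-covering Hoeffding bound together with the data-side estimate $(1/n)\sum_i \ell_\tau'(\hat{\varepsilon}_i)^2 \bX_i \bX_i^\T \preceq C\sigma^2 \bSigma$ yields a $\bSigma^{-1/2}$-norm bound of order $v\sqrt{(d+t)/n}$. Combining with RSC via the standard local convex-analysis argument gives \eqref{boot.est.deviation} and, in turn, the interior optimality.

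For \eqref{boot.br}, an integrated Taylor expansion of $\nabla \cL_\tau^\B$ at $\hat{\btheta}_\tau$ together with $\nabla \cL_\tau^\B(\hat{\btheta}_\tau^\B) = 0$ gives
$$
\bar{\bH}^\B \, (\hat{\btheta}_\tau^\B - \hat{\btheta}_\tau) \,=\, \frac{1}{n}\sum_{i=1}^n U_i \ell_\tau'(\hat{\varepsilon}_i)\bX_i,
$$
where $\bar{\bH}^\B = (1/n)\sum_i W_i \int_0^1 \mathbf{1}\{|Y_i - \bX_i^\T \btheta_s| \leq \tau\}\,ds \cdot \bX_i \bX_i^\T$ with $\btheta_s := \hat{\btheta}_\tau + s(\hat{\btheta}_\tau^\B - \hat{\btheta}_\tau)$. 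I would then show $\bSigma^{-1/2}\bar{\bH}^\B \bSigma^{-1/2} = \bI_d + o_{\PP}(1)$ in operator norm via a three-part decomposition: a deterministic truncation bias $\PP(|\varepsilon| > \tau) \lesssim \upsilon_4/\tau^4$, a data fluctuation of order $\sqrt{(d+t)/n}$ handled through sub-Gaussian concentration of $\bZ_i \bZ_i^\T$, and a multiplier fluctuation $(1/n)\sum_i U_i \mathbf{1}\{\cdot\}\bZ_i\bZ_i^\T$ treated with sub-Gaussianity of $U_i$ together with a symmetrization and covering argument uniform in $s \in [0,1]$ and $\bu \in \mathbb{S}^{d-1}$. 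Finally, swap $\ell_\tau'(\hat{\varepsilon}_i)$ for $\ell_\tau'(\varepsilon_i)$ using $|\ell_\tau'(\hat{\varepsilon}_i) - \ell_\tau'(\varepsilon_i)| \leq |\bX_i^\T(\hat{\btheta}_\tau - \btheta^*)|$; combined with Theorem~\ref{br.thm} and a concentration bound on $\|(1/n)\sum_i U_i \bZ_i \bX_i^\T\|$, this produces the claimed $v(d+t)/n$ remainder.

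The main obstacle is the uniform-in-$s$ control of the multiplier-perturbed Hessian $\bar{\bH}^\B$: the discontinuous indicators $\mathbf{1}\{|Y_i - \bX_i^\T\btheta_s| \leq \tau\}$ make a naive chaining delicate, and the two layers of randomness (over $\mathcal{D}_n$ and over $\{U_i\}$) must be disentangled with some care, particularly because $W_i$ is not guaranteed to be nonnegative. The sample-size requirement $n \gtrsim (d+t)^2$ is precisely what forces the truncation bias $\upsilon_4/\tau^4$ (with $\tau \asymp v\{n/(d+t)\}^{1/4}$) to be of order $(d+t)/n$, matching the target Bahadur remainder.
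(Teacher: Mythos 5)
Your overall architecture matches the paper's: both rest on (i) a localization of $\hat{\btheta}_\tau^\B$ that survives the non-convexity of the weighted loss, (ii) the cancellation $\nabla \cL_\tau^\B(\hat{\btheta}_\tau) = -\sum_i U_i \ell_\tau'(\hat{\varepsilon}_i)\bX_i$ from the first-order condition for $\hat{\btheta}_\tau$, and (iii) uniform control of a linearization remainder followed by swapping $\ell_\tau'(\hat{\varepsilon}_i)$ for $\ell_\tau'(\varepsilon_i)$. The organization differs in one place that matters. For \eqref{boot.est.deviation} the paper never establishes RSC for $\cL_\tau^\B$; it lower-bounds the zeroth-order difference $\cL_\tau^\B(\btheta)-\cL_\tau^\B(\hat{\btheta}_\tau)$ directly on the shells $\partial\Theta_0(r)$, $r\in[r_2, r_2+R_1]$, centered at $\btheta^*$, and concludes that the constrained minimizer cannot leave $\Theta_0(r_2)$. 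Your RSC route can be made to work, but the phrase ``standard local convex-analysis argument'' hides the delicate point: with possibly negative $W_i$ you cannot use the gradient-monotonicity interpolation from the proof of Theorem~\ref{br.thm}, and arguing from $\nabla\cL_\tau^\B(\hat{\btheta}_\tau^\B)=\mathbf{0}$ is circular until interiority is proved. The fix is to establish your RSC inequality uniformly over the whole constraint ball and integrate it along segments emanating from $\hat{\btheta}_\tau$, which produces exactly the paper's shell bound $\cL_\tau^\B(\btheta)-\cL_\tau^\B(\hat{\btheta}_\tau) \geq -\|\bSigma^{-1/2}\nabla\cL_\tau^\B(\hat{\btheta}_\tau)\|_2\,\rho + c\,n\rho^2/2$ with $\rho = \|\bSigma^{1/2}(\btheta-\hat{\btheta}_\tau)\|_2$; so the two arguments are the same modulo one integration. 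For \eqref{boot.br}, your integrated-Hessian identity is precisely the paper's remainder process $\bB^\B(\btheta,\btheta')$, your three-part split (truncation bias, data fluctuation, multiplier fluctuation) mirrors the paper's decomposition into $\EE^*\{\bB^\B\}$ and the centered part $\overline{\bB}^\B$, and the uniform-in-$s$ control of the discontinuous indicators is handled in the paper by conditional exponential-moment bounds for the gradient of the process combined with a generic chaining theorem, which is the concrete tool your sketch needs. One correction: the requirement $n\gtrsim (d+t)^2$ does not come from the truncation bias, since $\upsilon_4/\tau^4 \leq (d+t)/n$ automatically once $v^4\geq\upsilon_4$; it comes from concentrating the empirical fourth moment $M_{n,4}=\sup_{\bu\in\mathbb{S}^{d-1}}(1/n)\sum_i(\bu^\T\bZ_i)^4$, whose deviation bound carries a $(d+t)^2/n$ term, and $M_{n,4}$ is the variance proxy in every conditional sub-Gaussian bound over the multipliers.
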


%\begin{remark}\label{rem:tau}
%{\rm
%{Although the robustification parameter $\tau$ depends on the unknown moment parameter in Theorem \ref{boot.concentration.thm} and the following results, we will provide data-dependent methods in Section \ref{sec:adap} for choosing $\tau$.} Moreover, we note that the choice of $R = v$ in Theorem~\ref{boot.concentration.thm} is not essential. In fact, we may also choose $R=c_0 v$ for some numerical constant $c_0 \geq 1$, which only affects the scaling condition.}
%\end{remark}

The following theorem is a bootstrap version of Theorem~\ref{wilks.thm}. Define the random process
\begin{align}
	\bxi^\B(\btheta ) = \bSigma^{-1/2} \{ \nabla \cL_\tau^\B(\btheta) - \nabla \EE^* \cL_\tau^\B(\btheta) \} , \ \ \btheta \in \RR^d. \label{xiB.def}
\end{align}
From \eqref{Zi.def} and \eqref{weight.cond1} we see that
$$
	\bxi^\B(\btheta )  = \bSigma^{-1/2}   \{ \nabla \cL_\tau^\B(\btheta) - \nabla  \cL_\tau(\btheta)   \}  = - \sn \ell'_\tau(Y_i - \bX_i^\T \btheta) U_i \bZ_i , \ \  \btheta \in \RR^d.
$$
In particular, write $\bxi^\B =	\bxi^\B(\btheta^*) =  - \sn \ell'_\tau(\varepsilon_i) U_i \bZ_i$.

\begin{theorem} \label{boot.wilks.thm}
Assume Condition~\ref{moment.cond} with $\delta=2$ and Condition~\ref{weight.cond} hold. For any $t>0$ and $v \geq \upsilon_4^{1/4}$, the bootstrap estimator $\hat{\btheta}^\B_\tau$ with $\tau=v (\frac{n}{d+t})^{1/4}$ and $R \asymp v$ satisfies that, with probability (over $\mathcal{D}_n$) at least $1- 5 e^{-t}$,
\begin{align}
  \PP^*\Bigg[   \bigg|  \cL^\B_\tau( \hat{\btheta}_\tau )  - \cL^\B_\tau( \hat \btheta_\tau^\B ) -  \frac{\|  \bxi^\B \|_2^2 }{2n}  \bigg| \geq   c_5   v^2   & \frac{(d+t)^{3/2}}{\sqrt{n}}     \Bigg] \leq  4 e^{-t}  \label{boot.wilks}
\end{align}
and
\begin{align}
 \PP^*\Bigg[ \bigg|  \sqrt{ 2 \{ \cL^\B_\tau( \hat{\btheta}_\tau )  - \cL^\B_\tau( \hat \btheta_\tau^\B )   \} } - \frac{\| \bxi^\B  \|_2}{\sqrt{n}}     \bigg|  \geq  c_6 v  & \frac{d+t}{\sqrt{n}}  \Bigg]   \leq  4 e^{-t} \label{boot.sqwilks}
\end{align}
as long as $n \geq  \max\{ c_3 \kappa_{\bSigma} (d+t) , c_4 (d  +t   )^2 \}$, where $c_5, c_6 >0$ are constants depending only on $(A_0, A_U)$.
\end{theorem}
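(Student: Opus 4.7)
The strategy mirrors the proof of Theorem~\ref{wilks.thm}, but all bounds must now be stated conditionally on the data $\mathcal{D}_n$ and we must keep careful track of which randomness governs which tail bound. Set $\hat{\bdelta}^\B = \hat{\btheta}^\B_\tau - \hat{\btheta}_\tau$. The plan is to perform a Taylor-type expansion of $\cL^\B_\tau$ about $\hat{\btheta}_\tau$, substitute the bootstrap Bahadur representation \eqref{boot.br}, and then show that the surviving quadratic term equals $\|\bxi^\B\|_2^2/(2n)$ up to the claimed remainder.

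First I would exploit the piecewise-quadratic structure of $\ell_\tau$ (with generalized second derivative $\ell''_\tau(u)=\mathbbm{1}(|u|\le\tau)$) to write
\[
\cL^\B_\tau(\hat{\btheta}^\B_\tau) - \cL^\B_\tau(\hat{\btheta}_\tau) = \langle \nabla \cL^\B_\tau(\hat{\btheta}_\tau),\, \hat{\bdelta}^\B\rangle + \tfrac{1}{2}(\hat{\bdelta}^\B)^\T \bH^\B_n\, \hat{\bdelta}^\B + R^\B_n,
\]
where $\bH^\B_n = \sn W_i\,\mathbbm{1}(|Y_i-\bX_i^\T\hat{\btheta}_\tau|\le\tau)\bX_i\bX_i^\T$ is the (a.e.) Hessian and $R^\B_n$ is a boundary remainder that I would control by the fundamental theorem of calculus applied to $\ell'_\tau$, using the Lipschitz continuity of $\ell'_\tau$ and the deviation bound $\|\hat{\bdelta}^\B\|_2 \lesssim v\sqrt{(d+t)/n}$ from \eqref{boot.est.deviation}. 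Since the first-order condition at $\hat{\btheta}_\tau$ in the data world kills the unit part of $W_i=1+U_i$, we have $\nabla\cL^\B_\tau(\hat{\btheta}_\tau) = -\sn U_i\,\ell'_\tau(Y_i-\bX_i^\T\hat{\btheta}_\tau)\bX_i$, which by Lipschitzness of $\ell'_\tau$ equals $\bSigma^{1/2}\bxi^\B$ up to an error governed by $\|\hat{\btheta}_\tau-\btheta^*\|_{\bSigma}$ from Theorem~\ref{br.thm}.

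Second, I would replace $\bH^\B_n$ by $n\bSigma$ in operator norm. This decomposes into (i) a multiplier concentration $\bSigma^{-1/2}(\bH^\B_n-\EE^*\bH^\B_n)\bSigma^{-1/2}$ handled by a conditional matrix Bernstein inequality using Condition~\ref{weight.cond}, and (ii) deterministic concentration of $\sn \mathbbm{1}(|\varepsilon_i|\le\tau)\bZ_i\bZ_i^\T$ around $n\bI_d$, where the truncation bias is $O(n\tau^{-\delta}\upsilon_{2+\delta})$ as in the proof of Theorem~\ref{wilks.thm}. Combining these steps with the Bahadur expansion $\bSigma^{1/2}\hat{\bdelta}^\B = -n^{-1}\bxi^\B + O(v(d+t)/n)$ gives
\[
\langle \nabla\cL^\B_\tau(\hat{\btheta}_\tau),\hat{\bdelta}^\B\rangle \approx -\tfrac{1}{n}\|\bxi^\B\|_2^2, \qquad (\hat{\bdelta}^\B)^\T\bH^\B_n\hat{\bdelta}^\B \approx \tfrac{1}{n}\|\bxi^\B\|_2^2,
\]
so that the sum reduces to $-\|\bxi^\B\|_2^2/(2n)$ plus a cross-term of order $v\cdot\|\bxi^\B\|_2\cdot(d+t)/n$; using the high-probability bound $\|\bxi^\B\|_2 \lesssim v\sqrt{n(d+t)}$ (which itself requires a Bernstein inequality for $-\sn U_i\ell'_\tau(\varepsilon_i)\bZ_i$ with $|\ell'_\tau|\le\tau$), this yields the rate $v^2(d+t)^{3/2}/\sqrt{n}$ claimed in \eqref{boot.wilks}. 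For \eqref{boot.sqwilks} I would use the factorization $|\sqrt{a}-\sqrt{b}| = |a-b|/(\sqrt{a}+\sqrt{b})$ with $a = 2\{\cL^\B_\tau(\hat{\btheta}_\tau)-\cL^\B_\tau(\hat{\btheta}^\B_\tau)\}$ and $b=\|\bxi^\B\|_2^2/n$; the lower bound $\sqrt{a}+\sqrt{b} \gtrsim v\sqrt{d+t}$ on a high-probability event improves the rate by a factor of $\sqrt{d+t}$, recovering $v(d+t)/\sqrt{n}$.

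The main obstacle is the Hessian step: the indicator $\mathbbm{1}(|Y_i-\bX_i^\T\hat{\btheta}_\tau|\le\tau)$ is neither Lipschitz in $\btheta$ nor independent of $\hat{\btheta}_\tau$, so I cannot apply a conditional Bernstein argument directly to $\bH^\B_n$. I would address this by first passing to the population event $\{|\varepsilon_i|\le\tau/2\}$ after peeling off the region where $|\bX_i^\T(\hat{\btheta}_\tau-\btheta^*)|>\tau/2$, which has negligible contribution because $\|\hat{\btheta}_\tau-\btheta^*\|_{\bSigma} \lesssim v\sqrt{(d+t)/n}$ and $\tau \asymp v(n/(d+t))^{1/4}$. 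The combination of the multiplier randomness (sub-Gaussian) with the $\bX_i\bX_i^\T$ factors (sub-exponential) is precisely what forces the sample-size scaling $n\gtrsim(d+t)^2$ in the theorem, and tracking constants through this step is the most delicate part of the argument.
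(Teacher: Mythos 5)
Your overall architecture---a second-order expansion of $\cL^\B_\tau$ about $\hat\btheta_\tau$, substitution of the bootstrap Bahadur representation, and concentration of the resulting quadratic form---matches the paper's in spirit, but two of your steps contain genuine gaps. The clearest one is in your derivation of \eqref{boot.sqwilks}: writing $|\sqrt a-\sqrt b|=|a-b|/(\sqrt a+\sqrt b)$ with $b=\|\bxi^\B\|_2^2/n$ requires the lower bound $\sqrt a+\sqrt b\gtrsim v\sqrt{d+t}$ on an event of conditional probability $1-O(e^{-t})$, and no such bound is available uniformly in $t$. The conditional mean of $\|\bxi^\B\|_2^2/n$ is only of order $d\sigma_\tau^2$, so the best lower bound one can hope for is of order $\sigma\sqrt d$, and even that fails only with probability $e^{-cd}$, which exceeds $e^{-t}$ once $t\gg d$; for such $t$ your denominator cannot be controlled at the claimed level. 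The paper sidesteps this entirely: by the mean value theorem the remainder $R^\B(\hat\btheta_\tau,\hat\btheta^\B_\tau)$ in \eqref{R.randomprocess} is itself of the form $(\hat\btheta_\tau-\hat\btheta^\B_\tau)^\T\nabla_{\btheta}R^\B(\wt\btheta,\hat\btheta^\B_\tau)$, so dividing by $\sqrt n\,\|\bSigma^{1/2}(\hat\btheta_\tau-\hat\btheta^\B_\tau)\|_2$ cancels the increment exactly and leaves $\sup\|\bG^\B\|_2$, with no lower bound on $\|\hat\btheta_\tau-\hat\btheta^\B_\tau\|_2$ or on $\|\bxi^\B\|_2$ ever needed.

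The second gap is your replacement of $\nabla\cL^\B_\tau(\hat\btheta_\tau)=-\sn U_i\,\ell'_\tau(Y_i-\bX_i^\T\hat\btheta_\tau)\bX_i$ by $\bSigma^{1/2}\bxi^\B$ ``by Lipschitzness of $\ell'_\tau$.'' A pointwise Lipschitz estimate gives $\sn|U_i|\,|\bX_i^\T(\hat\btheta_\tau-\btheta^*)|\,\|\bZ_i\|_2\asymp v\sqrt{nd(d+t)}$, which is far too large; the needed cancellation comes from the $U_i$ being conditionally centered, so one must bound $\sup_{\btheta\in\Theta_0(r_1)}\|\bxi^\B(\btheta)-\bxi^\B(\btheta^*)\|_2$ as a centered multiplier process (the paper's Lemma~\ref{lem.bootfluctuation}, which yields the order $v(d+t)$ by a chaining bound on exponential moments of its directional derivatives). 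The same observation is what makes your Hessian step unnecessary: rather than proving matrix concentration for $\bH^\B_n$ with its data-dependent, non-Lipschitz indicators (your peeling sketch would additionally require controlling how many indices change truncation status between $\btheta^*$ and $\hat\btheta_\tau$, which you do not supply), the paper works directly with the gradient increment $\bSigma^{-1/2}\{\nabla\cL^\B_\tau(\btheta)-\nabla\cL^\B_\tau(\btheta')\}-n\bSigma^{1/2}(\btheta-\btheta')$ and controls its supremum through exponential moments of the almost-everywhere derivative, so the indicator never has to be smoothed or peeled. Your identification of the source of the $n\gtrsim(d+t)^2$ scaling (fourth moments of $\bu^\T\bZ_i$ entering through $M_{n,4}$) is correct.
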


The results \eqref{boot.wilks} and \eqref{boot.sqwilks} are non-asymptotic bootstrap versions of the Wilks' and square-root Wilks' phenomena. In particular, the latter indicates that the square-root excess
$
 \sqrt{ 2 \{ \cL^\B_\tau( \hat{\btheta}_\tau )  - \cL^\B_\tau( \hat \btheta_\tau^\B )   \} }
$
is close to $n^{-1/2} \|   \bxi^\B  \|_2$ with high probability as long as the dimension $d$ of the parameter
space satisfies the condition that $d^2/n$ is small.

\begin{remark}[{\sf Order of robustification parameter}] \label{rmk.boot.order}
{\rm
Similar to Remark~\ref{new.tau}, now with finite fourth moment $\upsilon_4$, the robustification parameter in Theorems~\ref{boot.concentration.thm} and \ref{boot.wilks.thm} can be chosen as
\begin{align}
	\tau = v  \{ n/(d+t) \}^{\eta} ~~\mbox{ for any } \eta \in [1/4 , 1/2) ~\mbox{ and }~ v\geq \upsilon_4^{1/4}, \label{new.boot.tau}
\end{align}
such that the same conclusions remain valid. Due to Lemma~\ref{cov.concentration} in the supplemental material, here we require $\eta$ to be strictly less than $1/2$. }
\end{remark}

The next result validates the approximation of the distribution of $\cL_\tau( \btheta^* )  - \cL_\tau( \hat \btheta_\tau  )$ by that of $\cL^\B_\tau( \hat{\btheta}_\tau )  - \cL^\B_\tau( \hat \btheta_\tau^\B )$ in the Kolmogorov distance. Recall that $\PP^*(\cdot) = \PP(\cdot \, | \mathcal{D}_n)$ denotes the conditional probability given the observed data $\mathcal{D}_n = \{ (Y_i, \bX_i) \}_{i=1}^n$.

\begin{theorem}
 \label{Boot.consistency}
Suppose Assumption~\ref{moment.cond} holds with $\delta=2$ and Condition~\ref{weight.cond} holds with $U \sim \mathcal{N}(0,1)$. For any $t>0$ and $v \geq \upsilon_4^{1/4}$, let $\tau=v (\frac{n}{d+t})^{\eta}$ for some $\eta \in [1/4, 1/2)$. Then, with probability (over $\mathcal{D}_n$) at least $1- 6 e^{-t}$, it holds for any $z \geq 0$ that
\begin{align}
 | \PP \{  \cL_\tau(\btheta^*) - \cL_\tau(\hat{\btheta}_\tau) \leq z \} -
	\PP^* \{  \cL_\tau^\B(\hat{\btheta}_\tau) - \cL_\tau^\B(\hat{\btheta}^\B_\tau) \leq z  \}  | \leq \Delta_1(n,d,t) , \label{Boot.unif.bound}
\end{align}
where
$$
	\Delta_1(n,d,t)= C  \{  d^{3/2} n^{-1/2}   +   d^{1/2} \{(d+t)/n\}^{1-2\eta}  + (d+t)^{3\eta} n^{1/2-3\eta} \}   + 7 e^{-t}
$$	
with $C = C(A_0, \sigma, \upsilon_4, v)>0$.
\end{theorem}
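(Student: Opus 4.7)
The plan is to chain three approximations. Define $\bxi = \sum_{i=1}^n\ell'_\tau(\varepsilon_i)\bZ_i$, so $\bxi^\B$ from \eqref{xiB.def} equals $-\sum_i\ell'_\tau(\varepsilon_i)U_i\bZ_i$. The square-root Wilks bounds \eqref{sqrt.Wilks.expansion} and \eqref{boot.sqwilks} imply that on events of probability (resp.\ $\PP^*$-probability) at least $1-3e^{-t}$ and $1-4e^{-t}$,
\[
\Bigl|\sqrt{2\{\cL_\tau(\btheta^*)-\cL_\tau(\hat\btheta_\tau)\}}-\tfrac{\|\bxi\|_2}{\sqrt n}\Bigr|\;\vee\;\Bigl|\sqrt{2\{\cL^\B_\tau(\hat\btheta_\tau)-\cL^\B_\tau(\hat\btheta^\B_\tau)\}}-\tfrac{\|\bxi^\B\|_2}{\sqrt n}\Bigr|\le Cv\tfrac{d+t}{\sqrt n}.
\]
Under the moment and eigenvalue assumptions the density of the limiting $\|\mathcal N(\bzero,\bSigma_\tau)\|_2$, with $\bSigma_\tau=\EE[\ell'_\tau(\varepsilon)^2\bZ\bZ^\T]$, is uniformly bounded, so Gaussian anti-concentration converts these pointwise Wilks errors into Kolmogorov-distance penalties and reduces the problem to bounding
$
\sup_{z\ge 0}\bigl|\PP(n^{-1/2}\|\bxi\|_2\le z)-\PP^*(n^{-1/2}\|\bxi^\B\|_2\le z)\bigr|.
$

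For the bootstrap side, the choice $U_i\sim\mathcal N(0,1)$ independent of $\mathcal D_n$ means that conditionally on $\mathcal D_n$ one has $\bxi^\B/\sqrt n\sim\mathcal N(\bzero,\widehat\bSigma)$ \emph{exactly}, with $\widehat\bSigma=n^{-1}\sum_i\ell'_\tau(\varepsilon_i)^2\bZ_i\bZ_i^\T$. Matrix Bernstein (truncated, using $|\ell'_\tau|\le\tau$ together with sub-Gaussian $\bZ_i$) yields operator-norm concentration of $\widehat\bSigma$ around $\bSigma_\tau$ on a $1-e^{-t}$ event. Gaussian comparison of $\ell_2$-norms — couple $\mathcal N(\bzero,\widehat\bSigma)$ to $\mathcal N(\bzero,\bSigma_\tau)$ through the symmetric square root and use the bounded density of $\|\mathcal N(\bzero,\bSigma_\tau)\|_2$ — then transfers the conditional law of $n^{-1/2}\|\bxi^\B\|_2$ to that of $\|\mathcal N(\bzero,\bSigma_\tau)\|_2$, contributing the middle term $d^{1/2}\{(d+t)/n\}^{1-2\eta}$ in $\Delta_1$ after substituting $\tau=v(n/(d+t))^\eta$.

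On the non-bootstrap side I invoke a high-dimensional Berry--Esseen theorem on Euclidean balls — Bentkus' bound, or the quadratic-form Gaussian approximation used in \cite{SZ2015} — applied to the IID summand $\ell'_\tau(\varepsilon_i)\bZ_i$. The centering drift $\bmu_\tau:=\sqrt n\,\EE[\ell'_\tau(\varepsilon)\bZ]$ is controlled through $|\EE[\ell'_\tau(\varepsilon)-\varepsilon\mid\bX]|\le\upsilon_4/\tau^3$ (using $\EE[\varepsilon\mid\bX]=0$ and Markov with $\EE[\varepsilon^4\mid\bX]\le\upsilon_4$), so $\|\bmu_\tau\|_2\lesssim\sqrt{nd}\,\tau^{-3}$ and this contribution is absorbed into the middle term. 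The remaining CLT cost scales like the third-moment ratio, $\EE|\ell'_\tau(\varepsilon)|^3\,\EE\|\bZ\|_2^3\lesssim\tau d^{3/2}$ divided by $\sqrt n$, producing both the dimensional $d^{3/2}n^{-1/2}$ and the $\tau$-dependent $(d+t)^{3\eta}n^{1/2-3\eta}$ summands of $\Delta_1$. Combined with the $3e^{-t}+4e^{-t}+e^{-t}$ bad events of the Wilks and covariance steps, this yields the claimed $7e^{-t}$ add-on.

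The main obstacle is the non-bootstrap step: a dimension-sharp Berry--Esseen bound in the Euclidean-ball metric for sums of truncated sub-Gaussian vectors with a growing truncation level $\tau$. Existing high-dimensional CLTs (Bentkus, G\"otze, or the Lindeberg-swapping argument in \cite{SZ2015}) supply various polynomial-in-$d$ rates, and the price is a $\tau$-dependent third moment; tracking this dependence is precisely what limits the admissible range in Remark~\ref{rmk.boot.order} to $\eta\in[1/4,1/2)$. At $\eta=1/2$ the middle $d^{1/2}\{(d+t)/n\}^{1-2\eta}$ term no longer vanishes, because the covariance concentration of $\widehat\bSigma$ loses its leading $\tau^{-2}$ safety margin.
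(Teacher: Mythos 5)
Your proposal follows essentially the same route as the paper's proof: a square-root Wilks reduction to comparing $\|\bxi^*\|_2/\sqrt n$ with $\|\bxi^\B\|_2/\sqrt n$, exact conditional Gaussianity of $\bxi^\B$ (since $U\sim\mathcal N(0,1)$) combined with operator-norm concentration of the empirical covariance and a Gaussian comparison lemma, a Bentkus-type CLT on Euclidean balls for the data-side score with a bias correction, and anti-concentration of the Gaussian norm to absorb the Wilks remainders. One bookkeeping correction: the term $(d+t)^{3\eta}n^{1/2-3\eta}$ comes from the drift $\|\sqrt n\,\EE\{\psi_\tau(\varepsilon)\bZ\}\|_2\le \upsilon_4\tau^{-3}\sqrt n$ (it is not absorbed into the middle term, and it does not come from the CLT), while the CLT contributes only $d^{3/2}n^{-1/2}$ because $\EE|\psi_\tau(\varepsilon)|^3\le\upsilon_3$ is bounded under the fourth-moment assumption; your estimate $\EE|\psi_\tau(\varepsilon)|^3\lesssim\tau$ would instead yield the larger quantity $\tau d^{3/2}n^{-1/2}$ and would not recover the stated $\Delta_1$.
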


Theorem~\ref{Boot.consistency} is in parallel with and can be viewed as a partial extension of Theorem~2.1 in \cite{SZ2015} to the case of heavy-tailed errors. In particular, taking $\eta =1/4$ in Theorem~\ref{Boot.consistency}  we see that the error term scales as $(d^3/n)^{1/4}$, while in \cite{SZ2015} it is of order $(d^3/n)^{1/8}$. The difference is due to the fact that the latter allows misspecified models as discussed in Remark~A.2 therein. In some way, allowing asymmetric and heavy-tailed errors can be regarded as a particular form of misspecification, considering that the OLS is the maximum likelihood estimator at the normal model.

\begin{remark}[{\sf Asymptotic result}] \label{rmk1}
{\rm
To make asymptotic statements, we assume $n\to \infty$ with an understanding that $d = d(n)$ depends on $n$ and possibly $d \to \infty$ as $n\to \infty$. Theorem~\ref{Boot.consistency} can be used to show the bootstrap consistency, where the notion of consistency is the one that guarantees asymptotically valid inference. Specifically, it shows that when the dimension $d$, as a function of $n$, satisfies $d=o(n^{1/3})$, then with $\tau \asymp (\frac{n}{d+\log n})^{\eta}$ for some $\eta \in [1/4, 1/2)$, it holds
$$
	\sup_{z\geq 0}  | \PP\{  \cL_\tau(\btheta^*) - \cL_\tau(\hat{\btheta}_\tau) \leq z \} -
	\PP^*\{  \cL_\tau^\B(\hat{\btheta}_\tau) - \cL_\tau^\B(\hat{\btheta}^\B_\tau) \leq z   \} | = o_{\PP}(1)
$$
as $n\to \infty$.
}
\end{remark}

For any $\alpha \in (0,1)$, let
\begin{align}
	z_\alpha^\B := \inf \{ z \geq 0: \PP^*\{  \cL_\tau^\B(\hat{\btheta}_\tau) - \cL_\tau^\B(\hat{\btheta}^\B_\tau) >z \} \leq \alpha \}  \label{def:z*}
\end{align}
be the upper $\alpha$-quantile of $\cL_\tau^\B(\hat{\btheta}_\tau) - \cL_\tau^\B(\hat{\btheta}^\B_\tau) $ under $\PP^*$, which serves as an approximate to the target value $z_\alpha$ given in \eqref{def:zalpha}. As a direct consequence of Theorem~\ref{Boot.consistency}, the following result formally establishes the validity of the multiplier bootstrap for adaptive Huber regression with heavy-tailed error.

\begin{theorem}[{\sf Validity of multiplier bootstrap}] \label{Boot.validity}
Assume the conditions of Theorem~\ref{Boot.consistency} hold and take $\eta = 1/4$. Then, for any $\alpha \in (0,1)$,
\begin{align}
| \PP \{ \cL_\tau(\btheta^*) - \cL_\tau(\hat{\btheta}_\tau) > z^\B_\alpha\}  - \alpha  |  \leq \Delta_2(n,d, t) ,   \label{Boot.bound}
\end{align}
where $\Delta_2(n,d,t) = C \{ (d+t)^3/n \}^{1/4}+   16 e^{-t} $, where $C = C(A_0, \sigma, \upsilon_4, v)>0$. In particular, taking $\tau \asymp  (\frac{n}{d+\log n})^{1/4}$, it holds
\begin{align}
	\sup_{\alpha \in (0,1)} | \PP \{ \cL_\tau(\btheta^*) - \cL_\tau(\hat{\btheta}_\tau) > z^\B_\alpha \}  - \alpha | = o(1) \nn
\end{align}
provided that $d=d(n)$ satisfies $d= o(n^{1/3})$ as $n\to \infty$.
\end{theorem}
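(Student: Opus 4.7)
The plan is to derive Theorem \ref{Boot.validity} as a direct corollary of Theorem \ref{Boot.consistency} via a quantile-sandwich argument. Write $\zeta := \cL_\tau(\btheta^*) - \cL_\tau(\hat\btheta_\tau)$ with deterministic CDF $F(z) := \PP\{\zeta \leq z\}$, and let $F^*(z) := \PP^*\{\cL_\tau^\B(\hat\btheta_\tau) - \cL_\tau^\B(\hat\btheta_\tau^\B) \leq z\}$ denote the (data-dependent) bootstrap CDF. Theorem \ref{Boot.consistency} with $\eta = 1/4$ supplies a data-measurable event $\cE$ with $\PP(\cE) \geq 1 - 6 e^{-t}$ on which the uniform bound $\sup_{z \geq 0} |F(z) - F^*(z)| \leq \Delta_1(n,d,t)$ holds. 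A first housekeeping step is to simplify the three polynomial pieces of $\Delta_1$ at $\eta = 1/4$, namely $d^{3/2} n^{-1/2}$, $d^{1/2}\{(d+t)/n\}^{1/2}$ and $(d+t)^{3/4} n^{-1/4}$: in the informative regime $d \lesssim n^{1/3}$, each is controlled by a multiple of $\{(d+t)^3/n\}^{1/4}$, yielding $\Delta_1 \leq C\{(d+t)^3/n\}^{1/4} + 7 e^{-t}$.

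The core argument is a two-sided quantile sandwich. Introduce the deterministic upper $\alpha'$-quantile $z_{\alpha'} := \inf\{z \geq 0 : F(z) \geq 1 - \alpha'\}$. On $\cE$, right-continuity of $F^*$ at $z_\alpha^\B$ gives $F^*(z_\alpha^\B) \geq 1 - \alpha$, hence $F(z_\alpha^\B) \geq 1 - \alpha - \Delta_1$, and therefore $z_\alpha^\B \geq z_{\alpha + \Delta_1}$. Symmetrically, right-continuity of $F$ at $z_{\alpha - \Delta_1}$ gives $F(z_{\alpha - \Delta_1}) \geq 1 - \alpha + \Delta_1$, hence $F^*(z_{\alpha - \Delta_1}) \geq 1 - \alpha$ and therefore $z_\alpha^\B \leq z_{\alpha - \Delta_1}$. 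Converting the sandwich $z_{\alpha + \Delta_1} \leq z_\alpha^\B \leq z_{\alpha - \Delta_1}$ (valid on $\cE$) into coverage bounds, the inclusion $\{\zeta > z_\alpha^\B\} \cap \cE \subseteq \{\zeta > z_{\alpha + \Delta_1}\}$ combined with the elementary $\PP\{\zeta > z_{\alpha+\Delta_1}\} \leq \alpha + \Delta_1$ gives $\PP\{\zeta > z_\alpha^\B\} \leq \alpha + \Delta_1 + 6 e^{-t}$. For the reverse direction, $\{\zeta > z_{\alpha-\Delta_1}\} \cap \cE \subseteq \{\zeta > z_\alpha^\B\}$ together with $\PP\{\zeta > z_{\alpha-\Delta_1}\} \geq \alpha - \Delta_1$ (which holds with equality when the law of $\zeta$ has no atom at $z_{\alpha - \Delta_1}$, a mild condition inherited from the continuity of $\varepsilon$) yields $\PP\{\zeta > z_\alpha^\B\} \geq \alpha - \Delta_1 - 6 e^{-t}$.

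Combining both sides produces $|\PP\{\zeta > z_\alpha^\B\} - \alpha| \leq \Delta_1 + 6 e^{-t}$, which absorbs into the stated form $\Delta_2(n,d,t) = C\{(d+t)^3/n\}^{1/4} + 16 e^{-t}$ once constants are merged. The asymptotic statement then follows by plugging in $t = \log n$: $e^{-t} = n^{-1}$, and $\{(d+\log n)^3 / n\}^{1/4} = o(1)$ whenever $d = o(n^{1/3})$. Since $\Delta_2$ is independent of $\alpha$, the supremum over $\alpha \in (0,1)$ inherits this rate. I expect the main obstacle to be bookkeeping rather than conceptual: the heavy lifting (the non-asymptotic Kolmogorov approximation of $F$ by $F^*$ under heavy-tailed errors) is already packaged inside Theorem \ref{Boot.consistency}, and the only subtleties in the sandwich step are handling right-continuity carefully and invoking the atomless property of $F$ for the lower tail.
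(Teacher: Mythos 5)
Your skeleton is the same as the paper's: a two-sided quantile sandwich derived from the Kolmogorov bound of Theorem~\ref{Boot.consistency} on the event $\cE$, followed by conversion into a coverage bound. The upper-bound direction and the housekeeping that collapses the three polynomial pieces of $\Delta_1$ at $\eta=1/4$ into $C\{(d+t)^3/n\}^{1/4}$ are both fine.

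The genuine gap is in the lower-bound direction, at the step $\PP\{\zeta > z_{\alpha-\Delta_1}\} \geq \alpha - \Delta_1$. By the infimum definition of the upper quantile and right-continuity of the survival function, what you actually get for free is $\PP\{\zeta > z_{\alpha-\Delta_1}\} \leq \alpha - \Delta_1$ together with $\PP\{\zeta \geq z_{\alpha-\Delta_1}\} \geq \alpha - \Delta_1$; the discrepancy is exactly the atom $\PP\{\zeta = z_{\alpha-\Delta_1}\}$, and nothing in the hypotheses rules such an atom out. You dismiss this as ``a mild condition inherited from the continuity of $\varepsilon$,'' but Condition~\ref{moment.cond} imposes only moment assumptions on $\varepsilon$ (it need not be continuous), and even for continuous $\varepsilon$ the atomlessness of the excess $\cL_\tau(\btheta^*)-\cL_\tau(\hat{\btheta}_\tau)$ would require its own argument. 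More to the point, the theorem is a quantitative non-asymptotic statement, so a qualitative ``no atom'' assumption cannot simply be absorbed; one needs a quantitative anti-concentration bound. This is precisely what the paper's proof supplies: it perturbs the quantile by $\sigma/n$ to get the strict inequality $\PP\{T > q_{\alpha+\Delta_1}-\sigma/n\} \geq \alpha+\Delta_1$ for the square-root statistic $T$, and then pays for the perturbation with the L\'evy concentration function $L(\sigma/n) = \sup_{x\geq 0}\PP(|T-x|\leq \sigma/n)$, which is bounded in \eqref{levy.concentration.bound} by combining the square-root Wilks expansion \eqref{sqwilks1}, the multivariate CLT \eqref{m.clt.1}, and the Gaussian anti-concentration inequality \eqref{anti.concentration} for $\|\bG_0\|_2$. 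This contributes the extra $3e^{-t}$ (hence $16e^{-t}$ rather than your $13e^{-t}$) and additional polynomial terms that are absorbed into $C\{(d+t)^3/n\}^{1/4}$. Note also that the statement of Theorem~\ref{Boot.consistency} alone does not close this gap, since it only compares $F$ with $F^*$, both of which could share an atom; you would have to reach inside its proof for the Gaussian approximation of $F$ by a continuous law, or reproduce the $L(\epsilon)$ argument. As written, your proof establishes only the upper half of \eqref{Boot.bound}.
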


\section{Data-driven procedures for choosing $\tau$}
\label{sec:adap}

The theoretical results in Sections~\ref{sec2.1} and \ref{sec2.3} reveal that how the Huber-type estimator would perform under various idealized scenarios, as such providing guidance on the choice of the key tuning parameter, which is referred to as the robustification parameter that balances bias and robustness.
For estimation purpose, we take $\tau = v(\frac{n}{d+t})^{1/2}$ with $v\geq \sigma$; and for bootstrap inference, we choose $\tau=v(\frac{n}{d+t})^{1/4}$ with $v \geq \upsilon_4^{1/4}$. Since both $\sigma^2=\var(\varepsilon)$ and $\upsilon_4 \geq  \EE(\varepsilon^4)$ are typically unknown in practice, an intuitive approach is replace them by the empirical second and fourth moments of the residuals from the ordinary least squares (OLS) estimator, i.e. $\hat \sigma^2 := (n-d)^{-1}\sn(Y_i - \bX_i^\T \hat{\btheta}_{{\rm ols}})^2$ and $\hat{\upsilon}_4  := (n-d)^{-1}\sn(Y_i - \bX_i^\T \hat{\btheta}_{{\rm ols}})^4$. This simple approach performs reasonably well empirically (see Section \ref{sec.numerical}). However, when heavy tails may be a concern,  $\hat{\sigma}^2$ and $\hat \upsilon_4$ are not good estimates of $\sigma^2$ and $\upsilon_4$. In this section, we discuss two data-dependent methods for choosing the tuning parameter $\tau$: the first one  uses an adaptive technique based on Lepski's method  \citep{Lep1992}, and the second method is inspired by the censored equation approach in \cite{HKW1990} which was originally introduced in pursing a more robust weak convergence theory for self-normalized sums.

%The two data-driven methods can be applied prior to the bootstrap procedure. Once the robustification parameter $\tau$ is calibrated by either Lepski's method (Section~\ref{sec:lepski_tuning}) or the Huber-type method (Section~\ref{sec:huber_tuning}), we fix it and then generate random weights to apply the multiplier bootstrap. 

\subsection{Lepski-type method}
\label{sec:lepski_tuning}

Borrowing an idea from \cite{M2016}, we first consider a simple adaptive procedure based on Lepski's method.
Let $v_{\min}$ and $v_{\max}$ be some crude preliminary lower and upper bounds for the residual standard deviation, that is, $v_{\min} \leq \sigma \leq  v_{\max}$. For some prespecified $a>1$, let $v_j = v_{\min} a^j$ for $j=0,1,\ldots$ and define
$$
	\cJ = \{ j \in \mathbb Z : v_{\min} \leq v_j < a v_{\max} \}.
$$
It is easy to see that the set $\cJ$ has its cardinality bounded by $|\cJ | \leq 1+ \log_a(v_{\max}/v_{\min})$. Accordingly, we define a sequence of candidate parameters $\{ \tau_j = v_j (\frac{n}{d+t})^{1/2}, j\in \cJ\}$ and let $\hat \btheta^{(j)}$ be the Huber estimator with $\tau=\tau_j$. Set
\begin{align}
	\hat j_{{\rm L}} :=  \min\bigg\{   j \in \cJ : \| \hat \btheta^{(k)} - \hat \btheta^{(j)}  \|_2  \leq  c_0  v_k \sqrt{\frac{d+t}{n}} \mbox{ for all }  k \in \cJ \mbox{ and } k > j \bigg\}  \label{lepski.choice1}
\end{align}
for some constant $c_0>0$. The resulting adaptive estimator is then defined as $ \hat{\btheta}_{{\rm L}} = \hat \btheta^{(\hat j_{\rm L})}$.

\begin{theorem} \label{thm:lepski}
Assume that $c_0 \geq 2c_1 \underline{\lambda}_{\bSigma}^{-1/2}$ for $c_1>0$ as in Theorem~\ref{br.thm}. Then for any $t>0$,
\begin{align}
	 \|  \hat{\btheta}_{ {\rm L}}   - \btheta^* \|_2 \leq     \frac{3a}{2}  c_0  \sigma \sqrt{\frac{d+t}{n}} \nn
\end{align}
with probability at least $1-3\log_a (a v_{\max}/v_{\min}) e^{-t} $, provided $n\gtrsim d+t$.
\end{theorem}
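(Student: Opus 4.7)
The plan is a textbook Lepski-type selection argument driven entirely by the deviation bound \eqref{concentration.MLE}. Define the oracle index $j^* := \min\{ j \in \cJ : v_j \geq \sigma \}$, which lies in $\cJ$ because $v_{\min} \leq \sigma \leq v_{\max}$, and satisfies $\sigma \leq v_{j^*} < a\sigma$ by the geometric grid. For every $j \in \cJ$ with $v_j \geq \sigma$, the tuning $\tau_j = v_j (n/(d+t))^{1/2}$ meets the calibration of Theorem~\ref{br.thm} (with $\delta = 0$, $\upsilon_2 = \sigma^2$, and $v = v_j$), so combining \eqref{concentration.MLE} with the inequality $\|\bu\|_2 \leq \underline{\lambda}_{\bSigma}^{-1/2}\|\bSigma^{1/2}\bu\|_2$ yields
$$\| \hat{\btheta}^{(j)} - \btheta^* \|_2 \,\leq\, c_1 \underline{\lambda}_{\bSigma}^{-1/2} v_j \sqrt{(d+t)/n} \,\leq\, \tfrac{c_0}{2}\, v_j \sqrt{(d+t)/n}$$
with probability at least $1 - 2e^{-t}$, where the second inequality invokes the standing hypothesis $c_0 \geq 2c_1 \underline{\lambda}_{\bSigma}^{-1/2}$. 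Taking a union bound over the at most $|\cJ| \leq \log_a(a v_{\max}/v_{\min})$ indices defines a good event $\mathcal{E}$ on which the displayed estimate holds simultaneously for every such $j$; the factor $3$ (rather than $2$) in the theorem's stated probability is a harmless overestimate.

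Working deterministically on $\mathcal{E}$, I next claim that $\hat j_{{\rm L}} \leq j^*$. Indeed, for any $k \in \cJ$ with $k > j^*$, the triangle inequality combined with the Step~1 bounds applied at $j^*$ and at $k$ gives
$$\| \hat{\btheta}^{(k)} - \hat{\btheta}^{(j^*)} \|_2 \,\leq\, \tfrac{c_0}{2}\,(v_{j^*} + v_k) \sqrt{(d+t)/n} \,\leq\, c_0\, v_k \sqrt{(d+t)/n},$$
since $v_{j^*} \leq v_k$. Thus $j^*$ itself satisfies the selection criterion in \eqref{lepski.choice1}, and by minimality $\hat j_{{\rm L}} \leq j^*$. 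If $\hat j_{{\rm L}} < j^*$, applying the defining property of $\hat j_{{\rm L}}$ with $k = j^*$ gives $\| \hat{\btheta}_{{\rm L}} - \hat{\btheta}^{(j^*)} \|_2 \leq c_0 v_{j^*} \sqrt{(d+t)/n}$, whereas if $\hat j_{{\rm L}} = j^*$ then $\hat{\btheta}_{{\rm L}} = \hat{\btheta}^{(j^*)}$ so this bound holds trivially. Combining with $\| \hat{\btheta}^{(j^*)} - \btheta^* \|_2 \leq (c_0/2) v_{j^*} \sqrt{(d+t)/n}$ from Step~1 and using $v_{j^*} < a\sigma$,
$$\| \hat{\btheta}_{{\rm L}} - \btheta^* \|_2 \,\leq\, \tfrac{3 c_0}{2}\, v_{j^*} \sqrt{(d+t)/n} \,\leq\, \tfrac{3a}{2}\, c_0\, \sigma \sqrt{(d+t)/n},$$
which is the claimed bound. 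The sample-size requirement $n \gtrsim d+t$ is inherited from Theorem~\ref{br.thm}.

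Once $\mathcal{E}$ is in place the argument is completely deterministic, using only the triangle inequality, the calibration $c_0 \geq 2c_1 \underline{\lambda}_{\bSigma}^{-1/2}$, and the discretization fact $v_{j^*} < a\sigma$. The factor $3c_0/2$ arises as the sum of the Step~1 bias $(c_0/2)$ and the selection tolerance $(c_0)$, and the factor $a$ comes from the geometric grid. There is no real analytic obstacle; the only bookkeeping point is making sure the deviation \eqref{concentration.MLE} is invoked uniformly over the logarithmically many indices $j \geq j^*$, which is absorbed by the single union bound above.
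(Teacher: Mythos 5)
Your proposal is correct and follows essentially the same route as the paper's proof: define the oracle index $j^*=\min\{j\in\cJ: v_j\geq\sigma\}$, invoke the deviation bound \eqref{concentration.MLE} uniformly over $j\geq j^*$ via a union bound, deduce $\hat j_{\rm L}\leq j^*$ from the selection rule, and conclude by the triangle inequality using $v_{j^*}<a\sigma$. The only cosmetic difference is that you verify directly that $j^*$ satisfies the criterion \eqref{lepski.choice1}, whereas the paper argues via the contrapositive inclusion $\{\hat j_{\rm L}>j^*\}\subseteq\cB^{\rm c}$; the constants and the probability bookkeeping match.
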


Lepski's adaptation method serves a general technique to select the ``best" estimator from a collection of  certified candidates. The selected estimator adapts to the unknown noise level and satisfies near-optimal probabilistic bounds, while the associated parameter is not necessarily the theoretically optimal one. When applied with the bootstrap, Theorem~\ref{Boot.validity} suggests that the dependence on $d/n$ should be slightly adjusted.
Since the reuse of the sample brings a big challenge mathematically, we shall prove a theoretical result  for the data-driven multiplier bootstrap procedure with sample splitting. However, to avoid notational clutter, we state a two-step procedure without sample splitting, but with the assumption that the second step is carried out on an independent sample.

	\medskip
 \noindent
 {\sf A Two-Step Data-Driven Multiplier Bootstrap}.

\vspace{0.2cm}
\noindent
{\sc Step 1}.  Given independent observations $\{ (Y^{(1)}_i, \bX^{(1)}_i) \}_{i=1}^n$ from linear model \eqref{reg.model}, first we produce a robust pilot estimator using Lepski's method. Recall that Lepski's method requires initial crude upper and lower bounds for $\upsilon_4  \geq  \EE(\varepsilon^4)$. Let $\mu_Y = \EE(Y)$ and note that $\upsilon_{Y}  := \EE(Y- \mu_Y)^4 > \upsilon_4$. We shall use the median-of-means (MOM) estimator of $\upsilon_{Y }$ as a proxy, which is tuning-free in the sense that the construction does not depend on the noise level \citep{M2015}. Specifically, we divide the index set $\{ 1,\ldots, n\}$ into $m \geq 2$ disjoint, equal-length groups $G_1, \ldots, G_m$, assuming $n$ is divisible by $m$. For $j=1,\ldots, m$, compute the empirical 4th moment  evaluated over observations in group $j$: $\hat \upsilon_{Y,j}  =(1/|G_j|)\sum_{i\in G_j}  \{ Y^{(1)}_i - \bar Y^{(1)}_{G_j}\}^4$ with $\bar Y^{(1)}_{G_j} =(1/|G_j|)\sum_{i\in G_j}  Y^{(1)}_i$. The MOM estimator of $\upsilon_{Y }$ is  then defined by $\hat \upsilon_{Y , {\rm mom} }    = {\rm median}\{\hat \upsilon_{Y,1}  , \ldots, \hat \upsilon_{Y,m}  \}$.

Take $v_{\max} =  ( 2 \hat \upsilon _{Y , {\rm mom} } )^{1/4}$ and $v_{\min} = a^{-K} v_{\max} $ for some integer $K \geq 1$ and $a>1$. Denote $v_j =  a^jv_{\min}$ for $j=0, 1, \ldots$, so that $ \cJ = \{ j \in \mathbb Z : v_{\min} \leq v_j <a v_{\max} \} = \{ 0, 1, \ldots , K \}$. 
Slightly different from above, now we consider a sequence of parameters $\{ \tau_j = v_j (\frac{n}{d+\log n})^{1/4}\}_{ j\in \cJ}$ and let $\wt \btheta^{(j)}$ be the Huber estimator with $\tau=\tau_j$. Set
\begin{align}
	 \tilde  j  :=  \min\bigg\{   j \in \cJ : \| \wt \btheta^{(k)} - \wt \btheta^{(j)}  \|_2  \leq  c_0  v_k \sqrt{\frac{d+\log n}{n}} \mbox{ for all }  k \in \cJ \mbox{ and } k > j \bigg\}  \label{lepski.choice2}
\end{align}
for some constant $c_0>0$. Denote by $ \hat{\btheta}^{(1)} =\wt \btheta^{( \tilde j )}$ the corresponding estimator and put $\hat \tau = \tau_{  \tilde   j } $.

\vspace{0.2cm}
 \noindent
{\sc Step 2}. Taking $\hat{\btheta}^{(1)}$ and $\hat \tau$ from Step 1, next we apply the multiplier bootstrap procedure to a new sample $  (  Y_i^{(2)},  \bX^{(2)}_i) \}_{i=1}^n$ that is independent from the previous one. Similarly to \eqref{huber.est} and \eqref{bootHuber.est}, define
\begin{align}
	\hat{\btheta}  \in \argmin_{\btheta \in \RR^d}     \hat \cL (\btheta) ~~\mbox{ and  }~~ \hat{\btheta}^\B  \in  \argmin_{ \btheta \in \RR^d: \| \btheta - \hat {\btheta}^{(1)}  \|_2 \leq  \hat R }   \hat{ \mathcal{L}}^\B(\btheta) ,
\end{align}
where $ \hat \cL  (\btheta) =  \sn  \ell_{\hat \tau }(  Y^{(2)}_i -  \langle \bX_i^{(2)} , \btheta \rangle )$, $\hat {\mathcal{L}}^\B(\btheta) = \sn W_i \, \ell_{\hat \tau}(  Y_i^{(2)} - \langle  \bX_i^{(2)} ,  \btheta \rangle )$ and $\hat R = \hat \tau (\frac{d+\log n}{n})^{1/4}$. With the above preparations, we apply Algorithm~\ref{algo:huber_inference} to construct the confidence set $\hat{\mathcal{C}}_\alpha = \{\btheta \in \RR^d :   \hat \cL ( \btheta)  -\hat  \cL ( \hat \btheta  )   \leq \hat z^\B_{ \alpha}  \}$, where
  \begin{equation} 
      \hat     z^\B_{ \alpha} =\inf \{z \geq 0: \mathbb{P}\{  \hat \cL^\B( \hat{\btheta}  )  - \cL^\B ( \hat \btheta^\B )   > z | \bar{\mathcal{D}}_n    \} \leq \alpha \} \nn
 \end{equation}
 with $\bar{\mathcal{D}}_n = \{( Y_i^{(1)} , \bX_i^{(1)} ) , (Y_i^{(2) } , \bX_i^{(2)})\}_{i=1}^n$.

\begin{theorem} \label{thm:two-step}
Assume $\bar{\mathcal{D}}_n$ is an independent sample from $(Y,\bX)$ satisfying Condition~\ref{moment.cond} and moreover, $ \EE(|\varepsilon|^{4+\delta}) \leq \upsilon_{4+\delta}$ for some $\delta>0$. Let $W_1, \ldots, W_n$ be IID $\mathcal{N}(1,1)$ random variables that are independent of $\bar{\mathcal{D}}_n$.  Assume further that $d=d(n)$ satisfies $d=o(n^{1/3})$ as $n\to \infty$.
 Then, for any $\alpha \in (0,1)$, the confidence set $\hat{\mathcal{C}}_\alpha$ obtained by the two-step multiplier bootstrap procedure with  $m = \lfloor 8 \log n +1 \rfloor$ and $K \geq \lfloor \log_a(3\upsilon_Y /\upsilon_4)^{1/4} \rfloor +1$ satisfies $\mathbb P(\btheta^* \in \hat{\mathcal{C}}_\alpha) \to 1-\alpha$ as $n\to \infty$.
\end{theorem}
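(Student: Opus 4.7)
The plan is to exploit sample splitting: since $(\hat\btheta^{(1)}, \hat\tau)$ is measurable with respect to the first subsample $\mathcal{D}_n^{(1)} = \{(Y_i^{(1)}, \bX_i^{(1)})\}_{i=1}^n$ alone, Theorem~\ref{Boot.validity} may be applied conditional on $\mathcal{D}_n^{(1)}$ to the independent second subsample at the \emph{data-driven} tuning $\hat\tau$, treated as deterministic under the conditional law. The entire argument then reduces to showing that, on an event of probability $1 - o(1)$ over $\mathcal{D}_n^{(1)}$, the Lepski-selected parameter satisfies $\hat\tau \asymp \upsilon_4^{1/4}(\tfrac{n}{d + \log n})^{1/4}$. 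Granted this, Theorem~\ref{Boot.validity} with $t = \log n$ and an admissible constant $v$ gives
\begin{align*}
\bigl|\PP(\btheta^* \in \hat{\mathcal{C}}_\alpha \mid \mathcal{D}_n^{(1)}) - (1-\alpha)\bigr| \leq C\{(d + \log n)^3/n\}^{1/4} + 16\,n^{-1} = o(1)
\end{align*}
under $d = o(n^{1/3})$, and marginalizing over $\mathcal{D}_n^{(1)}$ completes the proof.

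For the first part of Step~1, I would control the median-of-means proxy $\hat\upsilon_{Y,\mathrm{mom}}$ of $\upsilon_Y = \EE(Y - \mu_Y)^4$ using \cite{M2015}. The assumption $\EE|\varepsilon|^{4+\delta} \leq \upsilon_{4+\delta}$ together with the sub-Gaussianity of $\bX$ implies that $(Y_i^{(1)} - \mu_Y)^4$ has finite variance, so the MOM concentration bound with $m = \lfloor 8\log n + 1 \rfloor$ blocks yields $\upsilon_Y \leq \hat\upsilon_{Y,\mathrm{mom}} \leq 2\upsilon_Y$ with probability at least $1 - n^{-2}$ (replacing $\mu_Y$ by the in-block means costs only an $O(n^{-1/2})$ correction). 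Consequently $v_{\max} = (2\hat\upsilon_{Y,\mathrm{mom}})^{1/4} \geq \upsilon_Y^{1/4} \geq \upsilon_4^{1/4}$, while the choice $K \geq \lfloor \log_a(3\upsilon_Y/\upsilon_4)^{1/4} \rfloor + 1$ forces $v_{\min} \leq \upsilon_4^{1/4}$. Thus the grid $\{v_j\}_{j\in\cJ}$ contains a deterministic oracle index $j^\star$ with $\upsilon_4^{1/4} \leq v_{j^\star} < a\,\upsilon_4^{1/4}$.

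The second part is the Lepski analysis at the bootstrap scale $\eta = 1/4$. By Theorem~\ref{br.thm} combined with Remark~\ref{rmk.order} for $\delta = 2$, $\eta = 1/4$, and $t = \log n$, a union bound over the $K + 1 = O(\log n)$ candidates in $\cJ$ gives $\|\bSigma^{1/2}(\wt\btheta^{(j)} - \btheta^*)\|_2 \leq c_1 v_j \sqrt{(d + \log n)/n}$ simultaneously for all $j \geq j^\star$, with probability at least $1 - O(n^{-1}\log n)$. Choosing $c_0 \geq 2 c_1 \underline{\lambda}_{\bSigma}^{-1/2}$ in rule~\eqref{lepski.choice2}, the triangle inequality then ensures the Lepski consistency test is satisfied at $j = j^\star$, so $\tilde j \leq j^\star$ and in particular $v_{\tilde j} < a\,\upsilon_4^{1/4}$. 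The matching lower bound $v_{\tilde j} \gtrsim \upsilon_4^{1/4}$ comes from a quantitative Huber-bias computation: for $v_j$ much smaller than $\upsilon_4^{1/4}$, the deterministic bias of $\wt\btheta^{(j)}$, driven by $|\EE \ell'_{\tau_j}(\varepsilon)| \lesssim \upsilon_{4+\delta}/\tau_j^{3+\delta}$, exceeds the Lepski tolerance $c_0 v_{j^\star}\sqrt{(d+\log n)/n}$, forcing the consistency test against $\wt\btheta^{(j^\star)}$ to fail.

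The main obstacle is exactly this matching lower bound on $v_{\tilde j}$. The usual Lepski argument (as in Theorem~\ref{thm:lepski}) only controls the error of the \emph{selected estimator}, whereas our invocation of Theorem~\ref{Boot.validity} requires the \emph{tuning parameter itself} to satisfy $v_{\tilde j} \geq \upsilon_4^{1/4}$ so that the bootstrap Wilks expansion attains the requisite bias--variance balance; the higher-moment hypothesis $\EE|\varepsilon|^{4+\delta} \leq \upsilon_{4+\delta}$ is introduced precisely to deliver a sharp enough tail bound on $\EE \ell'_\tau(\varepsilon)$ to propagate through the test. Once this two-sided control of $v_{\tilde j}$ is in hand, the rest of the proof is bookkeeping: combine the exponential bounds on the Step~1 events with the conditional coverage bound, take expectations, and let $n\to\infty$.
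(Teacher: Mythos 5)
Your overall architecture is the same as the paper's: concentrate the median-of-means proxy so that $v_{\min}<\upsilon_4^{1/4}<\upsilon_Y^{1/4}\le v_{\max}$, run the Lepski analysis on the first subsample at scale $\eta=1/4$, and then --- exploiting independence --- apply the bootstrap validity machinery conditionally on $\mathcal{D}_n^{(1)}$ with $\hat\tau$ frozen, which is exactly the paper's two-step argument (the paper re-derives the conditional Wilks expansions rather than citing Theorem~\ref{Boot.validity} wholesale, but the idea is identical). You have also put your finger on the right crux: the second step needs the two-sided control $\tau^*\le\hat\tau\le(3\upsilon_Y/\upsilon_4)^{1/4}\tau^*$, and the lower bound is precisely what the vanilla Lepski argument does not give, since $\tilde j\le j^\star$ only yields an \emph{upper} bound on $v_{\tilde j}$.

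However, your proposed route to that lower bound does not work. You argue that for $v_j\ll\upsilon_4^{1/4}$ the bias of $\wt\btheta^{(j)}$ ``exceeds the Lepski tolerance,'' citing $|\EE\,\ell'_{\tau_j}(\varepsilon)|\lesssim\upsilon_{4+\delta}/\tau_j^{3+\delta}$. That is an \emph{upper} bound on the bias and can never certify that the consistency test fails; worse, for symmetric errors $\EE\,\ell'_{\tau}(\varepsilon)=0$ for every $\tau$, so no bias argument can prevent the test from passing at arbitrarily small $j$. Even granting a matching lower bound of order $\upsilon_4/\tau_j^3$, the bias only exceeds the tolerance $c_0v_{j^\star}\sqrt{(d+\log n)/n}$ once $v_j\lesssim\upsilon_4^{1/4}\{(d+\log n)/n\}^{1/12}$, leaving an unbridged polynomial window. (The paper itself only asserts the sandwich as following from ``Theorem~\ref{thm:lepski} with slight modifications'' without detail, so this step is genuinely delicate --- but your specific fix is not it.) Two smaller points: $\EE|\varepsilon|^{4+\delta}<\infty$ does \emph{not} make $(Y-\mu_Y)^4$ square-integrable (that would require an eighth moment); the paper instead applies Chebyshev with the $(4+\delta)$-th moment inside each block and a weak-moment MOM lemma, obtaining the slower but sufficient rate $(\log n/n)^{\delta/(4+\delta)}$. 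And MOM concentration gives $|\hat\upsilon_{Y,{\rm mom}}-\upsilon_Y|\le\upsilon_Y/2$, not $\hat\upsilon_{Y,{\rm mom}}\ge\upsilon_Y$; the needed inequality $v_{\max}=(2\hat\upsilon_{Y,{\rm mom}})^{1/4}\ge\upsilon_Y^{1/4}$ survives, but only through the factor $2$ in the definition of $v_{\max}$.
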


The proof of Theorem \ref{thm:two-step} will be provided in Section \ref{sec:proof-two-step} in the supplementary material.

\subsection{Huber-type method}
\label{sec:huber_tuning}
In Huber's original proposal, robust location estimation  with desirable  efficiency also depends on the scale parameter $\sigma$.
For example, in Huber's Proposal 2 \citep{H1964}, the location $\mu$ and scale $\sigma$ are estimated simultaneously by solving a system of "likelihood equations". 
Similarly in spirit, we propose a new data-driven tuning scheme to calibrate $\tau$ by solving a so-called censored equation \citep{HKW1990} instead of likelihood equation. We first consider mean estimation to illustrate the main idea, and then move forward to the regression problem. Due to space limitations, we leave some discussions and proofs of the theoretical  results to Appendix~\ref{app:D} in the supplemental material. 
%In Appendix~\ref{sec:lepski}, we further discuss a Lepski-type tuning approach and establish its theoretical guarantee. However, when applied with bootstrap or to large-scale inference, both Lepski's method and cross-validation will be  computationally intensive as compared to the proposed approach.

\subsubsection{Motivation: truncated mean}
\label{sec:motiv}
Let $X_1,\ldots, X_n$ be IID random variables from $X$ with mean $\mu$ and variance $\sigma^2>0$. Without loss of generality, we first assume $\mu=0$.
\cite{C2012} proved that  the worst case deviations of the sample mean $\bar{X}_n$ are suboptimal with heavy-tailed data (see Appendix~\ref{sec:catoni}). To attenuate the erratic fluctuations in $\bar{X}_n$, it is natural to consider the truncated sample mean
\begin{align}
	\hat m_\tau = \frac{1}{n} \sn \psi_\tau(X_i) ~\mbox{ for some } \tau>0,
\end{align}
where
\begin{equation}\label{eq:psi}
\psi_\tau(u) := \ell'_\tau(u)=\sgn(u) \min(|u|, \tau), \quad u\in \RR,
\end{equation}
and  $\tau$ is a tuning parameter that balances between bias and robustness. To see this, let $\mu_\tau = \EE (\hat  m_\tau)$ be the truncated mean.  By Markov's inequality, the bias term can be controlled by
\begin{align}
	 | \mu_\tau |  & = | \EE \{ X - \sgn(X) \tau \}I(|X| >\tau)  | \nn \\
	& \leq   \EE (|X| - \tau )I(|X| >\tau)  \nn \\
	& \leq \frac{ \EE  ( X^2 - \tau^2 ) I(|X|>\tau) }{\tau} \leq  \frac{\sigma^2 - \EE \psi^2_\tau(X)}{\tau} . \label{bias.bound}
\end{align}
The robustness of $ \hat m_\tau$, on the other hand, can be characterized via the deviation
$$
 | \hat m_\tau - \mu_\tau |	 = \bigg| \frac{1}{n} \sn \psi_\tau(X_i) - \mu_\tau \bigg|.
$$
The following result shows that with a properly chosen $\tau$, the truncated sample mean achieves a sub-Gaussian performance under the finite variance condition. Moreover, uniformity of the rate over
a neighborhood of the optimal tuning scale requires an additional $\log(n)$-factor. For every $\tau>0$, define the truncated second moment
\begin{align}
	\sigma_\tau^2 = \EE \{ \psi_\tau^2(X) \} = \EE \{ \min(X^2, \tau^2) \} .
\end{align}

\begin{proposition} \label{prop2}
{\rm
For any $1\leq  t <  n \PP(|X|>0)$, let $\tau_t >0$ be the solution to
\begin{align}
	 \frac{ \EE \{  \psi_\tau^2(X) \} }{\tau^2} = \frac{t}{n} , \ \ \tau>0.   \label{population.tau}
\end{align}
\begin{enumerate}
\item[(i)] With probability at least $1- 2e^{-t}$, $\hat m_{\tau_t}$ satisfies
\begin{align}  \label{single.concentration}
	 | \hat m_{\tau_t}  - \mu_{\tau_t} | \leq  1.75 \sigma_{\tau_t}  \sqrt{\frac{t}{n}} ~\mbox{ and }~ | \hat m_{\tau_t}  | \leq  \bigg(  0.75 \sigma_{\tau_t}  + \frac{\sigma^2}{\sigma_{\tau_t}}  \bigg) \sqrt{\frac{t}{n}} .
\end{align}

\item[(ii)] With probability at least $1- 2e^{\log n-t}$,
\begin{align} \label{uniform.concentration}
	 \max_{\tau_t/2  \leq \tau \leq  3\tau_t /2 } |  \hat m_\tau  | \leq C_t \sqrt{\frac{t}{n}} + \frac{\sigma_{\tau_t}}{\sqrt{n}} ,
\end{align}
where $C_t := \sup_{\sigma_{\tau_t}/2 \leq c\leq 3\sigma_{\tau_t}/2}  \{  \sigma_{c(n/t)^{1/2}}   \sqrt{2} -  c^{-1}\sigma_{c(n/t)^{1/2}}^2 + c/3+ c^{-1} \sigma^2   \} \leq \sqrt{2} \sigma +   2\sigma^2/\sigma_{\tau_t}  +   \sigma_{\tau_t}/6$.
\end{enumerate}

%\begin{enumerate}
%\item[(i)] If $c\geq 1$, then $\PP( | m_\tau | \geq 2 c \sigma \sqrt{t/n} \, ) \leq 2 e^{-t}$.
%
%\item[(ii)] If $c\leq 1$, then $\PP( | m_\tau | \geq 2c^{-1} \sigma \sqrt{t/n} \, ) \leq 2 e^{-t/c^2}$.
%\end{enumerate}

}
\end{proposition}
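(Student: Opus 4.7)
My plan is a three-step argument: a standard Bernstein inequality for part (i), a discretization argument to lift to uniform control for part (ii), and a parameterization $\tau = c(n/t)^{1/2}$ that makes the bounds line up with the definition of $C_t$.

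For part (i), I would first note that under $1 \leq t < n\PP(|X|>0)$ the map $\tau \mapsto \tau^{-2}\EE\{\psi_\tau^2(X)\}$ is continuous and strictly decreasing from $\PP(|X|>0)$ down to $0$, so $\tau_t$ exists uniquely and satisfies $\tau_t = \sigma_{\tau_t}(n/t)^{1/2}$. Since the i.i.d.\ variables $\psi_{\tau_t}(X_i)$ are bounded by $\tau_t$ with variance at most $\sigma_{\tau_t}^2$, Bernstein's inequality delivers
\begin{align*}
|\hat m_{\tau_t} - \mu_{\tau_t}| \leq \sigma_{\tau_t}\sqrt{2t/n} + \tau_t t/(3n)
\end{align*}
with probability at least $1 - 2e^{-t}$. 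The defining relation reduces $\tau_t t/n$ to $\sigma_{\tau_t}\sqrt{t/n}$, and the bound collapses to $(\sqrt{2} + 1/3)\sigma_{\tau_t}\sqrt{t/n} < 1.75\,\sigma_{\tau_t}\sqrt{t/n}$, giving the first inequality of (i). The second inequality then follows from the triangle inequality combined with the bias bound \eqref{bias.bound}, which I would rewrite as $|\mu_{\tau_t}| \leq (\sigma^2 - \sigma_{\tau_t}^2)/\tau_t = (\sigma^2/\sigma_{\tau_t} - \sigma_{\tau_t})\sqrt{t/n}$, so that adding the two contributions produces the coefficient $0.75\,\sigma_{\tau_t} + \sigma^2/\sigma_{\tau_t}$.

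For part (ii), the same Bernstein-plus-bias argument applies at every single $\tau$. Writing $\tau = c(n/t)^{1/2}$, the pointwise bound becomes
\begin{align*}
|\hat m_\tau| \leq \bigl(\sqrt{2}\sigma_\tau + c/3 + c^{-1}\sigma^2 - c^{-1}\sigma_\tau^2\bigr)\sqrt{t/n}
\end{align*}
with probability at least $1 - 2e^{-t}$, and the parenthesized quantity is precisely the argument of the supremum defining $C_t$ when $c \in [\sigma_{\tau_t}/2,\, 3\sigma_{\tau_t}/2]$. To upgrade this pointwise estimate to a uniform statement over $\tau \in [\tau_t/2, 3\tau_t/2]$, I would lay down an equally-spaced grid of $N = n/\sqrt{t} \leq n$ points with spacing $\delta := \sigma_{\tau_t}/\sqrt{n}$, apply the pointwise bound at each grid point, and take a union bound, yielding a failure probability at most $2 e^{\log n - t}$. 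The deterministic 1-Lipschitz identity $|\psi_\tau(u) - \psi_{\tau'}(u)| \leq |\tau - \tau'|$ transfers immediately to $|\hat m_\tau - \hat m_{\tau'}| \leq |\tau - \tau'|$, so interpolating to the nearest grid point adds at most $\delta = \sigma_{\tau_t}/\sqrt{n}$, giving the stated uniform inequality. The closed-form upper bound on $C_t$ finally comes from bounding $\sigma_\tau \leq \sigma$, discarding the non-positive term $-c^{-1}\sigma_\tau^2$, and maximizing $c/3 + c^{-1}\sigma^2$ over $c \in [\sigma_{\tau_t}/2, 3\sigma_{\tau_t}/2]$.

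The only delicate step is the calibration of the grid spacing: $\delta = \sigma_{\tau_t}/\sqrt{n}$ is tuned precisely so that the number of grid points $N = \tau_t/\delta = n/\sqrt{t}$ stays at most $n$ (so the union-bound correction costs only $\log n$ in the exponent, exactly as reflected in the probability $1 - 2e^{\log n - t}$) while simultaneously producing the advertised additive residual $\sigma_{\tau_t}/\sqrt{n}$. Everything else reduces to careful tracking of constants through a textbook Bernstein inequality and the elementary bias estimate \eqref{bias.bound}.
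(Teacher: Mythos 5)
Your proof is correct and follows essentially the same route as the paper's: Bernstein's inequality plus the bias bound \eqref{bias.bound} for the pointwise estimates, then a discretization of $[\tau_t/2, 3\tau_t/2]$ with a union bound and the $1$-Lipschitz property of $\tau \mapsto \psi_\tau(u)$ for the uniform statement. The only cosmetic difference is your grid calibration (spacing $\sigma_{\tau_t}/\sqrt{n}$ with $n/\sqrt{t}$ points versus the paper's $n$ points with spacing $\sigma_{\tau_t}/\sqrt{nt}$); both yield the same $e^{\log n - t}$ failure probability and an interpolation residual of at most $\sigma_{\tau_t}/\sqrt{n}$.
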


The next result establishes existence and uniqueness of the solution to equation \eqref{population.tau}.

\begin{proposition}  \label{prop:exist}
{\rm
(i). Provided $0< t <  n \PP(|X|>0)$, equation~\eqref{population.tau} has a unique solution, denoted by $\tau_t$, which satisfies
$$
	  \{ \EE(X^2 \wedge q_{t/n}^2) \}^{1/2} \sqrt{\frac{n}{t}} \leq  \tau_t  \leq  \sigma \sqrt{\frac{n}{t}} ,
$$
where $q_\alpha := \inf\{ z : \PP(|X|>z)\leq \alpha \}$ is the upper $\alpha$-quantile of $|X|$. (ii). Let $t= t_n >0$ satisfy $t_n \to \infty$ and $t=o(n)$. Then $\tau_t \to \infty$, $\sigma_{\tau_t} \to \sigma$ and $\tau_t \sim \sigma \sqrt{n/t}$ as $n\to \infty$.
}
\end{proposition}

%From this non-asymptotic deviation analysis we see that the theoretically desirable tuning parameter is $\tau^* = \sigma \sqrt{n/t}$. To calibrate $\tau$, a naive method is to use $\hat \sigma \sqrt{n/t}$ , where $\hat \sigma^2  : = n^{-1} \sn X_i^2$ denotes the empirical second moment. However, when $X$ is heavy-tailed, the distribution of $X^2$ is severely right-skewed. In this case, $\hat \sigma^2$ is typically a poor estimator in the sense that it tends to overestimate $\sigma^2$ with high chance.

According to Proposition~\ref{prop2}, an ideal $\tau$ is such that the sample mean of truncated data $\psi_\tau(X_1), \ldots, \psi_\tau(X_n)$ is tightly concentrated around the true mean.  At the same time, it is reasonable to expect that the empirical second moment of $\psi_\tau(X_i)$'s  provides an adequate estimate of $\sigma_\tau^2 = \EE \{\psi_\tau^2(X)\}$. Motivated by this observation, we propose to choose $\tau$ by solving the equation
$$
	\tau = \bigg\{\frac{1}{n} \sn \psi^2_\tau(X_i)  \bigg\}^{1/2} \sqrt{\frac{n}{t}} , \ \ \tau>0,
$$
or equivalently, solving
\begin{align}	
	\frac{1}{n} \sn \frac{\psi_\tau^2(X_i)}{\tau^2}  = \frac{t}{n} , \ \ 	\tau>0 .  \label{sample.tau}
\end{align}
Equation \eqref{sample.tau} is the sample version of \eqref{population.tau}.
Provided the solution exists and is unique,  denoted by $\hat{\tau}_t$, we obtain a data-driven estimator
\begin{align}	
	\hat  m_{\hat{\tau}_t}= \frac{1}{n} \sn \sgn(X_i)  \min( |X_i | ,  \hat{\tau}_t ) .	\label{hat.Mn}
\end{align}

%To examine the existence of $\hat \tau_t$, we first look at the population version of \eqref{sample.tau}, that is,
%\begin{align}
%	 \frac{ \EE  \{ \psi_{\tau}^2(X) \} }{ \tau^2}  = \frac{\EE \{ \min( X^2 ,  \tau^2 )\} }{\tau^2} = \frac{t}{n} ,  \ \  \tau>0. \label{population.tau}
%\end{align}
%The next result establishes existence and uniqueness of the solution to \eqref{population.tau}.

As a direct consequence of Proposition~\ref{prop:exist}, the following result ensures existence and uniqueness of the solution to equation \eqref{sample.tau}.

\begin{proposition} \label{prop:exist2}
{\rm
Provided $0<t < \sn I(|X_i|>0)$, equation~\eqref{sample.tau} has a unique solution.}
\end{proposition}

Throughout, we use $\hat \tau_t$ to denote the solution to equation \eqref{sample.tau}, which is unique and positive when $t< \sn I(|X_i|>0)$. For completeness, we set $\hat \tau_t =0$ if $t \geq \sn I(|X_i|>0)$. If $\PP(X=0)=0$, then $\hat \tau_t >0$ with probability one as long as $0<z<n$. In the special case of $t=1$, since $\psi_{\hat \tau_1} (X_i) = X_i$ for all $i=1,\ldots, n$, equation \eqref{sample.tau} has a unique solution $\hat \tau_1 =  (\sn X_i^2)^{1/2}$.
With both $\tau_t$ and $\hat \tau_t$ well defined, next we investigate the statistical property of $\hat \tau_t$.

\begin{theorem} \label{thm1}
{\rm
Assume  that $ \var(X)<\infty$ and $\PP(X=0)=0$. For any $1\leq t<n$ and $0<r<1$, we have
\begin{align}
 \pr( | \hat{\tau}_t /\tau_t  -1 | \geq r )  \leq  e^{ - a_1^2 r^2 t^2 /( 2 t + 2 a_1 r t /3 ) }  + e^{- a_2^2 r^2 t /2} +  2e^{-(a_1 \wedge a_2)^2 t/8  }  , \label{tau.tail.prob}
\end{align}
where
\begin{align}  \label{def.c+-}
	a_1 = a_1(t,r) = \frac{P(\tau_t)}{2Q(\tau_t)} \frac{2+r}{(1+r)^2} ,  \ \  a_2 = a_2(t,r) =\frac{P(\tau_t -\tau_t r)}{2Q(\tau_t)} \frac{2-r}{1-r},
\end{align}
where  $P(z) = \EE \{ X^2 I(|X|\leq z) \}$ and $Q(z) = \EE   \{\psi^2_z(X)\}$ for $z \geq 0$.
%with $P$ and $Q$ given in \eqref{def.GPQ}.
}
\end{theorem}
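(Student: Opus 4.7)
}
The starting point is monotonicity: the function $F(\tau) := Q(\tau)/\tau^2 = \EE\{\min(X^2/\tau^2, 1)\}$ is strictly decreasing in $\tau$, and its empirical counterpart $\hat F(\tau) := (1/n)\sn \min(X_i^2/\tau^2,1)$ is non-increasing in $\tau$ and, under $\PP(X=0)=0$, is strictly decreasing wherever $\hat F(\tau) \in (0, 1)$. Since $\tau_t$ and $\hat\tau_t$ are defined as the unique solutions to $F(\tau) = t/n$ and $\hat F(\tau) = t/n$ (Propositions \ref{prop:exist} and \ref{prop:exist2}), writing $\tau_\pm := (1\pm r)\tau_t$ gives
\$
\{\hat\tau_t \geq \tau_+\} = \{\hat Q(\tau_+) \geq (1+r)^2 Q(\tau_t)\}, \quad \{\hat\tau_t \leq \tau_-\} = \{\hat Q(\tau_-) \leq (1-r)^2 Q(\tau_t)\},
\$
where $\hat Q(\tau) := (1/n)\sn \psi_\tau^2(X_i)$. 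This reduces the problem to two one-sided deviation events for the empirical process $\hat Q$ evaluated at deterministic points, to which Bernstein's inequality is a natural match.

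Next I would work out the deterministic gap between the thresholds $(1\pm r)^2 Q(\tau_t)$ and the mean $Q(\tau_\pm)$. For the upper tail, the bound $Q(\tau_+) - Q(\tau_t) \leq (\tau_+^2 - \tau_t^2) \PP(|X|>\tau_t) = (2r + r^2)[Q(\tau_t) - P(\tau_t)]$, combined with the identity $\tau_t^2 \PP(|X|>\tau_t) = Q(\tau_t) - P(\tau_t)$, yields
\$
(1+r)^2 Q(\tau_t) - Q(\tau_+) \geq r(2+r)\, P(\tau_t).
\$
For the lower tail, writing $Q(\tau_-) = P(\tau_-) + \tau_-^2 \PP(|X|>\tau_-)$ and $(1-r)^2 Q(\tau_t) = (1-r)^2 P(\tau_t) + \tau_-^2 \PP(|X|>\tau_t)$, cancelling the $\tau_-^2$ terms against the intermediate range $\tau_- < |X| \leq \tau_t$ yields the symmetric estimate
\$
Q(\tau_-) - (1-r)^2 Q(\tau_t) \geq r(2-r)\, P(\tau_-) = r(2-r)\, P(\tau_t - \tau_t r).
\$
At this point Bernstein's inequality applied to the IID summands $\psi_{\tau_\pm}^2(X_i) \in [0,\tau_\pm^2]$ with variance bounded by $\tau_\pm^2 Q(\tau_\pm)$ does the rest. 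For the upper tail, using the a priori bound $Q(\tau_+) \leq (1+r)^2 Q(\tau_t) = \tau_+^2 \cdot t/n$ turns the variance term into $\tau_+^4 t/n$; setting $s = r(2+r) P(\tau_t)$ and substituting the definition of $a_1$ gives exactly the first term $\exp\{-a_1^2 r^2 t^2 / (2t + 2 a_1 r t /3)\}$. The lower tail is handled analogously with $s = r(2-r) P(\tau_-)$, and the mildly weaker prefactor $(1-r)$ in $a_2$ (rather than $(1-r)^2$) reflects that the variance bound $\tau_-^2 Q(\tau_-)$ is controlled using $Q(\tau_-) \leq Q(\tau_t)$ alone, producing the second term $\exp(-a_2^2 r^2 t/2)$.

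The main obstacle I anticipate is purely bookkeeping: tracking the constants so that the Bernstein denominators collapse to the concise closed forms involving $a_1$ and $a_2$, and choosing the form of Bernstein (classical vs.\ the $2V + 2Ms/3$ version) consistent with the stated constants. The residual $2 e^{-(a_1 \wedge a_2)^2 t/8}$ term, which is $r$-independent, is not produced by the Bernstein step above; I expect it to arise from an auxiliary step that replaces a random variance proxy (for example, an $\hat{Q}$-based or $\hat{\mathbb{P}}(|X|>\tau_t)$-based bound that appears when one does not want to presume $Q(\tau_+) \leq (1+r)^2 Q(\tau_t)$ as a free lunch) by its deterministic counterpart via a single application of Hoeffding's or a crude Bernstein bound on bounded Bernoulli-type indicators. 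Because that bound does not couple to the deviation $r$, it enters the final inequality as a clean additive $r$-free tail governed by the smaller of the two scales $a_1 \wedge a_2$.
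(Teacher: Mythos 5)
Your argument is correct, but it takes a genuinely different route from the paper's. The paper never evaluates the empirical process at the perturbed points $(1\pm r)\tau_t$; instead it uses the integral identity $q_n(\tau_t)-q_n(\hat\tau_t)=2\int_{\tau_t}^{\hat\tau_t}z^{-1}p_n(z)\,dz$ together with $q_n(\hat\tau_t)=t/n=q(\tau_t)$ to localize everything at the single deterministic point $\tau_t$, splitting the integral into a deterministic gap $D_1$ (resp.\ $D_2$) plus a remainder $R_1$ (resp.\ $R_2$) that is itself an average of integrated, bounded random variables. Those remainders require two additional Bernstein applications, and that is precisely where the $r$-free third term $2e^{-(a_1\wedge a_2)^2 t/8}$ comes from (with the choice $v=(a_1\wedge a_2)t/2$) — not, as you speculate, from replacing a random variance proxy. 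Your monotonicity reduction $\{\hat\tau_t\geq(1+r)\tau_t\}\subseteq\{\hat Q((1+r)\tau_t)\geq(1+r)^2Q(\tau_t)\}$ (note it should be an inclusion, not an equality, since $q_n$ need only be non-increasing; the inclusion is all you use) sidesteps the remainders entirely, your two deterministic gap computations are exactly correct (they reproduce the paper's bounds $\tau_+^2 D_1\geq r(2+r)P(\tau_t)$ and $\tau_-^2 D_2\geq r(2-r)P(\tau_t-\tau_t r)$), and two Bernstein applications suffice. One caveat on bookkeeping: your constants do not come out \emph{exactly} as in \eqref{tau.tail.prob} — normalizing by $\tau_\pm^2$ and using $q(\tau_+)\leq q(\tau_t)=t/n$ (and $q(\tau_-)\leq t/\{n(1-r)^2\}$ for the lower tail) gives exponents $4a_1^2r^2t^2/(2t+4a_1rt/3)$ and $2a_2^2r^2t$, which strictly dominate the stated first and second terms — so your route in fact proves a slightly stronger two-term bound that implies the theorem. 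What the paper's decomposition buys is a template that transfers to the regression setting where the "truncation point" enters through residuals; what yours buys is a cleaner proof with no third term.
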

More properties of functions $P(z)$ and $Q(z)$ can be found in Appendix~\ref{sec:pre} in the supplement.

\begin{remark} \label{rmk1}
{\rm
We discuss some direct implications of Theorem~\ref{thm1}.
\begin{enumerate}
\item[(i)] Let $t = t_n\geq 1$ satisfy $t \to \infty$ and $t =o(n)$ as $n\to \infty$. By Proposition~\ref{prop:exist}, $\tau_t \to \infty$, $\sigma_{\tau_t}\to \sigma$ and $\tau_t   \sim  \sigma\sqrt{ n /t }$, which further implies $P(\tau_t) \to  \sigma$ and $Q(\tau_t) \to  \sigma$ as $n\to \infty$.

\item[(ii)] With $r=1/2$ and $t= (\log n)^{1+\kappa }$ for some $\kappa >0$ in \eqref{tau.tail.prob}, the constants $a_1 =a_1(t , 1/2)$ and $a_2 = a_2(t, 1/2)$ satisfy $a_1 \to 5/9$ and $a_2  \to 3/2$ as $n\to \infty$. The resulting $\hat{\tau}_t$ satisfies that with probability approaching one, $\tau_t /2   \leq \hat{\tau}_t\leq 3 \tau_t/2$.
\end{enumerate}
}
\end{remark}

The following result, which is a direct consequence of \eqref{uniform.concentration}, Theorem~\ref{thm1} and Remark~\ref{rmk1}, shows that the data-driven estimator $\hat m_{\hat \tau_t}$ with $t= (\log n)^{1+\kappa}$ ($\kappa >0$) is tightly concentrated around the mean with high probability.

\begin{corollary}\label{cor:hat_m}
{\rm 
Assume the conditions of Theorem~\ref{thm1} hold. Then, the truncated mean $\hat m=\hat m_{\hat \tau_t}$ with $t= (\log n)^{1+\kappa}$ for some $\kappa>0$ satisfies $| \hat m | \leq c_1 \sqrt{(\log n)^{1+\kappa}/n}$ with probability greater than $1- c_2 n^{-1}$ as $n\to \infty$, where $c_1, c_2>0$ are constants independent of $n$.
}
\end{corollary}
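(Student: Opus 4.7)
The plan is to chain together two high-probability events: one guaranteeing that the data-driven threshold $\hat\tau_t$ lives in a neighborhood of the deterministic $\tau_t$, and one guaranteeing that $\hat m_\tau$ is small uniformly over that neighborhood. The hint that the corollary follows from \eqref{uniform.concentration}, Theorem~\ref{thm1}, and Remark~\ref{rmk1} essentially prescribes this strategy.

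First I would set $t = (\log n)^{1+\kappa}$ and $r = 1/2$ in Theorem~\ref{thm1}. By Remark~\ref{rmk1}(ii), the constants $a_1(t,1/2)$ and $a_2(t,1/2)$ from \eqref{def.c+-} converge to $5/9$ and $3/2$ respectively, so for all $n$ sufficiently large they are bounded below by, say, $1/2$. Substituting into \eqref{tau.tail.prob}, the exponent $a_1^2 r^2 t^2/(2t + 2 a_1 r t/3)$ is of order $t$, and similarly the other two exponents are of order $t$. Hence each of the three tail terms is bounded by $\exp(-c (\log n)^{1+\kappa})$ for some $c>0$, which is $o(n^{-k})$ for every fixed $k$. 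Define $E_1 = \{\tau_t/2 \le \hat\tau_t \le 3\tau_t/2\}$; then $\mathbb{P}(E_1) \ge 1 - o(n^{-1})$.

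Next, on the event $E_1$ the data-driven estimator satisfies $|\hat m_{\hat\tau_t}| \le \max_{\tau_t/2 \le \tau \le 3\tau_t/2} |\hat m_\tau|$. Applying the uniform concentration inequality \eqref{uniform.concentration} with $t = (\log n)^{1+\kappa}$, there is an event $E_2$ with $\mathbb{P}(E_2) \ge 1 - 2 e^{\log n - t} = 1 - 2 n \exp(-(\log n)^{1+\kappa}) = 1 - o(n^{-1})$ on which
\[
\max_{\tau_t/2 \le \tau \le 3\tau_t/2} |\hat m_\tau| \le C_t \sqrt{t/n} + \sigma_{\tau_t}/\sqrt{n}.
\]
By Proposition~\ref{prop:exist}(ii), $\sigma_{\tau_t} \to \sigma$ as $n\to\infty$, and hence the bound for $C_t$ in Proposition~\ref{prop2}(ii) gives $C_t \le \sqrt{2}\sigma + 2\sigma^2/\sigma_{\tau_t} + \sigma_{\tau_t}/6 = O(1)$. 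Since $\sigma_{\tau_t}/\sqrt{n}$ is dominated by $\sqrt{(\log n)^{1+\kappa}/n}$, we conclude that on $E_1 \cap E_2$,
\[
|\hat m_{\hat\tau_t}| \le c_1 \sqrt{(\log n)^{1+\kappa}/n},
\]
for an absolute constant $c_1$ depending only on $\sigma$. A union bound gives $\mathbb{P}(E_1 \cap E_2) \ge 1 - c_2/n$, which is in fact much stronger than needed.

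I do not expect a serious technical obstacle here; the two substantive results have already been established, and the work is bookkeeping of the probability bounds and verification that the constant $C_t$ remains bounded in $n$. The only mildly delicate point is checking that the exponents in Theorem~\ref{thm1} really yield super-polynomial decay once $t = (\log n)^{1+\kappa}$, which amounts to confirming that $a_1, a_2$ are bounded away from zero for large $n$; this is precisely what Remark~\ref{rmk1}(ii) records.
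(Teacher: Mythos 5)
Your proposal is correct and follows exactly the route the paper intends: the paper presents this corollary as a direct consequence of the uniform concentration bound \eqref{uniform.concentration}, Theorem~\ref{thm1} with $r=1/2$, and Remark~\ref{rmk1}(ii), combined by precisely the union-bound bookkeeping you carry out. The only points requiring care — that $a_1,a_2$ stay bounded away from zero so the tail probabilities in \eqref{tau.tail.prob} decay super-polynomially, and that $C_t$ remains $O(1)$ because $\sigma_{\tau_t}\to\sigma>0$ — are both handled correctly.
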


\subsubsection{Huber's mean estimator}
\label{sec:hub.mean}

For the truncated sample mean, even with the theoretically optimal tuning parameter, the deviation of the estimator only scales with the second moment rather than the ideal scale $\sigma$.  Indeed, the truncation method described above primarily serves as a heuristic device and paves the way for developing data-driven Huber estimators.

Given IID samples  $X_1,\ldots, X_n$ with mean $\mu$ and variance $\sigma^2$, recall the Huber estimator $\hat \mu_\tau = \argmin_\theta \sn \ell_\tau(X_i - \theta)$, which is also the unique solution to
\begin{align}
 \frac{1}{n} \sn \psi_\tau(X_i - \theta ) = 0 , \ \ \theta \in \RR. \label{huber.foc}
\end{align}
The non-asymptotic property of $\hat \mu_\tau$ is characterized by a Bahadur-type representation result developed in \cite{ZBFL2017}: for any $t>0$, $\hat \mu_\tau$ with $\tau = \sigma \sqrt{n/t}$ satisfies the bound $|\hat \mu_\tau - \mu - (1/n) \sn \psi_\tau(\varepsilon_i) | \leq  c_1 \sigma  t/\sqrt{n}$ with probability at least $1-3e^{-t}$ provided $n\geq c_2t$, where $c_1,c_2>0$ are absolute constants and $\varepsilon_i = X_i - \mu$ are noise variables. In other words, a properly chosen $\tau$ is such that the truncated average $ (1/n) \sn \psi_\tau(\varepsilon_i)$ is resistant to outliers caused by a heavy-tailed `noise'. Similar to \eqref{sample.tau}, now we would like to choose the robustification parameter by solving
\begin{align}
	\frac{1}{n} \sn \frac{\psi_\tau^2(\varepsilon_i)}{\tau^2} = \frac{t}{n} ,  \label{var.eqn}
\end{align}
which is practically impossible as $\varepsilon_i$'s are unobserved realized noise. In light of \eqref{huber.foc} and \eqref{var.eqn}, and motivated by Huber's Proposal 2 [page 96 in \cite{H1964}] for the simultaneous estimation of location and scale, we propose to estimate $\mu$ and calibrate $\tau$ by solving the following system of equations
\begin{align*}
\begin{cases}
	     \sn \psi_\tau(X_i - \theta) =0 , \\
 \frac{1}{n} \sn   \frac{ \psi_\tau^2(X_i - \theta ) }{\tau^2}  - \frac{t}{n} = 0,
\end{cases}   \theta \in \RR , \, \tau >0 .
\end{align*}
This method of simultaneous estimation can be naturally extended to the regression setting, as discussed in the next section.

A different while comparable proposal is a two-step method, namely $M$-estimation of $\mu$ with auxiliary robustification parameter computed separately by solving
$$
	\frac{1}{{n\choose 2}} \sum_{1\leq i<j\leq n} \frac{\min\{ (X_i- X_j )^2/2, \tau^2 \}}{\tau^2} = \frac{t}{n}.
$$
It is, however, less clear that how this method can be generalized to the regression problem. Therefore, our focus will be on the previous approach.

\subsubsection{Data-driven Huber regression}
\label{sec:data-adp-huber}
Consider the linear model $Y_i = \bX_i^\T \btheta^* + \varepsilon_i$ as in \eqref{reg.model} and the Huber estimator $\hat \btheta_\tau = \argmin_{\btheta \in \RR^d} \cL_\tau(\btheta)$, where $\cL_\tau(\btheta) = \sn \ell_\tau(Y_i - \bX_i^\T \btheta )$.
From the deviation analysis in \eqref{sec2.1} we see that to achieve the sub-Gaussian performance bound, the theoretically desirable tuning parameter for $\hat \btheta_\tau$ is $\tau \sim \sigma \sqrt{n/(d+t)}$ with $\sigma^2 = \var(\varepsilon_i)$.
Further, by the Bahadur representation \eqref{Bahadur.representation},
\begin{align}
	\hat \btheta_\tau  - \btheta^* =  \frac{1}{n} \sn \psi_\tau(\varepsilon_i)  \,\bSigma^{-1}\bX_i  + \cR_\tau, \nn
\end{align}
where the remainder $ \cR_\tau$ is of the order $\sigma (d+t)/n$ with exponentially high probability. This result demonstrates that the robustness is essentially gained from truncating the errors. Motivated by this representation and our discussions in Section~\ref{sec:motiv}, a robust tuning scheme is to find $\tau$ such that
\begin{align}
	\tau = \bigg\{ \frac{1}{n} \sn \psi_\tau^2(\varepsilon_i) \bigg\}^{1/2} \sqrt{\frac{n}{d+t}} , \ \ \tau>0. \label{eqn.residual}
\end{align}
Unlike the mean estimation problem, the realized noises $\varepsilon_i$ are unobserved. It is therefore natural to calibrate $\tau$ using fitted residuals. On the other hand, for a given $\tau >0$, the Huber loss minimization is equivalent to the following least squares problem with variable weights:
\begin{equation}
	\min_{w_i \geq 0, \; \btheta} \;\;\; \sn  \bigg\{ \frac{ (Y_i - \bX_i^\T \btheta )^2 }{w_i + 1} + \tau^2  w_i  \bigg\}, \label{wls}
\end{equation}
where the minimization is over $w_i \geq 0$ and $\btheta \in \RR^d$. This equivalence can be derived by writing down the KKT conditions of \eqref{wls}. Details will be provided in Remark \ref{rmk.equiv} below. By \eqref{wls}, this problem can be solved via the iteratively reweighted least squares method.

To summarize, we propose an iteratively reweighted least squares algorithm, which starts at iteration 0 with an initial estimate $\btheta^{(0)} = \hat \btheta_{{\rm ols}}$ (the least squares estimator) and involves three steps at each iteration.

\noindent
{\it Calibration:} Using the current estimate $\btheta^{(k)}$, we compute the vector of residuals $\bm{R}^{(k)} = (R_1^{(k)}, \ldots, R_n^{(k)} )^\T$, where $R_i^{(k)}  = Y_i -  \bX_i^\T\btheta^{(k)}$. Then we take $\tau^{(k)}$ as the solution to
\begin{align}
  \frac{1}{n} \sn \frac{\min\{ R_i^{(k)2}, \tau^2 \}}{\tau^2} = \frac{d+t}{n} , \ \ \tau>0.
  \label{eq:huber_fixed}
\end{align}
By Proposition~\ref{prop:exist2}, this equation has a unique positive solution provided $d+t < \sn I( |R_i^{(k)}|>0 )$.

\noindent
{\it Weighting:}  Compute the vector of weights ${\bm w}^{(k)} = (w_1^{(k)}, \ldots, w_n^{(k)})^\T$, where $w_i^{(k)} = |R_i^{(k)}|/\tau^{(k)} - 1 $ if $|R_i^{(k)} | > \tau^{(k)}$ and $w_i^{(k)} = 0$ if $|R_i^{(k)}|\leq \tau^{(k)} $. Then define the diagonal matrix $\textbf{W}^{(k)}={\rm diag}( (1+w_1^{(k)})^{-1}, \ldots, ( 1+ w_n^{(k)}  )^{-1} )$.

\noindent
{\it Weighted least squares:} Solve the weighted least squares problem \eqref{wls} with $w_i = w_i^{(k)}$ and $\tau = \tau^{(k)}$ to obtain
$$
	\btheta^{(k+1)}  =  ( \textbf{X}^\T \textbf{W}^{(k)} \textbf{X} )^{-1} \textbf{X}^\T {\bm Y} ,
$$
where $\textbf{X} = (\bX_1, \ldots, \bX_n)^\T \in \RR^{n \times d}$ and ${\bm Y} = (Y_1,\ldots, Y_n)^\T$.

Repeat the above three steps until convergence or until the maximum number of iterations is reached.

In addition, from Theorems~\ref{boot.concentration.thm}--\ref{Boot.consistency} we see that the validity of the multiplier bootstrap procedure requires a finite fourth moment condition, under which the ideal choice of $\tau$ is $\{  \upsilon_4   n /(d+t) \}^{1/4}$. To construct data-dependent robust bootstrap confidence set, we adjust equation \eqref{eq:huber_fixed} by replacing $R_i^{(k)2}$  and $\tau^2$ therein with $R_i^{(k) 4}$ and $\tau^4$, and solve instead
\begin{align}
	\frac{1}{n} \sn \frac{\min\{ R_i^{(k)4}, \tau^4 \}}{\tau^4} = \frac{d+t}{n} , \ \ \tau>0.     \label{tuning-free.2}
\end{align}
%Again we adopt the alternating approach: Starting with the initialization $\btheta^{(0)} =\hat{\btheta}_{{\rm OLS}}$, we iteratively update $\tau^{(k)}$ and  $\btheta^{(k)}$ for $k=1,2,\ldots$
Keep the other two steps and repeat until convergence or the maximum number of iterations is reached.  Let $\hat{\btheta}_{\hat{\tau}}$ and $\hat{\tau}$ be the obtained solutions. Then, we apply Algorithm~\ref{algo:huber_inference} with $\tau = \hat{\tau}$ to construct confidence sets.

Finally we discuss the choice of $t$. %In view of the discussions in Appendix~\ref{app:D}, if $t\geq 1$ is bounded, there is no net improvement in terms of robustness.
Since $t$ appears in both the deviation bound and confidence level, we let $t = t_n$ slowly grow with the sample size to gain robustness without compromising unbiasedness. We take $t = \log n$, a typical slowly growing function of $n$, in all the numerical experiments carried out in this paper.

\begin{remark}[{\sf Equivalence between \eqref{wls} and Huber regression}] \label{rmk.equiv}
{\rm	For a given $\btheta$ in \eqref{wls}, define $R_i=Y_i - \bX_i^\T \btheta$, $i=1,\ldots,n$. The KKT condition of the program \eqref{wls} with respect to each $w_i$ under the constraint $w_i \geq 0$ now reads:
	\begin{align*}
	-\frac{R_i^2}{(w_i+1)^2} +\tau^2 -\lambda_i =0; \;\; w_i \geq 0, \;\;  \lambda_i \geq 0;  \;\;  \lambda_i w_i =0,
	\end{align*}
	where $\lambda_i$ is the Lagrangian multiplier. The solution to the KKT condition takes the form:
	\begin{align*}
	&w_i =\frac{|R_i|}{\tau}-1, \;\; \lambda_i=0  \; &  \mathrm{if} \quad |R_i|\geq \tau, \\
	& w_i=0,        \;\; \lambda_i=\tau^2-R_i^2 \; &   \mathrm{if}\quad |R_i|< \tau.
	\end{align*} 
This gives the optimal solution of $w_i$. By plugging the optimal solution of $w_i$ back into \eqref{wls}, we obtain the following optimization with respect to $\btheta$:
	\begin{equation*}
		\min_{\btheta}  
			\sum_{i=1}^n \left(2\tau |Y_i - \bX_i^\T \btheta|-\tau^2\right)  I(|Y_i - \bX_i^\T \btheta| \geq \tau)+   |Y_i - \bX_i^\T \btheta|^2 I(|Y_i - \bX_i^\T \btheta| < \tau),
	\end{equation*}
	which is equivalent to Huber regression.}
\end{remark}

\section{Multiple inference with multiplier bootstrap calibration}
\label{sec3}

In this section, we apply the adaptive Huber regression with multiplier bootstrap to simultaneously test the hypotheses in \eqref{hypotheses}. Given a random sample $(\by_1,\bx_1),\ldots, (\by_n, \bx_n)$ from the multiple response regression model \eqref{panel.data}, we define robust estimators
\begin{align} \label{robust.mle}
	(\hat{\mu}_k , \hat{\bbeta}_k) \in  \argmin_{\mu \in \RR , \, \bbeta \in \RR^s} \sn \ell_{\tau_k}(y_{ik} - \mu - \bx_i^\T \bbeta ), \ \ k=1,\ldots, m,
\end{align}
where $\tau_k$'s are robustification parameters.

To conduct simultaneous inference for $\mu_k$'s, we use the multiplier bootstrap to approximate the distribution of $\hat{\mu}_k-\mu_k$.  Let $W$ be a random variable with unit mean and variance. Independent of $\{ (\by_i, \bx_i) \}_{i=1}^n$, let $\{ W_{ik}, 1\leq i\leq n, 1\leq k\leq m\}$ be IID from $W$. Define the multiplier bootstrap estimators
\begin{align}
	(\hat{\mu}_k^\B,  \hat{\bbeta}^\B_k) \in  \argmin_{ {\btheta} = (\mu,  \, \bbeta^\T)^\T : \atop \| \btheta - \hat{\btheta}_k \|_2 \leq R_k } \sn W_{ik} \ell_{\tau_k}(y_{ik} - \mu - \bx_i^\T \bbeta ), \ \ k=1,\ldots, m, \label{boot.est}
\end{align}
where $\hat{\btheta}_k = (\hat{\mu}_k , \hat{\bbeta}_k^\T)^\T$ and $R_k$'s are radius parameters. We will show that the unknown distribution of $\sqrt{n} \,( \hat{\mu}_k - \mu_k )$ can be approximated by the conditional distribution of $\sqrt{n} \, ( \hat{\mu}_k^\B - \hat{\mu}_k )$ given $\mathcal{D}_{kn} := \{ (y_{ik}, \bx_i) \}_{i=1}^n$.

The main result of this section establishes validity of the multiplier bootstrap on controlling the FDP in multiple testing. For $k=1,\ldots, m$, define test statistics $\hat{T}_k = \sqrt{n} \, \hat{\mu}_k$ and the corresponding bootstrap $p$-values $p^\B_k = G^\B_k(|\hat{T}_k|)$, where $G^\B_k(z) := \PP (   \sqrt{n} \, | \hat{\mu}_k^\B - \hat{\mu}_k | \geq z   | \mathcal{D}_{kn}  )$, $z\geq 0$. For any given threshold $t\in (0,1)$, the false discovery proportion is defined as
\begin{align}
	\FDP(t) = V(t) / \max\{R(t) , 1\} , \label{def:FDP}
\end{align}
where $V(t) = \sum_{k\in \mathcal{H}_0} I(p^\B_k \leq t)$ is the number of false discoveries, $R(t) = \sum_{k=1}^m I(p^\B_k \leq t)$ is the number of total discoveries and $\mathcal{H}_0 := \{ k : 1\leq k\leq m, \mu_k = 0 \}$ is the set of true null hypotheses. For any prespecified $\alpha \in (0,1)$, applying the the Benjamini and Hochberg (BH) method \citep{BH1995} to the bootstrap $p$-values $p^\B_1,\ldots, p^\B_m$ induces a data-dependent threshold
\begin{align} \label{BH.threshold}
	t^\B_{{\rm BH}} = p^\B_{(k^\B)}   ~\mbox{ with }~ k^\B = \max \{ k: 1\leq k\leq m, p^\B_{(k)} \leq \alpha k/m \} .
\end{align}
We reject the null hypotheses for which $p^\B_k \leq t^\B_{{\rm BH}}$.

\begin{assumption}
\label{moment.cond2}
{\rm
$(\by_1, \bx_1), \ldots , (\by_n, \bx_n)$ are IID observations from $(\by, \bx)$ that satisfies $\by =  \bmu +  \bGamma  \bx + \bepsilon$, where $\by=(y_1, \ldots, y_m)^\T$, $\bmu =(\mu_1,\ldots, \mu_m)^\T$, $\bGamma = (\bbeta_1 , \ldots, \bbeta_m)^\T \in \RR^{m\times s}$ and $\bepsilon = (\varepsilon_1, \ldots, \varepsilon_m)^\T$. The random vector $\bx \in \RR^s$ satisfies $\EE(\bx) = \textbf{0}$, $\EE(\bx \bx^\T) = \bSigma$ and $\PP(|\langle \bu, \bSigma^{-1/2} \bx \rangle |\geq t) \leq 2\exp(-t^2 \| \bu \|_2^2/A_0^2)$ for all $\bu \in \RR^s$, $t\in \RR$ and some constant $A_0>0$. Independent of $\bx$, the noise vector $\bepsilon$ has independent elements and satisfies $\EE(\bepsilon) = \textbf{0}$ and
$
	c_l \leq \min_{1\leq k\leq m} \sigma_k \leq \max_{1\leq k\leq m} \upsilon_{k,4}^{1/4} \leq c_u
$
for some constants $c_l , c_u >0$, where $\sigma_k^2 = \EE(\varepsilon_k^2)$ and $\upsilon_{k,4}= \EE(\varepsilon_k^4)$.
}
\end{assumption}

\begin{theorem} \label{FDP.thm}
Assume Condition~\ref{moment.cond2} holds and $m=m(n)$ satisfies $m\to \infty$ and $\log m=o(n^{1/3})$. Moreover, as $(n,m)\to\infty$,
\begin{align} \label{cond.SNR}
	{\rm card}\Big\{  k: 1\leq k\leq m , |\mu_k | / \sigma_k \geq \lambda_0 \sqrt{(2 \log m) / n} \, \Big\} \to \infty
\end{align}
for some $\lambda_0>2$. Then, with
$$
	\tau_k  = v_k  \bigg\{ \frac{n}{s+  2\log(nm) } \bigg\}^{1/3} ~\mbox{ and }~ R_k = v_k \geq \upsilon_{k,4}^{1/4}  , \ \ k=1,\ldots, m ,
$$
in \eqref{robust.mle} and \eqref{boot.est}, it holds
\begin{align} \label{FDP.consistency}
	\frac{\FDP(t_{\BH}^\B)}{(m_0/m)}  \to \alpha ~\mbox{ in probability as } (n,m) \to \infty ,
\end{align}
where $m_0 = {\rm card}(\mathcal{H}_0)$.
\end{theorem}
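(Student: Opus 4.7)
\textbf{Proof plan for Theorem~\ref{FDP.thm}.}

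\emph{Step 1: Linearization, uniformly in $k$.} Because $\EE(\bx) = \mathbf{0}$, the $(1,1)$-entry of the population precision matrix $\{\EE[(1, \bx^\T)^\T(1, \bx^\T)]\}^{-1}$ equals $1$, so the Bahadur representation of Theorem~\ref{br.thm}, applied to the $k$-th regression with dimension $d = s+1$ and tail parameter $t \asymp \log(nm)$, yields that on a high-probability event,
\[
\max_{1\leq k\leq m}\bigg| \sqrt{n}\bigl(\hat{\mu}_k - \mu_k\bigr) - n^{-1/2}\sum_{i=1}^n \psi_{\tau_k}(\varepsilon_{ik})\bigg| \;\lesssim\; \frac{s + \log(nm)}{\sqrt{n}} \;=\; o(1),
\]
where $\psi_\tau(u) = \ell_\tau'(u)$ and the last estimate uses the hypothesis $\log m = o(n^{1/3})$. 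The bootstrap analogue, via Theorem~\ref{boot.concentration.thm}, gives the parallel bound
$\max_k |\sqrt{n}(\hat{\mu}_k^\B - \hat{\mu}_k) - n^{-1/2}\sum_i U_{ik}\psi_{\tau_k}(\varepsilon_{ik})| = o_P(1)$ with the same scaling. This reduces both the sampling and the bootstrap distributions of $\hat T_k$ to those of truncated linear sums, uniformly in $k$.

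\emph{Step 2: Gaussian approximation of the $p$-values.} With the choice $\tau_k \asymp n^{1/3}$, the truncation bias satisfies $\sqrt{n}\,|\EE\psi_{\tau_k}(\varepsilon_k)| \leq \sqrt{n}\,\upsilon_{k,4}/\tau_k^3 = O(n^{-1/2})$, and $|\EE\psi_{\tau_k}^2(\varepsilon_k) - \sigma_k^2| = O(\upsilon_{k,4}/\tau_k^2) = o(1)$. A Berry--Esseen bound applied to $n^{-1/2}\sum_i \psi_{\tau_k}(\varepsilon_{ik})/\sigma_k$, combined with a conditional Berry--Esseen bound (using the sub-Gaussian weights $U_{ik}$ from Condition~\ref{weight.cond}) for the bootstrap sum $n^{-1/2}\sum_i U_{ik}\psi_{\tau_k}(\varepsilon_{ik})/\hat\sigma_k$ with $\hat\sigma_k^2 := n^{-1}\sum_i\psi_{\tau_k}^2(\varepsilon_{ik})$, yields that, uniformly in $k$,
\[
\bigl|\, p_k^\B \;-\; p_k^*\,\bigr| \;=\; o_P(1), \qquad p_k^* := 2\{1 - \Phi(|\hat T_k|/\sigma_k)\}.
\]
Condition~\ref{moment.cond2} endows the noise vector with \emph{independent} coordinates, so conditional on $\{\bx_i\}$ the $\{\hat T_k\}_{k\in \mathcal{H}_0}$ are asymptotically independent and each marginally close to $\mathcal{N}(0,\sigma_k^2)$; hence $\{p_k^*\}_{k\in\mathcal{H}_0}$ are asymptotically i.i.d.\ Uniform$(0,1)$.

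\emph{Step 3: FDP control via the standard BH argument.} Define $V^*(t)=\sum_{k\in\mathcal{H}_0}I(p_k^*\leq t)$ and $R^*(t)=\sum_{k=1}^m I(p_k^*\leq t)$. The approximate independence and uniformity of the null $p_k^*$'s give $\sup_{t \geq t_0}|V^*(t)/(m_0 t) - 1| = o_P(1)$ for $t_0 \gtrsim m^{-1/2}$, by a Bernstein/empirical-process argument. The signal-strength condition \eqref{cond.SNR} guarantees that every $k$ with $|\mu_k|/\sigma_k \geq \lambda_0\sqrt{2\log m/n}$ produces $p_k^* \lesssim m^{-\lambda_0^2/2}$ with high probability, and since $\lambda_0 > 2$ there is a diverging number of $p_k^*$'s bounded by $\alpha/m$; this forces $k^\B \to \infty$ and keeps $t_{\BH}^\B$ in the regime where the approximation is valid. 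Transferring from $p_k^*$ to $p_k^\B$ via Step~2 and applying the classical argument (e.g.\ as in Liu, 2014, or Storey--Taylor--Siegmund, 2004) gives $\FDP(t_{\BH}^\B)/(m_0/m) \to \alpha$ in probability.

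\emph{Main obstacle.} The principal technical difficulty is the \emph{uniform} Berry--Esseen control across $m$ tests: concentration must be strong enough at tail level $t \asymp \log(nm)$ to survive the union bound and the quantile transformation $2\{1-\Phi(\cdot)\}$, while the effective sample size for each CLT is still $n$. This is where the joint hypotheses $s+\log m = o(n^{1/3})$ and $\tau_k \asymp n^{1/3}$ are crucial: they simultaneously tame the truncation bias (of order $n^{-1/2}$) and the linearization remainder (of order $(s+\log(nm))/\sqrt{n}$), leaving the Berry--Esseen error $O(n^{-1/2})$ as the binding term, comfortably smaller than the BH resolution scale $m^{-1/2}$ since $\log m = o(n^{1/3})$.
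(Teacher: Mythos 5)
Your architecture (uniform Bahadur linearization, Gaussian approximation of the bootstrap law, then the Storey/BH counting argument) matches the paper's, but Step~2 and your ``main obstacle'' paragraph contain a genuine gap: an additive Berry--Esseen bound is not the right tool, and the quantitative claim that the $O(n^{-1/2})$ error is ``comfortably smaller than the BH resolution scale $m^{-1/2}$'' is false under the stated scaling. The hypothesis $\log m = o(n^{1/3})$ allows $m$ to grow subexponentially in $n$ (e.g.\ $m = e^{n^{1/4}}$), in which case both $m^{-1/2}$ and the actual resolution scale $b_m/m$ (with $b_m \to \infty$ arbitrarily slowly, since \eqref{cond.SNR} only guarantees a diverging --- possibly very slowly diverging --- number of strong signals) are far \emph{below} $n^{-1/2}$. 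To conclude $\sum_{k\in\mathcal{H}_0} I(p_k^\B \leq t)/(m_0 t) \to 1$ uniformly down to $t \asymp b_m/m$, you need \emph{relative} (ratio-form) accuracy of the tail approximation at Gaussian quantiles $z \asymp \sqrt{2\log m}$, where the target probability itself is of order $1/m$; an additive $O(n^{-1/2})$ error swamps this entirely. The paper instead proves a Cram\'er-type moderate deviation result (Lemma~\ref{robust.md}, built on Liu--Shao), showing $\PP(\sqrt{n}\,|\hat\mu_k-\mu_k| \geq z)/[2\{1-\Phi(z/\sigma_k)\}] \to 1$ uniformly for $z/\sigma_k = o(n^{1/6})$; the condition $\log m = o(n^{1/3})$ is precisely what places $\sqrt{2\log m}$ inside this moderate deviation range. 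The same ratio-form control is needed for the conditional bootstrap tail, and the linearization remainder $\delta \asymp (s+\log(nm))/\sqrt{n}$ must be absorbed \emph{multiplicatively}, via bounds of the type $|\PP(\sigma_k|Z| \geq z \pm \delta) - \PP(|Z| \geq z/\sigma_k)| \lesssim \delta(1+z/\sigma_k)e^{\delta z/\sigma_k^2}\,\PP(|Z|\geq z/\sigma_k)$, which requires $\delta\sqrt{\log m} = o(1)$ --- again exactly what the joint scaling delivers. Your restriction of the empirical-process bound to $t \gtrsim m^{-1/2}$ is likewise too coarse: the data-driven threshold is only guaranteed to exceed $\alpha m_{1,\lambda_0}/m$, which need not be $\gtrsim m^{-1/2}$.

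In short, replace the Berry--Esseen step with a uniform Cram\'er-type moderate deviation theorem for the truncated sums (and its conditional bootstrap counterpart), and extend the uniform ratio convergence of the null empirical process down to $t \geq b_m/m$; without these two repairs the argument does not close under $\log m = o(n^{1/3})$.
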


In practice, conditional quantiles of $\sqrt{n} \, ( \hat{\mu}_k^\B - \hat{\mu}_k )$ can be computed with arbitrary precision by using the Monte Carlo simulations: Independent of the observed data, generate IID random weights $\{ W_{ik,b}, 1\leq i\leq n, 1\leq k\leq m , 1\leq b\leq B \}$ from $W$, where $B$ is the number of bootstrap replications. For each $k$, the bootstrap samples of $(\hat{\mu}_k^\B  , \hat{\bbeta}^\B_k)$ are given by
\begin{equation}\label{eq:boot.est}
	 (\hat{\mu}^\B_{k,b}, \hat{\bbeta}^\B_{k,b}  )   \in \argmin_{   \mu \in \RR , \, \bbeta  \in  \RR^{s} }   \sn W_{ik,b} \,  \ell_{\tau_k}( y_{ik} -  \mu  -  \bx_i^\T \bbeta ) , \ \ b=1,\ldots, B.
\end{equation}
For $k =1,\ldots,  m$, define empirical tail distributions
$$
	G^\B_{k , B}(z) =  \frac{1}{ B+1}  \sum_{b=1}^B I( \sqrt{n} \, |\hat{\mu}^\B_{k,b} - \hat{\mu}_k |  \geq z  ) , \ \ z \geq 0.
$$
The bootstrap $p$-values are thus given by $\{ p^\B_{k, B} =  G^\B_{k , B}(\sqrt{n}\,|\hat{\mu}_k| ) \}_{k=1}^m$, to which either the BH procedure or Storey's procedure can be applied. For the former, we reject $H_{0k}$ if and only if $p^\B_{k, B}  \leq p^\B_{(k_B^\B) , B}$, where $k_B^\B = \max\{ k : 1\leq k \leq m,  p^\B_{(k),B} \leq k \alpha/m\}$ for a predetermined $0<\alpha<1$ and $p^\B_{(1),B} \leq \cdots \leq p^\B_{(m),B}$ are the ordered bootstrap $p$-values. See Algorithm~\ref{algo:huber_multi_infer} for detailed implementations.

\begin{algorithm}[!t]
    \caption{{\small {\sf Huber Robust Multiple Testing}}}
    \label{algo:huber_multi_infer}
    {\textbf{Input:} Data $\{ (\by_i, \bx_i) \}_{i=1}^n$, number of bootstrap replications $B$, thresholding parameters $\{\tau_k\}_{k=1}^m$, nominal level $\alpha \in (0,1)$.}
    %\hspace*{\algorithmicindent} \textbf{Output:} A confidence set of  $\bbeta^*$.
    \begin{algorithmic}[1]
      \STATE  Solve $m$ Huber regressions in \eqref{robust.mle} and obtain $\{(\hat{\mu}_k , \hat{\bbeta}_k)\}_{k=1}^m$.
      \FOR{$b=1,2 \ldots, B$}
          \STATE Generate IID random weights $\{W_{ik, b}\}_{i \in [n], k \in [m]}$ satisfying $\EE(W_{ik,b})=1$ and $\var(W_{ik , b})=1$.
          \STATE Solve $m$ weighted Huber regressions in \eqref{eq:boot.est} and obtain $\{(\hat{\mu}^\B_{k,b}, \hat{\bbeta}^\B_{k,b})\}_{k=1}^m$.
      \ENDFOR
      \FOR{$k=1, \ldots, m$}
          \STATE Compute the bootstrap $p$-value:
          $$
            p^\B_{k,B}=\frac{1}{ B+1}  \sum_{b=1}^B I( |\hat{\mu}^\B_{k,b} - \hat{\mu}_k |  \geq |\hat{\mu}_k|   ).
          $$
      \ENDFOR

      \STATE Sort the bootstrap $p$-values: $p^\B_{(1),B} \leq \cdots \leq p^\B_{(m),B}.$
      \STATE Compute the BH threshold:
       $
            k_m^\B = \max\{1\leq k \leq m:  p^\B_{(k),B} \leq k \alpha/m\}.
       $
    \end{algorithmic}

 \textbf{Output:} Set $\{ 1\leq k\leq m: p^\B_{k ,B} \leq p^\B_{(k_m^\B ),B}\}$ of rejections, i.e. reject $H_{0k}$ if $p^\B_{k,B} \leq p^\B_{(k_m^\B ),B}$.
\end{algorithm}

\section{Numerical studies}
\label{sec.numerical}

%In this section, we conduct simulation studies to illustrate the performance for both Huber robust confidence set (Algorithm~\ref{algo:huber_inference}) and Huber robust multiple testing (Algorithm~\ref{algo:huber_multi_infer}). Due to space limitations, the studies on Huber robust multiple testing is relegated to Appendix~\ref{sec:exp_multi} in the supplementary material.

\subsection{Confidence sets}\label{sec:exp_conf_sets}

We first provide simulation studies to illustrate the performance of the robust bootstrap procedure for constructing confidence sets with various heavy-tailed errors. Recall the linear model
$Y_i=\bX_i^\T \btheta^*+\varepsilon_i$ in \eqref{reg.model}. We simulate $\{\bX_i\}_{i=1}^n$ from $\mathcal{N}(0, \bI_d)$. The true regression coefficient $\btheta^*$ is a vector equally spaced between $[0,1]$. The errors $\varepsilon_i$ are IID from one of the following distributions, standardized to have mean 0 and variance 1.
\begin{enumerate}
\item Standard Gaussian distribution $\mathcal{N}(0,1)$;
\item $t_{\nu}$-distribution with degrees of freedom $\nu=3.5$;
\item Gamma distribution with shape parameter $3$ and scale parameter $1$;
\item $t$-Weibull mixture (Wbl mix) model: $\varepsilon = 0.5u_t+0.5u_\mathrm{W}$, where $u_t$ follows a standardized $t_4$-distribution and $u_{\rm W}$ follows a standardized Weibull distribution with shape parameter $0.75$ and scale parameter $0.75$;
\item Pareto-Gaussian mixture (Par mix) model: $\varepsilon=0.5u_\mathrm{P}+0.5u_\mathrm{G}$, where $u_{\mathrm{P}}$ follows a Pareto distribution with shape parameter $4$ and scale parameter $1$  and $u_{{\rm G}}\sim \mathcal{N}(0,1)$;
\item Lognormal-Gaussian mixtrue (Logn mix) model: $\varepsilon=0.5u_\mathrm{LN}+0.5u_\mathrm{G}$, where $u_\mathrm{LN}=\exp(1.25Z)$ with $Z\sim \mathcal{N}(0,1)$ and $u_{{\rm G}}\sim \mathcal{N}(0,1)$.
\end{enumerate}
\iffalse
Figure \ref{fig:noise} shows probability density functions of different noises. In particular, Figure \ref{fig:zoom_in} illustrates the tail behavior for noises, from which we can see that the errors in models 2--6 have heavier tails than the Gaussian tail.

\begin{figure}[!t]
\centering
\subfigure[Density functions between (-5, 5)]{\includegraphics[width=.45\textwidth]{./}}
\subfigure[Density functions between (2, 5)]{\includegraphics[width=.45\textwidth]{./}\label{fig:zoom_in}}
\caption{Density functions of the errors. Figure (b) is a zoom-in figure for (a), which indicates that the errors in models 2--6 all have heavier tails than the Gaussian tail. }
\label{fig:noise}
\end{figure}
\fi
Moreover, we consider three types of random weights as follows:
\begin{itemize}
\item Gaussian weights: $W_i\sim \mathcal{N}(0,1)+1$;
\item Bernoulli weights (with mean 0.5): $W_i\sim 2\mathrm{Ber}(0.5)$;
\item A mixture of Bernoulli and Gaussian weights considered by \citet{Z2016}:  $W_i=z_i+u_i+1$, with $u_i\sim(\mathrm{Ber}(b)-b)\sigma_u$, $b=0.276$, $\sigma_u = 0.235$, and $z_i\sim \mathcal{N}(0,\sigma_z^2)$, $\sigma_z^2= 0.038$.
\end{itemize}
All three weights considered are such that $\EE( W_i)=\var(W_i)=1$. Using non-negative random weights has the advantage that the weighted objective function is
guaranteed to be convex. Numerical results reveal that Gaussian and Bernoulli weights demonstrate almost the same coverage performance.

\begin{table}[!t]
\centering
\caption{Average coverage probabilities with $n=100$ and $d=5$ for different nominal coverage levels $1-\alpha=[0.95,0.9,0.85,0.8,0.75]$. The weights $W_i$ are generated from $\mathcal{N}(1,1)$.}
\begin{tabular}{lllllllll}
\hline
Noise ~~~~~& Approach  &  $0.95$ & $0.9$ & $0.85$&$0.8$&$0.75$ \\
\hline
\multicolumn{4}{l}{Gaussian}\\
&boot-Huber&0.954&0.908&0.842&0.783&0.734\\
&boot-OLS&0.952&0.908&0.837&0.785&0.735\\
\multicolumn{4}{l}{$t_\nu$}\\
&boot-Huber&0.966&0.904&0.848&0.801&0.748\\
&boot-OLS&0.954&0.887&0.798&0.710&0.630\\
\multicolumn{4}{l}{Gamma}\\
&boot-Huber&0.962&0.918&0.860&0.798&0.747\\
&boot-OLS&0.955&0.910&0.843&0.775&0.700\\
%\multicolumn{4}{l}{Weibull}\\
%&boot-Huber&0.974&0.931&0.870&0.829&0.767\\
%&boot-OLS&0.966&0.884&0.798&0.692&0.613\\
\multicolumn{4}{l}{Wbl mix}\\
&boot-Huber&0.962&0.907&0.851&0.797&0.758\\
&boot-OLS&0.944&0.899&0.808&0.775&0.680\\
\multicolumn{4}{l}{Par mix}\\
&boot-Huber&0.955&0.907&0.856&0.802&0.761\\
&boot-OLS&0.948&0.900&0.843&0.785&0.738\\
\multicolumn{4}{l}{Logn mix}\\
&boot-Huber&0.958&0.912&0.860&0.782&0.744\\
&boot-OLS&0.954&0.912&0.796&0.682&0.616\\
\hline
\end{tabular}
\label{table:noise}
\end{table}

\begin{table}
\centering
\caption{Average coverage probabilities with $n=100$ and $d=5$ for different nominal coverage levels $1-\alpha=[0.95,0.9,0.85,0.8,0.75]$.}
Bernoulli weights $W_i \sim 2\mathrm{Ber}(0.5)$.\\
~\\
\begin{tabular}{llllllll}
\hline
Noise & Approach  & $0.95$ & $0.9$ & $0.85$&$0.8$&$0.75$ \\
\hline
Gaussian\\
&boot-Huber  & 0.947 & 0.895 & 0.847 & 0.792 & 0.740\\
&boot-OLS & 0.926 & 0.865 & 0.824 & 0.783& 0.726\\
$t_\nu$\\
&boot-Huber  & 0.930 & 0.897 & 0.841 & 0.786 & 0.755\\
&boot-OLS  & 0.884 & 0.815 & 0.757 & 0.703 & 0.646\\
Gamma\\
&boot-Huber  & 0.943 & 0.900 & 0.861 & 0.805 & 0.756\\
&boot-OLS  & 0.935 & 0.882 & 0.831 & 0.774 & 0.720\\
Wbl mix\\
&boot-Huber  & 0.948 & 0.894 & 0.842 & 0.793 & 0.739\\
&boot-OLS  & 0.931 & 0.859 & 0.779 & 0.721 & 0.664\\
Par mix\\
&boot-Huber  & 0.932&0.875&0.832&0.780&0.741\\
&boot-OLS  & 0.927&0.871&0.817&0.765&0.716\\
Logn mix\\
&boot-Huber  & 0.944 & 0.892 & 0.846 & 0.807 & 0.758\\
&boot-OLS & 0.915 & 0.838 & 0.792 & 0.720 & 0.674\\
\hline
\end{tabular}
~\\
~\\
~\\
Mixture weights $W_i=z_i+u_i+1$, with $u_i\sim(\mathrm{Ber}(b)-b)\sigma_u$, $b=0.276$, $\sigma_u=0.235$, and $z_i\sim \mathcal{N}(0,\sigma_z^2)$, $\sigma_z^2= 0.038$.\\
~\\
\begin{tabular}{llllllll}
\hline
Noise & Approach   & $0.95$ & $0.9$ & $0.85$&$0.8$&$0.75$ \\
\hline
Gaussian\\
&boot-Huber  &  0.930&0.864&0.812&0.763&0.698\\
&boot-OLS & 0.920&0.842&0.772&0.695&0.640\\
$t_\nu$\\
&boot-Huber  & 0.942& 0.893 & 0.844 & 0.788 & 0.733\\
&boot-OLS  & 0.894& 0.792 & 0.695 & 0.605 & 0.528\\
Gamma\\
&boot-Huber  &0.930 &0.873&0.831&0.782&0.731\\
&boot-OLS  & 0.911 & 0.832 & 0.754 & 0.686 & 0.625\\
Wbl mix\\
&boot-Huber  & 0.963 & 0.919 & 0.874 & 0.812 & 0.754\\
&boot-OLS  & 0.924 & 0.759 & 0.656 & 0.545 & 0.459\\
Par mix\\
&boot-Huber  &0.942&0.884&0.818&0.750&0.702 \\
&boot-OLS  &0.924&0.830&0.736&0.664&0.614 \\
Logn mix\\
&boot-Huber  & 0.940 & 0.876 & 0.829 & 0.796 & 0.751\\
&boot-OLS & 0.903 & 0.812 & 0.736 & 0.637 & 0.579\\
\hline
\end{tabular}
\label{table:weight}
\end{table}

\begin{table}[!t]
\centering
\caption{Average coverage probabilities with $n=200$, $d=5$ for different nominal coverage levels $1-\alpha=[0.95,0.9,0.85,0.8,0.75]$. The  weights $W_i$ are generated from $\mathcal{N}(1,1)$.}
\begin{tabular}{llllllll}
\hline
Noise ~~~~~& Approach  &  $0.95$ & $0.9$ & $0.85$&$0.8$&$0.75$ \\
\hline
\multicolumn{4}{l}{Gaussian}\\
&boot-Huber &0.957 &0.910 &0.850 &0.790 &0.736 \\
&boot-OLS &0.955 &0.907 &0.850 &0.789 &0.736 \\
\multicolumn{4}{l}{$t_\nu$}\\
&boot-Huber &0.958 &0.906 &0.848 &0.798 &0.749 \\
&boot-OLS &0.940 &0.863 &0.772 &0.684 &0.599 \\
\multicolumn{4}{l}{Gamma}\\
&boot-Huber &0.948 &0.899 &0.845 &0.780 & 0.726 \\
&boot-OLS &0.944 &0.889 &0.822 &0.751 &0.685 \\
%\multicolumn{4}{l}{Weibull}\\
%&boot-Huber &0.957 &0.907 &0.855 &0.813 &0.754 \\
%&boot-OLS &0.945 &0.864 &0.779 &0.689 &0.591 \\
\multicolumn{4}{l}{Wbl mix}\\
&boot-Huber &0.954 &0.889 &0.837 &0.775 &0.713 \\
&boot-OLS &0.939 &0.865 &0.784 &0.695 &0.621 \\
\multicolumn{4}{l}{Par mix}\\
&boot-Huber &0.945 &0.898 &0.847 &0.789 &0.738 \\
&boot-OLS &0.941 &0.886 &0.820 &0.757 &0.700 \\
\multicolumn{4}{l}{Logn mix}\\
&boot-Huber&0.958&0.916&0.864&0.812&0.748\\
&boot-OLS&0.938&0.886&0.812&0.718&0.590\\
\hline
\end{tabular}
\label{table:noise2}
\end{table}

\begin{table}[!t]
\centering
\caption{Average coverage probabilities for the Wbl mix error and for different nominal coverage levels $1-\alpha=[0.95,0.9,0.85,0.8,0.75]$. The weights $W_i$ are generated from $\mathcal{N}(1,1)$.}
~\\
\begin{tabular}{lllllllll}
\hline
Approach&$d$ & $n$\quad~\quad~\quad~\quad  &  $0.95$ & $0.9$ & $0.85$&$0.8$&$0.75$ \\
\hline
 boot-Huber \\
&&50&0.951&0.904&0.848&0.789&0.725\\
&2&100&0.959&0.914&0.866&0.827&0.771\\
&&200&0.954&0.917&0.856&0.814&0.756\\
&&50&0.982&0.945&0.876&0.826&0.752\\
&5&100&0.966&0.917&0.855&0.802&0.760\\
&&200&0.950&0.894&0.835&0.777&0.721\\
&&50&0.990&0.972&0.955&0.915&0.881\\
&10&100&0.980&0.949&0.897&0.850&0.799\\
&&200&0.970&0.922&0.864&0.826&0.777\\
\hline
boot-OLS \\
&&50&0.942&0.887&0.827&0.758&0.672\\
&2&100&0.956&0.901&0.849&0.785&0.714\\
&&200&0.947&0.898&0.822&0.763&0.685\\
&&50&0.976&0.911&0.836&0.754&0.688\\
&5&100&0.954&0.896&0.824&0.751&0.674\\
&&200&0.940&0.868&0.790&0.698&0.622\\
&&50&0.997&0.970&0.919&0.844&0.761\\
&10&100&0.975&0.921&0.850&0.784&0.719\\
&&200&0.954&0.879&0.816&0.731&0.650\\
\hline
\end{tabular}
\label{table:dn_boot_ols_4}
\end{table}

The number of bootstrap replications is set to be $B = 2000$. Nominal coverage probabilities $1-\alpha$ are given in the columns, where we consider $1-\alpha \in \{0.95, 0.90, 0.85, 0.80, 0.75\}$.  We report the empirical coverage probabilities from $1000$ simulations. We first consider a simple approach for choosing $\tau$, which is set to be $1.2 \{ \hat{\nu}_4 n / (d + \log n)\}^{1/4}$. Here, $\hat{\nu}_4$ is the empirical fourth moment of the residuals from the OLS and the constant 1.2 (which is slightly larger than 1) is chosen in accordance with Theorem \ref{Boot.consistency} which requires $v \geq \upsilon_4^{1/4}$. This simple ad hoc approach leads to adequate results in most cases. In Section \ref{sec:exp_adap}, we further investigate the empirical performance of the fully data-dependent procedure proposed in Section \ref{sec:adap}.

%{\red According to Theorem \ref{Boot.consistency}, we set  the Huber robustification parameter $\tau=1.2 \{ \hat{\nu}_4\, n / (d + \log n)\}^{1/4}$, where $\hat{\nu}_4$ is the estimated 4th moment of the error using the ordinary least squares.}

% is standard deviation of the empirical residuals computed from the least squares. In practice, we perform the estimator $\widehat{\sigma}^2$ as the sample variance of the residual of the OLS by regressing $\{Y_i\}_{i=1}^n$ on $\{\bX_i \}_{i=1}^n$
We compare our method with an OLS-based bootstrap procedure studied in \cite{SZ2015}, namely, replacing the weighted Huber loss in \eqref{bootHuber.est} by the weighted quadratic loss $\mathcal{L}_{\mathrm{ols}}^\B(\btheta) = \sn W_i (Y_i - \bX_i^\T \btheta)^2$.

Consider the sample size $n=100$ and dimension $d=5$. Table \ref{table:noise} and Table \ref{table:weight} display the coverage probabilities of the proposed bootstrap Huber method (boot-Huber) and the bootstrap OLS method (boot-OLS). %Table \ref{table:noise} shows the simulation results for Gaussian weights, while Table \ref{table:weight} shows the results for the other two random multipliers. Some interesting observations are summarized as follows.
At the normal model, our approach achieves a similar performance as the boot-OLS, which demonstrates the efficiency of adaptive Huber regression.
%Indeed, even when the unknown error comes from Gaussian, one can still safely use the proposed boot-Huber approach.
For heavy-tailed errors, our method significantly outperforms the boot-OLS using all three types of random weights. %The latter leads to under-coverage confidence sets in many cases, especially when the nominal coverage level $1-\alpha \leq 0.9$.
Also, we observe that the Gaussian and Bernoulli weights demonstrate nearly the same desirable performance.
%which outperforms Bernoulli weights for $t_\nu$ and  Pareto-Gaussian mixture distributions and mixture weights for Gamma and  Pareto-Gaussian mixture distributions.
For simplicity, we focus on the Gaussian weights throughout the remaining simulation studies.
%In all three different choices the bootstrap weights,  our method significantly outperforms the boot-OLS, especially when the nominal coverage level $1-\alpha$ is $0.8$ or $0.75$. Indeed, the boot-OLS leads to under-coverage confidence sets for many cases.

In Table \ref{table:noise2}, we increase the sample size to $n=200$ and retain all the other settings.
For most cases of heavy-tailed errors, the coverage probability of the boot-OLS method is lower than the nominal level, sometimes to a large extent.
In Table \ref{table:dn_boot_ols_4}, we generate errors from a $t$-Weilbull mixture distribution and consider different combinations of  $n$ ($n \in \{50, 100, 200\}$) and $d$ ($d \in \{2,5,10\}$). The robust procedure outperforms the least squares method across most of the settings.
%When $d=10$ and $n=50$, our boot-Huber leads to over-coverage. Indeed, this is a very challenging case since $n=50$ is too small as compared to $d=10$. When $n$ increases to $200$ for $d=10$, the coverage rates become closer to the nominal coverage levels.
Similar phenomena are also observed in other cases of heavy-tailed errors.
%and thus the results are omitted due to space limitations.

We also report the standard deviations of the estimated quantiles of boot-Huber and boot-OLS; see Appendix~\ref{sec:std} in the supplement. The experimental results show that the boot-Huber leads to uniformly smaller standard deviations.
Furthermore, we consider more challenging settings with correlated or non-Gaussian designs and non-equally spaced $\btheta^{*}$. The average coverage probabilities of the boot-Huber method are in general close to nominal level, while the boot-OLS leads to severe under-coverage in many heavy-tailed noise settings. More details are presented in Appendix~\ref{sec:corr} in the supplementary material.

\subsection{Performance of the data-driven tuning approach}
\label{sec:exp_adap}

\begin{figure}[!t]
\centering
  \subfigure[$\mathcal{N}(0,1)$]{\includegraphics[width = 0.495\textwidth]{./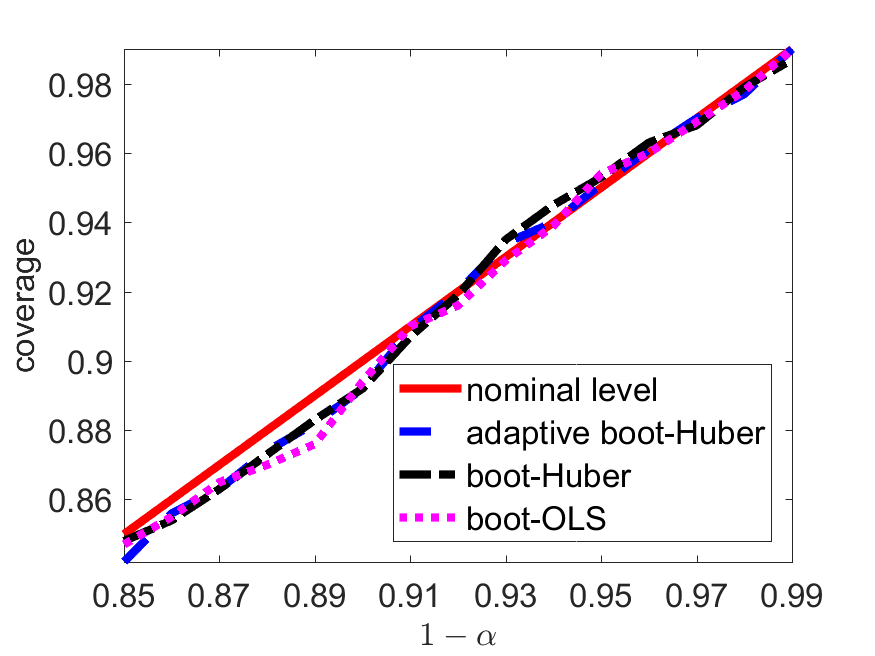}}
  \subfigure[$Logn(0,1)$]{\includegraphics[width = 0.495\textwidth]{./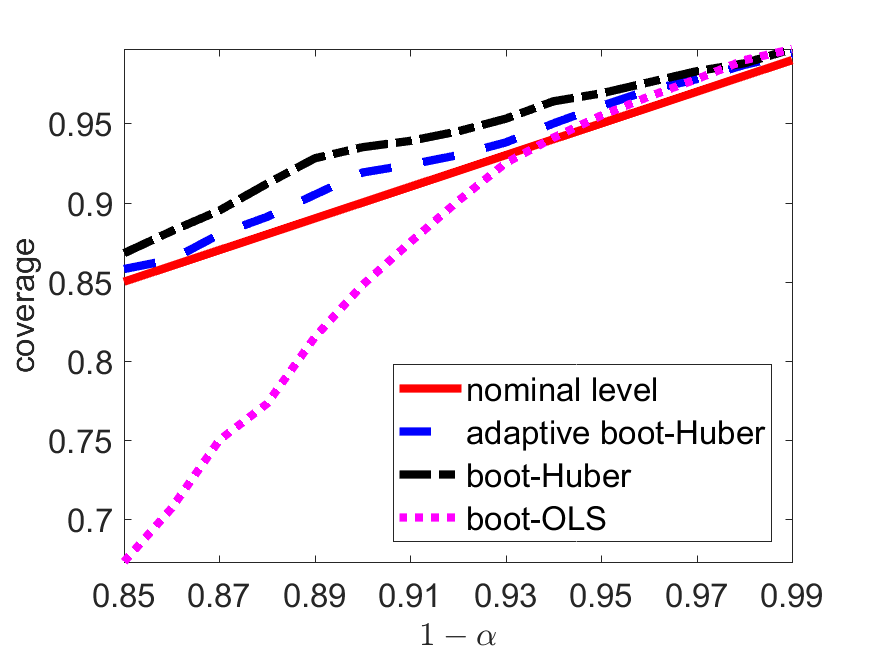}}
  \subfigure[$Logn(0,1.5)$]{\includegraphics[width = 0.495\textwidth]{./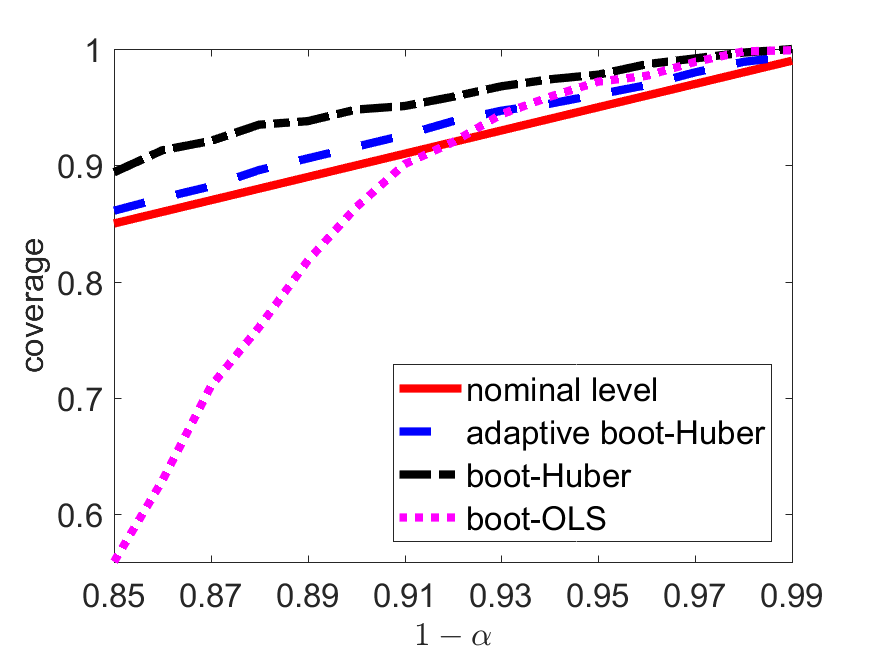}}
  \subfigure[$Logn(0,2)$]{\includegraphics[width = 0.495\textwidth]{./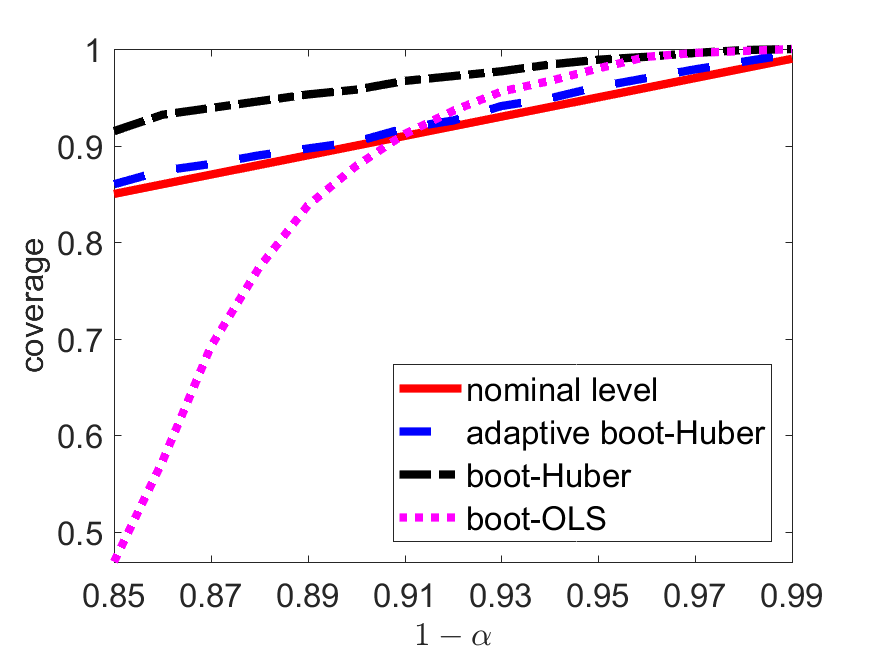}}
\caption{Comparison of coverage probabilities for different error distributions when the nominal coverage level $1-\alpha$ ranges from $0.85$ to $0.99$. Here, $x$-axis represents $1-\alpha$ and $y$-axis represents the average coverage rates over 1000 simulations. The red line represents the nominal coverage probability.
%Below a red line indicates under-coverage and above a red line indicates over-coverage. The closer to the red line, the better the inference performance.
}
\label{fig:adap}
\end{figure}

\begin{table}[!t]
\centering
\caption{Average coverage probabilities for different nominal coverage levels $1-\alpha \in \{0.99,0.97,0.95,0.9,0.87\}$. The weights $W_i$ are generated from $\mathcal{N}(1,1)$.}
\label{tab:adap}
\begin{tabular}{lllllll}
\hline
Noise         & Approach            & 0.99 & 0.97 & 0.95 & 0.90 & 0.87 \\
\hline
$\cN(0,1)$          &                     &       &       &       &       &       \\
              & adaptive boot-Huber & 0.993 & 0.970 & 0.942 & 0.896 & 0.868 \\
              & boot-Huber          & 0.991 & 0.971 & 0.946 & 0.899 & 0.868 \\
              & boot-OLS            & 0.993 & 0.970 & 0.948 & 0.895 & 0.868 \\
%$N(0,4^2)$          &                     &       &       &       &       &       \\
%              & adaptive-boot-Huber & 0.989 & 0.973 & 0.943 & 0.906 & 0.884 \\
%              & boot-Huber          & 0.993 & 0.974 & 0.937 & 0.909 & 0.887 \\
%              & boot-OLS            & 0.988 & 0.973 & 0.938 & 0.906 & 0.878 \\
$Logn(0, 1)$       &                     &       &       &       &       &       \\
              & adaptive boot-Huber & 0.994 & 0.978 & 0.961 & 0.919 & 0.880 \\
              & boot-Huber          & 0.997 & 0.983 & 0.969 & 0.935 & 0.895 \\
              & boot-OLS            & 0.997 & 0.978 & 0.955 & 0.848 & 0.750 \\
$Logn(0, 1.5)$ &                     &       &       &       &       &       \\
              & adaptive boot-Huber & 0.994 & 0.980 & 0.961 & 0.916 & 0.882 \\
              & boot-Huber          & 1.000 & 0.992 & 0.978 & 0.948 & 0.921 \\
              & boot-OLS            & 0.999 & 0.989 & 0.972 & 0.864 & 0.710 \\
$Logn(0, 2)$       &                     &       &       &       &       &       \\
              & adaptive boot-Huber & 0.995 & 0.979 & 0.961 & 0.904 & 0.881 \\
              & boot-Huber          & 1.000 & 0.996 & 0.989 & 0.958 & 0.939 \\
              & boot-OLS            & 1.000 & 0.996 & 0.980 & 0.879 & 0.692 \\
%Pareto$(2)$        &                     &       &       &       &       &       \\
%              & adaptive-boot-Huber & 0.997 & 0.982 & 0.964 & 0.914 & 0.880 \\
%              & boot-Huber          & 0.998 & 0.991 & 0.977 & 0.934 & 0.906 \\
%              & boot-OLS            & 0.998 & 0.990 & 0.966 & 0.857 & 0.711 \\
%Pareto$(3)$        &                     &       &       &       &       &       \\
%              & adaptive-boot-Huber & 0.996 & 0.983 & 0.959 & 0.916 & 0.873 \\
%              & boot-Huber          & 0.998 & 0.986 & 0.973 & 0.918 & 0.892 \\
%              & boot-OLS            & 0.997 & 0.984 & 0.958 & 0.842 & 0.736 \\
%Pareto$(4)$        &                     &       &       &       &       &       \\
%              & adaptive-boot-Huber & 0.996 & 0.984 & 0.960 & 0.916 & 0.873 \\
%              & boot-Huber          & 0.997 & 0.985 & 0.969 & 0.920 & 0.887 \\
%              & boot-OLS            & 0.997 & 0.977 & 0.955 & 0.844 & 0.759 \\
\hline
\end{tabular}
\end{table}

We further investigate the empirical performance of the data-driven procedure proposed in Section \ref{sec:adap}. We consider lognormal distributions $Logn(\mu, \sigma)$ with location parameter $\mu=0$ and varying shape parameters $\sigma$. The larger the value of $\sigma$ is, the heavier the tail is.
%The performance at the normal model shows that the proposed data-driven method loses no or little efficiency.
%We also present the performance when the error follows a Gaussian distribution, showing that the data-adaptive method loses no or little efficiency under a normal model. %to show that the data-adaptive tuning approach is a safe choice even when the noise is not heavy tailed.
Moreover, we take $n=200$, $d=5$ and $1-\alpha \in [0.85, 0.99]$ and compare three methods: (1) Huber-based bootstrap procedure with $\tau$ calibrated by solving \eqref{tuning-free.2} (adaptive boot-Huber), (2) Huber-based bootstrap procedure with $\tau=1.2 \{ \hat{\nu}_4 n / (d + \log n)\}^{1/4}$ (boot-Huber), and (3) OLS-based bootstrap method (boot-OLS).

From Figure~\ref{fig:adap} and Table~\ref{tab:adap} we see that, under lognormal models, the coverage probabilities of the adaptive boot-Huber method are closest to nominal levels, while the boot-OLS suffers from distorted empirical coverage: it tends to overestimate the real quantiles at high levels and severely underestimate the real quantiles at relatively lower levels.
In addition, Figure~\ref{fig:adap}--(a) shows that the proposed Huber-based procedure almost loses no efficiency under a normal model.

\section{Discussion}
\label{sec.discuss}

In this paper, we have proposed and analyzed robust inference methods for linear models with heavy-tailed errors. Specifically, we use a multiplier bootstrap procedure for constructing sharp confidence sets for adaptive Huber estimators and conducting large-scale simultaneous inference with heavy-tailed panel data. Our theoretical results provide explicit bounds for the bootstrap approximation errors and justify the bootstrap validity; the error of coverage probability is small as long as $d^3/n$ is small. For multiple testing, we show that when the error distributions have finite 4th moments and the dimension $m$ and sample size $n$ satisfy $\log m = o(n^{1/3})$, the bootstrap Huber procedure asymptotically controls the overall false discovery proportion at the nominal level.

Furthermore, the proposed robust inference method can be potentially applied to a broad range of statistical problems, including high dimensional sparse regression, reduced rank regression, covariance matrix estimation and low-rank matrix recovery. We leave such an extension for further research.

\newpage
\section*{Supplementary Material} 
 \appendix

This supplemental material contains (1) the proofs of Theorems~2.1--2.6 and Theorem 3.1 in the main text, (2) implementation of the proposed methods, and (3) additional simulation studies.

\section{Notations and Preliminaries}

\subsection{Notations}
\label{sec:notations}

Recall that the error variable $\varepsilon$ has mean zero and variance $\sigma^2 = \EE(\varepsilon^2) >0$. For every $\tau>0$, we define the truncated mean and second moment of $\varepsilon$ to be
\begin{align}
 m_\tau = \EE\{ \psi_\tau(\varepsilon) \} ~~\mbox{ and }~~ \sigma^2_\tau = \EE \{ \psi^2_\tau(\varepsilon)\} , \label{trun.mean.def}
\end{align}
where $\psi_\tau(u) := \ell'_\tau(u)=\sgn(u) \min(|u|, \tau)$, $u\in \RR$. For IID random variables $\varepsilon_1, \ldots, \varepsilon_n$ from $\varepsilon$, we define truncated variables
\begin{align}
	\xi_i = \psi_\tau(\varepsilon_i ) , \ \ i=1,\ldots, n. \label{def:xii}
\end{align}
The dependence of $\xi_i$ on $\tau$ will be assumed without displaying.

Moreover, define the $d\times d$ random matrix
\begin{align}
	\bS_n =  \bS_{n,\tau} = \frac{1}{n} \sn \xi_i ^2  \bZ_i \bZ_i^\T \label{def:Sn}
\end{align}
and the random variable
\begin{align}
	  M_{n,4} = \sup_{\bu \in \mathbb{S}^{d-1}} \frac{1}{n} \sn ( \bu^\T \bZ_i )^4 .   \label{def:Mn}
\end{align}

Throughout, we use $\PP^{\dagger}$-probability to denote the probability measure over $\{(Y_i, \bX_i)\}_{i=1}^n$ and use $\PP^*$-probability to denote the probability measure over $\{ U_i \}_{i=1}^n$ conditioning on $\{(Y_i, \bX_i)\}_{i=1}^n$. In general, $\PP$ denotes the probability measure over all the random variables involved.

\subsection{Technical lemmas}

In this section, we provide several technical lemmas that will be used repeatedly to prove the main theorems. Recall the isotropic random vectors $\bZ_i$ given in \eqref{Zi.def}. The first two lemmas provide concentration properties for $M_{n,4}$ and $\bS_{n}$, respectively.

\begin{lemma} \label{lem.design}
{\rm
Assume Condition~\ref{moment.cond} holds. Then for any $x>0$,
\begin{align}
	M_{n,4} \leq 2 \sup_{\bu \in \mS^{d-1}} \EE (\bu^\T \bZ)^4 + C \Bigg\{ \sqrt{\frac{3d + x }{n}} + \frac{(3d + x)^2}{n} \Bigg\}    \label{design.concentration}
\end{align}
with probability at least $1-2e^{-x}$, where $C>0$ depends only on $A_0$.}
\end{lemma}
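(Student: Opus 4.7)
The plan is to bound $M_{n,4}$ by combining an $\epsilon$-net discretization of $\mS^{d-1}$ with a Bernstein-type deviation inequality for sub-Weibull random variables. Let $f(\bu) := \frac{1}{n}\sum_{i=1}^n (\bu^\T \bZ_i)^4$, so $M_{n,4} = \sup_{\bu \in \mS^{d-1}} f(\bu)$.

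First I would establish a ``Lipschitz-type'' bound relating $f$ on the sphere to $M_{n,4}$. Using the factorization $a^4 - b^4 = (a-b)(a+b)(a^2+b^2)$ with $a = \bu^\T \bZ_i$, $b = \bv^\T \bZ_i$, and applying Hölder's inequality with exponents $(4,4,2)$ to $\frac{1}{n}\sum_i$, each of the three factors can be bounded by $\|\bu-\bv\|_2 M_{n,4}^{1/4}$, $\|\bu+\bv\|_2 M_{n,4}^{1/4}$, and $2 M_{n,4}^{1/2}$ respectively. Since $\|\bu+\bv\|_2 \leq 2$ on the sphere, this yields $|f(\bu) - f(\bv)| \leq 4\|\bu-\bv\|_2 M_{n,4}$. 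Choosing an $\epsilon$-net $\mathcal{N}_\epsilon$ with $\epsilon = 1/8$ and cardinality $|\mathcal{N}_\epsilon| \leq (1+2/\epsilon)^d = 17^d \leq e^{3d}$ (using the standard volumetric covering estimate) and noting $1 - 4\epsilon = 1/2$, one obtains the discretization bound $M_{n,4} \leq 2\sup_{\bv \in \mathcal{N}_{1/8}} f(\bv)$, so
\[
M_{n,4} \leq 2 \sup_{\bv \in \mS^{d-1}} \EE(\bv^\T \bZ)^4 + 2 \sup_{\bv \in \mathcal{N}_{1/8}} \bigl| f(\bv) - \EE f(\bv) \bigr|.
\]

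Next, I would control the deviation term pointwise. Under Condition~\ref{moment.cond}(i), $\bv^\T \bZ$ is sub-Gaussian with $\|\bv^\T \bZ\|_{\psi_2} \leq A_0$ for each $\bv \in \mS^{d-1}$; hence $(\bv^\T \bZ)^4$ is sub-Weibull of order $1/2$ with $\|(\bv^\T \bZ)^4\|_{\psi_{1/2}} \leq C_0 A_0^4$. A standard Bernstein-type inequality for sums of IID sub-Weibull$(1/2)$ variables (see, e.g., Vershynin 2018) then gives, for each $\bv$ and every $y > 0$,
\[
\PP\bigl( | f(\bv) - \EE f(\bv) | \geq t \bigr) \leq 2\exp\bigl(-c n \min\{ t^2 / A_0^8, \sqrt{t/A_0^4}\,\} \bigr),
\]
which inverts to $| f(\bv) - \EE f(\bv) | \leq C'\{A_0^4\sqrt{y/n} + A_0^4 y^2/n\}$ with probability at least $1-2e^{-y}$.

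Finally I would union-bound over the net, taking $y = 3d + x$, so that the total failure probability is at most $2 e^{3d} \cdot e^{-(3d+x)} \cdot 1 = 2e^{-x}$ (using $|\mathcal{N}_{1/8}| \leq e^{3d}$), and combining with the previous display yields the stated inequality with a constant $C$ depending only on $A_0$. The main technical obstacle is the heavy-tailedness of $(\bv^\T \bZ)^4$: since it only belongs to the Orlicz space $L^{\psi_{1/2}}$ rather than $L^{\psi_1}$, the Bernstein tail degrades from a linear $nt/K$ exponent to $\sqrt{nt/K}$, producing the $(3d+x)^2/n$ term. Carefully accounting for the $\psi_{1/2}$-norm in the inversion of the tail bound, and choosing $\epsilon$ so that the net-approximation factor reproduces the leading constant $2$ in front of $\sup_{\bu} \EE(\bu^\T \bZ)^4$, is the only delicate bookkeeping.
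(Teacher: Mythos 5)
Your proof is correct and follows essentially the same route as the paper's: an $\epsilon$-net reduction (the paper uses the triangle inequality for the empirical $\ell_4$-norm of $\bu \mapsto n^{-1/4}(\bu^\T\bZ_i)_i$, giving $M_{n,4}\le (1-\epsilon)^{-4}N_{n,\epsilon}$, where you use a Lipschitz bound via $a^4-b^4=(a-b)(a+b)(a^2+b^2)$, giving $M_{n,4}\le(1-4\epsilon)^{-1}N_{n,\epsilon}$; both yield the factor $2$ at $\epsilon=1/8$), followed by the Bernstein-type inequality for sums of $\psi_{1/2}$ variables and a union bound with $y=3d+x$. One transcription slip: the exponent in your displayed pointwise tail bound should read $-c\min\{nt^2/A_0^8,\sqrt{nt}/A_0^2\}$ rather than $-cn\min\{t^2/A_0^8,\sqrt{t/A_0^4}\}$ (the sub-exponential branch for $\psi_{1/2}$ scales like $\sqrt{nt}$, not $n\sqrt{t}$) --- your subsequent inversion $\sqrt{y/n}+y^2/n$ is the one implied by the corrected form, not by the display you wrote.
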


\begin{proof}
The proof is based on the covering argument. For any $\epsilon\in(0,1)$, we can find an $\epsilon$-net $\cN_\epsilon$ of the unit sphere $\mathbb{S}^{d-1}$ satisfying ${\rm card}(\cN_\epsilon) \leq (1+2/\epsilon)^d$. For every $\bu \in \mS^{d-1}$, there exists some $\bv\in \cN_\epsilon$ such that $\| \bu - \bv \|_2 \leq \epsilon$. Define the map $f: \RR^d \to \RR^n$ as
$$
	f(\bu )  = n^{-1/4} (  \bu^\T \bZ_1, \ldots, \bu^\T \bZ_n )^\T.
$$
By the triangle inequality,
\begin{align}
	& \| f(\bu ) \|_4 \leq  \| f(\bv ) \|_4 +  \| f(\bu ) - f(\bv ) \|_4 \nn \\
	& =    \| f(\bv ) \|_4 +\bigg( \frac{1}{n}\sn \langle \bu - \bv , \bZ_i \rangle^4 \bigg)^{1/4}  \leq    \| f(\bv ) \|_4 + \epsilon M_{n,4}^{1/4} . \nn
\end{align}
Taking the maximum over $\bv \in \cN_\epsilon$ and then taking the supremum over $\bu \in \mS^{d-1}$, we arrive at
$$
	M_{n,4}^{1/4} \leq  N_{n, \epsilon}^{1/4} +  \epsilon M_{n,4}^{1/4},
$$	
where $N_{n, \epsilon} = \max_{\bv \in \cN_\epsilon} (1/n) \sn (\bv^\T \bZ_i)^4$. Solving this inequality yields
\begin{align}
	M_{n,4} \leq   (1-\epsilon)^{-4} N_{n,\epsilon} . \label{discretization}
\end{align}

For every $\bv \in \cN_\epsilon$, note that $\PP\{ (\bv^\T \bZ_i)^4 \geq y\} \leq 2e^{-\sqrt{y}/A_0^2}$ for any $y>0$. Hence, by inequality (3.6) in \cite{Adam2011} with $s=1/2$, we obtain that for any $z>0$,
$$
	\PP\Bigg\{   \bigg| \sn (\bv^\T \bZ_i)^4 - \EE (\bv^\T \bZ_i)^4 \bigg| \geq z \Bigg\} \leq 2 \exp\Bigg\{-  c \min\bigg( \frac{z^2}{n C_1^2} , \sqrt{\frac{z}{C_1}} \, \bigg) \Bigg\} ,
$$
where $c>0$ is a universal constant and $C_1>0$ depends only on $A_0$. Taking the union bound over all vectors $\bv$ in $\cN_\epsilon$ gives
\begin{align}
	& \PP\Bigg\{ \max_{\bv \in \cN_\epsilon}  \bigg| \sn (\bv^\T \bZ_i)^4 - \EE (\bv^\T \bZ_i)^4 \bigg| \geq z \Bigg\} \nn \\
	&  \leq 2\exp\Bigg\{  d \log(1+2/\epsilon) -  c \min\bigg( \frac{z^2}{n C_1^2} , \sqrt{\frac{z}{C_1}} \, \bigg) \Bigg\} . \nn
\end{align}
It follows that
\begin{align}
	N_{n,\epsilon} & \leq \sup_{\bu\in\mS^{d-1}} \EE(\bu^\T \bZ)^4  \nn \\
	& \quad~ + C_2  \Bigg[ \sqrt{\frac{d\log (1+2/\epsilon) + x }{n}} + \frac{\{ d \log  (1+2/\epsilon)  + x\}^2}{n} \Bigg]  \label{discrete.concentration}
\end{align}
with probability at least $1-2e^{-x}$, where $C_2>0$ depends only on $A_0$.

Finally, taking $\epsilon=1/8$ in \eqref{discretization} and \eqref{discrete.concentration} implies \eqref{design.concentration}.
\end{proof}

\begin{lemma} \label{cov.concentration}
{\rm
Assume Condition~\ref{moment.cond} holds with $\delta=2$. For any $x>0$,
\begin{align}
 \|  \bS_{n} - \sigma^2_\tau \,\bI_d  \|_2 \leq 4 \sqrt{2} A_0^2 \,  \upsilon_4^{1/2}\sqrt{\frac{ \log 9^d + x }{n}} + 2 A_0^2 \, \tau^2 \frac{ \log 9^d+ x}{n}  \label{barSn.concentration}
\end{align}
with probability at least $1- 2 e^{-x}$.}
\end{lemma}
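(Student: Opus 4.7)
The plan is to combine a standard $1/4$-net discretization of the unit sphere with a Bernstein-type inequality for sums of independent sub-exponential random variables. First, I would pick a $1/4$-net $\cN_{1/4}$ of $\mS^{d-1}$ of cardinality at most $9^d$ and use the deterministic bound
$$\|\bS_n - \sigma_\tau^2 \bI_d\|_2 \leq 2 \max_{\bu \in \cN_{1/4}} \bigl| \bu^\T (\bS_n - \sigma_\tau^2 \bI_d) \bu \bigr| = 2 \max_{\bu \in \cN_{1/4}} \bigg| \frac{1}{n} \sn \bigl\{ \xi_i^2 (\bu^\T \bZ_i)^2 - \sigma_\tau^2 \bigr\} \bigg|.$$
Here the centering uses $\EE[\xi_i^2 (\bu^\T \bZ_i)^2] = \EE\bigl[\EE(\psi_\tau^2(\varepsilon_i)\mid \bX_i)\,(\bu^\T \bZ_i)^2\bigr] = \sigma_\tau^2$ (since $\bZ_i$ is isotropic and, under Condition~\ref{moment.cond}, the conditional distribution of $\varepsilon_i$ is the same as its marginal).

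Second, for each fixed $\bu \in \cN_{1/4}$, I would establish two properties of the summands $X_i := \xi_i^2 (\bu^\T \bZ_i)^2$. (a) An envelope/sub-exponential bound: since $|\xi_i|\leq \tau$, we have $X_i \leq \tau^2 (\bu^\T \bZ_i)^2$, and Condition~\ref{moment.cond}(i) yields $\|(\bu^\T \bZ_i)^2\|_{\psi_1} \lesssim A_0^2$, hence $\|X_i\|_{\psi_1} \lesssim A_0^2 \tau^2$. (b) A variance proxy: conditioning on $\bX_i$ and applying $\EE(\varepsilon_i^4\mid \bX_i) \leq \upsilon_4$ together with the sub-Gaussian fourth-moment bound $\EE(\bu^\T \bZ_i)^4 \leq 4 A_0^4$ (which follows by integrating the tail $\PP(|\bu^\T \bZ_i|\geq A_0 t)\leq 2e^{-t^2}$) gives $\EE X_i^2 \leq \upsilon_4 \cdot 4 A_0^4$.

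Third, I would invoke a Bernstein inequality for independent centered sub-exponential variables with variance proxy $V = 4 A_0^4 \upsilon_4$ and sub-exponential scale $b \asymp A_0^2 \tau^2$, namely
$$\PP\Bigg\{ \bigg|\frac{1}{n}\sn (X_i - \EE X_i)\bigg| \geq \sqrt{\frac{2 V y}{n}} + \frac{b y}{n} \Bigg\} \leq 2 e^{-y},$$
and then union-bound over the $9^d$ net points by taking $y = \log 9^d + x$. Multiplying by the factor $2$ from the net discretization produces exactly the stated constants $4\sqrt{2} A_0^2 \upsilon_4^{1/2}$ and $2 A_0^2 \tau^2$.

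The main obstacle is tracking the absolute constants in the sub-exponential Bernstein inequality (and in the constant $\EE(\bu^\T \bZ)^4 \leq 4 A_0^4$) so that the final bound matches the stated $4\sqrt{2}$ and $2$ exactly, rather than absorbing everything into a generic $C$. This is essentially bookkeeping: one needs the Bernstein form that keeps the leading variance constant equal to $\sqrt{2}$, combine it with the factor $2$ from the $1/4$-covering, and verify that Condition~\ref{moment.cond}(i) yields fourth moments of $\bu^\T \bZ$ bounded by $4 A_0^4$. Everything else (independence across $i$, the union bound, and the reduction to a net) is routine.
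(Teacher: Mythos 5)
Your proposal is correct and follows essentially the same route as the paper's proof: a $1/4$-net of $\mS^{d-1}$ with cardinality $9^d$ and the factor-of-$2$ reduction, followed by Bernstein's inequality applied to $\xi_i^2(\bu^\T\bZ_i)^2$ with variance proxy $4A_0^4\upsilon_4$ (from $\EE(\xi_i^4\mid\bX_i)\le\upsilon_4$ and $\EE(\bu^\T\bZ)^4\le 4A_0^4$) and scale $A_0^2\tau^2$, then a union bound with $y=\log 9^d+x$. The only cosmetic difference is that the paper normalizes by $\sigma_\tau$ and verifies the classical Bernstein moment-growth condition $\EE Y^k\le \tfrac{k!}{2}Vb^{k-2}$ directly (which is exactly how it keeps the constants $4\sqrt{2}$ and $2$ that you correctly identify as the remaining bookkeeping in a $\psi_1$-norm formulation).
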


\begin{proof}
Define random variables $w_{i} = \xi_i  / \sigma_\tau$ so that $\EE( w_{i}^2 ) =1$. We will bound $\| \bDelta \|_2$ via a standard covering argument, where
$$
	\bDelta =\bigg\| \frac{1}{n} \sn w_{i}^2 \bZ_i \bZ_i^\T - \bI_d  \bigg\|_2 =  \frac{1}{\sigma_\tau^2} \|  \bS_{n} - \sigma^2_\tau \,\bI_d  \|_2  .
$$

Proceed similarly to the proof of Lemma~4.4.1 in \cite{V2018}, it can be shown that there exists a $1/4$-net $\mathcal{N}_{1/4}$ of the unit sphere $\mathbb{S}^{d-1}$ satisfying $|\mathcal{N}_{1/4} | \leq 9^d$ such that
$$
 \| \bDelta \|_2 \leq 2 \max_{\bu \in \mathcal{N}_{1/4} } | \bu^\T \bDelta \bu |  = 2 \max_{\bu \in \mathcal{N}_{1/4} }  \bigg|  \frac{1}{n} \sn   w_{i}^2 (\bu^\T \bZ_i)^2  - 1 \bigg| .
$$
For any $\bu \in \mathbb{S}^{d-1}$, by \eqref{sG.equiv} we have $\EE(\bu^\T \bZ)^{2k} \leq  2 A_0^{2k} \, k!$ for all $k\geq 1$. This implies
\begin{align}
	\EE\{ w_{i}^4 (\bu^\T \bZ_i)^4 \} = \EE \{ (\bu^\T \bZ_i)^4 \EE(w_i^4 | \bX_i) \} \leq  4A_0^4 \,\sigma_\tau^{-4} \upsilon_4 , \nn \\
	\mbox{ and }~  \EE \{ w_{i}^2 (\bu^\T \bZ_i)^2\}^k  \leq   \frac{k!}{2} 4 A_0^4 \,\sigma_\tau^{-4} \upsilon_4  (  A_0^2 \, \sigma_\tau^{-2} \tau^2  )^{k-2} ~\mbox{ for all } k\geq 3 .   \nn
\end{align}
It then follows from Bernstein's inequality that for any $x\geq 0$,
\begin{align}
	\PP \bigg\{  | \bu^\T \bDelta \bu | \geq 2\sqrt{2} A_0^2 \, \sigma_\tau^{-2} \upsilon_4^{1/2}  \sqrt{\frac{x}{n}} + A_0^2 \, \sigma_\tau^{-2} \tau^2\frac{  x}{n} \bigg\} \leq 2e^{-x} . \nn
\end{align}
Taking the union bound over all $\bu \in \mathcal{N}_{1/4}$ yields
\begin{align}
 \| \bDelta \|_2 \leq 4\sqrt{2} A_0^2 \, \sigma_\tau^{-2}  \upsilon_4^{1/2}  \sqrt{\frac{x}{n}} + 2 A_0^2 \, \sigma_\tau^{-2} \tau^2 \frac{  x}{n} \nn
\end{align}
with probability at least $1- 9^d \cdot 2 e^{-x}$. Reinterpret this we reach \eqref{barSn.concentration}.
\end{proof}

The next lemma gives a deviation bound for the $\ell_2$-norm of the $d$-variate random vector $\bxi^\B =  - \sn \xi_i  U_i \bZ_i$, where $\xi_i$ are given in \eqref{def:xii}. Recall that $\PP^*$ is the conditional probability measure over the random multipliers given $\mathcal{D}_n=\{ (Y_i, \bX_i) \}_{i=1}^n$.

\begin{lemma} \label{lem.l2bootscore}
{\rm
Assume Condition~\ref{weight.cond} is fulfilled. For every $x>0$, it holds with $\PP^*$-probability at least $1- 2 e^{-x}$ that
\begin{align}
 \frac{1}{\sqrt{n}} \| \bxi ^\B \|_2  \leq  B_U  \|   \bS_{n} \|_2^{1/2} (\log 5^d + x)^{1/2} , \label{cond.xiB.concentration}
\end{align}
where $B_U>0$ is a constant depending only on $A_U$.}
\end{lemma}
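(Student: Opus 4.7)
The plan is a standard covering argument on $\mathbb{S}^{d-1}$ combined with the sub-Gaussian concentration of a weighted sum of the $U_i$'s conditional on $\mathcal{D}_n$.

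First, I would pick a $1/2$-net $\mathcal{N}_{1/2}$ of the unit sphere $\mathbb{S}^{d-1}$ with cardinality at most $5^d$ (standard bound), so that the usual discretization estimate yields
$$
\|\bxi^\B\|_2 \;\leq\; 2 \max_{\bu \in \mathcal{N}_{1/2}} \bigl| \bu^\T \bxi^\B \bigr|.
$$
Thus it suffices to control $|\bu^\T \bxi^\B|$ uniformly over $\bu \in \mathcal{N}_{1/2}$ with $\PP^*$-probability at least $1 - 2e^{-x}$.

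Next, fix $\bu \in \mathcal{N}_{1/2}$ and condition on $\mathcal{D}_n$. Then
$$
\bu^\T \bxi^\B \;=\; -\sum_{i=1}^n c_i\, U_i, \qquad c_i := \xi_i\,\bu^\T \bZ_i,
$$
is a linear combination of the IID $A_U$-sub-Gaussian multipliers $U_i$ with deterministic (under $\PP^*$) coefficients. A standard Hoeffding-type inequality for sums of independent sub-Gaussian random variables therefore gives
$$
\PP^*\bigl( |\bu^\T \bxi^\B| \geq t \bigr) \;\leq\; 2 \exp\!\left( - \frac{c_0\, t^2}{A_U^2 \sum_{i=1}^n c_i^2 } \right)
$$
for a universal constant $c_0>0$. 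Crucially, the sum of squared coefficients has a clean closed form:
$$
\sum_{i=1}^n c_i^2 \;=\; \sum_{i=1}^n \xi_i^2 (\bu^\T \bZ_i)^2 \;=\; n\, \bu^\T \bS_n \bu \;\leq\; n \|\bS_n\|_2,
$$
by the definition of $\bS_n$ in \eqref{def:Sn}.

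Finally, I would union-bound the sub-Gaussian tail over $\mathcal{N}_{1/2}$, whose cardinality is at most $5^d$, to obtain that with $\PP^*$-probability at least $1 - 2e^{-x}$,
$$
\max_{\bu \in \mathcal{N}_{1/2}} | \bu^\T \bxi^\B | \;\leq\; C\, A_U \|\bS_n\|_2^{1/2} \sqrt{n}\, (\log 5^d + x)^{1/2}
$$
for a universal constant $C$. Combining with the discretization bound and absorbing constants into $B_U = B_U(A_U)$ yields \eqref{cond.xiB.concentration}. There is no genuine obstacle here; the only thing to be careful about is that the argument is run under $\PP^*$ (so $\xi_i$, $\bZ_i$ and hence $\bS_n$ are treated as fixed), and that the sub-Gaussian tail for $\sum c_i U_i$ is applied with the correct variance proxy $A_U^2 \sum c_i^2$.
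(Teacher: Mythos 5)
Your proposal is correct and follows essentially the same route as the paper: a $1/2$-net of $\mathbb{S}^{d-1}$ with cardinality at most $5^d$, the conditional sub-Gaussian (general Hoeffding) tail bound for $\sum_i \xi_i (\bu^\T \bZ_i) U_i$ with variance proxy proportional to $A_U^2 \sum_i \xi_i^2 (\bu^\T \bZ_i)^2 = n\,\bu^\T \bS_n \bu \leq n\|\bS_n\|_2$, and a union bound over the net producing the $\log 5^d + x$ factor. No gaps.
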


\begin{proof}
The proof is based on an argument similar to that leads to \eqref{xi.concentration}. There exists a $1/2$-net $\mathcal{N}_{1/2}$ of $\mathbb{S}^{d-1}$ with cardinality $|\mathcal{N}_{1/2}| \leq 5^d$ such that
$$
	\| \bxi^\B \|_2 \leq 2\max_{\bu \in \mathcal{N}_{1/2}} | \bu^\T \bxi^\B |  =   2\max_{\bu \in \mathcal{N}_{1/2}}  \bigg| \sn \xi_i \bu^\T \bZ_i   U_i \bigg|.
$$
For each fixed $\bu\in \mathbb{S}^{d-1}$, applying Theorem~2.6.3 in \cite{V2018} gives
\begin{align}
	\PP^*\Bigg[   \bigg| \sn \xi_i \bu^\T \bZ_i U_i \bigg| \geq  C \bigg\{ \sn \xi_i^2 (\bu^\T \bZ_i)^2 \bigg\}^{1/2}  \sqrt{x} \,\Bigg]  \leq 2 e^{- x} \mbox{ for every } x \geq 0 ,\nn
\end{align}
where $C =C(A_U)>0$ and $\xi_i = \psi_\tau(\varepsilon_i)$ are as in \eqref{def:xii}. Taking the union bound over all vectors $\bu \in \mathcal{N}_{1/2}$, we obtain that with $\PP^*$-probability greater than $1- 5^d \cdot 2e^{-x}$,
\begin{align}
	\| \bxi^\B \|_2 \leq 2  C \bigg\|  \sn \xi_i^2 \bZ_i \bZ_i^\T \bigg\|_2^{1/2} \sqrt{x} . \nn
\end{align}
Reinterpret this inequality to obtain the stated result \eqref{cond.xiB.concentration}.
\end{proof}

Recall the random process $\bxi^\B(\btheta) = -\sn \ell'_\tau(Y_i - \bX_i^\T \btheta) U_i \bZ_i$, $\btheta \in \RR^d$ defined in \eqref{xiB.def}. The following lemma gives an upper bound on the local fluctuation $\sup_{\btheta \in \Theta_0(r)} \| \bxi^\B(\btheta) -	\bxi^\B(\btheta^*) \|_2$ for $r>0$.

\begin{lemma} \label{lem.bootfluctuation}
{\rm
Assume Condition~\ref{weight.cond} holds. For any $x>0$, it holds with $\PP^*$-probability at least $1- e^{-x}$ that
\begin{align} \label{score.fluctuation}
  \sup_{\btheta \in \Theta_0(r)}  \frac{1}{\sqrt{n}}\| \bxi^\B( {\btheta} ) -\bxi^\B( {\btheta}^*)  \|_2 \leq C  M_{n,4}^{1/2}  (8 d + 2 x )^{1/2}  r   ,
\end{align}
where $C >0$ depends only on $A_U$ and $M_{n,4}$ is given in \eqref{def:Mn}.}
\end{lemma}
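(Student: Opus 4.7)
The plan is to bound the local fluctuation by viewing $V(\bdelta, \bu) := \sum_{i=1}^n h_i(\bdelta)\,U_i\,(\bu^\T \bZ_i)$ --- with $\bdelta := \bSigma^{1/2}(\btheta-\btheta^*)$ satisfying $\|\bdelta\|_2 \le r$, $\bu \in \mS^{d-1}$, and $h_i(\bdelta) := \psi_\tau(\varepsilon_i - \bZ_i^\T\bdelta) - \psi_\tau(\varepsilon_i)$ --- as a conditionally sub-Gaussian process in the multipliers $U_i$, and then invoking a covering argument on the joint index set that exploits the $1$-Lipschitz property of $\psi_\tau = \ell'_\tau$.

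First I would write $\bxi^\B(\btheta) - \bxi^\B(\btheta^*) = -\sum_i h_i(\bdelta)\, U_i\, \bZ_i$, reducing the target quantity to $\sup_{\bdelta,\bu}|V(\bdelta,\bu)|$. By Condition~\ref{weight.cond} the increment $V(\bdelta_1,\bu_1) - V(\bdelta_2,\bu_2)$ is conditionally sub-Gaussian given $\mathcal{D}_n$, with proxy variance bounded --- upon adding and subtracting $h_i(\bdelta_1)(\bu_2^\T \bZ_i)$, splitting via $(a+b)^2\le 2a^2+2b^2$, invoking the Lipschitz bounds $|h_i(\bdelta)|\le |\bZ_i^\T\bdelta|$ and $|h_i(\bdelta_1)-h_i(\bdelta_2)|\le |\bZ_i^\T(\bdelta_1-\bdelta_2)|$, and applying Cauchy--Schwarz through the definition of $M_{n,4}$ --- by
$$2\,A_U^2\,n\,M_{n,4}\bigl(r^2 \|\bu_1-\bu_2\|_2^2 + \|\bdelta_1-\bdelta_2\|_2^2\bigr).$$
Thus $V$ is sub-Gaussian in the joint pseudometric $\rho$ whose diameter over the index set is at most $C_0 r\sqrt{n M_{n,4}}$.

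Next I would perform a covering and union bound. Take the product of an $\epsilon$-net of the Euclidean ball $\{\|\bdelta\|_2 \le r\}$ and an $(\epsilon/r)$-net of $\mS^{d-1}$, giving a $\rho$-net of total cardinality at most $(C_1 r\sqrt{n M_{n,4}}/\epsilon_0)^{2d}$ where $\epsilon_0 = \epsilon \sqrt{n M_{n,4}}$. A sub-Gaussian tail bound at each net point combined with a union bound yields $\max_{\mathrm{net}} |V| \le C_2 r\sqrt{n M_{n,4}}\sqrt{2d\log(C_1 r\sqrt{n M_{n,4}}/\epsilon_0)+x}$ with $\PP^*$-probability at least $1-e^{-x}$. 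The approximation error on the same event is controlled by the $\rho$-sub-Gaussian property via a single chaining step (or an explicit peeling argument). Choosing $\epsilon_0$ as a fixed constant fraction of the diameter keeps the logarithm $O(1)$ and absorbs the discretization, producing the claimed bound $\sup|V|\le C M_{n,4}^{1/2}\sqrt{n}\sqrt{8d+2x}\, r$ with $C=C(A_U)$.

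The main obstacle is the $\bdelta$-discretization: a crude union bound with a $\bdelta$-net at Euclidean scale $r/\sqrt{n}$, which would be needed to make the approximation error negligible when treating $\bdelta$ and $\bu$ separately, introduces an unwanted $\log n$ factor. This is circumvented by covering in the joint pseudometric $\rho$, whose geometry naturally balances the $\bdelta$ and $\bu$ scales; equivalently, Dudley's entropy integral for sub-Gaussian processes in $\rho$ delivers the $\sqrt{d+x}$ rate with no parasitic logarithms. The numerical constants (including the $8$ and $2$) collect the logarithm of the product net cardinality and the constants arising from the conditional sub-Gaussian concentration.
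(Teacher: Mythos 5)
Your proposal is correct and follows essentially the same route as the paper: both linearize the $\ell_2$-norm into a bilinear form over a $2d$-dimensional index set (your $(\bdelta,\bu)$ versus the paper's $(\btheta,\bomega)$), exploit the $1$-Lipschitz property of $\psi_\tau$ together with Cauchy--Schwarz through $M_{n,4}$ to obtain a conditional sub-Gaussian increment/gradient bound with variance proxy $A_U^2 M_{n,4}$, and then chain over the joint index set to get the $\sqrt{8d+2x}$ factor. The only difference is presentational: the paper bounds exponential moments of the directional derivatives and invokes Spokoiny's Theorem~A.1 as a black-box entropy bound, whereas you carry out the covering/Dudley argument explicitly; these are interchangeable here.
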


\begin{proof}
To begin with, note that
$$
	\sup_{\btheta \in \Theta_0(r)}  \| \bxi^\B( {\btheta} ) -\bxi^\B( {\btheta}^*)  \|_2 = \sup_{\btheta \in \Theta_0(r)}  \sup_{  \bomega \in \mathbb{B}^d(r)}   \bomega^\T \{\bxi^\B( {\btheta} ) -\bxi^\B( {\btheta}^*) \} /r .
$$
Define a new process $\bxi^\B(\btheta,  \bomega) = \bomega^\T \{\bxi^\B( {\btheta} ) -\bxi^\B( {\btheta}^*) \} /(2r \sqrt{n} ) $ for $\btheta \in \Theta_0(r)$ and $ \bomega \in  \mathbb{B}^d(r)$, so that
\begin{align}
\sup_{\btheta \in \Theta_0(r)} \frac{1}{\sqrt{n}}  \| \bxi^\B( {\btheta} ) -\bxi^\B( {\btheta}^*)  \|_2 =  2  \sup_{\btheta \in \Theta_0(r)}  \sup_{ \bomega \in \mathbb{B}^d(r)} \bxi^\B(\btheta,  \bomega). \label{equiv}
\end{align}
It is easy to see that $\EE^* \{  \bxi^\B(\btheta,  \bomega) \} = 0$ and
$$
	\nabla_{\btheta } \,\bxi^\B(\btheta,  \bomega)  = \frac{  \bomega^\T \nabla \bxi^\B(\btheta) }{2 r \sqrt{n}}   , \quad  \nabla_{ \bomega}\,\bxi^\B(\btheta,  \bomega)  = \frac{    \bxi^\B( {\btheta} ) -\bxi^\B( {\btheta}^*)  }{2r \sqrt{n} } .
$$
For any $\bu, \bv \in \RR^d$ and $\lambda \in \RR$, by H\"older's inequality,
\begin{align}
 & \log \EE^* \exp\bigg\{  \lambda   \frac{  (\bu^\T , \bv^\T) \nabla \bxi^\B(\btheta,  \bomega) }{ \| ( ( \bSigma^{1/2} \bu )^\T , \bv ) \|_2  } \bigg\}  \nn \\
 & \leq \frac{1}{2} \log   \EE^* \exp\bigg\{ \frac{\lambda}{r\sqrt{n}}   \frac{   \bomega^\T \nabla \bxi^\B(\btheta) \bu   }{\| ( ( \bSigma^{1/2} \bu )^\T , \bv ) \|_2} \bigg\}  \nn \\
 & \quad ~+ \frac{1}{2} \log \EE^* \exp\bigg[ \frac{\lambda}{ r \sqrt{n}}  \frac{ \bv^\T \{\bxi^\B( {\btheta} ) -\bxi^\B( {\btheta}^*) \} }{\| ( ( \bSigma^{1/2} \bu )^\T , \bv ) \|_2} \bigg]. \label{log.exp.bound}
\end{align}
Write $\overline{\bu} = \bSigma^{1/2} \bu /\| ( ( \bSigma^{1/2} \bu )^\T , \bv ) \|_2 $ and $\overline{ \bv} = \bv / \| ( ( \bSigma^{1/2} \bu )^\T , \bv ) \|_2 $. For the first term on the right-hand side of \eqref{log.exp.bound}, it follows from \eqref{xiB.def} and Condition~\ref{weight.cond} that
\begin{align}
 & \EE^* \exp\bigg\{   \frac{\lambda}{r \sqrt{n}}  \frac{    \bomega^\T \nabla \bxi^\B(\btheta) \bu }{\| ( ( \bSigma^{1/2} \bu )^\T , \bv ) \|_2}  \bigg\}  \nn \\
 & =  \EE^*  \exp\bigg\{ \frac{\lambda}{r \sqrt{n}}  \sn \ell''_\tau(Y_i - \bX_i^\T \btheta)    \bomega^\T \bZ_i  \bZ_i^\T  \overline \bu  \, U_i \bigg\} \nn \\
 & = \prod_{i=1}^n  \EE^* \exp\bigg\{   \frac{\lambda}{r \sqrt{n}}  \ell''_\tau(Y_i - \bX_i^\T \btheta)    \bomega^\T \bZ_i  \bZ_i^\T  \overline \bu \, U_i \bigg\} \nn \\
 & \leq \prod_{i=1}^n    \exp\bigg\{   \frac{\lambda^2 B_U^2 }{2  r^2 n  } ( \bomega^\T \bZ_i)^2 (\overline{\bu}^\T \bZ_i )^2 \bigg\} \nn \\
 & \leq \exp\bigg(   \frac{\lambda^2 }{2} B_U^2  M_{n,4}  \bigg)   \label{log.exp.bound1}
\end{align}
almost surely. For the second term, by the mean value theorem and taking $\bdelta_r = \bSigma^{1/2}(\btheta - \btheta^*)/r$, we get
\begin{align}
	& \EE^* \exp\bigg[ \frac{\lambda}{ r \sqrt{n}}   \overline \bv^\T \{\bxi^\B( {\btheta} ) -\bxi^\B( {\btheta}^*) \}  \bigg] \nn  \\
 & = \prod_{i=1}^n \EE^*    \exp\bigg\{  \frac{\lambda}{ \sqrt{n} }  I (|Y_i - \bX_i^\T \wt \btheta | \leq \tau ) \overline{\bv}^\T \bZ_i \bZ_i^\T  \bdelta_r \, U_i \bigg\}   \nn \\
 & \leq   \exp\bigg(   \frac{\lambda^2  }{2} B_U^2 M_{n,4}  \bigg) \label{log.exp.bound2}
\end{align}
almost surely, where $\wt \btheta$ is a convex combination of $\btheta$ and $\btheta^*$ and $M_{n,4}$ is given in \eqref{def:Mn}. Putting \eqref{log.exp.bound}, \eqref{log.exp.bound1} and \eqref{log.exp.bound2} together yields
\begin{align}
	&  \log \EE^* \exp\bigg\{ \lambda   \frac{  (\bu^\T , \bv^\T) \nabla \bxi^\B(\btheta,  \bomega) }{ \| ( ( \bSigma^{1/2} \bu )^\T , \bv ) \|_2  } \bigg\}   \leq \frac{\lambda^2 }{2}  B_U^2 M_{n,4} . \nn
\end{align}
Applying a conditional version of Theorem~A.1 in \cite{S2013} with $p = 2d$,
\begin{align*}
H_0 =  {\small\left(
\begin{array}{cc}
  \bSigma^{1/2} & \textbf{0}   \\
 \textbf{0}  & \bI_d
\end{array}
\right) } , \quad {\rm g} = \infty  ~\mbox{ and }~  \nu_0^2=  B_U^2 M_{n,4}
\end{align*}
to the process $\{ \bxi^\B(\btheta,  \bomega) : \btheta \in \Theta_0(r),  \bomega \in \mathbb{B}^d(r)\}$ in \eqref{equiv}, we arrive at
\begin{align}
	\PP^* \Bigg\{ \sup_{\btheta \in \Theta_0(r)}  \frac{1}{\sqrt{n}} \| \bxi^\B( {\btheta} ) \!- \! \bxi^\B( {\btheta}^*)  \|_2  \geq  6B_U M_{n,4}^{1/2}   (8 d +   2 x )^{1/2}    r   \Bigg\} \leq e^{-x}  \nn
\end{align}
almost surely. This is the bound stated in \eqref{score.fluctuation}.
\end{proof}

The following lemma provides moderate deviation results for the robust estimators $\hat{\mu}_k$ given in \eqref{robust.mle}.

\begin{lemma} \label{robust.md}
{\rm
Assume Condition~\ref{moment.cond2} holds. Let $\{a_n \}_{n\geq 1}$ be a sequence of positive numbers satisfying $a_n \to \infty$ and $a_n = o(n^{1/2})$ as $n\to \infty$. For each $1\leq k\leq m$, the robust estimator $\hat{\mu}_k$ with $\tau_k = v_k \{ n/ (s + a_n) \}^{1/3}$ for some $v_k \geq \upsilon_{k,4}^{1/4}$ satisfies
\begin{align}
	 \frac{\PP( \sqrt{n} \, |\hat{\mu}_k - \mu_k | \geq z ) }{2 \{ 1 - \Phi(z/\sigma_k) \}} \to 1  \label{md.result1}
\end{align}
uniformly for $0\leq z\leq o\{  \sigma_k \min( n^{1/6} ,  \sqrt{n} a_n^{-1} ) \} $ and $z \leq  \sigma_k \sqrt{a_n}$. }
\end{lemma}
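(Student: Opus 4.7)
The strategy is to linearize $\hat{\mu}_k-\mu_k$ via the Bahadur representation of Theorem~\ref{br.thm}, identify the leading term as a centered IID sum of truncated scores, and apply a classical Cram\'er-type moderate deviation theorem to that sum. Apply Theorem~\ref{br.thm} to the augmented regression with design vector $\tilde{\bX}_i=(1,\bx_i^\T)^\T\in\RR^{s+1}$ and parameter $(\mu_k,\bbeta_k^\T)^\T$, taking $d=s+1$, $t=a_n$, $\delta=2$ and $\eta=1/3$ (the last admissible by Remark~\ref{rmk.order}). Since $\EE\bx_i=\textbf{0}$, the augmented covariance $\tilde{\bSigma}=\mathrm{diag}(1,\bSigma)$ is block diagonal with leading entry $1$, hence $\mathbf{e}_1^\T\tilde{\bSigma}^{\pm 1/2}=\mathbf{e}_1^\T$ and $\mathbf{e}_1^\T\tilde{\bSigma}^{-1/2}\tilde{\bX}_i=1$. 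Projecting \eqref{Bahadur.representation} onto $\mathbf{e}_1$ gives
\[
\sqrt{n}\,(\hat{\mu}_k-\mu_k)=\frac{1}{\sqrt{n}}\sum_{i=1}^{n}\xi_{ik}+R_{n,k},\qquad \xi_{ik}:=\ell'_{\tau_k}(\varepsilon_{ik}),
\]
with $|R_{n,k}|\leq Cv_k(s+a_n)/\sqrt{n}$ on an event $\mathcal{E}_n$ of probability at least $1-3e^{-a_n}$.

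Next, set $m_{\tau_k}=\EE\xi_{ik}$, $\sigma_{\tau_k}^2=\var(\xi_{ik})$, and $\bar{\xi}_{ik}=\xi_{ik}-m_{\tau_k}$. Standard truncation bounds give $|m_{\tau_k}|\leq\upsilon_{k,4}/\tau_k^3$ and $|\sigma_{\tau_k}^2-\sigma_k^2|\leq\upsilon_{k,4}/\tau_k^2$, so with $\tau_k\asymp\{n/(s+a_n)\}^{1/3}$ one has $\sqrt{n}|m_{\tau_k}|=O((s+a_n)/\sqrt{n})$ and $\sigma_{\tau_k}/\sigma_k\to 1$. Writing $S_n=n^{-1/2}\sum_i\bar{\xi}_{ik}$ and $D_n=\sqrt{n}m_{\tau_k}+R_{n,k}$ gives $\sqrt{n}(\hat{\mu}_k-\mu_k)=S_n+D_n$ with $|D_n|\leq\delta_n:=C'(s+a_n)/\sqrt{n}$ on $\mathcal{E}_n$. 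Since the $\bar{\xi}_{ik}$ are IID, zero-mean, bounded by $2\tau_k$, and have a third absolute moment uniformly controlled by $\upsilon_{k,4}$, a classical Cram\'er-type moderate deviation theorem yields
\[
\PP(|S_n|\geq\sigma_{\tau_k}z)\,/\,\{2(1-\Phi(z))\}\to 1
\]
uniformly for $0\leq z=o(n^{1/6})$, and $\sigma_{\tau_k}/\sigma_k\to 1$ lets $\sigma_{\tau_k}$ be replaced by $\sigma_k$ in the target ratio.

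Finally, the sandwich
\[
\PP(|S_n|\geq\sigma_kz+\delta_n)-\PP(\mathcal{E}_n^c)\leq\PP(\sqrt{n}|\hat{\mu}_k-\mu_k|\geq\sigma_kz)\leq\PP(|S_n|\geq\sigma_kz-\delta_n)+\PP(\mathcal{E}_n^c),
\]
combined with the Mills-ratio identity $\{1-\Phi(z\pm\epsilon)\}/\{1-\Phi(z)\}\to 1$ whenever $z\epsilon\to 0$, transfers the MDT through $D_n$ provided $z\delta_n\to 0$, i.e.\ $z(s+a_n)/\sqrt{n}\to 0$; absorbing $s$ into $a_n$, this is exactly the range $z=o(\sigma_k\sqrt{n}/a_n)$. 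Dominance $\PP(\mathcal{E}_n^c)\leq 3e^{-a_n}=o(1-\Phi(z/\sigma_k))\asymp o(e^{-z^2/(2\sigma_k^2)})$ requires $z^2/\sigma_k^2<2a_n$, giving the second range $z\leq\sigma_k\sqrt{a_n}$; together with the intrinsic Cram\'er range $z=o(\sigma_kn^{1/6})$ these are exactly the hypotheses of the lemma. The delicate point---and the main obstacle---is keeping the four error sources (bias $\sqrt{n}m_{\tau_k}$, Bahadur remainder $R_{n,k}$, bad-event probability $3e^{-a_n}$, and variance discrepancy $\sigma_{\tau_k}^2/\sigma_k^2-1$) simultaneously negligible against $1-\Phi(z/\sigma_k)$ throughout the whole range; the exponent $\eta=1/3$ is precisely the scaling that balances the cubic bias $\upsilon_{k,4}/\tau_k^3$ against the $(d+t)/n$-scale Bahadur remainder.
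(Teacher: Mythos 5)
Your proposal is correct and follows essentially the same route as the paper's proof: project the Bahadur representation of Theorem~\ref{br.thm} onto the intercept coordinate, control the truncation bias and variance discrepancy, apply a Cram\'er-type moderate deviation theorem to the centered truncated sum (the paper uses Lemma~3.1 in the supplement of Liu and Shao (2014), which handles the triangular-array feature that the summands are bounded by the growing level $\tau_k$ rather than a fixed constant), and transfer the result through the remainder and the bad-event probability via Gaussian tail comparisons, which yields exactly the stated ranges for $z$. The only cosmetic difference is that the paper treats $0\leq z/\sigma_k\leq 1$ separately with a Berry--Esseen bound, whereas you let the moderate deviation range absorb it; both are fine.
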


\begin{proof}
Let $1\leq k\leq m$ be fixed and write $\tau=\tau_k$ for simplicity. Define truncated mean and variance of $\varepsilon_k$ by $m_{k,\tau} = \EE\{ \psi_\tau(\varepsilon_k)\}$ and $\sigma_{k,\tau}^2 = \EE \{ \psi^2_\tau(\varepsilon_k) \} $. Moreover, define
$$
	T_k =  \sn \psi_\tau(\varepsilon_{ik}) ~\mbox{ and }~ T_{0k} =  \sn \{ \psi_\tau(\varepsilon_{ik}) - m_{k,\tau} \} .
$$
Taking $\bX = (1, \bx^\T)^\T$, $\btheta^* = (\mu_k, \bbeta_k^\T)^\T$ and $\varepsilon=\varepsilon_k$ in Theorem~\ref{br.thm}, we obtain that with probability at least $1-4e^{-a_n}$,
\begin{align}
	| \sqrt{n} \, (\hat{\mu}_k - \mu_k) -  T_k  / \sqrt{n} | \leq C_1 v_k (s+ a_n ) n^{-1/2}  \label{BR}
\end{align}
as long as $n \geq C_2  (s+a_n )$.  We prove \eqref{md.result1} by considering the following two cases.

\medskip
\noindent
{\sc Case 1}: Assume $0\leq z/\sigma_k \leq 1$. Applying Theorem~2.2 in \cite{ZBFL2017} to $T_k$ gives
\begin{align}
	\sup_{x\in \RR}  | \PP (  T_k / \sqrt{n} \leq x ) - \Phi(x/\sigma_k)   | \leq C  \bigg( \frac{\upsilon_{k,3}}{\sigma_k^3 \sqrt{n}} + \frac{\upsilon_{k,4}}{\sigma_k^2 \tau^2 }  + \frac{\upsilon_{k,4 }\sqrt{n} }{\sigma_k \tau^3} \bigg) , \nn
\end{align}
where $C >0 $ is an absolute constant. The stated result \eqref{md.result1} then holds uniformly for $0\leq z\leq \sigma_k$.

\medskip
\noindent
{\sc Case 2}: Assume $1 \leq z/\sigma_k \leq \sqrt{a_n}$. It follows from Proposition A.2 with $\kappa=4$ in the supplement of \cite{ZBFL2017} that $| m_{k,\tau} | \leq \upsilon_{k,4} \tau^{- 3}$. Together with \eqref{BR}, this implies that with probability at least $1-4e^{-a_n}$,
\begin{align}
	& |   \sqrt{n} \, (\hat{\mu}_k - \mu_k) - T_{0k} / \sqrt{n} |  \nn \\
	& \leq \delta_1 := C_1 v_k (s+a_n ) n^{-1/2} + \upsilon_{k,4}  \tau^{-3} \sqrt{n} .
	\label{def:delta1}
\end{align}
It follows that
\begin{align}
	& \PP (    |T_{0k} |  / \sqrt{n} \geq z+ \delta_1  ) - 4e^{-a_n} \nn \\
	&  \leq \PP ( \sqrt{n}\, |\hat{\mu}_k - \mu_k| \geq z  ) \leq \PP ( |T_{0k} |  / \sqrt{n} \geq z-\delta_1 ) + 4e^{-a_n } . \label{GAR.1}
\end{align}

Next we focus on $T_{0k}$. Recall that $\tau = v_k \{ n/(s+a_n) \}^{1/3}$ with $v_k  \geq \upsilon_{k,4}^{1/4}$. To apply Lemma 3.1 in the supplement of \cite{LS2014}, we take
$$
	d=1, \ \ B_n = \sigma_k^2 n, \ \ c_n = \frac{\tau}{\sigma_k \sqrt{n}}  = \frac{v_k }{\sigma_k (s+a_n )^{1/3} n^{1/6}}
$$
and note that
\begin{align}
	  \bigg| \frac{ \cov(T_{0k}) }{B_n} -1  \bigg| \leq  \frac{|\sigma_{k,\tau}^2 - \sigma_k^2 | + m_{k,\tau}^2 }{\sigma_k^2 } \leq b_n := \frac{ \upsilon_{k,4}}{\sigma_k^2 \tau^2} + \frac{\sigma_k^2}{ \tau^2}  , \nn \\
  \beta_n := \frac{1}{B_n^{3/2}} \sn \EE |\psi_\tau(\varepsilon_{ik}) - m_{k,\tau} |^3  \leq C_2 \frac{ \upsilon_{k,3}}{\sigma_k^3 \sqrt{n}}  , \nn
\end{align}
where $C_2>0$ is an absolute constant. Consequently, taking $d_n = n^{-1/6}$, $x= \sqrt{n} z$ and $t_n = (C_{3,1}^{-1/2} \vee 4)\{ z/\sigma_k + (\log n)^{1/2} \} $ in Lemma 3.1 implies that for all sufficiently large $n$,
\begin{align}
	& | \PP (    |T_{0k} |  / \sqrt{n} \geq z  ) - \PP (  | Z | \geq  z /\sigma_k ) | \nn \\
	& \leq    C_3  \{ \beta_n t_n^3 + n^{-1/6} ( 1+  z/\sigma_k ) \} \PP (  | Z | \geq  z/ \sigma_k )  \nn \\
	& \quad \, +   7n^{-1} e^{ -( z/\sigma_k )^2 } + 9 e^{- c_1 n^{1/3}} \label{GAR.2}
\end{align}
uniformly over $0\leq z/\sigma_k \leq  c_2 \min( c_n^{-1}, \beta_n^{-1/3} , d_n^{-1} ) $, where $Z\sim \mathcal{N}(0,1)$, $c_1>0$ depends only on $(\sigma_k, \upsilon_{k,3})$ and $C_3, c_2>0$ are absolute constants. For normal distribution, it is known that for any $w > 0$,
$$
	 \frac{w}{1+w^2}  \frac{1}{\sqrt{2\pi}} e^{-w^2/2} \leq    1-\Phi(w) \leq  \min\bigg( \frac{1}{2} , \frac{1}{ w\sqrt{2\pi}} \bigg) e^{-w^2/2}.
$$
Combining this with \eqref{GAR.2},  we obtain that for $z > \sigma_k$ and $\delta_1$ in \eqref{def:delta1},
\begin{align}
	& |  \PP (\sigma_k |Z| \geq z - \delta_1 ) - \PP ( |Z| \geq z/\sigma_k ) | \nn \\
	& \leq \frac{2\delta_1}{\sqrt{2\pi}} e^{-(z-\delta_1)^2/(2\sigma_k^2)} \leq \delta_1 (1+ z/\sigma_k ) e^{\delta_1 z/\sigma_k^2} \, \PP (  |Z| \geq z /\sigma_k ) \label{GAR.3}
\end{align}
and
\begin{align}
	& |  \PP (\sigma_k |Z| \geq z + \delta_1 ) - \PP (  |Z| \geq z / \sigma_k ) |  \nn  \\
	& \leq \frac{2\delta_1}{\sqrt{2\pi}} e^{-z^2/(2\sigma_k^2)}  \leq  \delta_1  (1+ z/\sigma_k ) \, \PP ( |Z| \geq z /\sigma_k ) . \label{GAR.4}
\end{align}

Finally, observe that $e^{-a_n} \leq e^{-a_n/2- (z/\sigma_k)^2/2}$ for $z/\sigma_k \leq \sqrt{a_n}$. Then it follows from \eqref{def:delta1}--\eqref{GAR.4} that \eqref{md.result1} holds uniformly for $1\leq z / \sigma_k \leq o\{ \min ( n^{1/6} ,  \sqrt{n} a_n^{-1}  )  \}$, which completes the proof.
\end{proof}

\section{Proofs for Section~2}
\label{sec:supp_proof}

Without loss of generality, we assume $t\geq \log 2$, or equivalently $2e^{-t} \leq 1$ throughout the proof; otherwise if $2e^{-t}>1$, the conclusion is trivial. Let $\| \cdot \|_{\bSigma, 2}$ denote the rescaled $\ell_2$-norm on $\RR^d$, i.e. $\| \bu \|_{\bSigma,2} = \| \bSigma^{1/2} \bu \|_2$ for $\bu \in \RR^d$.

\subsection{Proof of Theorem~\ref{br.thm}}
\label{proof2.1}
\noindent
{\sc Proof of \eqref{concentration.MLE}}. To begin with, define the parameter set
\begin{align}
		\Theta_0(r)  := \{ \btheta \in \RR^{d} : \|   \btheta - \btheta^*  \|_{\bSigma, 2 } \leq r  \}  , \ \ r >0. \label{set.def}
\end{align}
For any prespecified $r>0$, we can find an intermediate estimator $\hat{\btheta}_{\tau, \eta} = \btheta^* + \eta(\hat{\btheta}_\tau - \btheta^*)$ for some $\eta\in [0,1]$, satisfying $w(\eta) := \|     \hat{\btheta}_{\tau, \eta} - \btheta^*  \|_{\bSigma, 2} \leq r$. In fact, if $\hat{\btheta}_\tau \in \Theta_0(r)$, we can simply take $\eta =1$; otherwise, since the function $w(\cdot): [0,1] \mapsto (0,\infty)$ is continuous with $w(0)=0$ and $w(1) >r$, there always exists some $\eta\in (0,1)$ such that $w(\eta) = r$. Applying Lemma~F.2 in \cite{FLSZ2015} to the loss function $\overline{\cL}_\tau := (1/n)\cL_\tau$, we obtain
\begin{align}
	 \langle \nabla  \overline \cL_\tau(\hat{\btheta}_{\tau, \eta} ) - \nabla  \overline \cL_\tau(\btheta^*)  , \hat{\btheta}_{\tau, \eta} - \btheta^* \rangle &  \leq \eta \langle   \nabla \overline \cL_\tau(\hat{\btheta}_\tau  ) - \nabla \overline \cL_\tau(\btheta^*)  , \hat{\btheta}_\tau - \btheta^*  \rangle \nn \\
	&  =  -\eta  \langle   \nabla \overline \cL_\tau(\btheta^*)  , \hat{\btheta}_{\tau} - \btheta^*  \rangle , \label{FOC.cond}
\end{align}
where the last step uses the first order condition $ \nabla \cL_\tau(\hat{\btheta}_\tau ) = \textbf{0}$.

In what follows, we bound the two sides of \eqref{FOC.cond} separately. Proposition~\ref{prop:RSC} below shows that $\overline \cL_\tau$ is strongly convex on $\Theta_0(r)$ with high probability.

\begin{proposition} \label{prop:RSC}
{\rm
Assume that kurtosises of the linear forms $\langle \bu, \bZ\rangle$ are uniformly bounded by $\kappa^4$ for some $\kappa>0$, i.e. $\EE \langle \bu,  \bZ \rangle^4 \leq \kappa^4 \| \bu \|_2^4$ for all $\bu \in \RR^d$.
Let $(\tau, r)$ and $(n,d,t)$ satisfy
\begin{align}
	 \tau \geq 2 \max\big\{ (4 \upsilon_{2+\delta})^{1/(2+\delta)} ,    4 \kappa^2 r \big\}    ~\mbox{ and }~  n \geq C (\tau/r)^2(d+t), \label{scaling.cond}
\end{align}
where $C >0$ is an absolute constant. Then with probability at least $1-e^{-t}$,
\begin{align}
    \langle \nabla  \overline \cL_\tau( \btheta ) - \nabla  \overline \cL_\tau(\btheta^*)  , \btheta - \btheta^* \rangle \geq \frac{1}{4} \| \btheta -\btheta^*  \|_{\bSigma, 2}^2 ~\mbox{ for all }~ \btheta \in \Theta_0(r). \label{RSC.lbd}
\end{align}
}
\end{proposition}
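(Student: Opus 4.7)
The plan is to reduce restricted strong convexity to a deterministic quadratic bound plus an operator-norm concentration for a truncated sample covariance. Setting $\bdelta := \btheta - \btheta^*$ and using $\ell_\tau''(u)=\mathbf{1}\{|u|\leq\tau\}$ almost everywhere, the fundamental theorem of calculus applied to $\psi_\tau=\ell_\tau'$ gives the integral identity
\begin{align*}
\langle \nabla \overline\cL_\tau(\btheta) - \nabla\overline\cL_\tau(\btheta^*),\bdelta\rangle = \frac{1}{n}\sum_{i=1}^n \int_0^1 \mathbf{1}\{|\varepsilon_i - t \bX_i^\T\bdelta|\leq \tau\}\,(\bX_i^\T\bdelta)^2\,dt.
\end{align*}
Since $\{|\varepsilon_i|\leq\tau/2\}\cap\{|\bX_i^\T\bdelta|\leq\tau/2\}$ forces $|\varepsilon_i - t\bX_i^\T\bdelta|\leq\tau$ for every $t\in[0,1]$, the elementary inequality $\mathbf{1}_{A\cap B}\geq 1-\mathbf{1}_{A^c}-\mathbf{1}_{B^c}$ yields the workable lower bound
\begin{align*}
\langle \nabla \overline\cL_\tau(\btheta) - \nabla\overline\cL_\tau(\btheta^*),\bdelta\rangle \geq T_n(\bdelta) - D_n(\bdelta),
\end{align*}
where $T_n(\bdelta) := \frac{1}{n}\sum_i (\bX_i^\T\bdelta)^2\mathbf{1}\{|\varepsilon_i|\leq\tau/2\}$ and $D_n(\bdelta) := \frac{1}{n}\sum_i (\bX_i^\T\bdelta)^2\mathbf{1}\{|\bX_i^\T\bdelta|>\tau/2\}$. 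I would then factor both functionals as $\|\bdelta\|_{\bSigma,2}^2$ times random quadratic forms indexed by $\bu := \bSigma^{1/2}\bdelta/\|\bdelta\|_{\bSigma,2}\in\mathbb{S}^{d-1}$, which reduces the uniform-in-$\bdelta$ question to uniformity on the unit sphere.

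For $D_n$, Markov's trick $\mathbf{1}\{|x|>\tau/2\}\leq 4x^2/\tau^2$ bounds $D_n(\bdelta)\leq (4r^2/\tau^2)M_{n,4}\,\|\bdelta\|_{\bSigma,2}^2$ on $\Theta_0(r)$, since $\frac{1}{n}\sum(\bX_i^\T\bdelta)^4=\|\bdelta\|_{\bSigma,2}^4\cdot\frac{1}{n}\sum(\bu^\T\bZ_i)^4\leq\|\bdelta\|_{\bSigma,2}^4 M_{n,4}$. Lemma~\ref{lem.design} combined with the uniform kurtosis bound $\EE(\bu^\T\bZ)^4\leq\kappa^4$ then gives $M_{n,4}\leq 2\kappa^4 + o(1)$ on an event of probability $\geq 1-2e^{-t}$ under the stated sample-size condition; together with the hypothesis $\tau\geq 8\kappa^2 r$ this forces $D_n(\bdelta)\leq \tfrac{1}{8}\|\bdelta\|_{\bSigma,2}^2$. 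For the main piece, write $T_n(\bdelta)=\|\bdelta\|_{\bSigma,2}^2\,\bu^\T \bfsym{N}_n\bu$ with $\bfsym{N}_n := \frac{1}{n}\sum_i \mathbf{1}\{|\varepsilon_i|\leq\tau/2\}\bZ_i\bZ_i^\T$. The condition $\tau\geq 2(4\upsilon_{2+\delta})^{1/(2+\delta)}$ gives $\PP(|\varepsilon|>\tau/2\mid\bX)\leq \upsilon_{2+\delta}(\tau/2)^{-(2+\delta)}\leq 1/4$, so $\EE\bfsym{N}_n\succeq \tfrac{3}{4}\bI_d$. The RSC bound thus reduces to showing $\|\bfsym{N}_n-\EE\bfsym{N}_n\|_2\leq 3/8$ with high probability, which combined with the $D_n$ estimate yields $\text{LHS}\geq(\tfrac{3}{4}-\tfrac{3}{8})\|\bdelta\|_{\bSigma,2}^2-\tfrac{1}{8}\|\bdelta\|_{\bSigma,2}^2=\tfrac{1}{4}\|\bdelta\|_{\bSigma,2}^2$.

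The hard part is this operator-norm concentration. I would attack it by a standard $1/4$-net discretization of $\mathbb{S}^{d-1}$ (net size $\leq 9^d$) reducing the task to a scalar Bernstein bound for each fixed $\bu$ in the net; to extract the sharp $(\tau/r)^2(d+t)$ scaling it is cleanest to apply Bernstein directly to the bounded composite summand $\phi_i(\bdelta):=(\bX_i^\T\bdelta)^2\mathbf{1}\{|\bX_i^\T\bdelta|\leq\tau/2\}\mathbf{1}\{|\varepsilon_i|\leq\tau/2\}\leq\tau^2/4$, whose variance is at most $(\tau^2/4)\,\EE(\bX_i^\T\bdelta)^2=(\tau^2/4)\|\bdelta\|_{\bSigma,2}^2$. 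Bernstein then produces a two-term deviation of order $\tau\|\bdelta\|_{\bSigma,2}\sqrt{(d+t)/n}+\tau^2(d+t)/n$; forcing both terms to fit under a slack of $c\|\bdelta\|_{\bSigma,2}^2$ in the worst case $\|\bdelta\|_{\bSigma,2}=r$ dictates precisely $n\geq C(\tau/r)^2(d+t)$, and a peeling argument along dyadic shells $\|\bdelta\|_{\bSigma,2}\in[2^{-k-1}r,2^{-k}r]$ transfers the bound to the entire set $\Theta_0(r)$. The main technical obstacle is therefore the sub-exponential (rather than sub-Gaussian) tail of $(\bu^\T\bZ)^2$, which rules out a Hoeffding-type discretization, and whose interplay with the covariate truncation at scale $\tau/2$ is exactly what drives the stated $(\tau/r)^2(d+t)$ sample-size requirement.
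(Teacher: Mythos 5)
Your overall architecture is sound and genuinely different from the paper's: you combine an integral identity for the gradient increment with a double truncation (on $\varepsilon_i$ and on $\bX_i^\T\bdelta$) and then run net-plus-Bernstein, whereas the paper lower-bounds the increment by a \emph{smoothed} truncated quadratic form $\varphi_{\tau\|\bdelta\|_{\bSigma,2}/(2r)}(\langle\bX_i,\bdelta\rangle)\psi_{\tau/2}(\varepsilon_i)$ and controls the normalized supremum via Talagrand's inequality, symmetrization, and a Gaussian comparison theorem. Your expectation computations (the $1/4$ loss from $\PP(|\varepsilon|>\tau/2)\leq\upsilon_{2+\delta}(\tau/2)^{-(2+\delta)}\leq 1/4$, the fourth-moment/Markov control of the covariate-truncation bias, and the use of Lemma~\ref{lem.design} for $M_{n,4}$) all match the paper's and are correct.

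The gap is in the uniformity step. You truncate the covariate term at the \emph{fixed} level $|\bX_i^\T\bdelta|\leq\tau/2$, so your bounded summand satisfies $\phi_i(\bdelta)\leq\tau^2/4$ with variance at most $(\tau^2/4)\|\bdelta\|_{\bSigma,2}^2$, and Bernstein yields a deviation of order $\tau\|\bdelta\|_{\bSigma,2}\sqrt{(d+t)/n}+\tau^2(d+t)/n$. Relative to the target $c\|\bdelta\|_{\bSigma,2}^2$, this requires $n\gtrsim(\tau/\|\bdelta\|_{\bSigma,2})^2(d+t)$, which under the stated condition $n\geq C(\tau/r)^2(d+t)$ is only satisfied on the outermost shell $\|\bdelta\|_{\bSigma,2}\asymp r$. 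On the inner dyadic shells $\|\bdelta\|_{\bSigma,2}\asymp 2^{-k}r$ the required sample size inflates by $4^k$, so the peeling you propose cannot close; yet the proposition must hold for \emph{all} $\btheta\in\Theta_0(r)$, including those arbitrarily close to $\btheta^*$, and the quantity $\langle\nabla\overline\cL_\tau(\btheta)-\nabla\overline\cL_\tau(\btheta^*),\bdelta\rangle$ is not homogeneous in $\bdelta$, so you cannot transfer a boundary-only bound inward. The repair is local but essential: truncate at the $\bdelta$-dependent level $|\bX_i^\T\bdelta|\leq\frac{\tau}{2r}\|\bdelta\|_{\bSigma,2}$ (equivalently $|\bu^\T\bZ_i|\leq\tau/(2r)$ with $\bu=\bSigma^{1/2}\bdelta/\|\bdelta\|_{\bSigma,2}$), which still implies $|\varepsilon_i-t\bX_i^\T\bdelta|\leq\tau$ on $\Theta_0(r)$, makes the summand bounded by $(\tau/2r)^2\|\bdelta\|_{\bSigma,2}^2$, and renders the normalized process scale-invariant, so the single Bernstein bound at rate $(\tau/r)\sqrt{(d+t)/n}$ covers the whole ball with no peeling. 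This is precisely what the paper's choice $\varphi_{\tau\|\bdelta\|_{\bSigma,2}/(2r)}$ accomplishes. A secondary point: with a hard indicator the truncated quadratic form is not Lipschitz in the direction $\bu$ (the truncation set itself depends on $\bu$), so the "standard $1/4$-net discretization" for a $\bu$-dependent matrix is not automatic; the paper's replacement of $x^2\mathbf{1}(|x|\leq R)$ by the $R$-Lipschitz surrogate $\varphi_R$, sandwiched as in \eqref{phi.bound}, is there exactly to make the supremum over the sphere controllable. You should either adopt that smoothing or supply a one-sided monotonicity argument for the net step.
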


By construction, $\hat \btheta_{\tau, \eta } \in \Theta_0(r)$ and therefore under the scaling \eqref{scaling.cond},
\begin{align}
	\langle  \overline \cL_\tau( \hat \btheta_{\tau,\eta} ) - \nabla \overline  \cL_\tau(\btheta^*) , \hat \btheta_{\tau,\eta} - \btheta^* \rangle  \geq \frac{1}{4}  \|  \hat \btheta_{\tau,\eta} - \btheta^* \|_{\bSigma, 2}^2   \label{RSC.bound}
\end{align}
with probability at least $1-e^{-t}$.

Next we bound the quadratic form $ \|   \bSigma^{-1/2} \nabla \overline \cL_\tau(\btheta^*) \|_2$. Define the centered random vector $\bgamma = \bSigma^{-1/2}  \{ \nabla  \overline \cL_\tau(\btheta^*) - \EE \nabla  \overline \cL_\tau( \btheta^* ) \}$ so that
\begin{align}
	\|   \bSigma^{-1/2} \nabla   \overline \cL_\tau(\btheta^*) \|_2 \leq \| \bgamma \|_2 +  \|   \bSigma^{-1/2}  \EE \nabla  \overline \cL_\tau( \btheta^* ) \|_2. \label{L2.dec}
\end{align}
To bound $\| \bgamma  \|_2$, by a standard covering argument, there exits a $1/2$-net $\mathcal{N}_{1/2}$ of $\mathbb{S}^{d-1}$ with $|\mathcal{N}_{1/2} | \leq 5^d$ such that $\| \bgamma \|_2   \leq 2\max_{\bu \in \mathcal{N}_{1/2}} | \bu^\T \bgamma |$. For every $\bu\in \mathbb{S}^{d-1}$, note that $ | \bu^\T \bgamma | = |  (1/n) \sn  \{ \xi_i   \bu^\T \bZ_i  - \EE \xi_i   \bu^\T \bZ_i  \} |$, where $\xi_i = \ell'_\tau( \varepsilon_i )$ and $\bZ_i$ are IID from $\bZ$ given in \eqref{Zi.def}.
Since $\bu^\T \bZ$ is sub-Gaussian, it follows from the proof of Proposition~2.5.2 in \cite{V2018} that
\begin{align}
	\EE|\bu^\T \bZ|^{k} \leq  A_0^k  k \Gamma(k/2) ~\mbox{ for all } k \geq 2  , \label{sG.equiv}
\end{align}
If $k = 2\ell$ for some $\ell \geq 1$, $\EE|\bu^\T \bZ|^{k} \leq  2 A_0^k (k/2)!$; otherwise if $k=2\ell +1$ for some $\ell \geq 1$,
$$
\EE|\bu^\T \bZ|^{k} \leq     A_0^k  k \Gamma(\ell +1/2) =  k\sqrt{\pi} A_0^k \frac{(2\ell)!}{4^\ell \ell!} =  2\sqrt{\pi}  A_0^k \frac{k!}{2^k \ell!} .
$$
By the above calculations, we obtain
\begin{align}
	\EE(\xi_i \bu^\T \bZ_i)^2 = \EE\{ \bu^\T \bZ_i)^2 \EE(\xi_i^2 | \bX_i) \}	\leq \sigma^2   , \nn \\
	\mbox{ and }~ \EE | \xi_i \bu^\T \bZ_i |^k \leq \frac{ k!}{2} \,  2\sigma^2  A_0^2    (A_0 \tau /2 )^{k-2}  ~\mbox{ for all } k\geq 3. \nn
\end{align}
Applying Bernstein's inequality, we see that
\begin{align}
	\PP\bigg(   | \bu^\T \bgamma | \geq  2\sigma A_0 \sqrt{\frac{x}{n}} + \frac{A_0 }{2} \frac{\tau x}{n}  \bigg) \leq 2 e^{-x } ~\mbox{ for any } x>0. \nn
\end{align}
Taking the union bound over all vectors $\bu \in \mathcal{N}_{1/2}$, we obtain that with probability at least $5^d \cdot 2 e^{-x}$, $\| \bgamma \|_2 \leq 2\max_{\bu \in \mathcal{N}_{1/2}} | \bu^\T \bgamma | \leq 4 \sigma A_0 \sqrt{ x/n} +  A_0 \tau x/n$. Taking $x= 2(d+t)\geq \log (5^d) +2 t$, we reach
\begin{align}
	\PP\bigg\{  \| \bgamma \|_2 \geq    4\sqrt{2} \, \sigma A_0 \sqrt{\frac{d+t}{n}}   +  2A_0 \tau \frac{d+t}{n} \bigg\} \leq 2 e^{-2t} \leq e^{-t}.  \label{xi.concentration}
\end{align}
For the second term $\| \bSigma^{-1/2} \EE \nabla  \overline \cL_\tau(\btheta^*) \|_2$ in \eqref{L2.dec}, it holds
\begin{align}
	\|  \bSigma^{-1/2}  \e \nabla  \overline \cL_\tau(\btheta^*) \|_2  =  \sup_{\bu \in \mathbb{S}^{d-1}}   \frac{1}{n}\sn \e  ( \xi_i  \bu^\T \bZ_i )    \leq   \frac{\upsilon_{2+\delta}}{\tau^{1+\delta}} . \nn
\end{align}
Together, the last two displays imply with probability at least $1-  e^{-t}$,
\begin{align}
  & \|   \bSigma^{-1/2}    \nabla  \overline \cL_\tau(\btheta^*) \|_2 \nn \\
  &  \leq   r_0 := \frac{ \upsilon_{2+\delta}}{\tau^{1+\delta}}  +  4\sqrt{2} \, \sigma  A_0 \sqrt{\frac{ d +t}{n}} + 2A_0 \tau \frac{ d+t}{n}   .  \label{quadratic.bound}
\end{align}

Finally, in view of \eqref{scaling.cond}, \eqref{RSC.bound} and \eqref{quadratic.bound}, we choose $r = \tau/(4\kappa^2)$.
Under Condition~\ref{moment.cond}, $\kappa$ scales as $A_0$.
Then with probability at least $1-2e^{-t}$, $ \hat{\btheta}_{\tau, \eta} \in \Theta_0(4r_0)$ under the scaling \eqref{scaling.cond}. Provided $n\gtrsim  A_0^4(   d+t )$, we have $r>4r_0$ so that $\hat \btheta_{\tau, \eta}$ lies in the interior of $\Theta_0(r)$, which enforces $\eta=1$ and $\hat \btheta_{\tau,\eta} = \hat \btheta_\tau$ (otherwise $\hat \btheta_{\tau,\eta}$ will lie on the boundary).
Putting together the pieces, we arrive at \eqref{concentration.MLE}.

\medskip
\noindent
{\sc Proof of \eqref{Bahadur.representation}}. Next we prove \eqref{Bahadur.representation}. Define random processes
\begin{align}
	\bB(\btheta ) =  \bSigma^{-1/2}   \{ \nabla  \overline \cL_\tau(\btheta) - \nabla  \overline \cL_\tau(\btheta^*) \}   -   \bSigma^{1/2} (\btheta  - \btheta^*) \label{process.B}
\end{align}
and $\bzeta(\btheta) =   \overline \cL_\tau(\btheta ) - \EE  \overline \cL_\tau(\btheta)$. In this notation, we have
\begin{align}
	& \bB(\btheta ) \nn \\
	& =    \bSigma^{-1/2}    \{ \nabla \bzeta(\btheta) - \nabla \bzeta(\btheta^*) + \nabla \EE  \overline \cL_\tau(\btheta) - \nabla \EE  \overline \cL_\tau(\btheta^*)    -    \bSigma (\btheta  - \btheta^*) \}  \nn \\
	&\mbox{ and }~  \EE \{ \bB(\btheta ) \}  = \bSigma^{-1/2}  \{  \nabla \EE  \overline \cL_\tau(\btheta) - \nabla\EE  \overline \cL_\tau(\btheta^*)  \}  -    \bSigma^{1/2} (\btheta  - \btheta^*) . \nn
\end{align}
In the following, we will deal with $ \bB(\btheta )  - \EE \{ 	\bB(\btheta)\}$ and $\EE \{ \bB(\btheta ) \}$ separately, starting with the latter. By the mean value theorem for vector-valued functions (see, e.g. Theorem~12 in Section~2 of \cite{P2015}),
\begin{align}
  \EE \{ \bB(\btheta ) \} & =  \bSigma^{-1/2}    \EE \int_0^1 \nabla^2  \overline \cL_\tau(\btheta^*_t ) \, dt  (\btheta - \btheta^*) -   \bSigma^{1/2} (\btheta  - \btheta^*) \nn \\
 &  =  \bigg\{ \bSigma^{-1/2}   \int_0^1  \nabla^2 \EE \overline \cL_\tau( \btheta_t^* )\, dt \,\bSigma^{-1/2} -   \bI_d  \bigg\}\bSigma^{1/2} (\btheta  - \btheta^* ), \nn
\end{align}
where $\btheta^*_t = (1-t)\btheta^* + t \btheta$. Note that
$$
	 \nabla^2 \EE  \overline \cL_\tau(\btheta_t^*) = \bSigma - \bSigma^{1/2}  \frac{1}{n}\sn \e  \{ I (| Y_i - \bX_i^\T \btheta_t^* | > \tau  ) \bZ_i \bZ_i^\T  \} \bSigma^{1/2}.
$$
For every $t\in [0,1]$, since $\btheta\in \Theta_0(r)$ and $\bu \in \mathbb{S}^{d-1}$, we have $\btheta^*_t \in \Theta_0(r)$ so that $\bdelta_t := \bSigma^{1/2}( \btheta_t^* - \btheta^*)$ satisfies $\| \bdelta_t \|_2\leq r$. Consequently, by Markov's inequality and \eqref{sG.equiv},
\begin{align}
	&  |   \bu^\T \{ \bSigma^{-1/2}  \nabla^2 \EE  \overline \cL_\tau( \btheta_t^* ) \bSigma^{-1/2}  -  \bI_d   \} \bu   |  \nn \\
	&   \leq   \frac{1}{n} \sn \e  \{ I (| Y_i - \bX_i^\T \btheta_t^* | > \tau ) ( \bu^\T \bZ_i )^2 \}   \nn \\
	& \leq   \upsilon_{2+\delta} (2/\tau)^{2+\delta}   +   2  ( \EE |\bdelta_t^\T \bZ|^{2 } )^{1/2} ( \EE |\bu^\T \bZ|^{4} )^{1/2}\tau^{-1}  \nn \\
	& \leq  \delta(r)   := 2^{2+\delta }\upsilon_{2+\delta} \tau^{-2-\delta}  + 4\kappa^2     \tau^{-1} r , \nn
\end{align}
where $\kappa>0$ is as in Proposition~\ref{prop:RSC}.
Putting together the pieces implies
\begin{align} \label{def.delta}
	 \sup_{\btheta \in \Theta_0(r)} \| \EE\{ \bB(\btheta) \}  \|_2 \leq  \delta(r)  r .
\end{align}

Turing to $\bB(\btheta ) - \EE \{\bB(\btheta)\} = \bSigma^{-1/2} \{ \nabla \bzeta(\btheta) - \nabla \bzeta(\btheta^*) \}$, we set
\begin{align}
	  \overline{\bB}(\bdelta )= \bB(\btheta ) - \EE \{\bB(\btheta)\} \mbox{ with } \bdelta = \bSigma^{1/2}( \btheta - \btheta^*). \nn
\end{align}
It is easy to see that $\overline{\bB}( \textbf{0}) = \textbf{0}$, $\EE\{ \overline{\bB}(\bdelta )\} = \textbf{0}$ and
$$
	\nabla_{\bdelta} \overline{\bB}(\bdelta) = \frac{1}{n} \sn [ \ell''_\tau(\varepsilon_i - \bZ_i^\T \bdelta) \bZ_i \bZ_i^\T - \EE \{\ell''_\tau(\varepsilon_i - \bZ_i^\T \bdelta) \bZ_i \bZ_i^\T\} ].
$$
In addition, for any $\bu , \bv \in \mathbb{S}^{d-1}$ and $\lambda \in \RR$, using the inequality $|e^z -1 - z | \leq  z^2 e^{|z|} /2$ for all $z\in \RR$ gives
\begin{align}
	& \EE \exp\{  \lambda \sqrt{n} \, \bu^\T \nabla_{\bdelta} \overline{\bB}(\bdelta) \bv  \} \nn \\
	& \leq \prod_{i=1}^n  \bigg[ 1 \! + \! \frac{\lambda^2 }{ n } \e  \{
  (  \bu^{\intercal} \bZ_i  \bv^\T \bZ_i )^2   \!+\! ( \e |  \bu^\T \bZ   \bv^\T \bZ | )^2 \}  e^{    \frac{|\lambda|}{\sqrt{n}}  ( | \bu^{\intercal} \bZ_i \bv^{\intercal} \bZ_i | + \e  | \bu^{\intercal}  \bZ  \bv^{\intercal} \bZ  |  )  }     \bigg]  . \nn
\end{align}
By the Cauchy-Schwarz inequality,
$$
	\EE | \bu^{\intercal}  \bZ  \bv^{\intercal} \bZ  |  \leq \{ \EE (\bu^\T \bZ)^2\}^{1/2} \{ \EE(\bv^\T \bZ)^2 \}^{1/2} = 1
$$
and
\begin{align}
	 & \e   (  \bu^{\intercal} \bZ_i  \bv^\T \bZ_i )^2 e^{    \frac{|\lambda|}{\sqrt{n}}    | \bu^{\intercal} \bZ_i \bv^{\intercal} \bZ_i |   }  \nn \\
& \leq    \e   (  \bu^{\intercal} \bZ_i  \bv^\T \bZ_i )^2 e^{    \frac{|\lambda|}{2\sqrt{n}}  (\bu^{\intercal} \bZ_i)^2 +      \frac{|\lambda|}{2\sqrt{n}}  (\bv^{\intercal} \bZ_i)^2  }  \nn \\
& \leq   \{ \EE (\bu^\T \bZ_i)^4 e^{    \frac{|\lambda|}{ \sqrt{n}}  (\bu^{\intercal} \bZ_i)^2   }  \}^{1/2}  \{ \EE (\bv^\T \bZ_i)^4 e^{    \frac{|\lambda|}{ \sqrt{n}}  (\bv^{\intercal} \bZ_i)^2   }   \}^{1/2}. \nn
\end{align}
Combining the last three displays, we arrive at	
\begin{align}
	& \EE \exp\{  \lambda \sqrt{n} \, \bu^\T \nabla_{\bdelta} \overline{\bB}(\bdelta) \bv  \} \nn \\
	&  \leq  \prod_{i=1}^n  \bigg[ 1 +    e^{\frac{| \lambda | }{\sqrt{n}}}  \frac{\lambda^2}{n}   \e ( e^{\frac{ | \lambda| }{\sqrt{n}} |  \bu^{\intercal} \bZ_i   \bv^{\intercal} \bZ_i | } )  \nn  \\
	& \qquad \qquad \qquad +   e^{\frac{| \lambda |}{\sqrt{n}}}  \frac{\lambda^2}{n} \e  \{    ( \bu^\T \bZ_i )^2 (  \bv^{\intercal} \bZ_i )^2  e^{\frac{ | \lambda | }{\sqrt{n}} | \bu^{\intercal} \bZ_i  \bv^{\intercal} \bZ_i | } \}  \bigg]  \nn \\
	& \leq  \prod_{i=1}^n \bigg[ 1 +  e^{\frac{ | \lambda| }{\sqrt{n}}}  \frac{\lambda^2}{n}    \max_{ \bu \in \mathbb{S}^{d-1} } \e \{  e^{ \frac{ | \lambda| }{\sqrt{n}} (\bu^{\intercal} \bZ )^2  } \} \nn \\
	&  \qquad \qquad \qquad  +  e^{\frac{| \lambda| }{\sqrt{n}}}  \frac{\lambda^2}{n}    \max_{ \bu \in \mathbb{S}^{d-1}  } \e \{  ( \bu^{\intercal} \bZ )^4 e^{ \frac{| \lambda | }{\sqrt{n}} (\bu^{\intercal} \bZ )^2  } \} \bigg]   \nn \\
	& \leq \exp\bigg[  e^{\frac{ | \lambda| }{\sqrt{n}}}   \lambda^2   \max_{ \bu \in \mathbb{S}^{d-1} } \e \{  e^{ \frac{ | \lambda | }{\sqrt{n}} (\bu^{\intercal} \bZ )^2  } \} \!+\!   e^{\frac{ | \lambda| }{\sqrt{n}}}  \lambda^2   \max_{ \bu \in \mathbb{S}^{d-1}  } \e \{  ( \bu^\T \bZ )^4 e^{ \frac{ | \lambda | }{\sqrt{n}} (\bu^{\intercal} \bZ )^2  } \} \bigg]. \nn
\end{align}
Under Condition~\ref{moment.cond}, there exists a constant $A_1  = A_1(A_0)>0$ such that, for all $|\lambda | \leq   \sqrt{n}/A_1$ and $\btheta \in \RR^{d}$,
\begin{align}
  \sup_{ \bu , \bv \in \mathbb{S}^{d-1} } \EE \exp\{  \lambda \sqrt{n} \, \bu^\T \nabla_{\bdelta} \overline{\bB}(\bdelta) \bv  \} \leq   \exp ( C^2 \lambda^2 /2 ),	\label{exp.moment.bound}
\end{align}
where $C   >0$ is an absolute constant. With the above preparations, applying Theorem~A.3 in \cite{S2013} which is a direct consequence of Corollary~2.2 in the supplement to \cite{S2012}, yields
\begin{align}
	\PP\Bigg\{ \sup_{\bdelta \in \mathbb{B}^d(r)} \| \sqrt{n} \, \overline{\bB}(\bdelta) \|_2 \geq 6 C  (8d + 2t)^{1/2} r \Bigg\}   \leq e^{-t} \nn
\end{align}
as long as $n\geq A_1^2(8d + 2t)$. Combining this and \eqref{def.delta}, we reach
\begin{align}
	\Delta(r) := \sup_{\btheta \in \Theta_0(r)}  \| \bB(\btheta ) \|_2   \leq   \delta(r) r    + 6   C  ( 8 d + 2t )^{1/2}  n^{-1/2} r  \label{Delta.r}
\end{align}
with probability at least $1-e^{-t}$, where $\bB(\btheta )$ is given in \eqref{process.B}.
Recalling the paragraph below \eqref{quadratic.bound}, we have $\PP \{ \hat{\btheta}_\tau \in  \Theta_0(4r_0) \} \geq 1 - 2 e^{-t}$ and $\nabla  \overline \cL_\tau(\hat{\btheta}_\tau)= \textbf{0}$. On the event $\{ \hat{\btheta}_\tau \in  \Theta_0(4r_0) \} $, it holds $\| \bB(\hat{\btheta}_\tau) \|_2 \leq \Delta(4r_0)$. Consequently, taking $r=4r_0$ in \eqref{Delta.r} proves \eqref{Bahadur.representation}.  \qed

\subsection{Proof of Theorem~\ref{wilks.thm}}

Keeping the notation appeared in the proof of Theorem~\ref{br.thm}, we consider the following local quadratic approximation of the Huber loss. For any $r>0$ and $\btheta  , \btheta' \in \Theta_0(r)$, define
$$
	R(\btheta, \btheta') = \cL_\tau(\btheta ) - \cL_\tau(\btheta') - (\btheta - \btheta')^\T \nabla \cL_\tau(\btheta') - \frac{n}{2} \| \bSigma^{1/2} (\btheta -\btheta' ) \|_2^2.
$$
Taking the gradient with respect to $\btheta$, we get $\nabla_{\btheta} R(\btheta, \btheta') = \nabla \cL_\tau(\btheta) - \nabla \cL_\tau(\btheta') - n \bSigma(\btheta - \btheta')$. Then, by the mean value theorem, $R(\btheta, \btheta')  = (\btheta - \btheta')^\T \{  \nabla \cL_\tau(\wt \btheta) - \nabla \cL_\tau(\btheta') - n \bSigma(\wt \btheta - \btheta') \}$, where $\wt \btheta$ is a convex combination of $\btheta$ and $\btheta'$ and hence $\wt \btheta \in \Theta_0(r)$. It follows that
\begin{align}
	& | R(\btheta, \btheta') |  \nn \\
	& \leq  n \| \bSigma^{1/2}(\btheta - \btheta') \|_2 \nn \\
	& \quad \times  \sup_{\btheta''  \in \Theta_0(r)} \|   \bSigma^{1/2} (\btheta'' - \btheta' ) -  \bSigma^{-1/2} n^{-1}  \{ \nabla \cL_\tau( \btheta'') - \nabla \cL_\tau(\btheta')  \}  \|_2 \nn \\
	& \leq 2 n \| \bSigma^{1/2}(\btheta - \btheta') \|_2 \times  \Delta(r) ,  \label{R.bound}
\end{align}
where $\Delta(r)$ is as in \eqref{Delta.r}. Recall from the proof of Theorem~\ref{br.thm} that $\PP\{\hat{\btheta}_\tau \in \Theta_0(4r_0)\} \geq 1- 2e^{-t}$ for $r_0$ given in \eqref{quadratic.bound}. Taking $r=4r_0$ in \eqref{Delta.r}, $(\btheta ,\btheta')=(\btheta^*, \hat{\btheta}_\tau)$ in \eqref{R.bound} and using the fact $\nabla \cL_\tau(\hat{\btheta}_\tau) = \textbf{0}$, we obtain that with probability greater than $1- 3e^{-t}$,
\begin{align}
 \bigg|  \cL_\tau(\btheta^* ) - \cL_\tau(\hat{\btheta}_\tau)   - \frac{n}{2} \| \bSigma^{1/2} (\btheta^* - \hat{\btheta}_\tau ) \|_2^2 \bigg| \leq
 8 n r_0 \Delta(4r_0). \nn
\end{align}

Write $\hat{\bdelta} = \bSigma^{1/2}(\hat{\btheta}_\tau - \btheta^*)$ and $\bGamma^* =  \bSigma^{-1/2}   \nabla  \overline \cL_\tau(\btheta^*)$. By \eqref{quadratic.bound} and \eqref{Delta.r}, we have $\| \bGamma^* \|_2 \leq r_0$, $\| \hat{\bdelta} +  \bGamma^* \|_2 \leq  \Delta(4r_0)$ and
\begin{align}
	& | \| \hat{\bdelta}  \|_2^2  - \| \bGamma^* \|_2^2 |   \leq  \|  \hat{\bdelta} + \bGamma^* \|_2^2 + 2 \| \bGamma^* \|_2  \|  \hat{\bdelta} + \bGamma^* \|_2  \leq \Delta(4r_0) \{ \Delta(4r_0) + 2 r_0 \} . \nn
\end{align}
Together, the last two displays imply that with probability at least $1- 3 e^{-t}$,
$$
\bigg|  \cL_\tau(\btheta^* ) - \cL_\tau(\hat{\btheta}_\tau)   - \frac{n}{2  } \| \bSigma^{-1/2}  \nabla \overline \cL_\tau(\btheta^*)  \|_2^2 \bigg| \leq  9n r_0 \Delta(4 r_0) + \frac{n}{2} \Delta^2(4r_0) ,
$$
which, together with \eqref{Delta.r}, proves \eqref{Wilks.expansion}

For the square-root Wilks' expansion, on $\{ \hat{\btheta}_\tau \in \Theta_0(4r_0)\}$ it holds
\begin{align}
 	&  \bigg| \sqrt{ 2 \{  \cL_\tau(\btheta^* ) - \cL_\tau(\hat{\btheta}_\tau)  \} } - \sqrt{n} \, \| \hat{\bdelta} \|_2  \bigg| \nn \\
   & \leq  \frac{ | 2\{  \cL_\tau(\btheta^* ) - \cL_\tau(\hat{\btheta}_\tau)\} - n \|  \hat{\bdelta} \|_2^2  | }{ \sqrt{n} \, \|  \hat{\bdelta} \|_2 } = \frac{2R(\btheta^* , \hat{\btheta}_\tau )}{\sqrt{n} \, \|  \hat{\bdelta} \|_2 }   \leq 4 \sqrt{n} \, \Delta(4r_0), \nn
\end{align}
where the last step follows from \eqref{R.bound}. Moreover, note that
\begin{align}
	& | \|  \hat{\bdelta} \|_2 -  \| \bGamma^* \|_2  |   \leq \|\hat{\bdelta} +  \bGamma^*\|_2 \leq \Delta(4r_0). \nn
\end{align}
Combining the last two displays with \eqref{Delta.r} proves \eqref{sqrt.Wilks.expansion}. \qed

\subsection{Proof of Proposition~\ref{prop:RSC}}

Since the Huber loss is convex and differentiable, we have
\begin{align}
	\cT(\btheta  ) & :=   \langle \nabla \overline \cL_\tau(\btheta ) -  \nabla \overline \cL_\tau(\btheta^*), \btheta  - \btheta^* \rangle \nn \\
	& = \frac{1}{n} \sn  \big\{ \ell_\tau'(Y_i - \bX_i^\T \btheta  )  - \ell_\tau'(Y_i - \bX_i^\T \btheta^* )  \big\} \bX_i^\T (\btheta - \btheta^* ) \nn \\
	& \geq  \frac{1}{n} \sn  \big\{ \ell_\tau'(Y_i - \bX_i^\T \btheta )  -\ell_\tau'(Y_i - \bX_i^\T \btheta^* )  \big\} \bX_i^\T (\btheta  - \btheta^*) I_{\cE_i}, \label{T12}
\end{align}
where $I_{\cE_i}$ is the indicator function of the event
\begin{align}
	\cE_i  :=   \big\{ |\varepsilon_i | \leq \tau/2 \big\}  \cap \big\{    |\langle  \bX_i, \btheta - \btheta^* \rangle |  \leq  (\tau/2r)   \|   \btheta - \btheta^*   \|_{\bSigma, 2} \big\}  ,  \nn
\end{align}
on which $|Y_i - \bX_i^\T \btheta  | \leq \tau$ for all $\btheta \in \Theta_0(r)$. Also, recall that $\ell_\tau''(u) = 1$ for $|u|\leq \tau$. For any $R>0$, define functions
\begin{align}
	\varphi_R(u) =\begin{cases}
		u^2   &  \mbox{ if }  |u | \leq  \frac{R}{2} , \\
		(u-R)^2   &  \mbox{ if }  \frac{R}{2} \leq u \leq R, \\
		(u+R)^2 &  \mbox{ if }  -R \leq u \leq - \frac{R}{2}, \\
		0  & \mbox{ if } |u| >R,
	\end{cases} ~\mbox{ and }~
	\psi_R(u) = I(|u| \leq R ) .
 \nn
\end{align}
In particular, $\varphi_{R}$ is $R$-Lipschitz and satisfies
\begin{align}
  u^2 I(|u| \leq R / 2)	\leq \varphi_R(u) \leq  u^2  I(|u| \leq R ) . \label{phi.bound}
\end{align}
It then follows that
\begin{align}
		 \cT(\btheta )  \geq   g(\btheta )  :=   \frac{1}{n} \sn  \varphi_{\tau \|  \btheta  - \btheta^*  \|_{\bSigma, 2}/ (2r)} (\langle \bX_i, \btheta  - \btheta^* \rangle ) \psi_{\tau/2}(\varepsilon_i)  . \label{def:g}
\end{align}

To bound the right-hand side of \eqref{def:g}, consider the supremum of a random process indexed by $\Theta_0(r)$:
\begin{align}
	\Delta_r    :=  \sup_{ \btheta  \in \Theta_0(r) }  \frac{ | g(\btheta   ) - \EE  g(\btheta  ) | }{\|   \btheta -\btheta^* \|_{\bSigma, 2}^2 }.  \label{def:Delta}
\end{align}
For any $ \btheta $ fixed, write $\bdelta = \btheta  - \btheta^*$.
By \eqref{phi.bound},
\begin{align}
  \EE  g(\btheta   )  & \geq \EE  \langle  \bX_i, \bdelta \rangle^2 \nn \\
  & \quad  - \EE  \bigg\{  \langle  \bX_i, \bdelta \rangle^2 I \bigg( \frac{ | \langle  \bX_i, \bdelta \rangle|}{\| \bdelta  \|_{\bSigma, 2}} \geq \frac{\tau}{4r}   \bigg) \bigg\}   -  \EE  \big\{  \langle  \bX_i,\bdelta \rangle^2 I(|\varepsilon_i |\geq \tau/2 ) \big\}   \nn \\
& \geq \| \bdelta  \|_{\bSigma, 2}^2 -   \Bigg\{   \bigg( \frac{4r}{\tau} \bigg)^2\frac{ \EE   \langle  \bX_i,   \bdelta \rangle^4 }{ \|  \bdelta  \|_{\bSigma, 2}^2} +   \bigg( \frac{2}{\tau} \bigg)^{2+\delta}  \EE \langle  \bX_i,\bdelta \rangle^2 |\varepsilon_i|^{2+\delta}   \Bigg\}.  \nn
\end{align}
Provided  $\tau \geq 2 \max \{ (4 \upsilon_{2+\delta})^{1/(2+\delta)} , 4\kappa^2 r\}$, it follows that
\begin{align}
	 \EE g(\btheta  ) \geq \| \bdelta  \|_{\bSigma, 2}^2   - \| \bdelta  \|_{\bSigma, 2}^2 \bigg(   \frac{16 \kappa^4 r^2}{\tau^2}   +  \frac{2^{2+\delta} \upsilon_{2+\delta}}{\tau^{2+\delta}} \bigg)     \geq \frac{1}{2}  \| \bdelta  \|_{\bSigma, 2}^2       \label{mean.g.2}
\end{align}
for all $ \btheta  \in \Theta_0(r)$. From \eqref{def:g}--\eqref{mean.g.2}, we conclude that
\begin{align}
	\frac{\cT(\btheta )}{\| \btheta  -  \btheta^* \|_{\bSigma ,2}^2 }  \geq \frac{1}{2}   - \Delta_r ~\mbox{ for all }~   \btheta  \in  \Theta_0(r) . \label{Lower.bound.1}
\end{align}

Next we deal with the stochastic term $\Delta_r$ defined in \eqref{def:Delta}. For $g(\btheta )$ given in \eqref{def:g}, we write $g(\btheta ) = (1/n) \sn g_i(\btheta )$.
Recalling that $0\leq \varphi_R(u)  \leq R^2/4$ and $0\leq  \psi_R (u)  \leq 1$ for all $u\in \RR$, it is easy to see that $0\leq  g_i(\btheta )  \leq    (\tau / 4r)^2 \| \btheta - \btheta^* \|_{\bSigma, 2}^2$.
By Talagrand's inequality (see, e.g. Theorem~7.3 in \cite{B2003}), we have for any $x>0$,
\begin{align}
	 \Delta_r  \leq  \EE \Delta_r +  (\EE \Delta_r)^{1/2} \frac{\tau}{2r} \sqrt{\frac{x}{n}}  +  \sigma_n \sqrt{     \frac{  2 x }{n}  }  +  +  \frac{\tau^2}{16 r^2} \times \frac{ x }{3n}   \label{Delta.concentration}
\end{align}
where $\sigma_n^2 = \sup_{ \btheta  \in \Theta_0(r) } \EE g_i^2(\btheta ) / \| \btheta - \btheta^* \|_{\bSigma ,2}^4$. By \eqref{phi.bound}, $\EE  g_i^2(\btheta ) \leq  \EE \langle \bX_i, \btheta  - \btheta^*  \rangle^4 \leq \kappa^4  \| \btheta  - \btheta^* \|_{\bSigma,2}^4$, implying $\sigma_n \leq \kappa^2$.

To bound the expectation $\EE \Delta_r$, applying the symmetrization inequality for empirical processes, and by the connection between Gaussian and Rademacher complexities, we have $\EE \Delta_r \leq 2 \sqrt{\pi/2}  \, \EE \{ \sup_{ \btheta  \in \Theta_0(r)  } |  \mathbb{G}_{\btheta } |   \}$,
where
$$
	\mathbb{G}_{\btheta } := 	 \frac{1}{n} \sn \frac{g_i}{\|\btheta - \btheta^*  \|_{\bSigma,2}^2}   \varphi_{\tau \| \btheta  - \btheta^*  \|_{\bSigma ,2} / (2r)} ( \langle \bX_i , \btheta  - \btheta^*  \rangle ) \psi_{\tau/2}(\varepsilon_i)   ,
$$
and $g_i$ are IID standard normal random variables that are independent of the observed data.  For any $\btheta_0  \in \Theta_0(r)$,  it holds
\begin{align}
	& \EE^*\bigg\{  \sup_{ \btheta \in \Theta_0(r) } | \GG_{\btheta } | \bigg\}     \leq  \EE^* |  \GG_{\btheta_0  } | +  2\EE^*\bigg\{ \sup_{ \btheta  \in \Theta_0(r)  }  \GG_{\btheta  }   \bigg\}    , \label{mean.G.1}
\end{align}
where $\EE^*$ denotes the conditional expectation given $\{ (Y_i,\bX_i) \}_{i=1}^n$. Taking the expectation with respect to $\{ (Y_i,\bX_i) \}_{i=1}^n$ on both sides, we see that \eqref{mean.G.1} remains valid with $\EE^*$ replaced by $\EE$.
To select a proper $\btheta_0$, first decompose $\btheta^*$ as $(\theta_0, \wt \btheta^{*\T})^\T$, where $\theta_0$ denotes the first coordinate of $\btheta^*$ and $\wt \btheta^{*} \in \RR^{d-1}$ consists of the remaining. Taking $\btheta_0 = (  \theta_0 + \sigma_{11}^{-1/2} r, \wt \btheta^{*\T}  )^\T$, we observe that $\| \btheta_0 - \btheta^* \|_{\bSigma, 2} = r$.
Since $\varphi_R(u) \leq \min(u^2, R^2/4)$, it holds
$$
\EE |  \GG_{\btheta_0  } | \leq (\EE  \GG_{\btheta_0  }^2)^{1/2}  \leq \frac{\tau }{4r\sqrt{n}} .
$$
%Then it follows from \eqref{phi.bound} that $\EE |  \GG_{\btheta_0  } | \leq (\EE  \GG_{\btheta_0  }^2)^{1/2} \leq (\EE X_1^4)^{1/2} \sigma_{11}^{-1} n^{-1/2}$.
As in the proof of Lemma~11 in \cite{LW2015}, we next use the Gaussian comparison theorem to bound the expectation of the (conditional) Gaussian supremum $\EE^*\{  \sup_{\Theta_0(r) }    \mathbb{G}_{\btheta }  \}$.

Let $\var^*$ be the conditional variance given $\{ (Y_i,\bX_i) \}_{i=1}^n$.
For  $ \btheta  , \btheta' \in \Theta_0(r)$, write $\bdelta = \btheta  - \btheta^*$ and $\bdelta' = \btheta'-\btheta^*$. Then
\begin{align}
	& \var^*(\GG_{\btheta } - \GG_{\btheta' }) \nn \\
	& \leq     \frac{1}{n^2} \sn \psi^2_{\tau/2}(\varepsilon_i)   \Bigg\{  \frac{\varphi_{\tau \|\bdelta \|_{\bSigma ,2}/(2r)} (\bX_i^\T \bdelta ) }{\| \bdelta \|_{\bSigma ,2}^2 }  - \frac{  \varphi_{\tau \|\bdelta' \|_{\bSigma ,2} /(2r)} (\bX_i^\T \bdelta' ) }{\| \bdelta' \|_{\bSigma,2}^2 }     \Bigg\}^2.   \nn
\end{align}
Note that $\varphi_{c R}(cu) = c^2 \varphi_{R}(u)$ for any $c>0$.  In particular, taking $R= \tau \|\bdelta' \|_{\bSigma,2}/(2r)$ and $c= \| \bdelta \|_{\bSigma,2} / \| \bdelta' \|_{\bSigma,2}$ delivers
$$
	 \varphi_{\tau \|\bdelta' \|_{\bSigma,2}/(2r)} (\bX_i^\T \bdelta' ) =  \frac{\| \bdelta' \|_{\bSigma,2}^2 }{ \|\bdelta \|_{\bSigma,2}^2}  \varphi_{\tau \|\bdelta  \|_{\bSigma,2}/(2r)} \bigg( \frac{\| \bdelta \|_{\bSigma,2} }{ \| \bdelta' \|_{\bSigma,2}} \bX_i^\T \bdelta' \bigg).
$$
Putting the above calculations together, we obtain
\begin{align}
&\var^*(\GG_{\btheta } - \GG_{\btheta'})  \nn \\
& \leq     \frac{1}{n^2} \sn \frac{1}{\| \bdelta \|_{\bSigma,2}^4 } \bigg\{  \varphi_{\tau \| \bdelta \|_{\bSigma,2}/(2 r)} (\bX_i^\T \bdelta )  -   \varphi_{\tau \|\bdelta  \|_{\bSigma,2}/(2r)} \bigg( \frac{\| \bdelta \|_{\bSigma,2} }{ \| \bdelta' \|_{\bSigma,2}} \bX_i^\T \bdelta' \bigg) \bigg\}^2   \nn \\
& \leq  \frac{1}{n^2} \sn \frac{1}{\| \bdelta \|_{\bSigma,2}^4} \frac{\tau^2 \| \bdelta \|_{\bSigma,2}^2}{4 r^2}  \bigg(  \bZ_i^\T \bdelta - \frac{\| \bdelta \|_{\bSigma,2} }{\| \bdelta'\|_{\bSigma,2}} \bZ_i^\T \bdelta'  \bigg)^2 \nn \\
& \leq     \frac{1}{n^2 } \sn  \frac{\tau^2 }{4 r^2} \bigg(  \frac{\bX_i^\T \bdelta }{\| \bdelta \|_{\bSigma,2}} - \frac{\bX_i^\T \bdelta'}{\| \bdelta'\|_{\bSigma,2}} \bigg)^2 . \label{var.G.1}
\end{align}
Next, define another (conditional) Gaussian process indexed by $\btheta$:
\begin{align}
	\ZZ_{\btheta  } :=  \frac{\tau }{2r n}  \sn g_i' \frac{\bX_i^\T(\btheta  - \btheta^*)}{\| \btheta  - \btheta^*  \|_{\bSigma,2} } ,  \nn
\end{align}
where $g'_i $ are IID standard normal random variables that are independent of all other random variables. By \eqref{var.G.1},
$ \var^*(\GG_{\btheta } - \GG_{\btheta' })  \leq \var^* (\ZZ_{\btheta } - \ZZ_{\btheta'} )$. By the Gaussian comparison inequality \citep{LT1991},
\begin{align}
	 &  \EE^*\bigg\{ \sup_{ \btheta   \in \Theta_0(r) } \GG_{\btheta  } \bigg\} \leq 2 \EE^*  \bigg\{  \sup_{ \btheta  \in \Theta_0(r) } \ZZ_{\btheta } \bigg\} \leq     \frac{  \tau   }{  r }     \EE^* \bigg\| \frac{1}{n} \sn g_i  \bZ_i \bigg\|_2  . \nn
\end{align}
Together with the unconditional version of \eqref{mean.G.1}, this implies
\begin{align}
	\EE  \Delta_r  \leq \sqrt{2\pi} \bigg(  \frac{ 2 \tau}{r}  \EE \bigg\| \frac{1}{n} \sn g_i     \bZ_i \bigg\|_2 + \frac{ \tau }{4r\sqrt{n}} \bigg)  \leq \sqrt{2\pi} \bigg(  \frac{ 2 \tau}{r} \sqrt{\frac{d}{n}} + \frac{ \tau }{4r\sqrt{n}} \bigg) . \nn
\end{align}
Combining this with \eqref{Delta.concentration}  and \eqref{Lower.bound.1}, we obtain that with probability at least $1-e^{-t}$,
\begin{align}
	\frac{\cT(\btheta )}{\| \btheta  -\btheta^* \|^2_{\bSigma , 2} } \geq \frac{1}{4}  ~\mbox{ uniformly over }  \btheta \in \Theta_0(r)  \nn
\end{align}
for all sufficiently large $n$ that scales as $(\tau /r)^2  (d+t)$ up to an absolute constant. This proves \eqref{RSC.lbd}.   \qed

\subsection{Proof of Theorem~\ref{boot.concentration.thm}}

Throughout we assume $t\geq 1$ and keep the notations used in the proof of Theorem~\ref{br.thm}.

\medskip
\noindent
{\sc Proof of \eqref{boot.est.deviation}}. Recall the weighted loss function $\mathcal{L}_\tau^\B(\btheta) = \sn  W_i  \ell_\tau(Y_i - \bX_i^\T \btheta)$, $\btheta \in \RR^d$ and the parameter set $\Theta_0(r)$ defined in \eqref{set.def}. Write $r_1 = 4r_0$ for $r_0$ as in \eqref{quadratic.bound}, so that
$$
	\PP \{ \hat{\btheta}_\tau \in \Theta_0(r_1) \} \geq 1-  2 e^{-t} ~\mbox{ provided } n \geq C_1  (d+t)
$$
for some $C_1 = C_1(A_0)>0$. By the definition of $\hat{\btheta}^\B_\tau$, $\cL_\tau^\B(\hat{\btheta}^\B_\tau)  - \cL_\tau^\B(\hat \btheta_\tau) \leq  0$ and
\begin{align}
	  \|   \hat{\btheta}_\tau^\B - \btheta^*   \|_{\bSigma, 2}   \leq \|   \hat{\btheta}_\tau^\B - \hat{\btheta}_\tau  \|_{\bSigma, 2} + \|  \hat{\btheta}_\tau - \btheta^*   \|_{\bSigma, 2} \leq   R_1 + \|  \hat{\btheta}_\tau - \btheta^*  \|_{\bSigma, 2} , \nn
\end{align}
where $R_1 := \overline \lambda_{\bSigma}^{1/2} R$. If we can show that, for some $r_2 \geq r_1$ to be specified,
\begin{align}
	   \cL_\tau^\B(\btheta) -  \cL_\tau^\B(\hat \btheta_\tau)  > 0 ~\mbox{ for all }  \btheta \in \partial \Theta_0(r) \mbox{ with } r_2 \leq r \leq r_2 + R_1  \nn
\end{align}
with high probability, then we must have $\hat{\btheta}^\B_\tau \in \Theta_0(r_2)$ with high probability. Here and below, we set $\partial \Theta_0(r) := \{ \btheta \in \RR^d: \| \btheta -\btheta^* \|_{ \bSigma, 2} =r \}$.

Centering the weighted Huber loss function, we define
\begin{align}
	\bzeta^\B(\btheta)= \cL^\B_\tau(\btheta) - \EE^*\{\cL^\B_\tau(\btheta) \} = \cL^\B_\tau(\btheta) - \cL_\tau(\btheta). \label{def:zetab}
\end{align}
Note that
\begin{align}
	& \cL_\tau^\B(\btheta) - \cL_\tau^\B(\hat \btheta_\tau ) \nn\\
	& =  \bzeta^\B(\btheta) - \bzeta^\B(\hat{\btheta}_\tau) + \cL_\tau(\btheta) - \cL_\tau(\btheta^*) + \cL_\tau(\btheta^* ) - \cL_\tau(\hat{\btheta}_\tau) \nn \\
	& \geq  \underbrace{\bzeta^\B(\btheta) - \bzeta^\B(\hat{\btheta}_\tau) }_{\Pi_1(\btheta)} + \underbrace{ \cL_\tau(\btheta) - \cL_\tau(\btheta^*)  }_{\Pi_2(\btheta)}. \label{diff.lbd1}
\end{align}
In the following, we bound $\Pi_1(\btheta)$ and $\Pi_2(\btheta)$ separately, starting with the latter which only depends on the observed data. As before, define $\bzeta(\btheta) = \cL_\tau(\btheta) - \EE\{\cL_\tau(\btheta)\}$ and  consider the decomposition
\begin{align}
	 \Pi_2(\btheta)  & = \underbrace{\bzeta(\btheta) - \bzeta(\btheta^*) - (\btheta - \btheta^*)^\T \nabla \bzeta(\btheta^*) }_{\Pi_{21}(\btheta)} \nn \\
& \quad  + \underbrace{(\btheta - \btheta^*)^\T \nabla \bzeta(\btheta^*)}_{\Pi_{22}(\btheta)} + \underbrace{\EE \{\cL_\tau(\btheta) - \cL_\tau(\btheta^*)\}}_{\Pi_{23}(\btheta)} .\label{Pi2.dec}
\end{align}

First we deal with $\Pi_{21}(\btheta)$. For every $r>0$, define the random process
\begin{align}
	U_r(\btheta) =  \frac{1}{r\sqrt{n}} \{  \bzeta(\btheta) - (\btheta - \btheta^*)^\T \nabla \bzeta(\btheta^*) \}, \ \ \btheta \in \Theta_0(r) . \label{define.Uprocess}
\end{align}
We will use Theorem~A.1 in \cite{S2013} to bound the local fluctuation $| U_r(\btheta) - U_r(\btheta^*) |$ over $\btheta \in \Theta_0(r)$. For any random variable $X$, we write $(\II  - \EE)X = X- \EE(X)$.
For every $\btheta \in \Theta_0(r )$, $\bv \in \RR^d$ and $\lambda \in \RR$, putting $\overline{\bv} = \bSigma^{1/2} \bv / \|  \bv\|_{ \bSigma ,2}$ and $\bdelta_r = \bSigma^{1/2}(\btheta - \btheta^*) / r$, and by the mean value  theorem, we have
\begin{align}
	& \EE \exp\bigg\{ \lambda \frac{\bv^\T \nabla U_r(\btheta)}{\|   \bv  \|_{ \bSigma ,2} } \bigg\} \nn \\
	%  = \EE \exp\bigg\{  \frac{\lambda}{\sqrt{n} } \frac{\bv^\T \nabla^2 \bzeta(\wt \btheta) (\btheta - \btheta^*)}{  \| \bSigma^{1/2}\bv \|_2\,  r  } \bigg\} \nn \\
	& = \EE \exp\bigg\{  \frac{\lambda}{  \sqrt{n} } \sn (\II - \EE) \ell''_\tau(Y_i - \bX_i^\T \wt \btheta_i )  \overline{\bv}^\T \bZ_i \bZ_i^\T \bdelta_r \bigg\} \nn \\
	& \leq \prod_{i=1}^n  \bigg( 1 + \frac{\lambda^2}{ n }  \EE \Big[  \{  ( \overline{\bv}^\T \bZ_i  \bZ_i^\T \bdelta_r  )^2 + (\EE |\overline{\bv}^\T \bZ_i  \bZ_i^\T \bdelta_r | )^2 \} \nn \\
	& \quad \quad\quad \quad\quad\quad\quad \quad\quad\quad\quad\quad\quad \times e^{\frac{|\lambda|}{\sqrt{n} } (  |\overline{\bv}^\intercal \bZ_i  \bZ_i^\intercal \bdelta_r | + \EE |\overline{\bv}^\intercal \bZ_i  \bZ_i^\intercal \bdelta_r | )  } \Big] \bigg) \nn \\
	& \leq \prod_{i=1}^n \bigg\{ 1 +  \frac{\lambda^2  }{n}  e^{\frac{|\lambda |}{\sqrt{n}} } \EE  e^{\frac{|\lambda |}{\sqrt{n} } |\overline{\bv}^\intercal \bZ_i  \bZ_i^\intercal \bdelta_r | }   + \frac{\lambda^2  }{n} e^{\frac{|\lambda |}{\sqrt{n}} }  \EE (\overline{\bv}^\T \bZ_i \bZ_i^\T \bdelta_r )^2 e^{\frac{|\lambda |}{\sqrt{n} } |\overline{\bv}^\intercal \bZ_i  \bZ_i^\intercal \bdelta_r | } \bigg\}  \nn \\
	& \leq  \exp  \bigg\{ e^{\frac{|\lambda |}{\sqrt{n}}}  \lambda^2 \max_{\bu\in \mathbb{S}^{d-1}} \EE e^{\frac{|\lambda |}{\sqrt{n}} (\bu^\intercal \bZ)^2} +  e^{\frac{|\lambda |}{\sqrt{n}}}  \lambda^2 \max_{\bu\in \mathbb{S}^{d-1}} \EE (\bu^\intercal \bZ)^4 e^{\frac{|\lambda |}{\sqrt{n}} (\bu^\intercal \bZ)^2} \bigg\} . \nn
\end{align}
Similarly to \eqref{exp.moment.bound}, it can be shown that for all $|\lambda | \leq c_1 \sqrt{n}$ and $\btheta \in \Theta_0(r)$,
\begin{align}
	\EE \exp\bigg\{ \lambda \frac{\bv^\T \nabla U_r(\btheta)}{\| \bSigma^{1/2} \bv  \|_2 } \bigg\}  \leq \exp( C_2^2 \lambda^2 /2 ). \nn
\end{align}
Using Theorem~A.1 in \cite{S2013}, we deduce that with $\PP^{\dagger}$-probability at least $1-e^{-t}$,
\begin{align}
\sup_{\btheta \in \Theta_0(r)} | U_r(\btheta ) - U_r(\btheta^*)  |  \leq 3 C_2   (4d+2t)^{1/2} r .\nn
\end{align}
as long as $n \geq c_1^{-2}(4d+2t)$. In view of \eqref{Pi2.dec} and \eqref{define.Uprocess}, it holds for every $r>0$ that
\begin{align}
	\sup_{\btheta \in \Theta_0(r)} | \Pi_{21}(\btheta) |  \leq  3 C_2  (4d+2t)^{1/2}  r^2 \sqrt{n}  \label{Pi21.bound}
\end{align}
with $\PP^{\dagger}$-probability at least $1-e^{-t}$. The bound in \eqref{Pi21.bound} holds for any given $r>0$. Following the slicing argument similar to that used in the proof of Theorem~A.2 in \cite{S2013}, it can be shown that with $\PP^{\dagger}$-probability at least $1- e^{-t}$,
\begin{align}
	 \sup_{\btheta \in \Theta_0(r)}
	| \Pi_{21}(\btheta) |  \leq 6 C_2\{ 4d + 2t + 2 \log(2r/r_2) \}^{1/2}  r^2 \sqrt{n}  \label{Pi21.unif.bound}
\end{align}
for all $r_2 \leq r \leq r_2 + R_1$ as long as $n\geq c_1^{-2} \{4d + 2t +2 \log(2+2 R_1/r_2)\}$.

For $\Pi_{22}(\btheta)$, note that
\begin{align}
	  \sup_{\btheta \in \Theta_0(r)} | \Pi_{22}(\btheta) |  \leq   r  \| \bSigma^{-1/2} \{ \nabla \cL_\tau(\btheta^*) - \EE \nabla \cL_\tau(\btheta^*)   \}  \|_2 = r n  \| \bgamma \|_2  \nn
\end{align}
for $\bgamma$ as given in \eqref{L2.dec}. This, together with \eqref{xi.concentration}, implies that with $\PP^{\dagger}$-probability at least $1- e^{-t}$,
\begin{align}
  \sup_{\btheta \in \Theta_0(r)} | \Pi_{22}(\btheta) | \leq C_3 v  ( d+  t)^{1/2} r \sqrt{n} ~\mbox{ for any } r>0, \label{Pi22.bound}
\end{align}
where $C_3= C_3(A_0)>0$.

Turning to $\Pi_{23}(\btheta)$, we define the function $h(\btheta) = (1/n) \EE \{ \cL_\tau(\btheta)\}$, $\btheta \in \RR^d$ so that $\Pi_{23}(\btheta) = n\{ h(\btheta) - h(\btheta^*) \}$. Put $\bdelta =\bSigma^{1/2}( \btheta - \btheta^*)$. By \eqref{Zi.def} and the mean value theorem, it follows that
\begin{align}
	h(\btheta) = h(\btheta^*) - \EE\{ \psi_\tau(\varepsilon ) \bZ^\intercal \bdelta \} + \frac{1}{2 }   \EE \{ \ell''_\tau(Y  - \bX^\T \wt \btheta) ( \bZ^\intercal \bdelta )^2 \} , \nn
\end{align}
where $\wt \btheta$ is a point lying between $\btheta$ and $\btheta^*$. Since $\EE(\varepsilon | \bZ )=0$, we have $-\EE\{\psi_\tau(\varepsilon)| \bZ\} = \EE[ \{  \varepsilon - \tau \sgn(\varepsilon)\} I(|\varepsilon | > \tau )  | \bZ ]$, which further implies
$$
	|  \EE\{\psi_\tau(\varepsilon ) \bZ^\intercal \bdelta \}  | \leq  \upsilon_4 \tau^{-3} \EE( |\bZ^\intercal \bdelta |) \leq   \upsilon_4 \tau^{-3} \|\bdelta \|_2.
$$
Moreover,
\begin{align}
	 & \EE \{ \ell''_\tau(Y  - \bX^\T \wt \btheta) ( \bZ^\intercal \bdelta )^2 \} \nn \\
	 &  = \EE  ( \bZ^\intercal \bdelta )^2 - \EE \{ I(|Y  - \bX^\T \wt \btheta |>\tau)( \bZ^\intercal \bdelta )^2 \} \nn \\
	% & \geq \| \bdelta \|_2^2 -  \upsilon_{2+\delta } (2/\tau)^{2+\delta }  \| \bdelta \|_2^2 -   (2/\tau)^{2}  \EE [ \{ \bX^\T (\wt \btheta - \btheta^*)\}^2 (\bZ^\T \bdelta)^2 ]  \nn \\
	% & \geq   ( 1 - 2^{2+\delta}\upsilon_{2+\delta } \tau^{-2-\delta}   ) \| \bdelta \|_2^2   - 8 e A_0^4\,\tau^{-2}  \| \bdelta \|_2^4 \nn \\
	 & \geq  ( 1- \sigma^2 \tau^{-2} ) \| \bdelta \|_2^2 - \tau^{-2} \EE \{\bX^\T(\btheta^* - \wt \btheta) \}^2 (\bZ^\T \bdelta)^2 \nn \\
	 &  \geq   ( 1- \sigma^2 \tau^{-2} ) \| \bdelta \|_2^2 - \kappa^4 \tau^{-2}   \| \bdelta \|_2^4, \nn
\end{align}
where $\kappa>0$ is as in Proposition~\ref{prop:RSC}.
Putting the above calculations together yields that for any $\btheta \in \partial \Theta_0(r)$,
\begin{align}
	\frac{1}{n} \Pi_{23}(\btheta) \geq ( 1 - \sigma^2 \tau^{-2} - \kappa^4 \tau^{-2}  r^2 ) \frac{r^2}{2} -  \upsilon_4 \tau^{-3}r =  \frac{1}{2} b(r) r^2 ,  \label{mean.diff.bound}
\end{align}
where $b(r) := 1 - \sigma^2 \tau^{-2} - \kappa^4 \tau^{-2}  r^2  - 2  \upsilon_4 \tau^{-3} r^{-1}$, $r>0$.

%Here, to ensure the positivity of $b(r)$ for $r\geq r_2$, we require
%\begin{align}
%	\tau \geq \max\{  2\sigma, 8 A_0^2 (r_2+ R_1) \} ~\mbox{ and }~ \tau r_2 \geq 4\sigma^2.  \label{tau.constraint.1}
%\end{align}

Combining \eqref{Pi2.dec}, \eqref{Pi21.unif.bound}, \eqref{Pi22.bound} and \eqref{mean.diff.bound}, it follows that with $\PP^{\dagger}$-probability at least $1 - 2 e^{-t}$,
\begin{align}
	    \Pi_2(\btheta)    \geq   r^2 n  \Bigg\{ \frac{1}{2} b(r) - 6 C_2   \sqrt{ \frac{ 4d+2t + 2\log(2r/r_2) }{n}}  - C_3  \frac{ v (d+t)^{1/2}}{r \sqrt{n}} \Bigg\} \label{Pi2.bound}
\end{align}
for all $\btheta \in \partial \Theta_0(r)$ with $r\in [r_2, r_2 + R_1]$ provided $n\geq c_1^{-2} \{4d + 2t + 2\log(2+2 R_1/r_2)\}$.

Next we deal with the process $\Pi_1(\btheta)=\bzeta^\B(\btheta) - \bzeta^\B(\hat{\btheta}_\tau)$ in \eqref{diff.lbd1}, where
$$
	\bzeta^\B(\btheta) = \sn \ell_\tau(Y_i - \bX_i^\T \btheta) (W_i - 1), \ \ \btheta \in \RR^d.
$$
Decompose $\Pi_1(\btheta)$ as
\begin{align}
	\Pi_1(\btheta) = \underbrace{  \bzeta^\B(\btheta) - \bzeta^\B(\hat{\btheta}_\tau) - (\btheta - \hat{\btheta}_\tau)^\T \nabla \bzeta^\B(\hat{\btheta}_\tau)  }_{\Pi_{11}(\btheta)} + \underbrace{  (\btheta - \hat{\btheta}_\tau)^\T \nabla \bzeta^\B(\hat{\btheta}_\tau)  }_{\Pi_{12}(\btheta)} . \label{Pi1.dec}
\end{align}
We will use a conditional version of Theorem~A.1 in \cite{S2013} to bound $\Pi_{11}(\btheta)$. Similarly to \eqref{define.Uprocess}, define, for each $r>0$,
\begin{align}
	U^\B_r(\btheta) =  \frac{1}{r\sqrt{n}}  \{  \bzeta^\B(\btheta) - (\btheta -  \hat{\btheta}_\tau)^\T \nabla \bzeta^\B( \hat{\btheta}_\tau) \}, \ \ \btheta \in \RR^d. \label{define.UBprocess}
\end{align}
Similarly to \eqref{set.def}, define
\begin{align}
	\hat \Theta_0(r) =  \{ \btheta \in \RR^d: \|   \btheta - \hat{\btheta}_\tau  \|_{\bSigma, 2} \leq r \}, \ \ r>0. \label{boot.set.def}
\end{align}
For every $\btheta \in \hat \Theta_0(r)$ and $\bv  \in \RR^d$, by the mean value theorem we have
$$
	\bv^\T \nabla 	U^\B_r(\btheta) =  \frac{1}{r \sqrt{n}} \bv^\T \{  \nabla \bzeta^\B( \btheta ) - \nabla \bzeta^\B( \hat{\btheta}_\tau) \} =  \frac{1}{r\sqrt{n}} \bv^\T \nabla^2 \bzeta^\B(\wt\btheta) (\btheta - \hat{\btheta}_\tau ),
$$	
where $\wt \btheta$ is a convex combination of $\btheta$ and $\hat{\btheta}_\tau$. Putting $\overline \bdelta_r = \bSigma^{1/2} (\btheta - \hat{\btheta}_\tau)/r$ and $\overline{\bv} = \bSigma^{1/2} \bv / \| \bSigma^{1/2} \bv \|_2$, we deduce that
\begin{align}
	& \EE^*  \exp  \bigg\{  \lambda \frac{\bv^\T \nabla U_r^\B(\btheta)}{\| \bSigma^{1/2} \bv \|_2 } \bigg\}  \nn \\
	& = \EE^*  \exp \bigg\{ \frac{\lambda}{r \sqrt{n} } \overline \bv^\T \bSigma^{-1/2} \nabla^2 \bzeta^\B(\wt \btheta) (\btheta - \hat{\btheta}_\tau)   \bigg\} \nn \\
	& = \prod_{i=1}^n \EE^* \exp \bigg\{ \frac{\lambda}{r \sqrt{n} } \overline \bv^\T \bSigma^{-1/2} \nabla_{\btheta}^2 \ell_\tau(Y_i - \bX_i^\T  \wt \btheta)  (\btheta - \hat{\btheta}_\tau)  U_i \bigg\}  \nn \\
	& = \prod_{i=1}^n \EE^* \exp\bigg\{ \frac{\lambda}{\sqrt{n}} \eta_i (\btheta, \bv) U_i \bigg\} , \label{boot.exp.bound1}
\end{align}	
where
\begin{align*}
& \eta_i(\btheta, \bv)  \\
& :=\overline{\bv}^\T \bSigma^{-1/2} \nabla_{\btheta}^2 \ell_\tau(Y_i - \bX_i^\T  \wt \btheta)  (\btheta - \hat{\btheta}_\tau)/r  =  I ( |Y_i - \bX_i^\T \wt \btheta |\leq \tau )  \overline{\bv}^\T  \bZ_i \bZ_i^\T  \overline \bdelta_r  .
\end{align*}
Under Condition~\ref{weight.cond}, it holds
\begin{align}
 & \EE^* \exp\bigg\{ \frac{\lambda}{\sqrt{n}} \eta_i (\btheta, \bv) U_i  \bigg\} \nn \\
 &  \leq  \exp\bigg\{ \frac{\lambda^2  }{2 n } B_U^2 \eta_i^2(\btheta, \bv)  \bigg\} \leq \exp\bigg\{ \frac{\lambda^2  }{2 n }  B_U^2 (\overline{\bv}^\T \bZ_i )^2 ( \overline \bdelta_r^\T \bZ_i )^2  \bigg\} , \nn
\end{align}
where $B_U = B_U(A_U)>0$. Plugging this into \eqref{boot.exp.bound1} shows
\begin{align}
	\EE^*  \exp  \bigg\{  \lambda \frac{\bv^\T \nabla U_r^\B(\btheta)}{\| \bSigma^{1/2} \bv \|_2 } \bigg\}   \leq \exp\bigg(\frac{ \lambda^2 }{2 }  B_U^2 M_{n,4} \bigg). \nn
\end{align}
With the above preparations in place, it follows from \eqref{define.UBprocess} and Theorem~A.1 in \cite{S2013} that for any $r>0$,
\begin{align}
	  \PP^* \Bigg\{  \sup_{\btheta \in \hat \Theta_0(r)} | \Pi_{11}(\btheta) |   \geq  3 B_U M_{n,4}^{1/2}   (4d+2t)^{1/2} r^2 \sqrt{n} \Bigg\}  \leq e^{-t} \nn
\end{align}
almost surely, where $M_{n,4}$ is given in \eqref{def:Mn}. Again, using the slicing technique and applying the preceding bound to each slice separately, we obtain that with $\PP^*$-probability at least $1- e^{-t}$,
\begin{align}
	\sup_{\btheta \in \hat \Theta_0(r)}  | \Pi_{11}(\btheta) |  \leq  6 B_U  M_{n,4}^{1/2} \{  4d + 2t + 2\log(2r/ r_1)  \}^{1/2}  r^2 \sqrt{n}  \nn
\end{align}
for all $r_1  \leq r\leq 2 r_2 + R_1$. Note that, on the event $\{ \hat{\btheta}_\tau \in \Theta_0(r_1) \}$ that occurs with $\PP^{\dagger}$-probability at least $1- 2 e^{-t}$,
$$
	\Theta_0(r) \subseteq \hat{\Theta}_0(r+ r_1).
$$
Combining the last two displays and taking $x=2t$ in Lemma~\ref{lem.design}, we obtain that with $\PP^{\dagger}$-probability at least $1- 3 e^{-t}$,
\begin{align}
	\PP^*  \Bigg\{  & \sup_{\btheta \in \Theta_0(r)}  | \Pi_{11}(\btheta) |     \leq  C_4   \sqrt{  d +  t +  \log(2 + 2r/r_1)   }  \nn \\
& \ \   \ \  \times (r + r_1)^2  \sqrt{n} ~\mbox{ for all } r_2 \leq r\leq r_2 + R_1 \Bigg\} \geq 1 -  e^{-t}  \label{Pi11.unif.bound}
\end{align}
as long as $n\geq C_0 (d  +t )^2$, where $C_0 = C_0(A_0)$ and $C_4 = C_4(A_0, A_U)$.

For $\Pi_{12}(\btheta)$ in \eqref{Pi1.dec}, it holds on the event $\{ \hat{\btheta}_\tau \in \Theta_0(r_1) \}$ that, for every $\btheta \in \Theta_0(r)$,
\begin{align}
   | \Pi_{12}(\btheta) |  & = | (\btheta -\btheta^* + \btheta^* - \hat{\btheta}_\tau)^\T \nabla \bzeta^\B (\hat{\btheta}_\tau) | \nn \\
 &  \leq   (r + r_1)  \| \bSigma^{-1/2} \nabla \bzeta^\B(\hat{\btheta}_\tau)  \|_2  \nn \\
 &   \leq  (r+ r_1)  \{   \| \bxi^\B(\hat{\btheta}_\tau) -\bxi^\B( {\btheta}^*)  \|_2  + \| \bxi^\B(\btheta^*)  \|_2 \}  \nn \\
 & \leq  (r+ r_1)  \Bigg\{ \sup_{\btheta \in \Theta_0(r_1)}  \| \bxi^\B( {\btheta} ) -\bxi^\B( {\btheta}^*)  \|_2 + \| \bxi^\B(\btheta^*)  \|_2 \Bigg\} , \label{Pi12.dec}
\end{align}
where $\bxi^\B(\btheta) =  \bSigma^{-1/2} \nabla \bzeta^\B( {\btheta} ) =  - \sn \psi_\tau(Y_i - \bX_i^\T \btheta)  U_i \bZ_i$ is as in \eqref{xiB.def}.

By Lemma~\ref{lem.bootfluctuation}, it holds for each $r>0$ that
\begin{align}
	\PP^* \Bigg\{ \sup_{\btheta \in \Theta_0(r)} \frac{1}{\sqrt{n}} \| \bxi^\B( {\btheta} )  -  \bxi^\B( {\btheta}^*)  \|_2  \geq  C M_{n,4}^{1/2}  (8 d   & +  2 t)^{1/2} r   \Bigg\}  \leq e^{-t}  \nn
\end{align}
almost surely. Combining this and Lemma~\ref{lem.design}, we see that, conditioning on the same event where \eqref{Pi11.unif.bound} holds,
\begin{align}
\PP^* \Bigg\{ \sup_{\btheta \in \Theta_0(r_1)} \frac{1}{\sqrt{n}}  \| \bxi^\B( {\btheta} ) - \bxi^\B( {\btheta}^*)  \|_2  \geq   C_5  (d  +  t)^{1/2}  r_1    \Bigg\}  \leq  e^{-t}   \label{Pi12.bound-1}
\end{align}
as long as $n\geq C_0(d  + t)^2$, where $C_5=C_5(A_0, A_U) >0$.

For $\| \bxi^\B(\btheta^*) \|_2$, taking $x= 2t$ in Lemmas~\ref{cov.concentration} and \ref{lem.l2bootscore}, we see that with $\PP^{\dagger}$-probability at least $1  -   e^{-t} $,
\begin{align}
  \PP^* \{ \| \bxi^\B(\btheta^*) \|_2 \geq C_6  (d+t)^{1/2} \sqrt{n}  \} \leq   e^{-t}, \label{Pi12.bound-2}
\end{align}
where $C_6=C_6(A_U)>0$.
Combining \eqref{Pi1.dec}, \eqref{Pi11.unif.bound}, \eqref{Pi12.dec}, \eqref{Pi12.bound-1} and \eqref{Pi12.bound-2}, we conclude that conditioning on some event that occurs with $\PP^{\dagger}$-probability at least $1- 4 e^{-t}$, it holds
\begin{align}
	& \sup_{\btheta \in \Theta_0(r) }  |  \Pi_1(\btheta) |  \nn \\
	& \leq r^2 n \Bigg[ C_4 (r+r_1)^2  \frac{\{ d+ t + \log(2+ 2r/r_1)\}^{1/2}}{r^2 \sqrt{n}} +  C_5    (r+ r_1 )   \frac{r_1  (d+t)^{1/2} }{r^2 \sqrt{n}}     \nn \\
	& \quad\quad\quad \quad \quad \ \ + C_6   (r+r_1) \frac{v (d+t)^{1/2}}{r^2 \sqrt{n}}  \,\Bigg] ~\mbox{ for all } r_2 \leq r \leq r_2 +R_1   \label{Pi1.unif.bound}
\end{align}
with $\PP^*$-probability at least $1-3 e^{-t}$ provided $n\geq  C_0 (d  +t)^2$.

Finally, combining \eqref{diff.lbd1}, \eqref{Pi2.bound} and \eqref{Pi1.unif.bound}, and taking
$$
	r_2 = C_7 v \sqrt{(d+t)/n} \geq r_1
$$
for some sufficiently large constant $C_7= C_7(A_0, A_U)>0$, we conclude that, conditioning on some event that occurs with $\PP^{\dagger}$-probability at least $1-5  e^{-t}$,
\begin{align}
  	\PP^* \bigg\{ \hat{\btheta}_\tau^\B \!\in\! \Theta_0(r_1\!+\!R_1)  \mbox{ and } \cL^\B_\tau(\btheta)\! >\! \cL_\tau^\B(\hat{\btheta}_\tau)  ,   \forall \btheta \!\in\! \partial \Theta_0(r) ,   r \!\in \! &\, [r_2 , r_2\!+\!R_1]  \bigg\} \nn \\
  	& \geq 1- 3 e^{-t} \nn
\end{align}
provided $n \geq C_0 (d+t)^2$ and $n\geq C_8 \overline \lambda_{\bSigma} $, where $C_8 = C_8(A_0)>0$. Reinterpret this we obtain \eqref{boot.est.deviation}.

\medskip
\noindent
{\sc Proof of \eqref{boot.br}}. An argument similar to that given in the proof of Theorem~\ref{br.thm} can be used to prove \eqref{boot.br}. Define the random process
\begin{align}
&	\bB^\B(\btheta , \btheta' )   =  \bSigma^{-1/2} n^{-1} \{ \nabla \cL_\tau^\B(\btheta) - \nabla \cL_\tau^\B(\btheta') \}   -   \bSigma^{1/2} (\btheta  - \btheta') \nn \\
	&  =    \bSigma^{-1/2}  n^{-1} \{ \nabla \bzeta^\B(\btheta) - \nabla \bzeta^\B(\btheta') + \nabla  \cL_\tau(\btheta) - \nabla  \cL_\tau( \btheta')    -  n \bSigma (\btheta  - \btheta') \} , \nn
\end{align}
where $\bzeta^\B(\cdot)$ is given in \eqref{def:zetab}. The stated result follows from a bound on
\begin{align}
	&   \sup_{\btheta,  \, \btheta' \in \Theta_0(r)} \| \bB^\B(\btheta, \btheta') \|_2 \nn \\
& \leq  \sup_{\btheta, \, \btheta' \in \Theta_0(r)} \|  \bB^\B(\btheta, \btheta') - \EE^* \{ \bB^\B(\btheta , \btheta') \} \|_2 + \sup_{\btheta , \, \btheta' \in \Theta_0(r)} \|  \EE^* \{ \bB^\B(\btheta, \btheta') \} \|_2 ,  \nn
\end{align}
and the facts that $\hat{\btheta}_\tau \in \Theta_0(r_1)$ and $\hat{\btheta}^\B_\tau \in \Theta_0(r_2)$ with high probability.

Note that $\EE^* \{ \bB^\B(\btheta, \btheta' ) \}  = \bB(\btheta) - \bB(\btheta')$ for $\bB(\cdot)$ as in \eqref{process.B}. It then follows from \eqref{Delta.r} that with $\PP^\dagger$-probability greater than $1- e^{-t}$,
\begin{align}
\sup_{\btheta ,  \, \btheta' \in \Theta_0(r)} \| \EE^* \{ \bB^\B(\btheta, \btheta') \} \|_2   \leq   2\delta(r) r    + C_9  ( d+t )^{1/2}  n^{-1/2} r  , \label{EBb.bound}
\end{align}
where $\delta(\cdot)$ is defined above \eqref{def.delta} and $C_9=C_{9}(A_0) >0$.

For $\bB^\B(\btheta , \btheta' ) - \EE^* \{\bB^\B(\btheta , \btheta')\}$, note that
\begin{align}
& \bB^\B(\btheta , \btheta' ) - \EE^* \{\bB^\B(\btheta , \btheta')\} \nn \\
& = \bSigma^{-1/2} n^{-1} \{ \nabla \bzeta^\B(\btheta) - \nabla \bzeta^\B(\btheta') \} \nn \\
& =  \bSigma^{-1/2} n^{-1} \{ \nabla \bzeta^\B(\btheta) - \nabla \bzeta^\B(\btheta^*) \} - \bSigma^{-1/2} n^{-1} \{ \nabla \bzeta^\B(\btheta') - \nabla \bzeta^\B(\btheta^*) \} .\nn
\end{align}
Since we are interested in the case where both $\btheta$ and $\btheta'$ are in a neighborhood of $\btheta^*$, it suffices to focus on $\btheta$. To proceed, we change the variable by $\bdelta = \bSigma^{1/2}( \btheta - \btheta^*)$ and define
\begin{align}
	  & \overline{\bB}^\B(\bdelta )= \bSigma^{-1/2} n^{-1} \{ \nabla \bzeta^\B(\btheta) - \nabla \bzeta^\B(\btheta^*) \} \nn \\
	  &   = -\frac{1}{n} \sn  \{ \psi_\tau( \varepsilon_i - \bZ_i^\T\bdelta) - \psi_\tau (\varepsilon_i  ) \} U_i  \bZ_i . \nn
\end{align}
It is easy to see that $\overline{\bB}^\B( \textbf{0}) = \textbf{0}$, $\EE^*\{ \overline{\bB}^\B(\bdelta )\} = \textbf{0}$ and
$$
	\nabla \overline{\bB}^\B(\bdelta) = \frac{1}{n} \sn  \ell''_\tau(\varepsilon_i - \bZ_i^\T \bdelta) U_i \bZ_i \bZ_i^\T  .
$$
For any $\bu , \bv \in \mathbb{S}^{d-1}$ and $\lambda \in \RR$, by Condition~\ref{weight.cond} we have
\begin{align}
	& \EE^* \exp \{  \lambda \sqrt{n} \, \bu^\T \nabla \overline{\bB}^\B(\bdelta) \bv  \} \nn \\
	& \leq \prod_{i=1}^n  \exp\bigg\{ \frac{\lambda^2 }{2n} B_U^2 ( \bu^\T \bZ_i )^2 (  \bv^{\intercal} \bZ_i )^2  \bigg\} \leq \exp\bigg( \frac{\lambda^2}{2}  B_U^2 M_{n,4}\bigg) ,\nn
\end{align}
where $B_U = B_U(A_U)>0$. Applying a conditional version of Theorem~A.1 in \cite{S2013} delivers
\begin{align}
	\PP^*\Bigg\{ \sup_{\bdelta \in \mathbb{B}^d(r)} \| \sqrt{n} \, \overline{\bB}^\B(\bdelta) \|_2 \geq  3 B_U  M_{n,4}^{1/2}  (4d +  2t)^{1/2}  r \Bigg\}  \leq   e^{-t}  \label{Bb.bound}
\end{align}
almost surely, where $M_{n,4}$ is given in \eqref{def:Mn}.

Finally, we take $(\btheta, \btheta') = (\hat{\btheta}_\tau^\B , \hat{\btheta}_\tau)$. By \eqref{concentration.MLE} and \eqref{boot.est.deviation}, $\hat{\btheta}_\tau \in \Theta_0(r_1)$ with probability at least $1-3e^{-t}$, and with $\PP^{\dagger}$-probability at least $1-5 e^{-t}$,
$$
	\PP^*\{ \hat{\btheta}^\B_\tau \in \Theta_0(r_2) \} \geq 1- 3 e^{-t}.
$$
Since $\nabla  \cL_\tau(\hat \btheta_\tau) = \textbf{0}$, it holds $\bSigma^{-1/2} \nabla \cL^\B_\tau(\hat{\btheta}_\tau)  = \bxi^\B(\hat{\btheta}_\tau)$, where $\bxi^\B(\cdot)$ is given in \eqref{xiB.def}. Then, on the event $\{ \hat{\btheta}_\tau \in \Theta_0(r_1) \}$, it holds
$$
	\| \bxi^\B(\hat{\btheta}_\tau) - \bxi^\B(\btheta^*) \|_2 \leq \sup_{\btheta \in \Theta_0(r_1) } \|  \bxi^\B(\btheta) - \bxi^\B(\btheta^*) \|_2 ,
$$
so that the bound in \eqref{Pi12.bound-1} can be applied. Moreover, by the triangle inequality, $\| \bSigma^{1/2} ( \hat{\btheta}^\B_\tau - \hat{\btheta}_\tau ) \|_2 \leq r_1 + r_2$ with high probability, which in turn implies that $\hat{\btheta}^\B_\tau$ falls in the interior of $\{ \btheta : \| \btheta - \hat \btheta_\tau \|_2 \leq R\}$ for all sufficiently large $n$ and hence $\nabla \cL^\B_\tau(\hat{\btheta}^\B_\tau) = \textbf{0}$. This, together with \eqref{EBb.bound}, \eqref{Bb.bound} and the definition of $\bB^{\B}(\hat{\btheta}_\tau^\B, \hat{\btheta}_\tau)$, proves \eqref{boot.br}.  \qed

\subsection{Proof of Theorem~\ref{boot.wilks.thm}}

The proof is based on a similar argument to that used in the proof of Theorem~\ref{wilks.thm}. To begin with, define the bootstrap random process: for $\btheta , \btheta' \in \Theta_0(r)$,
\begin{align}
	 &  R^\B(\btheta , \btheta' ) \nn \\
	 &  =   \cL^\B_\tau(\btheta ) - \cL^\B_\tau(\btheta') - (\btheta - \btheta')^\T \nabla \cL^\B_\tau(\btheta') - \frac{n}{2} \| \bSigma^{1/2} (\btheta -\btheta' ) \|_2^2 . \label{R.randomprocess}
\end{align}
By the mean value theorem, $R^\B(\btheta , \btheta' ) =  (\btheta - \btheta' )^\T \nabla_{\btheta} R^\B(\btheta, \btheta' ) |_{\btheta = \wt{\btheta}}$, where $\wt{\btheta}$ is a convex combination of $\btheta$ and $\btheta'$ and thus satisfies $\wt{\btheta} \in \Theta_0(r)$. It follows that
\begin{align}
R^\B(\btheta , \btheta' ) \leq 2 r \sup_{\btheta \in \Theta_0(r) }  \| \bG^\B(\btheta, \btheta')   \|_2 , \label{RB.bound}
\end{align}
where $\bG^\B(\btheta, \btheta'):= \bSigma^{-1/2} \nabla_{\btheta} R^\B(\btheta, \btheta' ) = \bSigma^{-1/2} \{  \nabla \cL^\B_\tau(\btheta) -  \nabla \cL^\B_\tau(\btheta')  \} - n   \bSigma^{1/2} (\btheta - \btheta ')$. To bound the right-hand side of \eqref{RB.bound}, we will deal with
$$
	\bD(\btheta ,\btheta') := \bG^\B(\btheta, \btheta') -\bG(\btheta, \btheta') ~\mbox{ and }~ \bG(\btheta, \btheta')
$$
separately, where $\bG(\btheta, \btheta')=\EE^* \{\bG^\B(\btheta, \btheta')\}  =  \bSigma^{-1/2} \{  \nabla \cL_\tau(\btheta) -  \nabla \cL_\tau(\btheta')  \} - n  \bSigma^{1/2} (\btheta - \btheta ')$. For the latter, we have
\begin{align}
	\sup_{\btheta , \btheta' \in \Theta_0(r)} \| \bG(\btheta, \btheta')  \|_2 \leq 2 n \Delta(r) , \label{RB.bound-1}
\end{align}
where $\Delta(r)$ is given in \eqref{Delta.r}. For the former term, note that
\begin{align}
   \bD(\btheta ,\btheta') = \sn \{  \psi_\tau(Y_i -\bX_i^\T \btheta) -\psi_\tau(Y_i -\bX_i^\T \btheta' ) \}   U_i \bZ_i  .  \nn
\end{align}
Define new variables $\bdelta = \bSigma^{1/2}(\btheta - \btheta^*)$ and $\bdelta' = \bSigma^{1/2}(\btheta' - \btheta^*)$, so that
\begin{align}
\sup_{\btheta ,\btheta' \in \Theta_0(r)} \| \bD(\btheta ,\btheta') \|_2 = \sup_{\bdelta, \bdelta' \in \mathbb{B}^d(r)} \| \overline \bD(\bdelta ,\bdelta') \|_2 \leq 2 \sup_{\bdelta \in \mathbb{B}^d(r)} \| \overline \bD(\bdelta , \textbf{0}) \|_2 , \label{process.D.bound}
\end{align}
where $\overline \bD(\bdelta ,\bdelta') = \sn \{ \psi_\tau(\varepsilon_i -\bZ_i^\T \bdelta ) - \psi_\tau(\varepsilon_i  -\bZ_i^\T \bdelta') \}   U_i \bZ_i$. It is easy to see that the random process $\{\overline \bD(\bdelta ,\textbf{0} ) , \bdelta \in  \mathbb{B}^d(r) \}$ satisfies $\overline \bD(\textbf{0} ,\textbf{0} ) = \textbf{0}$ and $\EE^* \{  \overline \bD(\bdelta , \textbf{0} )\} = \textbf{0}$. Moreover, for any $\bu, \bv \in \mathbb{S}^{d-1}$ and $\lambda\in \RR$,
\begin{align}
	& \EE^*  \exp\bigg\{ \frac{ \lambda }{\sqrt{n}} \bu^\T \nabla_{\bdelta} \overline\bD(\bdelta  , \textbf{0} ) \bv \bigg\} \nn \\
	& =   \prod_{i=1}^n \EE^*   \exp\bigg\{  - \frac{ \lambda}{\sqrt{n}} \ell''_\tau( \varepsilon_i - \bZ_i^\T \bdelta )(\bu^\T \bZ_i)(\bv^\T \bZ_i) U_i \bigg\} \nn \\
	& \leq  \prod_{i=1}^n  \exp\bigg\{ \frac{\lambda^2  }{2 n}B_U^2 (\bu^\T \bZ_i)^2 (\bv^\T \bZ_i)^2 \bigg\} \leq \exp\bigg( \frac{\lambda^2   }{2}  B_U^2 M_{n,4} \bigg) . \nn
\end{align}
It then follows from Theorem~A.3 in \cite{S2013} that
\begin{align}
	\PP^* \Bigg\{ \sup_{\bdelta \in \mathbb{B}^d(r)} \| \overline \bD(\bdelta , \textbf{0}) \|_2  \geq  6 B_U M_{n,4}^{1/2}  ( 8d  + 2t)^{1/2}  r \sqrt{n} \Bigg\} \leq e^{-t}. \label{process.D.cond.bound}
\end{align}

Together, the estimates \eqref{RB.bound}--\eqref{process.D.cond.bound} imply that, with $\PP^*$-probability at least  $1 - e^{-t}$,
\begin{align}
	\sup_{\btheta ,\btheta' \in \Theta_0(r)} \| R^\B(\btheta , \btheta' )   \|_2 \leq 4r \{ n \Delta(r) + 6 B_U M_{n,4}^{1/2}  ( 8d + 2t)^{1/2} r \sqrt{n}  \}.  \label{RB.cond.bound}
\end{align}

Recall the proof of Theorem~\ref{boot.concentration.thm} and note that
\begin{align}
	&  \bigg|   ( \hat \btheta_\tau^\B -  \hat \btheta_\tau)^\T \nabla \cL^\B_\tau(\hat \btheta_\tau) + \frac{n}{2}  \| \bSigma^{1/2} (\hat \btheta_\tau^\B - \hat \btheta_\tau  ) \|_2^2  + \frac{1}{2 n}  \| \bSigma^{-1/2} \nabla \cL_\tau^\B(\hat{\btheta}_\tau ) \|_2^2  \bigg| \nn \\
	& = \frac{1}{2 n}   \| \bG^\B( \hat \btheta_\tau^\B ,  \hat \btheta_\tau) \|_2^2 \nn \\
	&   \leq  \frac{1}{2n}(  \| \bD( \hat \btheta_\tau^\B ,  \hat \btheta_\tau) \|_2 + \| \bG( \hat \btheta_\tau^\B ,  \hat \btheta_\tau) \|_2 )^2.   \nn
\end{align}
This, together with \eqref{RB.bound-1}--\eqref{RB.cond.bound} and the proof of Theorem~\ref{boot.concentration.thm}, yields that, conditioning on some event that occurs  with $\PP^\dagger$-probability at least $1- 5 e^{-t}$,
$$
  M_{n,4}^{1/2}   \leq   C_{10}
$$
for some $C_{10} = C_{10}(A_0)>0$ and moreover, the following inequalities
\begin{align}
	& \bigg|  \cL^\B_\tau( \hat \btheta_\tau^\B ) - \cL^\B_\tau( \hat{\btheta}_\tau ) - ( \hat \btheta_\tau^\B -  \hat \btheta_\tau)^\T \nabla \cL^\B_\tau(\hat \btheta_\tau) - \frac{n}{2} \| \bSigma^{1/2} (\hat \btheta_\tau^\B - \hat \btheta_\tau  ) \|_2^2 \bigg| \nn \\
	& \qquad \qquad \qquad \qquad \qquad \leq   4 r_2 \{ n \Delta(r_2) + 6 B_U  C_{10} ( 8d + 2t)^{1/2} r_2  \sqrt{n} \}  \nn
\end{align}
and
\begin{align}
	&   \bigg|   ( \hat \btheta_\tau^\B -  \hat \btheta_\tau)^\T \nabla \cL^\B_\tau(\hat \btheta_\tau) + \frac{n}{2}  \| \bSigma^{1/2} (\hat \btheta_\tau^\B - \hat \btheta_\tau  ) \|_2^2  + \frac{1}{2 n}  \| \bSigma^{-1/2} \nabla \cL_\tau^\B(\hat{\btheta}_\tau ) \|_2^2  \bigg| \nn \\
 & \qquad \qquad \qquad \qquad \qquad  \leq     \frac{2}{n} \{  n \Delta(r_2) +    6 B_U  C_{10}  (8d+2t)^{1/2}  r_2  \sqrt{n} \}^2  \nn
\end{align}
hold with $\PP^*$-probability greater than $1- 4 e^{-t}$. Recall that $\nabla \cL_\tau(\hat{\btheta}_\tau) = \textbf{0}$ and by \eqref{xiB.def}, $\bSigma^{-1/2} \nabla \cL_\tau^\B(\hat{\btheta}_\tau )  =  \bxi^\B(\hat{\btheta}_\tau)$. It then follows that
\begin{align}
	& |  \| \bSigma^{-1/2} \nabla \cL_\tau^\B(\hat{\btheta}_\tau ) \|_2^2   -   \|  \bxi^\B(\btheta^*) \|_2^2  | \nn \\
	& = |  \|  \bxi^\B(\hat{\btheta}_\tau) \|_2^2   -   \|  \bxi^\B(\btheta^*) \|_2^2  |  \nn \\
	&  \leq  \| \bxi^\B(\hat{\btheta}_\tau) - \bxi^\B( \btheta^* ) \|_2 \{  \| \bxi^\B(\hat{\btheta}_\tau) - \bxi^\B( \btheta^* ) \|_2  + 2 \| \bxi^\B( \btheta^* )  \|_2 \} . \nn
\end{align}
Putting $\Delta^\B(r) = \sup_{\btheta\in \Theta_0(r)}  \| \bxi^\B( \btheta ) - \bxi^\B( \btheta^* ) \|_2$ and combining the last three displays, we conclude that
\begin{align}
	&    \bigg|  \cL^\B_\tau( \hat{\btheta}_\tau )  - \cL^\B_\tau( \hat \btheta_\tau^\B ) -  \frac{1}{2 n }  \|  \bxi^\B(\btheta^*) \|_2^2 \bigg| \nn \\
   & \leq 4 r_2 \{ n \Delta(r_2) + 6 B_U C_{10}  ( 8d + 2t)^{1/2} r_2  \sqrt{n} \}  \nn \\
   & \quad \, +   \frac{2}{n} \{  n \Delta(r_2) +    6 B_U C_{10} (8d+2t)^{1/2} r_2  \sqrt{n}  \}^2 \nn \\
   & \quad \, \, + \frac{ \Delta^\B(r_1) }{2n}\{ \Delta^\B(r_1) + 2 \| \bxi^\B( \btheta^* )  \|_2 \}  \leq   C_{11} v^2 \frac{ (d+t)^{3/2}}{\sqrt{n}}   ,  \label{boot.wilks.bound-1}
\end{align}
for some $C_{11}= C_{11}(A_0, A_U) >0$. This proves \eqref{boot.wilks} immediately.

For the square-root Wilks expansion, note that
\begin{align}
	& \bigg|  \sqrt{ 2 \{ \cL^\B_\tau( \hat{\btheta}_\tau )  - \cL^\B_\tau( \hat \btheta_\tau^\B )  \} } - \sqrt{n} \,  \| \bSigma^{1/2} (\hat{\btheta}_\tau - \hat{\btheta}_\tau^\B)  \|_2 \bigg|  \nn \\
	& \leq   \frac{ | 2\{ \cL^\B_\tau( \hat{\btheta}_\tau )  \!-\! \cL^\B_\tau( \hat \btheta_\tau^\B )  \}  \!- \! n  \,\| \bSigma^{1/2} (\hat{\btheta}_\tau \!- \! \hat{\btheta}_\tau^\B)  \|_2^2 | }{ \sqrt{n} \, \| \bSigma^{1/2} (\hat{\btheta}_\tau \!- \!\hat{\btheta}_\tau^\B)  \|_2 }  \!= \! \frac{2}{\sqrt{n}} \frac{  | R^\B(\hat{\btheta}_\tau, \hat{\btheta}_\tau^\B ) | }{  \| \bSigma^{1/2} (\hat{\btheta}_\tau \!- \! \hat{\btheta}_\tau^\B)  \|_2} , \nn
\end{align}
where $R^\B(\cdot, \cdot )$ is given in \eqref{R.randomprocess}. For any $r>0$, similarly to \eqref{RB.bound}, it holds
\begin{align}
 \sup_{\btheta, \btheta' \in \Theta_0(r)} \frac{| R^\B(\btheta, \btheta') | }{\|  \bSigma^{1/2} (\btheta - \btheta') \|_2} \leq \sup_{\btheta, \btheta' \in \Theta_0(r)} \|   \bG^\B(\btheta, \btheta') \|_2 .  \nn
\end{align}
Again, the estimates \eqref{RB.bound-1}--\eqref{process.D.cond.bound} imply that, with $\PP^*$-probability at least $1-e^{-t}$,
\begin{align}
	\sup_{\btheta, \btheta' \in \Theta_0(r)} \|   \bG^\B(\btheta, \btheta') \|_2    \leq 2 n \Delta(r) + 12 B_U  M_{n,4}^{1/2}  ( 8d + 2t)^{1/2}  r  \sqrt{n},  \nn
\end{align}
where $\Delta(r)$ is given in \eqref{Delta.r}. Recall that $\bG^\B( \hat{\btheta}_\tau , \hat{\btheta}_\tau^\B )   = \bSigma^{-1/2}   \nabla \cL^\B_\tau(\hat{\btheta}_\tau )   - n  \bSigma^{1/2} (\hat{\btheta}_\tau - \hat{\btheta}_\tau^\B) =  \bxi^\B( \hat{\btheta}_\tau)  - n  \bSigma^{1/2} (\hat{\btheta}_\tau - \hat{\btheta}_\tau^\B) $. Following the same argument that delivers \eqref{boot.wilks.bound-1}, we reach
\begin{align}
	& \bigg|  \sqrt{ 2 \{ \cL^\B_\tau( \hat{\btheta}_\tau )  - \cL^\B_\tau( \hat \btheta_\tau^\B )  \} } -  n^{-1/2} \|   \bxi^\B(\btheta^*) \|_2   \bigg|  \nn \\
& \leq  6 \sqrt{n} \,\Delta(r_2) + 36 B_U C_{10} ( 8d + 2t)^{1/2}  r_2  \nn \\
& \quad   +  \frac{1}{\sqrt{n}} \sup_{\btheta \in \Theta_0(r_1) } \| \bxi^\B(\btheta) - \bxi^\B(\btheta^*) \|_2   \leq C_{12}  v  \frac{d+t}{\sqrt{n}}, \nn
\end{align}
where $C_{12}=C_{12}(A_0, A_U)>0$. This is the bound stated in \eqref{boot.sqwilks}. \qed

\subsection{Proof of Theorem~\ref{Boot.consistency}}

We divide the proof into three steps. In the first step, we revisit the non-asymptotic square-root Wilks approximations for the excess loss and its bootstrap counterpart. The second step is on Gaussian approximation for the $\ell_2$-norm of the standardized score vector $ \bSigma^{-1/2} \nabla \cL_\tau(\btheta^*) $. The last step links the distributions of the excess loss and its bootstrap counterpart via a Gaussian comparison inequality. Without loss of generality, we assume $t\geq 1$ throughout the proof.

\medskip
\noindent
{\sc Step 1}~(Wilks approximations). Define $\bxi^* \!=\! - \sn \xi_i\bZ_i$
and recall that $\bxi^\B = - \sn \xi_i U_i \bZ_i$.

For any $x \geq 0$, it follows from \eqref{sqrt.Wilks.expansion} that
\begin{align}
	& \PP\bigg[ \sqrt{2 \{ \cL_\tau(\btheta^*) - \cL_\tau(\hat{\btheta}_\tau ) \} } \leq x  \bigg]   \leq \PP\bigg(  \frac{ \| \bxi^* \|_2 }{\sqrt{n}}\leq x + R_1  \bigg) + 3 e^{-t} , \label{sqwilks1}
\end{align}
where $R_1>0$ satisfies $R_1 \asymp  v (d+t) n^{-1/2}$. Similarly, applying \eqref{boot.sqwilks} yields that, with probability (over $\mathcal{D}_n$) at least $1-5e^{-t}$,
\begin{align}
	& \PP\bigg[ \sqrt{2 \{ \cL^\B_\tau( \hat \btheta_\tau ) - \cL^\B_\tau(\hat{\btheta}_\tau^\B ) \} } \leq x  \bigg| \mathcal{D}_n \bigg] \nn \\
	& \geq \PP\bigg\{ \frac{\| \bxi^\B \|_2}{\sqrt{n}}  \leq \max( x - R_2 , 0 ) \bigg| \mathcal{D}_n \bigg\}  - 4 e^{-t} ,  \label{sqwilks2}
\end{align}
where $R_2>0$ satisfies $R_2 \asymp v (d+t)n^{-1/2}$. In the following two steps, we validate the approximation of the distribution of $\| \bxi^\B \|_2$ by that of $\| \bxi^* \|_2$ in the Kolmogorov distance. To that end, define random vectors
$$
	\Sb_1 = \frac{1}{\sqrt{n}} \sn  \bV_i  ~\mbox{ and }~  \Sb_2= \frac{1}{\sqrt{n}} \sn U_i \bV_i     ~\mbox{ with }~ \bV_i = \xi_i \bZ_i, \ \  \xi_i = \psi_\tau(\varepsilon_i) .
$$
In this notation, we have $\| \bxi^* \|_2 = \sqrt{n} \| \Sb_1 \|_2$ and $\| \bxi^\B \|_2 = \sqrt{n} \| \Sb_2 \|_2$.

%The proof consists of three two ingredients: Gaussian approximation for the quadratic form $\| \Sb_1 \|_2$ and a comparison inequality for $\ell_2$-norms of Gaussian random vectors.

\medskip
\noindent
{\sc Step 2}~(Gaussian approximation for $\| \Sb_1 \|_2$). Recall the truncated mean and second moment $m_\tau = \EE(\xi_i)$ and $\sigma^2_\tau = \EE( \xi_i^2)$, and consider the centered sum
$$
	\overline{\Sb}_1 =  \frac{1}{\sqrt{n}} \sn ( \bV_i - \EE \bV_i  ) = \frac{1}{\sqrt{n}} \sn (\bV_i - m_\tau \bupsilon ) = \Sb_1 - \sqrt{n} \, m_\tau \bupsilon ,
$$
where $\bupsilon = \EE(\bZ)$ is such that $\| \bupsilon \|_2 \leq 1$. Here, $\bV_1, \ldots, \bV_n$ are independent copies of the random vector $\bV = \psi_\tau(\varepsilon) \bZ \in \RR^d$ with mean $m_\tau \bupsilon$ and covariance matrix $\bSigma_1 = \sigma^2_\tau \, \bI_d - m_\tau^2 \,\bupsilon \bupsilon^\T$. For $(m_\tau, \sigma_\tau^2)$, applying Proposition~A.2 with $\kappa=4$ in the supplement of \cite{ZBFL2017} gives
\begin{align}
	| m_\tau | \leq  \upsilon_4 \tau^{-3} ~\mbox{ and }~  \sigma^2  -  \upsilon_4 \tau^{-2}  \leq  \sigma_\tau^2 \leq \sigma^2  . \nn
\end{align}
Hence, for any $\bu \in \mathbb{S}^{d-1}$, it holds
\begin{align}
	  \sigma^2 ( 1  - \upsilon_4 \sigma^{-2} \tau^{-2}  - \upsilon_4^2 \sigma^{-2} \tau^{-6}   )  \leq  \| \bSigma_1^{1/2} \bu \|_2^2  \leq \sigma^2 . \nn
\end{align}
Taking $\tau = v \{ n/(d+t) \}^{\eta}$ for $v\geq \upsilon_4^{1/4}$, this implies $\overline \lambda_{\bSigma_1} \leq \sigma^2$ and
\begin{align}
 \underline	\lambda_{ \bSigma_1 } \geq  \sigma^2 \left\{   1  -  \frac{ \upsilon_4^{1/2} }{\sigma^{2}  } \bigg( \frac{d+t}{n} \bigg)^{2\eta}  - \frac{ \upsilon_4^{1/2} }{\sigma^2}  \bigg( \frac{d+t}{n} \bigg)^{6\eta}   \right\} \geq   \frac{1}{2} \sigma^2 \nn
\end{align}
provided $n\geq   ( 4\upsilon_4^{1/2} / \sigma^2)^{1/(2\eta)} (d+t)$. Also, under this scaling condition, it holds $\sigma_\tau^2 \geq 3\sigma^2 /4$. It then follows from a multivariate central limit theorem \citep{B2005} that
\begin{align}
	& \sup_{ y \geq 0} | \PP (  \| \overline{\Sb}_1   \|_2 \leq y  ) - \PP( \| \bG_1   \|_2 \leq y ) |  \nn \\
 & \leq \frac{C_1}{\sqrt{n}}  \EE \{ \| \bSigma_1^{-1/2} (\bV - m_\tau \bupsilon ) \|_2^3  \} \leq C_2   \max_{1\leq j\leq d} \EE ( | Z_j|^3 ) \frac{\upsilon_3}{ \sigma^3  } \frac{d^{3/2}}{\sqrt{n}}   , \label{m.clt.1}
\end{align}
where $\bG_1 \sim \mathcal{N}(\textbf{0}, \bSigma_1)$ and $C_1, C_2>0$ are absolute constants.

Let $\bG_0 \sim \mathcal{N}(\textbf{0},  \bSigma_0 )$ with $ \bSigma_0 := \sigma_\tau^2 \, \bI_d$. Note that $ \bSigma_0^{-1/2}  \bSigma_1 \bSigma_0^{-1/2} \!-\! \bI_d \!=\!  m_\tau^2 \sigma^{-2}_\tau \bupsilon \bupsilon^\T$,
\begin{align}
	\|  m_\tau^2 \sigma^{-2}_\tau \bupsilon \bupsilon^\T \|_2 \leq \delta_\tau :=  \upsilon_4^2 \sigma_\tau^{-2} \tau^{-6}  ~\mbox{ and }~
	\tr \{( m_\tau^2 \sigma^{-2}_\tau \bupsilon \bupsilon^\T )^2 \} \leq \delta_\tau^2 . \nn
\end{align}
Applying Lemma~A.7 in the supplementary material of \cite{SZ2015} gives
\begin{align}
\sup_{ y \geq 0} | \PP (  \| \bG_1 \|_2 \leq  y ) - \PP ( \| \bG_0 \|_2 \leq y ) |  \leq \delta_\tau /2 \label{normal.compare}
\end{align}
provided $\delta_\tau\leq 1/2$. In addition, the Gaussian random vector $\bG_0$ satisfies the following anti-concentration inequality \citep{B1993}: for any $ \epsilon \geq 0$,
\begin{align}
	\sup_{y \geq 0}  \PP (  y \leq \| \bG_0  \|_2 \leq y +\epsilon ) \leq  \frac{C_3}{\sigma_\tau} \epsilon , \label{anti.concentration}
\end{align}
where $C_3>0$ is an absolute constant.

For the deterministic term $\EE(\Sb_1) =\sqrt{n} \, m_\tau \bupsilon$, we have
$$
	\gamma_1 := \| \EE(\Sb_1) \|_2 \leq  \upsilon_4 \tau^{-3} \sqrt{n}  \leq \upsilon_4^{1/4}  \frac{(d+t)^{3\eta} }{n^{3\eta - 1/2}} .
$$
Combining this with \eqref{m.clt.1}--\eqref{anti.concentration}, we arrive at
\begin{align}
	& \PP\bigg( \frac{\| \bxi^* \|_2}{\sqrt{n}}  \leq x + R_1 \bigg) = \PP ( \| \Sb_1 \|_2 \leq x+ R_1 ) \nn \\
	& \leq \PP( \| \overline  \Sb_1 \|_2 \leq x+ R_1 + \gamma_1  ) \nn \\
	& \leq \PP ( \| \bG_1 \|_2 \leq x+ R_1 + \gamma_1 ) +  C_2   \max_{1\leq j\leq d} \EE ( | Z_j|^3 ) \frac{\upsilon_3 }{ \sigma^3} \frac{d^{3/2}}{\sqrt{n}}  \nn \\
	& \leq \PP ( \| \bG_0 \|_2 \leq x \!+\! R_1 \!+\! \gamma_1 ) \!+ \!  C_2   \max_{1\leq j\leq d} \EE ( | Z_j|^3 ) \frac{\upsilon_3 }{ \sigma^3} \frac{d^{3/2}}{\sqrt{n}} \! + \! \frac{\upsilon_4^{1/2}}{2\sigma_\tau^2}  \bigg( \frac{d  +  t}{n} \bigg)^{6\eta}\nn \\
	& \leq  \PP \{ \| \bG_0 \|_2 \leq \max( x \!-\! R_2 , 0 )  \}  \nn \\
	& \quad~  +  C_2   \max_{1\leq j\leq d} \EE ( | Z_j|^3 ) \frac{\upsilon_3 }{ \sigma^3} \frac{d^{3/2}}{\sqrt{n}}  + \frac{\upsilon_4^{1/2}}{2\sigma_\tau^2}  \bigg( \frac{d + t}{n} \bigg)^{6\eta} + \frac{ C_3   }{\sigma_\tau}(R_1 + R_2 + \gamma_1)  . \label{m.clt.2}
\end{align}

\medskip
\noindent
{\sc Step 3}~(Gaussian comparison). Note that, conditional on $\mathcal{D}_n$, $\Sb_2$ follows a multivariate normal distribution with mean $\EE(\Sb_2 | \mathcal{D}_n) = \textbf{0}$ and covariance matrix
$$
	\bS_{n} = \cov(\Sb_2 |  \mathcal{D}_n ) = \frac{1}{n} \sn \xi_i^2 \bZ_i \bZ_i^\T \in \RR^{d\times d} .
$$
Applying Lemma~\ref{cov.concentration} with $x=2t$ yields that, with probability at least  $1- e^{- t}$,
\begin{align}
  	\|   \bSigma_0^{-1/2} \bS_{n}  \bSigma_0^{-1/2} \!-\! \bI_d \|_2   \leq   C_4 \frac{v^2}{\sigma^2} \bigg( \frac{d + t}{n} \bigg)^{1-2\eta} \leq  \frac{1}{2} \nn \\
  	\mbox{and } \tr \{ (  \bSigma_0^{-1/2} \bS_{n}  \bSigma_0^{-1/2}  -  \bI_d )^2 \}  \leq  C_4^2  \frac{v^4  d}{\sigma^2}   \bigg( \frac{d + t}{n} \bigg)^{2-4\eta}   \nn
\end{align}
as long as $n$ is sufficiently large, where $C_4=C_4(A_0)>0$. Hence, it follows from a conditional version of Lemma~A.7 in the supplement of \cite{SZ2015} that, with probability (over $\mathcal{D}_n$) at least $1- e^{-t}$,
\begin{align}
\sup_{y \geq 0} |  \PP (  \| \Sb_2 \|_2 \leq y | \mathcal{D}_n ) -  \PP ( \| \bG_0 \|_2 \leq y ) | \leq C_4 \frac{v^2  }{2\sigma^2}  \sqrt{d}  \bigg( \frac{d + t}{n} \bigg)^{1-2\eta}   .  \nn
\end{align}
In particular, taking $y=\max( x - R_2 , 0 ) $ gives
\begin{align}
 &\PP \{ \| \bG_0 \|_2 \leq \max( x - R_2 , 0 )  \} \nn \\
 &\leq   \PP \bigg\{ \frac{\|\bxi^\B \|_2 }{\sqrt{n}} \leq \max( x - R_2 , 0 ) \bigg| \mathcal{D}_n \bigg\} +  C_4 \frac{v^2}{2\sigma^2}   \sqrt{d}  \bigg( \frac{d + t}{n} \bigg)^{1-2\eta} . \label{b.clt}
\end{align}

Combining the inequalities \eqref{sqwilks1}, \eqref{sqwilks2}, \eqref{m.clt.2} and \eqref{b.clt}, we conclude that with probability (over $\mathcal{D}_n$) at least $1-6 e^{-t}$,
\begin{align}
	&  \PP\bigg[ \sqrt{2 \{ \cL_\tau(\btheta^*) - \cL_\tau(\hat{\btheta}_\tau ) \} } \leq x  \bigg]  \nn \\
	& \leq \PP\bigg[ \sqrt{2 \{ \cL^\B_\tau( \hat \btheta_\tau ) - \cL^\B_\tau(\hat{\btheta}_\tau^\B ) \} } \leq x  \bigg| \mathcal{D}_n \bigg] + 7 e^{-t}  + \frac{\upsilon_4^{1/2}}{2\sigma_\tau^2}  \bigg( \frac{d+t}{n} \bigg)^{6 \eta}   \nn \\
	& \quad~ +   C_2   \max_{1\leq j\leq d} \EE ( | Z_j|^3 ) \frac{\upsilon_3 }{ \sigma^3} \frac{d^{3/2}}{\sqrt{n}}  + \frac{ C_3   }{\sigma_\tau}(R_1 + R_2 + \gamma_1) +  C_4 \frac{v^2}{2\sigma^2} \sqrt{d}  \bigg( \frac{d + t}{n} \bigg)^{1-2\eta} . \nn
\end{align}
A similar argument leads to the reverse inequality and thus completes the proof by taking $z= x^2/2$.  \qed

\subsection{Proof of Theorem~\ref{Boot.validity}}

For $\alpha \in (0,1)$, let $q^\flat_{\alpha}$ and $q_\alpha$ be the upper $\alpha$-quantiles of
$$
	\sqrt{2 \{ \cL_\tau^\B(\hat{\btheta}_\tau) - \cL_\tau^\B(\hat{\btheta}^\B_\tau) \}} ~\mbox{ and }~ \sqrt{2 \{ \cL_\tau( {\btheta}^* ) - \cL_\tau(\hat{\btheta}_\tau) \} } ,
$$
respectively, under $\PP^*$ and $\PP$. By the definitions of $z^{\flat}_\alpha$ and $z_\alpha$ in \eqref{def:z*} and \eqref{def:zalpha}, it is easy to see that $z_\alpha^\flat = (q_\alpha^\flat )^2/2$ almost surely and $z_\alpha = q_\alpha^2/2$. According to Theorem~\ref{Boot.consistency}, there exists an event $\mathcal{E}_t$ satisfying $\PP(\mathcal{E}_t ) \geq 1- 6e^{-t}$ such that
\begin{align}
	 & \PP^* \{ \cL_\tau^\B(\hat{\btheta}_\tau) - \cL_\tau^\B(\hat{\btheta}^\B_\tau) >  q^2_{\alpha - \Delta_1 } /2 \}   \nn \\
	 &   \begin{cases}
	 =  0 < \alpha, & \mbox{ if } \alpha \leq \Delta_1,   \\
	 \leq \PP \{ \cL_\tau( {\btheta}^* ) - \cL_\tau(\hat{\btheta}_\tau) > q^2_{\alpha - \Delta_1 }/2 \} + \Delta_1 \leq   \alpha, & \mbox{ if } \alpha > \Delta_1,
	 \end{cases} \nn
\end{align}
and
\begin{align}
	&  \PP^* \{ \cL_\tau^\B(\hat{\btheta}_\tau) - \cL_\tau^\B(\hat{\btheta}^\B_\tau) > ( q_{\alpha+\Delta_1 } - \sigma /n )^2/2  \} \nn \\
	&   \geq   \PP \{ \cL_\tau( {\btheta}^* ) - \cL_\tau(\hat{\btheta}_\tau) > ( q_{\alpha+\Delta_1 } -\sigma /n )^2/2 \}  - \Delta_1 \geq  \alpha \nn
\end{align}
hold almost surely on $\mathcal{E}_t$, where $\Delta_1 = \Delta_1(n,d,t)$. Together, these inequalities imply
\begin{align}
 q_{\alpha +\Delta_1  }  - \sigma /n \leq 	q^\flat_\alpha \leq  q_{\alpha -  \Delta_1 } ~\mbox{ almost surely on } \mathcal{E}_t . \label{quantile.bound}
\end{align}

Next, define the L\'evy concentration function of the non-negative random variable $T := \sqrt{ 2 \{ \cL_\tau(\btheta^* ) - \cL_\tau(\hat{\btheta}_\tau) \} }$:
$$
	L(\epsilon ) = \sup_{x \geq 0}   \PP ( | T - x | \leq \epsilon ) , \ \ \epsilon \geq 0.
$$
It then follows from \eqref{quantile.bound} that
\begin{align}
	& \PP \{ \cL_\tau(\btheta^* ) - \cL_\tau(\hat{\btheta}_\tau) > z^\flat_\alpha \} \nn \\
	& \geq \PP \{ \cL_\tau(\btheta^* ) - \cL_\tau(\hat{\btheta}_\tau) > q_{\alpha - \Delta_1 }^2/2 \} - 6 e^{-t} \nn \\
	& \geq   \PP \{ \cL_\tau(\btheta^* ) - \cL_\tau(\hat{\btheta}_\tau) > ( q_{\alpha - \Delta_1 } - \sigma /n )^2/2 \} - L( \sigma /n )  - 6 e^{-t} \nn \\
	& \geq \alpha - \Delta_1 -  L( \sigma /n ) -  6 e^{-t} . \label{Boot.lbd}
\end{align}
Similarly, using \eqref{quadratic.bound} and the definition of $L(\cdot)$, we get
\begin{align}
& \PP \{ \cL_\tau(\btheta^* ) - \cL_\tau(\hat{\btheta}_\tau) > z^\flat_\alpha \} \nn \\
& \leq \PP \{ \cL_\tau(\btheta^* ) - \cL_\tau(\hat{\btheta}_\tau) > ( q_{\alpha + \Delta_1 } - \sigma /n )^2/2 \} +  6 e^{-t} \nn \\
& \leq  \PP \{ \cL_\tau(\btheta^* ) - \cL_\tau(\hat{\btheta}_\tau) > q_{\alpha + \Delta_1 }^2/2   \} + L( \sigma /n ) +  6 e^{-t} \nn \\
& \leq \alpha + \Delta_1 + L(\sigma /n ) +   6 e^{-t}  . \label{Boot.ubd}
\end{align}

To complete the proof, it remains to bound $L(\epsilon)$ for any given $ \epsilon >0$. Keeping the notations used in the proof of Theorem~\ref{Boot.consistency}, and following \eqref{sqwilks1}, \eqref{m.clt.1} and \eqref{anti.concentration}, we obtain that for any $x\geq 0$,
\begin{align}
	&   \PP ( | T - x |  \leq \epsilon ) \nn \\
	& \leq \PP ( | \| \Sb_1 \|_2 - x | \leq \epsilon + R_1 )  + 3 e^{-t }  \nn \\
	& \leq \PP ( | \| \overline{\bG}_1 \|_2  - x |  \leq \epsilon +  R_1  ) +  2 C_2   \max_{1\leq j\leq d} \EE ( | Z_j|^3 ) \frac{\upsilon_3 }{ \sigma^3} \frac{d^{3/2}}{\sqrt{n}}  + 3 e^{-t} \nn\\
	& \leq  C \frac{ \epsilon + R_1}{ \underline \lambda_{\bSigma_1}^{1/2} }  +  2 C_2   \max_{1\leq j\leq d} \EE ( | Z_j|^3 ) \frac{\upsilon_3 }{ \sigma^3} \frac{d^{3/2}}{\sqrt{n}}  +3 e^{-t}  , \label{levy.concentration.bound}
\end{align}
where $R_1 \asymp v (d+t) n^{-1/2}$ is as in \eqref{sqwilks1}, $\overline{\bG}_1 \sim \mathcal{N} (\EE(\Sb_1), \bSigma_1 )$ and $C>0$ is an absolute constant.

Finally, combining \eqref{Boot.lbd}, \eqref{Boot.ubd} and \eqref{levy.concentration.bound} to reach \eqref{Boot.bound}. \qed

\section{Proofs for Sections~3 and 4}

\subsection{Proof of Theorem~\ref{thm:lepski}}
This proof is based on an argument similar to that used in the proof of Theorem~5.1 in \cite{M2016}.
Let $j^* = \min\{ j\in \cJ : v_j \geq \sigma\}$ and note that $v_{j^*} \leq a \sigma$.
From the definition of $\hat j_{{\rm L}}$ in \eqref{lepski.choice1} with $c_0 \geq 2c_1 \underline{\lambda}_{\bSigma}^{-1/2}$, we see that
\begin{align}
	\{  \hat j_{{\rm L}} > j^* \} & \subseteq  \bigcup_{k\in \cJ: k> j^*} \Bigg\{  \| \hat \btheta^{(k)} - \hat \btheta^{(j^*)}  \|_2  > c_0 v_k \sqrt{\frac{d+t}{n}} \Bigg\}  \nn \\
& \subseteq \bigcup_{k\in \cJ: k \geq  j^*} \Bigg\{  \| \hat \btheta^{(k)} - \btheta^*  \|_2  > c_1 \underline{\lambda}_{\bSigma}^{-1/2} v_k \sqrt{\frac{d+t}{n}} \Bigg\} . \nn
\end{align}
Define the event
$$
	\cB = \bigcap_{k\in \cJ: k\geq j^*} \Bigg\{ \ \| \hat \btheta^{(k)} - \btheta^* \|_2 \leq  c_1 \underline{\lambda}_{\bSigma}^{-1/2}  v_k\sqrt{\frac{d+t}{n}} \Bigg\},	
$$
such that $\cB \subseteq \{ \hat j_{{\rm L}} \leq  j^* \}$. Recalling Theorem~\ref{br.thm}, we have for any $v\geq \sigma$, $\hat \btheta_\tau$ with $\tau = v\sqrt{n/(d+t)}$ satisfies the bound
$$
	 \| \hat \btheta_\tau - \btheta^* \|_2 \leq c_1 \underline{\lambda}_{\bSigma}^{-1/2} v \sqrt{\frac{d+t}{n}}
$$
with probability at least $1-3e^{-t}$ as long as $n\gtrsim d+t$. Together with the union bound, this implies
\begin{align}
 \PP (\cB^{{\rm c}}) & \leq \sum_{k\in \cJ: k\geq j^* } \PP\Bigg( \| \hat \btheta^{(k)} -  \btheta^*  \|_2 > c_1 \underline{\lambda}_{\bSigma}^{-1/2} v_k \sqrt{\frac{d+t}{n}} \Bigg) \nn \\
 & \leq 3| \cJ | e^{-t} \leq 3 \{ 1 + \log_a(v_{\max} / v_{\min}) \} e^{-t}.  \nn
\end{align}
On the event $\cB$, $\hat j_{{\rm L}} \leq  j^*$ and thus
\begin{align}
	 \|  \hat{\btheta}^{(\hat j_{\rm L})}  - \btheta^* \|_2 & \leq \| \hat{\btheta}^{(\hat j_{\rm L})}   -  \hat \btheta^{(j^*)} \|_2 +  \|  \hat \btheta^{(j^*)} - \btheta^* \|_2 \nn \\
	 & \leq  ( c_0 + c_1 \underline{\lambda}_{\bSigma}^{-1/2} ) v_{j^*} \sqrt{\frac{d+t}{n}} \leq \frac{3a}{2}  c_0 \sigma \sqrt{\frac{d+t}{n}} . \nn
\end{align}
Together, the last two displays lead to the stated result. \qed

\subsection{Proof of Theorem~\ref{thm:two-step}}
\label{sec:proof-two-step}

To begin with, define $\mathcal{D}_n^{(1)}$ and $\mathcal{D}_n^{(2)}$ to be the two independent samples $\{( Y_i^{(1)} , \bX_i^{(1)} )  \}_{i=1}^n$ and $\{ (Y_i^{(2) } , \bX_i^{(2)})\}_{i=1}^n$, respectively, such that $\bar{\mathcal{D}}_n = \mathcal{D}_n^{(1)} \cup \mathcal{D}_n^{(2)}$.
Under the assumption that $ \EE(|\varepsilon|^{4+\delta}) \leq \upsilon_{4+\delta} $ for some $\delta>0$, we have $  \EE |Y-\mu_Y|^{4+\delta} <\infty$. For each $j=1,\ldots, m$ with $m$ denoting the number of blocks, by Chebyshev's inequality, one can show that for any $\delta \in (0,1/2]$,
$$
 | \hat \upsilon_{Y,j}  -   \upsilon_{Y} | \lesssim  \bigg( \frac{ \EE |Y-\mu_Y|^{4+\delta}}{\delta n^{\delta/4}} \bigg)^{4/(4+\delta)},
$$
with probability at least $1-\delta$. Then, it follows from a variant of Lemma~2 in \cite{BCL2013} that, with $m = \lfloor 8\log n+1\rfloor$, 
$$
 \mathbb P \Bigg\{ 	| \hat \upsilon_{Y , {\rm mom} }   - \upsilon_Y | \gtrsim     ( \EE |Y-\mu_Y|^{4+\delta})^{4/(4+\delta)}  \bigg( \frac{\log n}{n} \bigg)^{\delta/(4+\delta)} \Bigg\} \lesssim n^{-1}
$$ 
as long as $n\gtrsim \log n$, where the probability is over the training set $ \mathcal{D}_n^{(1)}$. Therefore, with the same probability (over $ \mathcal{D}_n^{(1)}$),  $ | \hat \upsilon_{Y , {\rm mom} }   - \upsilon_Y |  \leq \upsilon_Y/2$ for all sufficiently large $n$. Then, it follows that, with high probability over $ \mathcal{D}_n^{(1)}$, 
$$
	\upsilon_4^{1/4}<  \upsilon_Y^{1/4}  \leq v_{\max}    \leq  ( 3\upsilon_Y)^{1/4} ~\mbox{ and }~
	 v_{\min} =  \frac{v_{\max} }{a^K} \leq  \frac{  ( 3\upsilon_Y)^{1/4 } }{a^K} < \upsilon_4^{1/4},
$$
where the last inequality holds provided $K \geq \lfloor \log_a ( 3\upsilon_Y / \upsilon_4 )^{1/4} \rfloor +1$. This, together with Theorem~\ref{thm:lepski} with slight modifications, implies that with probability (over $\mathcal{D}_n^{(1)}$) at least $1- O(Kn^{-1})$,
\begin{align}
    \| \hat \btheta^{(1) } - \btheta^* \|_2 \lesssim   \upsilon_4^{1/4} \sqrt{\frac{d+\log n}{n}}~\mbox{ and }~ \tau^*  \leq  \hat \tau  \leq   (3\upsilon_Y/\upsilon_4)^{1/4} \tau^*, \label{training.sample.1}
\end{align}
where $\tau^* :=  \upsilon_4^{1/4}  ( \frac{n}{d+\log n} )^{1/4}$.

For the second step, write $\varepsilon_i^{(2)} = Y_i^{(2)} - \langle \bX_i^{(2)} , \btheta^* \rangle$ for $i=1,\ldots, n$. Define random vectors 
$$
\bxi^* =  \sn \ell'_{\hat \tau}(\varepsilon_i^{(2)}) \bSigma^{-1/2}\bX_i^{(2)} ~\mbox{ and }~ \bxi^\B  = \sn \ell'_{\hat \tau} (\varepsilon_i^{(2)}) (1-W_i)  \bSigma^{-1/2}\bX_i^{(2)} .
$$
Conditioning on the event that \eqref{training.sample.1} holds, applying Theorems~\ref{wilks.thm} and \ref{boot.wilks.thm} we obtain that as long as $n\gtrsim d+ \log n$,
$$
   \bigg| \sqrt{ 2\{  \hat \cL(\btheta^* ) - \hat  \cL(\hat{\btheta} )\} } -  \frac{\| \bxi^* \|_2}{\sqrt{n}}    \bigg| \lesssim   ( \upsilon_Y/\upsilon_4)^{1/4} \frac{d+\log n}{\sqrt{n}}
$$
with probability (over $\mathcal{D}_n^{(2)}$) at least $1-O(n^{-1})$, and
$$
	\bigg|  \sqrt{ 2 \{ \hat \cL^\B ( \hat{\btheta}  )   - \hat  \cL^\B ( \hat \btheta^\B )   \} } - \frac{\| \bxi^\B  \|_2}{\sqrt{n}}     \bigg|  \lesssim   ( \upsilon_Y/\upsilon_4)^{1/4}   \frac{d+\log n}{\sqrt{n}}  
$$
with probability  (over $\mathcal{D}_n^{(2)}$ and $\{ W_i \}_{i=1}^n$)  at least $1-O(n^{-1})$. With the above preparations, the stated result follows from the same argument as in the proof of Theorem~\ref{Boot.validity}. \qed

\subsection{Proof of Theorem~\ref{FDP.thm}}

The proof consists of two main steps.

\medskip
\noindent
{\sc Step 1}~(Accuracy of bootstrap approximations). For each $1\leq k\leq m$, write $\mathcal{D}_{kn} = \{ (y_{ik}, \bx_i) \}_{i=1}^n$ and  $T_k^\B = \sn  \psi_{\tau_k}(\varepsilon_{ik}) U_i$. Then, applying Theorem~\ref{boot.concentration.thm} with $\bX=(1,\bx^\T)^\T$ and $\btheta^* = (\mu_k , \bbeta_k^\T)^\T$ gives that, with probability (over $\mathcal{D}_{kn})$ at least $1 -  6/(nm)^2 $,
\begin{align}
	\PP \{ | \sqrt{n} \, ( \hat{\mu}^\B_k - \hat \mu_k )  -  T^\B_k  / \sqrt{n} | \geq \delta_{2k}  | \mathcal{D}_{kn} \}  \leq 4 (nm)^{-2} ,  \label{cond.BR}
\end{align}
where $\delta_{2k} := C_{1k}\, v_k \{ s+ \log(nm) \} n^{-1/2}$ and $C_{1k} = C_{1k}(A_0, A_U)>0$. Observe that, conditional on $\mathcal{D}_{kn}$, $n^{-1/2} T^\B_k$ follows a normal distribution with mean zero and variance $\hat{\sigma}_{k,\tau_k}^2 = (1/n) \sn  \{ \ell_{\tau_k}'(\varepsilon_{ik}) \}^2$. With $\tau_k = v_k [ n/\{ s+2\log(nm)\} ]^{1/3}$, an argument similar to that used to derive Lemma~\ref{cov.concentration} may be employed to show that, with probability at least $1- (nm)^{-2}$,
\begin{align}
	 | \hat{\sigma}_{k,\tau_k}^2 - \sigma_{k}^2 | &  \leq | \hat{\sigma}_{k,\tau_k}^2 - \sigma_{k,\tau_k}^2 | + |\sigma_{k,\tau_k}^2 - \sigma_{k}^2 |   \nn \\
	& \leq  C_{2k}  \,v_k^2 \bigg\{ \frac{\log(nm)}{n} \bigg\}^{1/3} + \frac{\upsilon_{k,4}}{v_k^2} \bigg\{ \frac{s+\log(nm)}{n} \bigg\}^{2/3} , \label{cond.var.bound}
\end{align}
where $C_{2k} = C_{2k} (A_0)>0$. Combining \eqref{cond.BR}, \eqref{cond.var.bound}, Lemma~A.7 in \cite{SZ2015} and the union bound, we conclude that with probability (over $\mathcal{D}_{kn})$ at least $1 -7 /(n^2 m) $,
\begin{align}
   \sup_{x\in \RR} | \PP \{  \sqrt{n} \, ( \hat{\mu}^\B_k - \hat \mu_k )  \leq x |  \mathcal{D}_{kn} \}  & - \Phi(x/\sigma_k) | \nn \\
  & \leq  C_{3k} \, v_k^2 \bigg\{ \frac{\log(nm)}{n} \bigg\}^{1/3} + \frac{4}{(nm)^2}  \nn
\end{align}
for all $k=1,\ldots, m$. Combining this with \eqref{GAR.3}, \eqref{GAR.4} and taking $a_n =  2\log(nm)$ in Lemma~\ref{robust.md}, we conclude that on some event that occurs with probability at least $1- 7 /(n^2m)$,
\begin{align}
		\frac{\PP (   \sqrt{n} \, | \hat{\mu}^\B_k - \hat \mu_k  |  \geq z |  \mathcal{D}_{kn} ) }{  2 \{ 1- \Phi(z/\sigma_k)  \} }  = 1 + o(1)  \label{cond.md}
\end{align}
uniformly in $ 0\leq z/\sigma_k \leq o\{  \min (  n^{1/6}, \sqrt{n} /\log m ) \}$ and $1\leq k\leq m$.

\medskip
\noindent
{\sc Step 2}~(FDP control with bootstrap calibration). For $k=1,\ldots, m$ and $z\geq 0$, define $\hat{T}_k = \sqrt{n} \, \hat{\mu}_k$, $G(z) = 2\{ 1-\Phi(z)\}$,
\begin{align}
	G_k(z) = \PP_{H_{0k}}(  | \hat{T}_k | \geq z  ) ~\mbox{ and }~ 	G_k^\B(z) = \PP \{  \sqrt{n} \, | \hat{\mu}_k^\B - \hat{\mu}_k | \geq z |  \mathcal{D}_{kn} \} . \nn
\end{align}
In this notation, we have $p^\B_k = G^\B_k(|\hat{T}_k|)$ for $k=1,\ldots, m$. As a direct consequence of Lemma~1 in \cite{STS2004}, the BH procedure with $p$-values $\{ p^\B_k \}_{k=1}^m$ is equivalent to Storey's procedure, that is, reject $H_{0k}$ if and only if $p^\B_k \leq t^\B_{{\rm S}} $, where
\begin{align}
	t^\B_{{\rm S}} : = \sup\bigg\{ t\in [0,1] : t \leq \frac{\alpha \max\{ \sum_{k=1}^m I(p^\B_k \leq t) , 1 \}}{m} \bigg\}. \nn
\end{align}
By the definition of $t^\B_{{\rm S}} $, we have
\begin{align}
	t^\B_{{\rm S}}  = \frac{\alpha \max\{ \sum_{k=1}^m I(p^\B_k \leq t^\B_{{\rm S}}  ) , 1 \} }{m } . \label{BH.threshold}
\end{align}
For the bootstrap $p$-values $p^\B_k$ and data-driven threshold $t^\B_{{\rm S}} $, we claim that, as $(n, m ) \to \infty$,
\begin{align}
	\PP \{  t^\B_{{\rm S}}  \geq \alpha m_{1,\lambda_0} /m  \} \to 1   \label{BH.threshold.bound} \\
	\mbox{ and }  \sup_{  b_m/ m \leq t \leq 1} \bigg| \frac{\sum_{k\in \mathcal{H}_0} I( p^\B_k \leq t )}{m_0 \,t} - 1 \bigg|  \xrightarrow {\PP}  0 \label{unif.bound}
\end{align}
for any sequence $b_m >0$ satisfying $b_m \to \infty$ and $b_m = o(m)$, where $m_{1,\lambda_0} ={\rm card} \{ 1\leq k\leq m: | \mu_k | / \sigma_k \geq \lambda_0\sqrt{(2 \log m) / n} \, \}$.  Under condition~\eqref{cond.SNR}, it follows
$$
 \frac{\sum_{k\in \mathcal{H}_0} I( p^\B_k \leq  t^\B_{{\rm S}}  )}{m_0 \, t^\B_{{\rm S}}  }  \xrightarrow {\PP}  1  ,
$$
which, together with \eqref{BH.threshold}, proves the stated result \eqref{FDP.consistency}.

It remains to verify \eqref{BH.threshold.bound} and \eqref{unif.bound}. By \eqref{BH.threshold}, it is clear that $t^\B_{{\rm S}}  \in [\alpha/m , 1]$. Recall that $\log m = o( n^{1/3})$. Then, by \eqref{cond.md},
$$
	G^\B_k( \sigma_k \sqrt{ 2\log m}) = G(\sqrt{2\log m}) \{ 1 + o_{\PP}(1) \}
$$
uniformly in $1\leq k\leq m$ as $(n,m) \to \infty$. Note that
$$
	 G(\sqrt{2 \log m} ) = 2 \{ 1-\Phi(\sqrt{2\log m}) \} \sim \sqrt{\frac{2}{\pi}} \frac{1}{ m \sqrt{2\log m}} = o(m^{-1}).
$$
Combining the last two displays, we see that with probability tending to $1$, $t^\B_{{\rm S}}  \geq  G^\B_k( \sigma_k \sqrt{2 \log m})$ for all $1\leq k\leq m$. It follows
\begin{align}
	t_{{\rm S}} ^\B &  \geq \frac{\alpha}{m}  \sum_{k=1}^m I \{ G^\B_k(|\hat{T}_k|) \leq  G^\B_k( \sigma_k \sqrt{2\log m} ) \}  = \frac{\alpha}{m} \sum_{k=1}^m I \{ |\hat{T}_k| \geq \sigma_k \sqrt{2\log m}   \}  .\nn
\end{align}
Furthermore,
\begin{align}
	&  \sum_{k=1}^m I \{ |\hat{T}_k| \geq \sigma_k \sqrt{2\log m}   \} \nn \\
	& \geq  \sum_{k=1}^m I\bigg\{  \sqrt{n} \, \frac{ | \mu_k |}{\sigma_k } \geq  \sqrt{2\log m} + \sqrt{n} \max_{1\leq k\leq m} \frac{ |\hat{\mu}_k - \mu_k| }{ \sigma_k  }  \bigg\} . \nn
\end{align}
For any $\epsilon >0$, define the event
$$
	\mathcal{A}(\varepsilon) =  \bigg\{ \sqrt{n}\max_{1\leq k\leq m} \frac{ |\hat{\mu}_k - \mu_k | }{\sigma_k} \leq (1+ \epsilon ) \sqrt{ 2 \log m} \bigg\},
$$
on which it holds
$$
 \sum_{k=1}^m I \{ |\hat{T}_k| \geq \sigma_k \sqrt{2\log m}   \} \geq  \sum_{k=1}^m I\bigg\{    \frac{ | \mu_k |}{\sigma_k } \geq   (2+ \epsilon ) \sqrt{\frac{ 2\log m}{n}}  \, \bigg\}.
$$
Using Lemma~\ref{robust.md} and the union bound shows that, as $(n,m) \to \infty$,
\begin{align}
	  \PP \{ \mathcal{A}(\epsilon)^{{\rm c}} \}  &\leq  \sum_{k=1}^m \PP\Big\{ \sqrt{n}\, | \hat{\mu}_k - \mu_k |  > \sigma_k (1+ \epsilon ) \sqrt{2\log m} \Big\} \nn \\
	& \leq 2m \exp\{  - (1+\epsilon )^2 \log m  \} = 2 m^{-2\epsilon- \epsilon^2}  = o(1) .  \label{event.prob}
\end{align}
Putting the above calculations together leads to the claim \eqref{BH.threshold.bound}.

Finally we verify \eqref{unif.bound}. By Lemma~\ref{robust.md} and \eqref{cond.md}, it is easy to see that
\begin{align}
	\max_{1\leq k\leq m} \sup_{0\leq z \leq \sigma_k\sqrt{ 2\log (nm) }} \bigg| \frac{G^\B_k(z)}{G(z/\sigma_k)} -1 \bigg| \xrightarrow {\PP} 0  \nn \\
	\mbox{ and }  	\max_{1\leq k\leq m} \sup_{0\leq z \leq   \sigma_k\sqrt{ 2\log (nm)}} \bigg| \frac{G^\B_k(z)}{G_k(z)} -1 \bigg| \xrightarrow {\PP} 0 \nn
\end{align}
as $(n, m) \to \infty$. Also, consider the event
$$
	\mathcal{A}_0 =  \bigg\{  \max_{ k \in \mathcal{H}_0}   |\hat{T}_k| / \sigma_k \leq    \sqrt{ 2\log(nm) } \bigg\},
$$
Similarly to \eqref{event.prob}, it can be shown that $\PP( \mathcal{A}_0^{{\rm c}} ) \to 0$. Consequently, there exits a sequence $\{ \alpha_n \}_{n \geq 1}$ of positive numbers satisfying $\alpha_n \to 0$ such that
\begin{align}
	&   \sum_{k\in \mathcal{H}_0} I\{ G( |\hat{T}_k| /\sigma_k ) \leq (1-\alpha_n ) t \} \nn \\
	 &   \leq  \sum_{k\in \mathcal{H}_0} I(p_k^\B \leq t ) \leq  \sum_{ k \in \mathcal{H}_0 } I \{ G( |\hat{T}_k| /\sigma_k) \leq  (1+\alpha_n ) t \}. \label{FD.bound}
\end{align}
Again, using Lemma~\ref{robust.md} gives
\begin{align}
	\max_{k \in \mathcal{H}_0} \sup_{0\leq z \leq \sigma_k \sqrt{ 2\log m}}  \bigg| \frac{\PP(|\hat{T}_k| \geq z)}{G(z/\sigma_k )}  -1 \bigg|  \to 1 . \label{null.unif}
\end{align}
Note that, with $0\leq z \leq \sigma_k \sqrt{2\log m}$, it holds
$$
	\sqrt{\frac{2}{\pi}} \frac{\sqrt{2\log m}}{1+2 \log m} \frac{1}{m}  \leq G(z/\sigma_k) \leq 1.
$$
In \eqref{null.unif}, we change the variable by $t= G(z/\sigma_k)$ to obtain
$$
	\max_{k \in \mathcal{H}_0} \sup_{ m^{-1} \leq t \leq 1 }  \bigg| \frac{\PP\{ G( |\hat{T}_k| / \sigma_k ) \leq t\}}{t}  -1 \bigg|  \to 0.
$$
By an argument similar to that in the proof of Proposition~B.3 in \cite{ZBFL2017}, it follows that for any sequence $b_m>0$ satisfying $b_m \to \infty$ and $b_m = o(m)$,
\begin{align}
	\sup_{b_m / m \leq t\leq 1}  \bigg| \frac{\sum_{k \in \mathcal{H}_0} I\{ G( | \hat{T}_k |/\sigma_k) \leq t \} }{m_0 \, t}  - 1 \bigg| \xrightarrow {\PP} 0.\nn
\end{align}
Together with \eqref{FD.bound}, this proves \eqref{unif.bound} as desired. \qed

\section{Implementation}
\label{app:C}

Since the bootstrap Huber estimator needs to be computed many times,  an efficient optimization solver is critical for applications. Ideally,  second order methods such as Newton's method should be adopted due to fast convergence.
%However, the Hessian matrix of the empirical Huber loss is not positive definite everywhere.
Denote the gradient of the weighted Huber loss in \eqref{bootHuber.est} by
\begin{align}\label{eq:g}
 \bg(\btheta)=\sum_{i=1}^n W_i  &  \{   I ( |Y_i-\bX_i^\T\btheta|\leq \tau  ) (\bX_i^\T  \btheta-Y_i)\bX_i  \\
 & ~ +   I ( |Y_i-\bX_i^\T\btheta| > \tau )  \tau \cdot \sgn(\bX_i^\T\btheta-Y_i) \bX_i   \}, \ \ \btheta \in \RR^d. \nn
\end{align}
Although $\bg(\btheta)$ is not differentiable everywhere with respect to $\btheta$, we can still compute a generalized Jacobian of $\bg(\btheta)$:
\begin{equation}\label{eq:H}
 \bH(\btheta)=\sum_{i=1}^n W_i   I ( |Y_i-\bX_i^\T\btheta|\leq \tau  )  \bX_i \bX_i^\T,
\end{equation}
which serves as an ``approximate Hessian matrix''. Given \eqref{eq:H}, the generalized Newton method can be directly implemented via the following iterative procedure (for $t=1,2,\ldots$):
\begin{equation}\label{eq:gen_newton}
\btheta^{t+1}=\btheta^{t}-\eta_t  \{ \bH(\btheta^t) \}^{-1} \bg(\btheta^t),
\end{equation}
where $\eta_t$ is the step-size. We note that the constraint in \eqref{bootHuber.est} is omitted here, since it is introduced mainly for theoretical analysis and will not affect the empirical performance.

Although \eqref{eq:gen_newton} is easy to implement, there remains a practical issue that the Hessian matrix $\bH(\btheta^t)$ is not always invertible. To address this issue, we adopt the damped semismooth Newton method, which is a combination of Newton's method and gradient descent. The idea is straightforward: when $\bH(\btheta^t)$ is invertible, $\btheta^{t+1}$ is computed via the generalized Newton step in \eqref{eq:gen_newton}; otherwise, the gradient descent step is performed, that is,
\begin{equation}
\btheta^{t+1}=\btheta^{t}-\eta_t \,\bg(\btheta^t).
\end{equation}
The step-size $\eta_t$ is determined via the backtracking-Armijo line search rule.  

 Now we briefly discuss the the convergence of the damped semismooth Newton method. Note that the random weights $W_i$ may sometimes take negative values, our objective function could be non-convex, and thus we only discuss the convergence to a stationary point, i.e.  some $\hat\btheta$ such that $g(\hat\btheta)=0$. The following proposition from \cite{QS1999} and \cite{DFK1996} provides the local convergence rate for solving a system $g(\btheta)=0$.
\begin{proposition}\label{prop:convergence}
{\rm	Suppose that $g(\hat\btheta)=0$, where $g$ is locally Lipschitz, and that all $V \in \partial g(\hat\btheta)$ are non-singular. If $g$ is strongly semismooth at $\hat\btheta$, then the method is quadratically convergent in a neighborhood of $\hat\btheta$. }
\end{proposition}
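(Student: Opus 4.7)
The plan is to prove local quadratic convergence of the pure (undamped) Newton step $\btheta^{t+1}=\btheta^t - V_t^{-1}g(\btheta^t)$, where $V_t\in\partial g(\btheta^t)$, by combining a uniform invertibility bound near $\hat\btheta$ with the strong semismoothness estimate. Once $\btheta^t$ is close enough to $\hat\btheta$, the line search will accept $\eta_t=1$ and the damped method collapses to this pure Newton step, so the quadratic rate for the idealized iteration transfers to the actual algorithm.

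First I would establish a uniform invertibility neighborhood. Since $g$ is locally Lipschitz, the Clarke generalized Jacobian $\partial g$ is nonempty, compact, and upper semicontinuous at $\hat\btheta$. Because every element of $\partial g(\hat\btheta)$ is nonsingular, by compactness there exists $M>0$ with $\|V^{-1}\|\le M$ for all $V\in\partial g(\hat\btheta)$. Upper semicontinuity of $\partial g$ together with continuity of matrix inversion then yields a radius $r_1>0$ such that every $V\in\partial g(\btheta)$ is nonsingular with $\|V^{-1}\|\le 2M$ whenever $\|\btheta-\hat\btheta\|\le r_1$. This is the step I expect to require the most care, because strictly speaking one must argue via a contradiction/compactness argument from upper semicontinuity rather than from continuous invertibility of a single map.

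Next I would invoke strong semismoothness. By definition, for any $V_t\in\partial g(\btheta^t)$,
\begin{equation*}
\|g(\btheta^t) - g(\hat\btheta) - V_t(\btheta^t-\hat\btheta)\| = O\bigl(\|\btheta^t-\hat\btheta\|^2\bigr)
\end{equation*}
as $\btheta^t\to\hat\btheta$. Since $g(\hat\btheta)=0$, there exist $r_2\in(0,r_1]$ and $L>0$ with $\|g(\btheta^t) - V_t(\btheta^t-\hat\btheta)\|\le L\|\btheta^t-\hat\btheta\|^2$ whenever $\|\btheta^t-\hat\btheta\|\le r_2$. Writing the Newton update as
\begin{equation*}
\btheta^{t+1}-\hat\btheta = (\btheta^t-\hat\btheta) - V_t^{-1}g(\btheta^t) = -V_t^{-1}\bigl(g(\btheta^t) - V_t(\btheta^t-\hat\btheta)\bigr),
\end{equation*}
and applying the two previous bounds, I obtain
\begin{equation*}
\|\btheta^{t+1}-\hat\btheta\|\ \le\ 2ML\,\|\btheta^t-\hat\btheta\|^2.
\end{equation*}

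Choosing $r_*:=\min\{r_2,\,1/(4ML)\}$ and starting from $\|\btheta^0-\hat\btheta\|\le r_*$, an induction on $t$ gives $\|\btheta^{t+1}-\hat\btheta\|\le \tfrac12\|\btheta^t-\hat\btheta\|$, so all iterates remain in the invertibility neighborhood and the quadratic bound propagates, proving quadratic convergence. Finally, I would note that within this neighborhood the Armijo sufficient-decrease condition on $\tfrac12\|g(\btheta)\|^2$ is satisfied by the unit step (a standard consequence of the quadratic model error), hence the damped semismooth Newton method eventually takes $\eta_t=1$ and inherits the quadratic rate. This last remark is what connects the abstract statement in Proposition~\ref{prop:convergence} to the damped scheme actually implemented in \eqref{eq:gen_newton}.
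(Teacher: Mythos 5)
The paper never proves this proposition: it is imported directly from \cite{QS1999} and \cite{DFK1996}, and the paper's only original content here is the verification, given immediately after the statement, that the weighted Huber system satisfies its hypotheses (local Lipschitzness, strong semismoothness of $\ell_\tau'$, nonsingularity of elements of $\partial g(\hat\btheta)$). Your reconstruction is correct and is essentially the classical argument underlying those references: uniform bounded invertibility of $\partial g$ in a neighborhood of $\hat\btheta$ (compactness of $\partial g(\hat\btheta)$ plus upper semicontinuity plus a Banach perturbation bound), the strong-semismoothness residual estimate $\|g(\btheta^t)-V_t(\btheta^t-\hat\btheta)\| = O(\|\btheta^t-\hat\btheta\|^2)$, the Newton error identity, and an induction keeping the iterates in the invertibility ball. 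The one step you leave as a remark --- that backtracking-Armijo eventually accepts $\eta_t=1$ so the damped scheme coincides with pure Newton --- needs one additional ingredient to be airtight: a lower bound of the form $\|g(\btheta^t)\| \geq c\,\|\btheta^t-\hat\btheta\|$ near $\hat\btheta$ (which follows from the same uniform invertibility, since $\|\btheta^t-\hat\btheta\| \leq \|V_t^{-1}\|\,(\|g(\btheta^t)\| + L\|\btheta^t-\hat\btheta\|^2)$), so that $\|g(\btheta^{t+1})\| = O(\|\btheta^t-\hat\btheta\|^2)$ translates into the required relative decrease of the merit function. This is standard and does not affect the validity of your core quadratic-rate argument.
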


Now, let us verify the conditions in Proposition \ref{prop:convergence} for the weighted Huber regression. Given the Huber loss $\ell_\tau(x)$ and its gradient $\ell_\tau'(x) = xI(|x| \leq \tau) + \tau\cdot {\rm sign}(x)I(|x|>\tau)$, the Clarke's generalized Jacobian of $\ell_\tau'(x)$ \citep{HL2001} can be calculated as 
\begin{equation}\label{eq:generalized_jacob}
\partial \ell_\tau'(x)  =\begin{cases}
0       & x > \tau \;\; \text{or} \;\; x<-\tau\\
1       & |x| <\tau \\
[0,1]      & x = \pm \tau  \\
\end{cases},
\end{equation}

The boundedness of $\ell_\tau'(x)$ implies that $g$ in \eqref{eq:g} is locally Lipschitz. Moreover, we can easily verify that $\ell_\tau'(x)$ is a strongly semismooth function. Since the semi-smoothness is preserved under linear transformation, the function $g$ in \eqref{eq:g}  is also  strongly semismooth. Then the remaining condition is on the non-singularity of  $V \in \partial g(\hat\btheta)$, where  $\partial$ denotes the Clarke's generalized Jacobian of $g$. According to \eqref{eq:generalized_jacob}, we have 
\[
 \partial g(\hat\btheta) = \left\{ \sum_{i=1}^n W_i  \bX_i \bX_i^\T \{ I ( |Y_i-\bX_i^\T\hat\btheta|< \tau  ) + v_i I ( |Y_i-\bX_i^\T\hat\btheta|= \pm \tau)  \} : v_i \in [0,1]\right\}.
\]
We note that $ \bH$ in \eqref{eq:H} is also a member of $\partial g(\hat\btheta)$. 
The non-singularity condition depends on the realization of random weights $W_i$ and $\hat\btheta$. However, since the dimension $d$ is small as compared to $n$, $W_i$ and $(Y_i, \bX_i)$ are IID random variables, as long as $\hat\btheta$ is not too extreme (e.g. there are at least $d$ terms such that $|Y_i-\bX_i^\T\hat\btheta|< \tau $), the non-singularity condition will be easily satisfied.
%The damped semismooth Newton method is an efficient and preserves the quadratic convergence rate as for Newton's method. See, for example, \cite{DFK1996} and \cite{QS1999}.
 
\section{Selecting robustification parameter: A data-driven approach}
\label{app:D}

\subsection{Preliminaries}
\label{sec:pre}

Let $X$ be a real-valued random variable with finite variance. For $z \geq 0$, define
\begin{align}
G(z) = \pr(|X|>z), ~ P(z) = \EE \{ X^2 I(|X|\leq z) \} , ~ Q(z) = \EE   \{\psi^2_z(X)\}, \label{def.GPQ}
\end{align}
where $\psi_z(x)  = ( |x|\wedge z ) \sgn(x)$, $x\in \RR$. Moreover, for $z>0$, we define
\begin{align}
 p(z) =z^{-2} P(z) ~\mbox{ and }~ q(z) = z^{-2} Q(z). \label{def.pq}
\end{align}
It is easy to see that $Q(z) = P(z) + z^2 G(z)$ and $q(z) = p(z) + G(z)$. The following result provides some useful connections among these functions.  See (2.3) and (2.4) in \cite{HKW1990}. We reproduce them here for the sake of readability.

\begin{lemma} \label{prop1}
{\rm
Let functions $G, Q, p$ and $q$ be given in \eqref{def.GPQ} and \eqref{def.pq}.
\begin{enumerate}
\item[(i)] The function $Q:[0,\infty) \to \RR$ is non-decreasing with $\lim_{z \to \infty}Q(z) = \EE (X^2)$. For any $z >0$, we have
\begin{align}
  Q(z) = 2 \int_0^z  y G(y) \, dy, \quad  q'(z) = - 2 \frac{p(z)}{z}  , \label{prop1.1}
\end{align}
and
\begin{align}
  q(z) = \pr(X\neq 0) - 2 \int_0^z  \frac{p(y)}{y} dy.  \label{prop1.2}
\end{align}

\item[(ii)] The function $q: (0,\infty) \to \RR$ is non-increasing and positive everywhere with $q(0+):=\lim_{s \downarrow 0}q(s) =\pr(X\neq 0)$. Moreover,
\begin{align}
	q(s)=\pr(X\neq 0)  \label{def.Delta}
\end{align}
for all $0\leq s \leq \Delta := \inf\{ y>0: G(y)< \pr(X\neq 0)\}$, and $q(s)$ decreases strictly and continuously on $(\Delta, \infty)$ with $\lim_{z \to \infty} q(z) = 0$.
\end{enumerate}
}
\end{lemma}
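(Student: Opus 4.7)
The plan is to establish each claim through the layer-cake representation, Fubini's theorem, and elementary calculus combined with monotone/bounded convergence arguments. I would organize the proof into two blocks, matching parts (i) and (ii) of the lemma.

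For part (i), I would first derive the integral representation $Q(z) = 2\int_0^z yG(y)\,dy$ from the layer-cake identity $\min(X^2, z^2) = \int_0^z 2y\, I(y < |X|)\,dy$ together with Fubini's theorem. Monotonicity of $Q$ is then immediate from non-negativity of the integrand, and $\lim_{z\to\infty} Q(z) = \EE(X^2)$ follows by monotone convergence since $\psi_z^2(X) \uparrow X^2$ pointwise. The differentiation formula $q'(z) = -2p(z)/z$ follows from the quotient rule applied to $q(z) = Q(z)/z^2$, using $Q'(z) = 2zG(z)$ together with the identity $Q(z) = P(z) + z^2 G(z)$ to cancel the $G$-terms. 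Finally, integrating $q'$ from $0^+$ to $z$ yields the claimed representation, once I verify $q(0^+) = \PP(X\neq 0)$; this follows by writing $q(z) = \EE\{\min(X^2/z^2, 1)\}$ and invoking bounded convergence as $z\downarrow 0$, since $\min(X^2/z^2, 1) \to I(X\neq 0)$ pointwise.

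For part (ii), the non-increasing property follows directly from $q'(z) \leq 0$, and positivity of $q$ from $Q(z) > 0$ whenever $\PP(X\neq 0) > 0$ (the degenerate case $X=0$ a.s.\ being trivial). For constancy of $q$ on $[0, \Delta]$, I would combine the trivial bound $G(y) \leq \PP(|X|>0) = \PP(X\neq 0)$ with the definition of $\Delta$ to conclude $G(y) = \PP(X\neq 0)$ for all $y \in [0, \Delta)$; this forces $\PP(0 < |X| \leq y) = 0$ and hence $P(y) = 0$, so substituting into $Q(y) = P(y) + y^2 G(y)$ gives $q(y) \equiv \PP(X\neq 0)$ on $[0,\Delta)$, extended to $y=\Delta$ by continuity of $Q$. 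For strict decrease on $(\Delta,\infty)$, the definition of $\Delta$ guarantees some $y_0 \in (\Delta, y)$ with $G(y_0) < \PP(X\neq 0)$, which gives $\PP(0 < |X| \leq y_0) > 0$ and hence $P(y) \geq P(y_0) > 0$, whence $q'(y) < 0$ and $q$ is strictly decreasing; continuity of $q$ on $(\Delta,\infty)$ follows from continuity of $Q$. The limit $q(z) \to 0$ as $z\to\infty$ is immediate from $Q(z) \to \EE(X^2) < \infty$ together with the $1/z^2$ normalization.

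The main obstacle is the careful bookkeeping of the atom at the origin: one must distinguish $\PP(|X|=0)$ from $\PP(|X|>0)$ throughout and exploit the identity $G(y) = \PP(|X|>y)$ to convert statements about $G$ at levels below $\PP(X\neq 0)$ into positive-mass statements for $|X|$ on the interval $(0,y]$. Everything else reduces to the quotient rule, Fubini, and monotone/bounded convergence, so the core argument is short once the definitions surrounding $\Delta$ are unpacked.
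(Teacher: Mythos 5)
Your proposal is correct and follows essentially the same route as the paper's proof: the layer-cake/Fubini representation of $Q$, differentiation of the identity $Q(z)=z^2q(z)$ (your quotient rule on $q=Q/z^2$ is the same computation as the paper's product rule), passage to the limit $s\downarrow 0$ with bounded/dominated convergence to identify $q(0+)=\PP(X\neq 0)$, and the same unpacking of $\Delta$ via $P(y)=0$ on $(0,\Delta)$ versus $P(y)>0$ for $y>\Delta$. The minor variations (using $Q=P+z^2G$ directly for constancy on $[0,\Delta]$, and the bound $q(z)\le \EE(X^2)/z^2$ for the limit at infinity) are cosmetic.
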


\begin{proof}[Proof of Lemma~\ref{prop1}]
Note that
\begin{align*}
	 (|X| \wedge z )^2 & = 2\int_0^z I(|X|> z )y \,dy + 2\int_0^{|X|} I(|X|\leq z )y \,dy \nn \\
& =  2\int_0^z  I(|X|>z )y \,dy + 2\int_0^z  I(|X|>y) I(|X|\leq z)y \,dy  \\
&= 2\int_0^z I(|X|>y) y\, dy.
\end{align*}
Taking expectations on both sides implies $Q(z)= \EE (|X| \wedge z )^2   = 2\int_0^z \pr(|X|>y) y\, dy=2\int_0^z yG(y)dy$, as stated. It follows that $Q'(z) = 2 zG(z)$ and thus $Q$ is non-decreasing. Moreover, by the monotone convergence theorem we see that $\lim_{z\to \infty} Q(z) = \EE(X^2)$.

Next, taking derivatives  with respect to $z$ on both sides of \eqref{def.pq} gives $2z q(z) + z^2 q'(z) =  2 zG(z) = 2z \{q(z) - p(z)\}$, which proves the the second equation in \eqref{prop1.1}. To prove \eqref{prop1.2}, note that, for any $0<s<z$, $q(z) = q(s) - 2 \int_s^z  y^{-1} p(y) \, dy $. On event $\{ X\neq 0\}$, it holds almost surely that
$$
	0< \frac{(|X| \wedge s)^2}{s^2} \leq 1, ~\mbox{ and }~ \frac{(|X| \wedge s)^2}{s^2} \to 1 ~\mbox{ as } s \to 0.
$$
By the dominated convergence theorem,
$$
	q(s) = \EE \{ s^{-2} (|X| \wedge s)^2\} = \EE \{ s^{-2} (|X| \wedge s)^2 I(|X| >0)\} \to \pr(|X|>0)
$$
as $s\to 0$.
In the equation $q(z) = q(s) - 2 \int_s^z  y^{-1} p(y) \,dy $  for $0<s<z$, letting $s$ tend to zero proves \eqref{prop1.2}.

Move to part (ii), by the definition of $\Delta$, we have $\pr(0< |X| \leq y)=0$ and thus $p(y)=0$ for all $0< y<\Delta$. This, together with \eqref{prop1.2}, implies $q(s) = \pr(X\neq 0)>0$ for all $0\leq s\leq \Delta$. It is easy to see that $p(y)>0$ for any $y> \Delta$, and therefore $q(\cdot)$ is strictly decreasing on $(\Delta, \infty)$. Finally, note that
$$
0< \frac{(|X| \wedge s)^2}{s^2} \leq 1  ~\mbox{ and }~ \frac{(|X| \wedge s)^2}{s^2} \to  0 ~\mbox{ as } s \to \infty.
$$
By the dominated convergence theorem, $\lim_{z \to \infty} q(z) = 0$ as desired.
\end{proof}

\subsection{Catoni's lower bound of sample mean}
\label{sec:catoni}

Let $X_1,\ldots, X_n$ be IID random variables from $X$ with mean zero and variance $\sigma^2>0$.
Let $\cA_{\sigma^2}$ be the set of probability measures on the real line with variance bounded by $\sigma^2$.
\cite{C2012} proved a lower bound for the deviations of the empirical mean $\bar X_n$ when the underlying distribution is the least favorable in $\cA_{\sigma^2}$: for any $t \geq 2 e$, there exists some distribution with mean zero and variance $\sigma^2$ such that the IID sample of size drawn from it satisfies
\begin{align}
 	| \bar X_n | \geq  \bigg( 1 - \frac{1}{n} \bigg)^{(n-1)/2}  \sigma \sqrt{\frac{t}{2n} } \nn
\end{align}
with probability at least $2t^{-1}$. This shows that the worst case deviations of $\bar{X}_n$ are suboptimal with heavy-tailed data.

 \subsection{Proof of Proposition~\ref{prop2}}

For any $\tau>0$, note that $\psi_\tau(X_i)$'s are independent random variables satisfying $| \psi_\tau(X_i) | \leq \tau$ and $\EE \psi_\tau^2(X_i) = \sigma_\tau^2$. By Bernstein's inequality,
\begin{align}
	 | \hat m_\tau - \mu_\tau | \leq \sigma_\tau \sqrt{\frac{2t}{n}} +  \frac{\tau t}{3n} \nn
\end{align}
with probability at least $1- 2 e^{-t}$. Taking $\tau = \tau_t$ in the last display leads to the first inequality in \eqref{single.concentration}, which, together with \eqref{bias.bound}, proves the second one.

To prove \eqref{uniform.concentration}, we first make a finite approximation of the interval $[1/2,3/2]$ using a sequence $\{ c_k \}_{k=1}^n$ of equidistant points $c_k = 1/2 + k/n$. Then for any $\tau_t  / 2 \leq \tau \leq 3\tau_t /2$ with $\tau_t = \sigma_{\tau_t} \sqrt{n/t}$, there exists some $1\leq k\leq n$ such that $|\tau - \tau_{t,k}| \leq \sigma_{\tau_t} (n t )^{-1/2}$, where $\tau_{t,k} := c_k \sigma_{\tau_t} \sqrt{n/t}$. It follows that
\begin{align}
	\sup_{  \tau_t /2 \leq \tau \leq  3\tau_t /2} | \hat m_{\tau}   | \leq \max_{1\leq k\leq n} | \hat m_{\tau_{t, k} } | + \frac{\sigma_{\tau_t}}{\sqrt{nt}}.  \label{discrete.approxi}
\end{align}
For every $1\leq k\leq n$, we have
$$
	| \hat m_{\tau_{t, k} } - \mu_{\tau_{t, k} } | \leq  \sigma_{\tau_{t,k}} \sqrt{\frac{2t}{n}} + \frac{\tau_{t,k} }{3} \frac{t}{n}
$$
with probability at least $1- 2 e^{-t}$. By \eqref{bias.bound}, $| \mu_{\tau_{t,k}} | \leq  ( \sigma^2 - \sigma_{\tau_{t,k}}^2 )  / \tau_{t,k}$.
Apply the union bound over $1\leq k\leq n$ to see that
\begin{align}
	 \max_{1\leq k\leq n} | \hat m_{\tau_{t,k} }  | \leq  \max_{1\leq k\leq n} \bigg(   \sigma_{\tau_{t,k}} \sqrt{2} + \frac{c_k \sigma_{\tau_t} }{3} + \frac{\sigma^2}{c_k \sigma_{\tau_t}}   - \frac{\sigma_{\tau_{t,k}}^2}{c_k \sigma_{\tau_t}} \bigg)  \sqrt{\frac{t}{n}}   \label{discrete.bound}
\end{align}
with probability at least $1- 2n e^{-t}$. Together, \eqref{discrete.approxi} and \eqref{discrete.bound} prove \eqref{uniform.concentration}. \qed

 \subsection{Proof of Proposition~\ref{prop:exist}}

Using the notation in Section~\ref{sec:pre}, equation \eqref{population.tau} can be written as $q(\tau) = t/n$. By Lemma~\ref{prop1}, the function $q$ satisfies $\max_{z \geq 0} q(z) = \lim_{z \to 0} q(z) = \PP(|X|>0)$, $\lim_{z\to \infty} q(z) = 0$ and is strictly decreasing on $(\Delta, \infty)$. Provided $t /n < \PP(|X| >0)$, equation \eqref{population.tau} has a unique solution that lies in  $(\Delta, \infty)$.

By definition, this unique solution $\tau_t$ satisfies
\begin{align} \label{eqn.tauz}
	\tau_t^2 = \EE(X^2\wedge \tau_t^2 ) \frac{n}{t}  \leq \sigma^2 \frac{n}{t}.
\end{align}
On the other hand, note that $\EE(X^2 \wedge \tau^2) \geq \tau^2 \PP(|X| > \tau)$ for any $\tau>0$. It follows that $\PP(|X|>\tau_t) \leq  t/n$, which implies $\tau_t \geq q_{t/n}$.  Substituting this into \eqref{eqn.tauz} gives $\tau_t^2 \geq \EE(X^2 \wedge q_{t/n}^2) (n/t)$.

To prove Part~(ii), recall that $q(\tau_t) = t/n$. Since $t/n \to 0$ and $q(z)$ strictly decreases to zero as $z\to \infty$, we have $\tau_t \to \infty$ and therefore $\EE (X^2 \wedge \tau_t^2) \to \sigma^2$ as $n\to \infty$. \qed

\subsection{Proof of Theorem~\ref{thm1}}

By Proposition~\ref{prop:exist2}, $\hat \tau_t$ is uniquely determined and positive on the event $\{ t < \sn I(|X_i|>0) \}$.
Under the condition $\PP(X=0)=0$ and when $t<n$, this event occurs with probability one. We divide the rest of the proof into four steps.

\noindent
{\sc Step 1}. Define functions
$$
	p_n(z) = \frac{1}{n} \sn \frac{X_i^2 I(|X_i|\leq z)}{z^2} ~\mbox{ and }~ q_n (z) =  \frac{1}{n} \sn \frac{\psi_z^2(X_i)}{z^2} , \ \ z >0.
$$
Applying Lemma \ref{prop1} to $p_n$ and $q_n$ implies $q_n'(z)=-2 z^{-1} p_n(z)$. Therefore,
\begin{align}
   q_n(\tau_t ) - q_n(\hat{\tau}_t)  = 2 \int^{\hat{\tau}_t}_{\tau_t} \frac{p_n(z)}{z} dz  = 2\int_0^{(\hat{\tau}_t- \tau_t)/\tau_t} \frac{p_n(  \tau_t +\tau_t u  )}{1+u} d u \nn
\end{align}
by change of variables $u = (z -\tau_t)/\tau_t$. By definition, $q_n(\hat{\tau}_t)= t/n = q(\tau_t)$. It then follows that
$$
  q_n(\tau_t) - q(\tau_t) =  2\int_0^{(\hat{\tau}_t- \tau_t)/\tau_t} \frac{p_n( \tau_t  + \tau_t u )}{1+u} d u. $$ %\label{qn.minus.q}
For any $r\in (0,1)$, it holds on the event $\{   ( \hat{\tau}_t- \tau_t)/\tau_t  \geq  r  \}$ that
\begin{align}
  q_n(\tau_t) - q(\tau_t) &\geq 2 \int_0^r \frac{p_n(\tau_t  + \tau_t u )}{1+u} d u \nn \\
&  =   2 \int_0^r  \frac{p_n(\tau_t  + \tau_t u) - p(\tau_t  + \tau_t u)}{1+u} du + 2\int_0^r \frac{p(\tau_t  + \tau_t u)}{1+u} du \nn \\
& =  { 2 \int_0^r  \frac{p_n(\tau_t  + \tau_t u) - p(\tau_t  + \tau_t u)}{1+u} du }  +   \{ q(\tau_t) - q(\tau_t +\tau_t r ) \}  \nn\\
& =: R_1 +  D_1 . \nn
\end{align}
Similarly, on the event $\{ ( \hat{\tau}_t- \tau_t)/\tau_t  \leq  -r \}$, it holds
\begin{align*}
 & q_n(\tau_t) - q(\tau_t) \nn  \\
 \leq  &  - \{  q(\tau_t -\tau_t r ) - q(\tau_t)  \} - 2 \int_{-r}^0 \frac{p_n(\tau_t +\tau_t u) - p(\tau_t +\tau_t u)}{1+u} du\\
  =: & -D_2 + R_2 . \nn
\end{align*}
Putting the above calculations together, we arrive at
\begin{align}
 & \pr( |\hat{\tau}_t / \tau_t -1 | \geq r ) \nn \\
 &  \leq  \pr \{  q_n(\tau_t ) - q(\tau_t) \geq D_1 + R_1  \} + \pr \{  q_n(\tau_t ) - q(\tau_t) \leq  - D_2+  R_2   \}. \label{tail.prob.dec}
\end{align}
Set $\zeta_i = (X_i^2 \wedge \tau_t^2)/\tau_t^2$ such that $q_n(\tau_t ) - q(\tau_t) =(1/n)\sn \{\zeta_i - \EE (\zeta_i )\}$. Note that $0\leq \zeta_i\leq 1$ and $\EE (\zeta_i^2) \leq \EE( X_i^2 \wedge \tau_t^2) / \tau_t^2 = t/n$. By Bernstein's inequality, for any $u>0$ it holds
\begin{align}
	\pr\{ q_n(\tau_t ) - q(\tau_t)  \geq u  /n\} \leq \exp  \{ -u^2/(2t+2u/3) \}. \label{qn.ubd}
\end{align}
On the other hand, applying Theorem~2.19 in \cite{DLS2009} with $X_i=\zeta_i/n$ therein gives that, for any $0<u< t$,
\begin{align}
 \pr\{ q_n(\tau_t ) - q(\tau_t)  \leq -u  /n \} \leq \exp\{ -u^2/(2t) \} . \label{qn.lbd}
\end{align}

\noindent
{\sc Step 2} (Controlling $R_1$ and $R_2$). Note that $R_1$ and $R_2$ can be written, respectively, as $R_1 =   (2/n) \sn \{\xi_i -\EE (\xi_i)\}$ and $R_2 = - (2/n) \sn \{\eta_i - \EE (\eta_i)\}$, where
\begin{align}
	 \xi_i =  \int_0^r \frac{X_i^2 I\{ |X_i| \leq \tau_t(1+u)\}}{ \tau_t^2(1+u)^3} du ~\mbox{ and }~
	 \eta_i = \int_{-r}^0 \frac{X_i^2 I\{ |X_i| \leq \tau_t(1+u)\}}{ \tau_t^2(1+u)^3} du  \nn
\end{align}
are bounded, non-negative random variables satisfying
$$
 \xi_i \leq \int_0^r \frac{du}{1+u}   \leq r, \quad  \eta_i   \leq \int_{-r}^0 \frac{du}{1+u} \leq \frac{r}{1-r}. $$
In addition,
$$	  \EE (\xi_i^2) \leq  \frac{ \EE[ X_i^2I\{|X_i| \leq \tau_t(1+r)\}] }{\tau_t^2} \bigg\{  \int_0^r  \frac{du}{  (1+u)^2} \bigg\}^2 \leq q(\tau_t +\tau_t r ) r^2  \leq q(\tau_t) r^2 , $$ and
$$   \EE (\eta_i^2) \leq  \frac{\EE \{X_i^2 I(|X_i|\leq \tau_t)\}}{\tau_t^2}  \bigg\{  \int_{-r}^0  \frac{du}{  (1+u)^2} \bigg\}^2 \leq  \frac{q(\tau_t) r^2}{(1-r)^2} . \nn
$$
Recall that $q(\tau_t) = t/n$.
Again, by Theorem~2.19 in \cite{DLS2009} we have, for any $v>0$,
\begin{align}
	\pr(   R_1 \leq  - 2r v/n ) \leq  \exp \{ - v^2 / (2 t  ) \}   \label{R1.bound}
\end{align}
and
\begin{align}
 \pr \{  R_2  \geq  2rv/(1-r)n \} \leq   \exp\{- v^2/(2t ) \}. \label{R2.bound}
\end{align}

\noindent
{\sc Step 3} (Bounding $D_1$ and $D_2$). Starting with $D_1$, by Lemma~\ref{prop1} we have
\begin{align}
 	  D_1 & = q(\tau_t) - q(\tau_t +\tau_t r)  = 2\int_{\tau_t}^{\tau_t(1+r)} \frac{P(u)}{u^3} du \nn \\
 	  &  \geq 2 P(\tau_t)\int_{\tau_t}^{\tau_t(1+r)} \frac{1}{u^3} du =   \frac{ r^2+2r}{ (1+r)^2}\frac{P(\tau_t)}{\tau_t^2} . \label{D1.lbd}
\end{align}
Similarly,
\begin{align}
	& D_2 = q(\tau_t -\tau_t r) - q( \tau_t ) = 2\int^{\tau_t}_{\tau_t(1-r)} \frac{P(u)}{u^3} du \geq  \frac{2r -r^2}{(1-r)^2} \frac{P(\tau_t -\tau_t r)}{\tau_t^2} . \label{D2.lbd}
\end{align}

\noindent
{\sc Step 4}. Together, \eqref{tail.prob.dec} and \eqref{R1.bound}--\eqref{D2.lbd} imply that, for any $0<r<1$ and $v>0$,
\begin{align}
& \pr( |\hat{\tau}_t / \tau_t -1 | \geq r ) \nn \\
& \leq 2 \exp\{-v^2/(2t  )\} +  \pr \bigg\{ q_n(\tau_t) - q(\tau_t) \geq    \frac{ r^2+2r}{ (1+r)^2} \frac{P(\tau_t)}{\tau_t^2}  - \frac{2rv}{n} \bigg\} \label{tail.prob.1} \\
& \quad +  \pr \bigg\{ q_n(\tau_t) - q(\tau_t) \leq  -   \frac{2r -r^2}{(1-r)^2} \frac{P(\tau_t - \tau_t r)}{\tau_t^2} +  \frac{2rv}{(1-r)n}\bigg\} .\nn
\end{align}
Note that
$$
 \frac{ r^2+2r}{ (1+r)^2} \frac{P(\tau_t)}{\tau_t^2}  - \frac{2rv}{n} =   \bigg\{ \frac{P(\tau_t) }{ Q(\tau_t)} \frac{ 2+r }{(1+r)^2} t - 2v\bigg\} \frac{r}{n} $$
and
\begin{align}
	 \frac{2r -r^2}{(1-r)^2} \frac{P(\tau_t -\tau_t r)}{\tau_t^2} -  \frac{2rv}{(1-r)n}   =   \bigg\{ \frac{P(\tau_t -\tau_t r)}{Q(\tau_t)} \frac{2-r}{1-r}  t - 2v\bigg\}   \frac{r}{(1-r)n} .\nn
\end{align}
Taking $v = (a_1 \wedge a_2) t/2 $ for $a_1$ and $a_2$ as in \eqref{def.c+-}, the right-hand side of \eqref{tail.prob.1} can further be bounded by
\begin{align}
	\PP  \bigg\{ q_n(\tau_t) - q(\tau_t) \geq \frac{ a_1 rt}{n} \bigg\}   + \PP  \bigg\{ q_n(\tau_t) - q(\tau_t) \leq - \frac{a_2 rt}{n} \bigg\}  + 2 \exp\{-v^2/(2t )\} . \nn
\end{align}
Combining this with  \eqref{qn.ubd}, \eqref{qn.lbd} and \eqref{tail.prob.1} proves \eqref{tau.tail.prob}. \qed

\section{Additional Simulation Studies}

\subsection{Standard deviations of estimated quantiles}
\label{sec:std}

In this section, we report the standard deviations of the estimated quantiles for the results in Table \ref{table:noise} and Table \ref{table:noise2}; see Table \ref{table:stdn100} and Table \ref{table:stdn200} below, which correspond to Table \ref{table:noise}  and Table \ref{table:noise2}, respectively. For each setting in Tables \ref{table:stdn100} and \ref{table:stdn200}, the standard deviation of the estimated quantiles for the boot-Huber is slightly smaller than that for the boot-OLS method.
In a sense both the two bootstrap-based methods are rather stable, although the latter one suffers from distorted empirical coverage due to heavy-tailedness.
For settings in other tables in the main text, the observations are similar and thus we omit the details.

\subsection{Correlated design}
\label{sec:corr}

In this section, we consider some more challenging cases in which the designs are highly correlated and/or non-Gaussian. Specifically, we consider the following two scenarios:
\begin{enumerate}
\item The covariate vector $\bX = (X_1,\ldots, X_d)^\T \in \RR^d$ follows a multivariate uniform distribution $\mathrm{Unif}([0,1]^d)$ with $\mathrm{Corr}(X_j,X_k)=0.5^{|j-k|}$ for $1\leq j\neq k\leq d$. See \cite{falk1999simple} for the construction of a multivariate uniform distribution.  Each component of $\btheta^*$ follows a Bernoulli distribution with probability 0.5, i.e. $\text{Ber}(0.5)$. The results for this case are presented in Tables \ref{table:unif_n100} (with $n=100$) and \ref{table:unif_n200} (with $n=200$).
\item The covariate vector $\bX$ follows $\mathcal{N}(\textbf{0}, \bSigma)$, where the covariance matrix $\bSigma=(\sigma_{jk})_{1\leq j, k \leq d}$ has a Toeplitz structure with $\sigma_{jk}=0.9^{|j-k|}$. The components of $\btheta^*$ are equally spaced in $[0,1]$.      The results for this case are presented in Tables \ref{table:toep_n100} (with $n=100$) and \ref{table:toep_n200} (with $n=200$).
\end{enumerate}

From Tables~\ref{table:unif_n100}--\ref{table:toep_n200} we find that the average coverage probabilities of the boot-Huber method are in general close to  nominal levels, while the boot-OLS leads to severe under-coverage for many heavy-tailed noise settings.

\subsection{Simulations on multiple testing}
\label{sec:exp_multi}

In this section, we evaluate the empirical performance of the proposed robust multiple testing procedure described in Algorithm \ref{algo:huber_multi_infer}. Recall the multi-response regression model \eqref{panel.data}:
\[
y_{ik}=\mu_k+\bx_i^\T \bbeta_k+\varepsilon_{ik},\quad  i=1,2,\dots, n, \quad k=1,\ldots, m,
\]
where $\bbeta_k \in \mathbb{R}^s$. We choose $\mu_k = \gamma \sigma\sqrt{ (2 \log m)/n}$ for $1 \leq  k \leq m_1$ with $m_1 = 0.05m$ and $\mu_k = 0$ for $m_1 +1 \leq  k \leq m$, where $\sigma^2 =\var(\varepsilon_i)=1$. The parameter $\gamma$   takes the value either  1.5 (i.e. the weaker signal strength case)  or 3 (i.e. the stronger signal strength case). We generate $\{\bx_i\}_{i=1}^n$ from $\mathcal{N}( \textbf{0}, \bI_s)$ and $\bbeta_k$ from the uniform distribution on $[-1,1]^s$ for $k=1,\ldots, m$. The settings of error distributions are the same as in Section \ref{sec:exp_conf_sets}. The number of tests $m$ is set to be $1000$.  The bootstrap weights $\{ w_{ik} , 1\leq i\leq n, 1\leq k\leq m \}$ are IID from $\mathcal{N}(1,1)$. For each setup, we report the average false discovery proportion and empirical power based on 1000 simulations. The FDP nominal level takes value in  $\{ 5\%, 10\%, 15\%, 20\%, 25\%\}$.

%Since we have compared the performance under different distribution of the bootstrap weights $W_i$, we set $W_i\sim \mathcal{N}(0,1)+1$ as default here in the multiple testing simulations. For each test, the empirical false discovery rate (FDR) is calculated based on 1000 replications with FDR level taking values in $5\%$, $10\%$, $15\%$, $20\%$, $25\%$.

Tables \ref{table:multiple1} and \ref{table:multiple2} show the empirical FDPs and powers for the weaker signal case with $\mu_k = 1.5  \sqrt{2 (\log m)/n}$; while Tables \ref{table:multiple3} and \ref{table:multiple4} show the results for the stronger signal case with $\mu_k = 3  \sqrt{2 (\log m)/n}$. Moreover, Tables \ref{table:multiple1} and \ref{table:multiple3} consider different error distributions when $n=100$ and $s=5$.
When the error is from a $t$-Weilbull mixture distribution, Tables \ref{table:multiple2} and \ref{table:multiple4} present the results for different combinations of $(s,n)$, revealing the influence of $s$ on the difficulty of the problem.
In particular, the combination of $s=10$, $n=100$ and ${\rm signal~strength}=1.5\sqrt{2(\log m)/n}$ corresponds to the most challenging scenario. Increasing either the sample size or the signal strength improves both the FDP control and power performance, which is consistent with our theoretical result in Theorem \ref{FDP.thm}.
In summary, with various types of heavy-tailed errors and across different settings, the proposed robust testing procedure performs well and steadily in terms of FDP control and power.

%The FDPs are well controlled throughout all settings, and the powers exceed 90\% even in the weaker signal strength case. %  and  Pareto-Gaussian mixture and Lognormal-Gaussian mixture have better powers than other distributions.
%Increasing the signal strength to $\mu_k = 3\sqrt{2 (\log m)/n}$, the powers achieve one in most cases.
%

%For the weaker signal case in Table \ref{table:multiple2}, we observe that the FDPs are below (or close to) nominal levels. Moreover, the powers are high except for the case $(s,n)=(10,100)$. This empirical result is expectable since the sample size $n=100$ is relatively small as compared to the dimension $s=10$ (especially in the weak signal strength case). As the signal strength increases, the powers approach one even in the case of $(s,n)=(10,100)$ (see Table \ref{table:multiple4}). Moreover, from Table \ref{table:multiple2} and \ref{table:multiple4} we see that when $n$ is relatively small as compared to $s$ (e.g. $n=100$, $s=10$), the FDP control is somewhat conservative. By increasing the sample size, the empirical FDP gets closer to the nominal level,

\setcounter{table}{5}

\begin{table}[!t]
\centering
\caption{Standard deviations of the estimated quantiles for $(n,d)=(100,5)$ and nominal levels  $1-\alpha=[0.95,0.9,0.85,0.8,0.75]$. The weights $W_i$ are generated from $\mathcal{N}(1,1)$}
\begin{tabular}{lllllllll}
\hline
Noise ~~~~~& Approach  &  $0.95$ & $0.9$ & $0.85$&$0.8$&$0.75$ \\
\hline
\multicolumn{4}{l}{Gaussian}\\
& boot-Huber & 0.210 & 0.301 & 0.365 & 0.412 & 0.443\\
& boot-OLS & 0.214 & 0.299 & 0.365 & 0.416 & 0.446\\
\multicolumn{4}{l}{$t_\nu$}\\
& boot-Huber & 0.191 & 0.295 & 0.364 & 0.399 & 0.442\\
& boot-OLS & 0.222 & 0.336 & 0.420 & 0.467 & 0.491\\
\multicolumn{4}{l}{Gamma}\\
& boot-Huber & 0.191 & 0.292 & 0.366 & 0.410 & 0.441\\
& boot-OLS & 0.220 & 0.309 & 0.384 & 0.423 & 0.456\\
\multicolumn{4}{l}{Wbl mix}\\
& boot-Huber & 0.176 & 0.265 & 0.343 & 0.392 & 0.435\\
& boot-OLS & 0.201 & 0.308 & 0.392 & 0.440 & 0.472\\
\multicolumn{4}{l}{Par mix}\\
& boot-Huber & 0.181 & 0.286 & 0.358 & 0.408 & 0.443\\
& boot-OLS & 0.196 & 0.324 & 0.414 & 0.466 & 0.488\\
\multicolumn{4}{l}{Logn mix}\\
& boot-Huber & 0.176 & 0.268 & 0.331 & 0.392 & 0.425\\
& boot-OLS & 0.189 & 0.327 & 0.416 & 0.461 & 0.488\\
\hline
\end{tabular}
\label{table:stdn100}
\end{table}

\bigskip
\begin{table}[!t]
\centering
\caption{Standard deviations of the estimated quantiles for $(n,d)=(200,5)$ and nominal levels $1-\alpha=[0.95,0.9,0.85,0.8,0.75]$. The weights $W_i$ are generated from $\mathcal{N}(1,1)$}
\begin{tabular}{lllllllll}
\hline
Noise ~~~~~& Approach  &  $0.95$ & $0.9$ & $0.85$&$0.8$&$0.75$ \\
\hline
\multicolumn{4}{l}{Gaussian}\\
& boot-Huber & 0.232 & 0.333 & 0.390 & 0.427 & 0.456\\
& boot-OLS & 0.236 & 0.340 & 0.388 & 0.431 & 0.455\\
\multicolumn{4}{l}{$t_\nu$}\\
& boot-Huber & 0.205 & 0.291 & 0.357 & 0.407 & 0.445\\
& boot-OLS & 0.236 & 0.342 & 0.437 & 0.481 & 0.497\\
\multicolumn{4}{l}{Gamma}\\
& boot-Huber & 0.212 & 0.295 & 0.358 & 0.401 & 0.440\\
& boot-OLS & 0.232 & 0.307 & 0.366 & 0.415 & 0.451\\
\multicolumn{4}{l}{Wbl mix}\\
& boot-Huber & 0.194 & 0.292 & 0.364 & 0.409 & 0.439\\
& boot-OLS & 0.220 & 0.335 & 0.409 & 0.447 & 0.480\\
\multicolumn{4}{l}{Par mix}\\
& boot-Huber & 0.168 & 0.252 & 0.333 & 0.395 & 0.433\\
& boot-OLS & 0.189 & 0.314 & 0.415 & 0.469 & 0.495\\
\multicolumn{4}{l}{Logn mix}\\
& boot-Huber & 0.232 & 0.314 & 0.379 & 0.418 & 0.447\\
& boot-OLS & 0.245 & 0.363 & 0.452 & 0.491 & 0.500\\
\hline
\end{tabular}
\label{table:stdn200}
\end{table}

\begin{table}[!t]
\centering
\caption{Average coverage probabilities for $(n,d)=(100,5)$ and nominal levels $1-\alpha=[0.95,0.9,0.85,0.8,0.75]$ when $\bX_i$ are IID from a multivariate uniform distribution. Each component of $\btheta^*$ follows $\text{Ber}(0.5)$, and $W_i$ are generated from $\mathcal{N}(1,1)$.}
\begin{tabular}{lllllllll}
\hline
Noise ~~~~~& Approach  &  $0.95$ & $0.9$ & $0.85$&$0.8$&$0.75$ \\
\hline
\multicolumn{4}{l}{Gaussian}\\
& boot-Huber & 0.946 & 0.898 & 0.848 & 0.781 & 0.725\\
& boot-OLS & 0.944 & 0.898 & 0.843 & 0.782 & 0.720\\
\multicolumn{4}{l}{$t_\nu$}\\
& boot-Huber & 0.968 & 0.919 & 0.877 & 0.825 & 0.777\\
& boot-OLS & 0.954 & 0.881 & 0.803 & 0.729 & 0.642\\
\multicolumn{4}{l}{Gamma}\\
& boot-Huber & 0.961 & 0.911 & 0.868 & 0.812 & 0.751\\
& boot-OLS & 0.958 & 0.900 & 0.842 & 0.778 & 0.716\\
\multicolumn{4}{l}{Wbl mix}\\
& boot-Huber & 0.963 & 0.907 & 0.866 & 0.808 & 0.748\\
& boot-OLS & 0.947 & 0.880 & 0.817 & 0.724 & 0.663\\
\multicolumn{4}{l}{Par mix}\\
& boot-Huber & 0.974 & 0.928 & 0.882 & 0.842 & 0.775\\
& boot-OLS & 0.963 & 0.897 & 0.815 & 0.715 & 0.634\\
\multicolumn{4}{l}{Logn mix}\\
& boot-Huber & 0.972 & 0.936 & 0.888 & 0.834 & 0.780\\
& boot-OLS & 0.962 & 0.901 & 0.804 & 0.701 & 0.615\\
\hline
\end{tabular}
\label{table:unif_n100}
\end{table}

\bigskip
\begin{table}[!t]
\centering
\caption{Average coverage probabilities for $(n,d)=(200,5)$ and nominal levels $1-\alpha=[0.95,0.9,0.85,0.8,0.75]$ when $\bX_i$ are IID from a multivariate uniform distribution. Each component of $\btheta^*$ follows $\text{Ber}(0.5)$, and $W_i$ are generated from $\mathcal{N}(1,1)$.}
\begin{tabular}{lllllllll}
\hline
Noise ~~~~~& Approach  &  $0.95$ & $0.9$ & $0.85$&$0.8$&$0.75$ \\
\hline
\multicolumn{4}{l}{Gaussian}\\
& boot-Huber & 0.953 & 0.893 & 0.844 & 0.799 & 0.743\\
& boot-OLS & 0.955 & 0.893 & 0.846 & 0.798 & 0.744\\
\multicolumn{4}{l}{$t_\nu$}\\
& boot-Huber & 0.960 & 0.910 & 0.850 & 0.795 & 0.750\\
& boot-OLS & 0.948 & 0.860 & 0.759 & 0.657 & 0.586\\
\multicolumn{4}{l}{Gamma}\\
& boot-Huber & 0.949 & 0.896 & 0.836 & 0.782 & 0.729\\
& boot-OLS & 0.947 & 0.886 & 0.817 & 0.765 & 0.704\\
\multicolumn{4}{l}{Wbl mix}\\
& boot-Huber & 0.957 & 0.906 & 0.861 & 0.811 & 0.766\\
& boot-OLS & 0.941 & 0.879 & 0.805 & 0.722 & 0.656\\
\multicolumn{4}{l}{Par mix}\\
& boot-Huber & 0.963 & 0.924 & 0.862 & 0.798 & 0.743\\
& boot-OLS & 0.958 & 0.869 & 0.751 & 0.669 & 0.581\\
\multicolumn{4}{l}{Logn mix}\\
& boot-Huber & 0.958 & 0.909 & 0.849 & 0.795 & 0.739\\
& boot-OLS & 0.947 & 0.861 & 0.723 & 0.600 & 0.531\\
\hline
\end{tabular}
\label{table:unif_n200}
\end{table}

\begin{table}[!t]
\centering
\caption{Average coverage probabilities for $(n,d)= (100,5)$ and nominal levels $1-\alpha=[0.95,0.9,0.85,0.8,0.75]$ when $\bX_i$ are IID from a multivariate normal distribution with Toplitz covariance structure and $\btheta^*$ is a vector of equally spaced points in $[0,1]$. The weights $W_i$ are generated from $\mathcal{N}(1,1)$.}
\begin{tabular}{lllllllll}
\hline
Noise ~~~~~& Approach  &  $0.95$ & $0.9$ & $0.85$&$0.8$&$0.75$ \\
\hline
\multicolumn{4}{l}{Gaussian}\\
& boot-Huber & 0.954 & 0.899 & 0.842 & 0.783 & 0.732\\
& boot-OLS & 0.952 & 0.901 & 0.842 & 0.778 & 0.726\\
\multicolumn{4}{l}{$t_\nu$}\\
& boot-Huber & 0.962 & 0.904 & 0.843 & 0.802 & 0.734\\
& boot-OLS & 0.948 & 0.870 & 0.772 & 0.678 & 0.594\\
\multicolumn{4}{l}{Gamma}\\
& boot-Huber & 0.962 & 0.906 & 0.841 & 0.786 & 0.736\\
& boot-OLS & 0.949 & 0.893 & 0.820 & 0.767 & 0.706\\
\multicolumn{4}{l}{Wbl mix}\\
& boot-Huber & 0.968 & 0.924 & 0.864 & 0.811 & 0.747\\
& boot-OLS & 0.958 & 0.894 & 0.811 & 0.737 & 0.665\\
\multicolumn{4}{l}{Par mix}\\
& boot-Huber & 0.966 & 0.910 & 0.849 & 0.790 & 0.733\\
& boot-OLS & 0.960 & 0.881 & 0.780 & 0.681 & 0.609\\
\multicolumn{4}{l}{Logn mix}\\
& boot-Huber & 0.968 & 0.922 & 0.875 & 0.811 & 0.763\\
& boot-OLS & 0.963 & 0.878 & 0.778 & 0.693 & 0.608\\
\hline
\end{tabular}
\label{table:toep_n100}
\end{table}

\bigskip
\begin{table}[!t]
\centering
\caption{Average coverage probabilities for $(n,d)= (200,5)$ and nominal levels $1-\alpha=[0.95,0.9,0.85,0.8,0.75]$ when $\bX_i$ are IID from a multivariate normal distribution with Toplitz covariance structure and $\btheta^*$ is a vector of equally spaced points in $[0,1]$. The weights $W_i$ are generated from $\mathcal{N}(1,1)$.}
\begin{tabular}{lllllllll}
\hline
Noise ~~~~~& Approach  &  $0.95$ & $0.9$ & $0.85$&$0.8$&$0.75$ \\
\hline
\multicolumn{4}{l}{Gaussian}\\
& boot-Huber & 0.943 & 0.873 & 0.813 & 0.761 & 0.706\\
& boot-OLS & 0.941 & 0.867 & 0.815 & 0.754 & 0.708\\
\multicolumn{4}{l}{$t_\nu$}\\
& boot-Huber & 0.956 & 0.907 & 0.850 & 0.791 & 0.729\\
& boot-OLS & 0.941 & 0.865 & 0.744 & 0.639 & 0.561\\
\multicolumn{4}{l}{Gamma}\\
& boot-Huber & 0.953 & 0.904 & 0.849 & 0.799 & 0.738\\
& boot-OLS & 0.943 & 0.895 & 0.841 & 0.779 & 0.717\\
\multicolumn{4}{l}{Wbl mix}\\
& boot-Huber & 0.961 & 0.906 & 0.843 & 0.788 & 0.739\\
& boot-OLS & 0.949 & 0.871 & 0.788 & 0.724 & 0.641\\
\multicolumn{4}{l}{Par mix}\\
& boot-Huber & 0.971 & 0.932 & 0.873 & 0.807 & 0.751\\
& boot-OLS & 0.963 & 0.889 & 0.779 & 0.674 & 0.572\\
\multicolumn{4}{l}{Logn mix}\\
& boot-Huber & 0.943 & 0.889 & 0.826 & 0.775 & 0.725\\
& boot-OLS & 0.936 & 0.844 & 0.715 & 0.597 & 0.516\\
\hline
\end{tabular}
\label{table:toep_n200}
\end{table}

\begin{table}[!t]
\centering
\caption{Empirical FDP and power for $(n,s)=(100,5)$. The nominal level $\alpha$ takes value in $\{0.05,0.10,0.15,0.20,0.25\}$. The signal strength $\mu_k = 1.5\sqrt{2 (\log m)/n}$ for $1 \leq  k \leq m_1$.}
~\\
\begin{tabular}{llllllll}
\hline   Noise & $\alpha$ & 0.05 &0.10 &0.15 &0.20 &0.25 \\
\hline
Gaussian\\
&FDP&0.027 &0.044 &0.075 &0.106 &0.138 \\
&Power&0.935 &0.962 &0.978 &0.986 &0.989 \\
$t_\nu$\\
&FDP&0.017 &0.030 &0.053 &0.080 &0.105 \\
&Power&0.928 &0.953 &0.969 &0.978 &0.983 \\
Gamma\\
&FDP&0.048 &0.076 &0.119 &0.159 &0.197 \\
&Power&0.957 &0.981 &0.993 &0.996 &0.999 \\
Wbl mix\\
&FDP&0.038 &0.060 &0.098 &0.130 &0.160 \\
&Power&0.951 &0.977 &0.988 &0.993 &1.000 \\
Par mix \\
&FDP&0.033 &   0.056   & 0.094&    0.129 &    0.165 \\
&Power&0.998 &0.999 &1.000 &1.000 &1.000 \\
Logn mix\\
&FDP&0.029 &0.067 &0.108 &0.149 &0.184 \\
&Power&0.999&1.000 &1.000 &1.000 &1.000 \\
\hline
\end{tabular}
\label{table:multiple1}
\end{table}

\begin{table}[!t]
\centering
\caption{Empirical FDP and power for the Wbl mix model. The nominal level $\alpha$ takes value in $\{0.05,0.10,0.15,0.20,0.25\}$. The signal strength $\mu_k = 1.5\sqrt{2 (\log m)/n}$ for $1 \leq  k \leq m_1$.}
~\\
\begin{tabular}{llllllll}
\hline
$s$ & $n$& $\alpha$ & 0.05 &0.10 &0.15 &0.20 &0.25 \\
\hline
2&100&FDP&0.061 &0.098 &0.143 &0.186 &0.232 \\
&&Power&0.981 &0.991 &0.995 &0.997 &0.998 \\
&200&FDP&0.048 &0.101 &0.149 &0.195 &0.242 \\
&&Power&0.989 &0.995 &0.997 &0.998 &0.998 \\
5&100&FDP&0.038 &0.060 &0.098 &0.130 &0.160 \\
&&Power&0.951 &0.977 &0.988 &0.993 &1.000 \\
&200&FDP&0.056 &0.092 &0.137 &0.180 &0.223 \\
&&Power&0.938 &0.991 &0.996 &0.997 &0.999 \\
10&100&FDP&0.006 &0.014 &0.022 &0.035&0.052 \\
&&Power&0.607 &0.825 &0.897 &0.940 &0.961 \\
&200&FDP&0.043 &0.070 &0.110 &0.145 &0.187\\
&&Power&0.971 &0.983 &0.989 &0.993 &0.995 \\
\hline
\end{tabular}
\label{table:multiple2}
\end{table}

\begin{table}[!t]
\centering
\caption{Empirical FDP and power for $(n,s)=(100,5)$. The nominal level $\alpha$ takes value in $\{0.05,0.10,0.15,0.20,0.25\}$. The signal strength $\mu_k = 3\sqrt{2 (\log m)/n}$ for $1 \leq  k \leq m_1$.}
~\\
\begin{tabular}{llllllll}
\hline   Noise & $\alpha$ & 0.05 &0.10 &0.15 &0.20 &0.25 \\
\hline
Gaussian\\
&FDP&0.015 &0.039 &0.063 &0.093 &0.124 \\
&Power&1.000 &1.000 &1.000 &1.000 &1.000 \\
$t_\nu$\\
&FDP&0.009 &0.027 &0.046 &0.072 &0.098 \\
&Power&0.999 &1.000 &1.000 &1.000 &1.000 \\
Gamma\\
&FDP&0.038 &0.063 &0.103 &0.136 &0.178 \\
&Power&1.000 &1.000 &1.000 &1.000 &1.000 \\
%Weibull&FDP&0.156 &0.238 &0.300 &0.356 &0.402 \\
%&Power&1.000 &1.000 &1.000 &1.000 &1.000 \\
Wbl mix\\
&FDP&0.038 &0.049 &0.089 &0.120 &0.156 \\
&Power&0.999 &1.000 &1.000 &1.000 &1.000 \\
Par mix \\
&FDP&0.037 &   0.060   & 0.100&    0.135 &    0.167 \\
&Power&1.000 &1.000 &1.000 &1.000 &1.000 \\
Logn mix\\
&FDP&0.024 &0.067 &0.099 &0.128 &0.157 \\
&Power&1.000 &1.000 &1.000 &1.000 &1.000 \\
\hline
\end{tabular}
\label{table:multiple3}
\end{table}

\begin{table}[!t]
\centering
\caption{Empirical FDP and power for the Wbl mix model. The nominal level $\alpha$ takes value in $\{0.05,0.10,0.15,0.20,0.25\}$. The signal strength $\mu_k = 3\sqrt{2 (\log m)/n}$ for $1 \leq  k \leq m_1$.}
~\\
\begin{tabular}{llllllll}
\hline
$s$ & $n$& $\alpha$ & 0.05 &0.10 &0.15 &0.20 &0.25 \\
\hline
2&100&FDP&0.042 &0.087 &0.125 &0.179 &0.221 \\
&&Power&1.000 &1.000 &1.000 &1.000 &1.000 \\
&200&FDP&0.049 &0.102 &0.144 &0.187 &0.234 \\
&&Power&1.000 &1.000 &1.000 &1.000 &1.000 \\
5&100&FDP&0.026 &0.049 &0.089 &0.120 &0.156 \\
&&Power&0.999 &1.000 &1.000 &1.000 &1.000 \\
&200&FDP&0.040 &0.069 &0.089 &0.132 &0.184 \\
&&Power&1.000 &1.000 &1.000 &1.000 &1.000 \\
10&100&FDP&0.011 &0.014 &0.022 &0.041 &0.054 \\
&&Power&0.991 &0.995 &0.999 &0.999 &0.999 \\
&200&FDP&0.040&0.069 &0.102 &0.131 &0.166 \\
&&Power&1.000 &1.000 &1.000 &1.000 &1.000 \\
\hline
\end{tabular}
\label{table:multiple4}
\end{table}


\begin{thebibliography}{9}

\bibitem[Arlot, Blanchard and Roquain(2010)]{ABR2010}
{\sc Arlot, S., Blanchard, G.} and {\sc Roquain, E.} (2010).
Some nonasymptotic results on resampling in high dimension. I. Confidence regions.
{\it Ann. Statist.} \textbf{38} 51--82.

\bibitem[Audibert and Catoni(2011)]{AC2011}
{\sc Audibert, J.-Y.} and {\sc Catoni, O.} (2011).
Robust linear least squares regression.
{\it Ann. Statist.} \textbf{39} 2766--2794.

\bibitem[Barras, Scaillet and Wermers(2010)]{BSW2010}
        {\sc Barras, L., Scaillet, O.} and {\sc Wermers, R.} (2010).
        False discoveries in mutual fund performance: Measuring luck in estimated alphas.
        {\it J. Finance} {\bf 65} 179--216.

\bibitem[Benjamini and Hochberg(1995)]{BH1995}	
	{\sc Benjamini, Y.} and {\sc Hochberg, Y.} (1995).
	Controlling the false discovery rate: A practical and powerful approach to multiple testing.
	{\it J. R. Stat. Soc. Ser. B. Stat. Methodol.} {\bf 57} 289--300.

\bibitem[Berk and Green(2004)]{BG2004}
	{\sc Berk, J.\,B.} and {\sc Green, R.\,C.} (2004).
	Mutual fund flows and performance in rational markets.
	{\it J. Polit. Econ.} {\bf 112} 1269--1295.

\bibitem[Brownlees, Joly and Lugosi(2015)]{BJL2015}
{\sc Brownlees, C.}, {\sc Joly, E.} and {\sc Lugosi, G.} (2015).
Empirical risk minimization for heavy-tailed losses.
{\it Ann. Statist.} \textbf{43} 2507--2536.

\bibitem[{Catoni(2012)}]{C2012}
{\sc Catoni, O.} (2012).
Challenging the empirical mean and empirical variance: A deviation study.
\textit{Ann. Inst. Henri Poincar\'e Probab. Stat.} \textbf{48} 1148--1185.

\bibitem[{Catoni and Giulini(2017)}]{CG2017}
{\sc Catoni, O.} and {\sc Giulini, L.} (2017).
Dimension-free PAC-Bayesian bounds for matrices, vectors, and linear least squares regression.
Available at \href{https://arxiv.org/abs/1712.02747}{arXiv:1712.02747}.


\bibitem[Chatterjee and Bose(2005)]{CB2005}
	{\sc Chatterjee, S.} and {\sc Bose, A.} (2005).
	Generalized bootstrap for estimating equations. {\em Ann. Statist.} {\bf 33} 414--436.
	
\bibitem[Chernozhukov, Chetverikov and Kato(2013)]{CCK2013}
	{\sc Chernozhukov, V., Chetverikov, D.} and {\sc Kato, K.} (2013).
	Gaussian approximations and multiplier bootstrap for maxima of sums of high-dimensional random vectors. {\em Ann. Statist.} {\bf 41} 2786--2819.

\bibitem[Chernozhukov, Chetverikov and Kato(2014)]{CCK2014}
	{\sc Chernozhukov, V., Chetverikov, D.} and {\sc Kato, K.} (2014).
	Gaussian approximation of suprema of empirical processes. {\em Ann. Statist.} {\bf 42} 1564--1597.

\bibitem[Delaigle, Hall and Jin(2011)]{DHJ2011}	
	{\sc Delaigle, A.}, {\sc Hall, P.} and {\sc Jin, J.} (2011).
 	Robustness and accuracy of methods for high dimensional data analysis based on Student's $t$-statistic.
 	{\it J. R. Stat. Soc. Ser. B. Stat. Methodol.} {\bf 73} 283--301.

\bibitem[Desai and Storey(2012)]{DS2012}
	{\sc Desai, K.\,H.} and {\sc Storey, J.\,D.} (2012).
	Cross-dimensional inference of dependent high-dimensional data.
	{\it J. Amer. Statist. Assoc.} {\bf 107} 135--151.

\bibitem[Devroye et~al.(2016)]{DLLO2016}
\textsc{Devroye, L.}, \textsc{Lerasle, M.}, \textsc{Lugosi, G.} and \textsc{Oliveira, R.\,I.} (2016).
Sub-Gaussian mean estimators.
{\it Ann. Statist.} \textbf{44} 2695--2725.

\bibitem[{Dudoit and van der Laan(2008)}]{DV2008}
Dudoit, S. and van der Laan, M.\,J. (2008). 
{\it Multiple Testing Procedures with Applications to Genomics.} 
Springer, New York.

\bibitem[Efron(2010)] {E2010}
      {\sc Efron, B.} (2010).
      {\em Large-Scale Inference: Empirical Bayes Methods for Estimation, Testing, and Prediction. Institute of Mathematical Statistics (IMS) Monographs} \textbf{1}. Cambridge Univ. Press, Cambridge.

\bibitem[Fama and French(1993)]{FF1993}
	{\sc Fama, E.\,F.} and {\sc French, K.\,R.} (1993).
	Common risk factors in the returns on stocks and bonds.
	{\it  J. Financial Econ.} {\bf 33} 3--56.
	
\bibitem[Fan, Hall and Yao(2007)]{FHY2007}	
	{\sc Fan, J.}, {\sc Hall, P.} and {\sc Yao, Q.} (2007).
	To how many simultaneous hypothesis tests can normal, Student's $t$ or bootstrap calibration be applied?
 	{\it J. Amer. Statist. Assoc.} {\bf 102} 1282--1288.

\bibitem[Fan, Han and Gu(2012)]{FHG2012}	
	{\sc Fan, J.}, {\sc Han, X.} and {\sc Gu, W.} (2012).
	Estimating false discovery proportion under arbitrary covariance dependence.
 	{\it J. Amer. Statist. Assoc.} {\bf 107} 1019--1035.


\bibitem[Fan, Li and Wang(2017)]{FLW2017}
\textsc{Fan, J.}, \textsc{Li, Q.} and \textsc{Wang, Y.} (2017).
Estimation of high dimensional mean regression in the absence of symmetry and light tail assumptions.
{\it J. R. Stat. Soc. Ser. B. Stat. Methodol.} {\bf 79} 247--265.

\bibitem[Fan, Liao and Yao(2015)]{FLY2015}
{\sc Fan, J., Liao, Y.} and {\sc Yao, J.} (2015).
Power enhancement in high-dimensional cross-sectional tests.
{\it Econometrica} {\bf 83} 1497--1541.

\bibitem[Friguet, Kloareg and Causeur(2009)]{FKC2009}
	{\sc Friguet, C., Kloareg, M.} and {\sc Causeur, D.} (2009).
	A factor model approach to multiple testing under dependence.
	{\it J. Amer. Statist. Assoc.} {\bf 104} 1406--1415.
	
\bibitem[Giulini(2017)]{G2017}
	{\sc Giulini, I.} (2017).
	Robust PCA and pairs of projections in a Hilbert space. 
	{\it Electron. J. Stat.} {\bf 11} 3903--3926.

\bibitem[Hahn, Kuelbs and Weiner(1990)]{HKW1990}
{\sc Hahn, M.\,G., Kuelbs, J.} and {\sc Weiner, D.\,C.} (1990).
The asymptotic joint distribution of self-normalized censored sums and sums of squares.
{\it   Ann. Probab.} {\bf 18} 1284--1341.

\bibitem[{Hsu and Sabato(2016)}]{HS2016}
{\sc Hsu, D.} and {\sc Sabato, S.} (2016).
Loss minimization and parameter estimation with heavy tails.
{\it J. Mach. Learn. Res.} {\bf 17}(18) 1--40.

\bibitem[{Huber(1964)}]{H1964}
{\sc Huber, P.\,J.} (1964).
{Robust estimation of a location parameter.}
{\it Ann. Math. Statist.} \textbf{35} 73--101.

\bibitem[{Huber and Ronchetti(2009)}]{HR2009}
{\sc Huber, P.\,J.} and {\sc Ronchetti, E.\,M.} (2009).
{\it Robust Statistics,} 2nd ed.
Wiley, New York.


\bibitem[Lan and Du(2019)]{LD2017}
{\sc Lan, W.} and {\sc Du, L.} (2019).
A factor-adjusted multiple testing procedure with application to mutual fund selection. 
{\it J. Bus. Econom. Statist.} {\bf 37} 147--157.

\bibitem[Lepski\u \i(1991)]{Lep1992}
{\sc Lepski\u \i, O.\,V.} (1991).
Asymptotically minimax adaptive estimation. I. Upper bounds. Optimally adaptive estimates.
{\it Teor. Veroyatn. Primen.} {\bf 36} 645--659.



\bibitem[Lintner(1965)]{L1965}
	{\sc Lintner, J.} (1965).
	The valuation of risk assets and the selection of risky investment in stock portfolios and capital budgets.
	{\it Rev. Econ. Stat.} {\bf 47} 13--37.

\bibitem[Liu and Shao(2014)]{LS2014}
	{\sc Liu, W.} and {\sc Shao, Q.-M.} (2014).
	Phase transition and regularized bootstrap in large-scale $t$-tests with false discovery rate control.
	{\it Ann. Statist.} {\bf 42} 2003--2025.

\bibitem[Lugosi and Mendelson(2019)]{LM2017}
{\sc Lugosi, G.} and {\sc Mendelson, S.} (2019).
Sub-Gaussian estimators of the mean of a random vector.
{\it Ann. Statist.} {\bf 47}  783--794.

\bibitem[Minsker(2015)]{M2015}
{\sc Minsker, S.} (2015)
Geometric median and robust estimation in Banach spaces.
{\em Bernoulli} {\bf 21} 2308--2335.

\bibitem[Minsker(2018)]{M2016}
{\sc Minsker, S.} (2018).
Sub-Gaussian estimators of the mean of a random matrix with heavy-tailed entries.
{\it Ann. Statist.} {\bf 46} 2871--2903. 


\bibitem[Qi and Sun(1999)]{QS1999}
{\sc Qi, L.} and {\sc Sun, D.} (1999).
A survey of some nonsmooth equations and smoothing Newton methods.
In {\it Progress in Optimization} 121--146.  Springer, Boston, MA.

\bibitem[Sharpe(1964)]{S1964}	
	{\sc Sharpe, W.\,F.} (1964).
	Capital asset prices: A theory of market equilibrium under conditions of risk.
	{\it J. Finance} {\bf 19} 425--442.

\bibitem[Spokoiny and Zhilova(2015)]{SZ2015}	
	{\sc Spokoiny, V.} and {\sc Zhilova, M.} (2015).
	Bootstrap confidence sets under model misspecification.
	{\it Ann. Statist.} {\bf 43} 2653--2675.

\bibitem[{Sun~{\it et al.}(2018)}]{SZF2016}
	{\sc Sun, Q., Zhou, W.-X.} and {\sc Fan, J.} (2018).
	Adaptive Huber regression.
	{\it J. Amer. Statist. Assoc.} To appear.
	Available at \href{https://arxiv.org/abs/1706.06991}{arXiv:1706.06991}.

\bibitem[{van der Vaart and Wellner(1996)}]{VW1996}
	{\sc van der Vaart, A.\,W.} and {\sc Wellner, J.}  (1996).
	{\it Weak Convergence and Empirical Processes: With Applications to Statistics.}
	Springer, New York.

\bibitem[{Vershynin(2018)}]{V2012}
	{\sc Vershynin, R.} (2018).
	{\it High-Dimensional Probability: An Introduction with Applications in Data Science}.
	Cambridge Univ. Press, Cambridge.

\bibitem[Wang et al.(2017)]{WZHO2017}
	{\sc Wang, J., Zhao, Q., Hastie, T.} and {\sc Owen, A.\,B.} (2017).
	Confounder adjustment in multiple hypothesis testing.
	{\it Ann. Statist.} {\bf 45} 1863--1894.

\bibitem[Wilks(1938)]{W1938}	
	{\sc Wilks, S.\,S.} (1938).
	The large-sample distribution of the likelihood ratio for testing composite hypotheses.
	{\it Ann. Math. Statist.} {\bf 9} 60--62.

\bibitem[Zhilova(2016)]{Z2016}
	{\sc Zhilova, M.} (2016).
	Non-classical Berry-Esseen inequality and accuracy of the weighted bootstrap. 
	Available at \href{https://arxiv.org/abs/1611.02686}{arXiv:1611.02686.}
	
\bibitem[Zhou et al.(2018)]{ZBFL2017}
	{\sc Zhou, W.-X., Bose, K., Fan, J.} and {\sc Liu, H.} (2018).
	A new perspective on robust $M$-estimation: Finite sample theory and applications to dependence-adjusted multiple testing.
	{\it Ann. Statist.} {\bf 46} 1904--1931.


\end{thebibliography}

\begin{thebibliography}{9}

\bibitem[Adamczak et al.(2011)]{Adam2011}
	{\sc Adamczak, R., Litvak, A.\,E., Pajor, A.} and {\sc Tomczak-Jaegermann, N.} (2011).
	Restricted isometry property of matrices with independent columns and neighborly polytopes by random sampling.
	{\it Constr. Approx.} {\bf 34} 61--88.

\bibitem[Ball(1993)]{B1993}
{\sc Ball, K.} (1993).
The reverse isoperimetric problem for Gaussian measure.
{\it Discrete Comput. Geom.} {\bf 10} 411--420.

\bibitem[Bentkus(2005)]{B2005}
{\sc Bentkus, V.} (2005).
A Lyapunov-type bound in $R^d$.
{\it Theory Probab. Appl.} {\bf 49} 311--323.

\bibitem[{Bousquet(2003)}]{B2003}
{\sc Bousquet, O.} (2003).
Concentration inequalities for sub-additive functions using the entropy method.
{\it In Stochastic Inequalities and Applications. Progress in Probability} {\bf 56} 213--247. Birkh\"auser, Basel.

\bibitem[{Bubeck, Cesa-Bianchi and Lugosi(2013)}]{BCL2013}
{\sc Bubeck, S.}, {\sc Cesa-Bianchi, N.} and {\sc Lugosi, G.} (2013).
Bandits with heavy tail.
\textit{IEEE Trans. Inform. Theory} \textbf{59} 7711--7717. 

\bibitem[{Catoni(2012)}]{C2012}
{\sc Catoni, O.} (2012).
Challenging the empirical mean and empirical variance: A deviation study.
\textit{Ann. Inst. Henri Poincar\'e Probab. Stat.} \textbf{48} 1148--1185.


\bibitem[de la Pe\~na, Lai and Shao(2009)]{DLS2009}
{\sc de la Pe\~na, V.\,H., Lai, T.\,L.} and {\sc Shao, Q.-M.} (2009).
{\it Self-Normalized Processes: Limit Theory and Statistical Applications}. Springer, Berlin.

\bibitem[De Luca and Facchinei and Kanzow(1996)]{DFK1996}
{\sc   De Luca, T.}, {\sc Facchinei, F.} and {\sc Kanzow, C.} (1996).
A semismooth equation approach to the solution of nonlinear complementarity problems.
{\it Math. Program.} {\bf 75} 407--439.


\bibitem[Devroye et~al.(2016)]{DLLO2016}
\textsc{Devroye, L.}, \textsc{Lerasle, M.}, \textsc{Lugosi, G.} and \textsc{Oliveira, R.\,I.} (2016).
Sub-{G}aussian mean estimators.
{\it Ann. Statist.} \textbf{44} 2695--2725.

\bibitem[Falk(1999)]{falk1999simple}	
	{\sc Falk, M.} (1999).
 	A simple approach to the generation of uniformly distributed random variables with prescribed correlations.
 	{\it Comm. Statist. Simulation Comput.} {\bf 28} 785--791.



\bibitem[Fan, Li and Wang(2017)]{FLW2017}	
	{\sc Fan, J.}, {\sc Li, Q.} and {\sc Wang, Y.} (2017).
 	Estimation of high dimensional mean regression in the absence of symmetry and light tail assumptions.
 	{\it J. R. Stat. Soc. Ser. B. Stat. Methodol.} {\bf 79} 247--265.

\bibitem[Fan et~al.(2018)]{FLSZ2015}
\textsc{Fan, J.}, \textsc{Liu, H.}, \textsc{Sun, Q.} and \textsc{Zhang, T.}
  (2018).
\newblock {I-LAMM} for sparse learning: Simultaneous control of algorithmic
  complexity and statistical error. {\it Ann. Statist.} \textbf{96} 1348--1360.


\bibitem[Hahn, Kuelbs and Weiner(1990)]{HKW1990}
{\sc Hahn, M.\,G., Kuelbs, J.} and {\sc Weiner, D.\,C.} (1990).
The asymptotic joint distribution of self-normalized censored sums and sums of squares.
{\it   Ann. Probab.} {\bf 18} 1284--1341.

\bibitem[Hiriart-Urruty(2001)]{HL2001}
{\sc Hiriart-Urruty,, J.\,B.} and {\sc Lemar\'{e}chal, C. } (2015).
{\it Fundamentals of Convex Analysis}.
Springer-Verlag.


\bibitem[{Ledoux and Talagrand(1991)}]{LT1991}
{\sc Ledoux, M.} and {\sc Talagrand, M.} (1991).
{\it Probability in Banach Spaces: Isoperimetry and Processes}.
Springer-Verlag, Berlin.

\bibitem[Liu and Shao(2014)]{LS2014}
	{\sc Liu, W.} and {\sc Shao, Q.-M.} (2014).
	Phase transition and regularized bootstrap in large-scale $t$-tests with false discovery rate control.
	{\it Ann. Statist.} {\bf 42} 2003--2025.



\bibitem[Lepski\u \i(1991)]{Lep1992}
{\sc Lepski\u \i, O.\,V.} (1991).
Asymptotically minimax adaptive estimation. I. Upper bounds. Optimally adaptive estimates.
{\it Teor. Veroyatn. Primen.} {\bf 36} 645--659.

\bibitem[{Loh and Wainwright(2015)}]{LW2015}
{\sc Loh, P.-L.} and {\sc Wainwright, M.\,J.} (2015).
Regularized $M$-estimators with nonconvexity: Statistical and algorithmic theory for local optima.
{\it J. Mach. Learn. Res.} {\bf 16} 559--616.


\bibitem[Minsker(2018)]{M2016}
{\sc Minsker, S.} (2018).
Sub-Gaussian estimators of the mean of a random matrix with heavy-tailed entries.
{\it Ann. Statist.} {\bf 46} 2871--2903. 


\bibitem[Pugh(2015)]{P2015}
{\sc Pugh, C.\,C.} (2015).
{\it Real Mathematical Analysis}, 2nd ed.
Springer-Verlag, New York.

\bibitem[Qi and Sun(1999)]{QS1999}
	{\sc Qi, L.} and {\sc Sun, D.} (1999).
	A survey of some nonsmooth equations and smoothing Newton methods.
	In {\it Progress in Optimization} 121--146. Springer, Boston, MA.

\bibitem[Spokoiny(2012)]{S2012}
    {\sc Spokoiny, V.} (2012).
	Parametric estimation. Finite sample theory.
	{\it Ann. Statist.} {\bf 40} 2877--2909.

\bibitem[Spokoiny(2013)]{S2013}
	{\sc Spokoiny, V.} (2013).
	Bernstein--von Mises theorem for growing parameter dimension. Preprint.
	Available at \href{https://arxiv.org/abs/1302.3430}{arXiv:1302.3430.}

\bibitem[Spokoiny and Zhilova(2015)]{SZ2015}	
	{\sc Spokoiny, V.} and {\sc Zhilova, M.} (2015).
	Bootstrap confidence sets under model misspecification.
	{\it Ann. Statist.} {\bf 43} 2653--2675.
	
\bibitem[Storey, Taylor and Siegmund(2004)]{STS2004}	
	{\sc Storey, J.\,D.}, {\sc Taylor, J.\,E.} and {\sc Siegmund, D.} (2004).
	Strong control, conservative point estimation and simultaneous conservative consistency of false discovery rate: A unified approach.
	{\it J. R. Stat. Soc. Ser. B. Stat. Methodol.} {\bf 66} 187--205.

\bibitem[{Vershynin(2018)}]{V2018}
	{\sc Vershynin, R.} (2018)
	{\it High-Dimensional Probability: An Introduction with Applications in Data Science.}
	Cambridge Univ. Press, Cambridge.

\bibitem[Zhou et al.(2018)]{ZBFL2017}
	{\sc Zhou, W.-X., Bose, K., Fan, J.} and {\sc Liu, H.} (2018).
	A new perspective on robust $M$-estimation: Finite sample theory and applications to dependence-adjusted multiple testing.
	 {\it Ann. Statist.} {\bf 46} 1904--1931.

\end{thebibliography}
\end{document}